%% file: main.tex
\documentclass[journal,twoside,web]{ieeecolor}
\usepackage{generic}
\usepackage{cite}
\usepackage{textcomp}

\usepackage{array}

\usepackage{cite}
\usepackage{amsmath,amsthm,amssymb,amsfonts}

\usepackage{xpatch}
\makeatletter
\xpatchcmd{\@thm}{\thm@headpunct{.}}{\thm@headpunct{}}{}{}
\makeatother

\usepackage{bm}
\usepackage[T1]{fontenc}

\usepackage{algorithm,algorithmic}
\usepackage{graphicx}
\usepackage{textcomp}
\usepackage{xcolor}

\usepackage[short]{optidef}

\usepackage{caption}

\usepackage{aliascnt}

\usepackage{booktabs}   
\usepackage{tabularx}   
\newcolumntype{Y}{>{\centering\arraybackslash}X}                 
\newcolumntype{L}{>{\centering\arraybackslash}p{0.045\textwidth}}
\newcolumntype{W}{>{\centering\arraybackslash}p{0.053\textwidth}}
\newcolumntype{S}{>{\centering\arraybackslash}p{0.048\textwidth}}
\renewcommand{\tabcolsep}{3pt}                                    

\newtheorem{theorem}{Theorem}
\newtheorem{assumption}{Assumption}

\newtheorem{lemma}{Lemma}

\newtheorem{remark}{Remark}

\DeclareMathOperator{\col}{col}

\DeclareMathOperator{\diag}{diag}

\input{styledsymbols} 

\usepackage{fancyhdr}
\allowdisplaybreaks 

\usepackage{colortbl}
\definecolor{LightGray}{gray}{0.9}
\definecolor{LightCyan}{rgb}{0.4863    0.6157    0.5922}
\definecolor{LightGreen}{rgb}{0.8824    0.8000    0.6941}
\definecolor{LightOrange}{rgb}{0.8314    0.7294    0.6784}

\usepackage{multirow}
\usepackage{array}
\newcolumntype{M}[1]{>{\centering\arraybackslash}m{#1}}
\newcolumntype{N}{@{}m{0pt}@{}}

\usepackage{fancyhdr}
\usepackage[T1]{fontenc}     
\usepackage[scaled]{helvet}  

\makeatletter
\def\@myheaderfont{\scriptsize\textsf}
\def\ps@myheadings{%
  \def\@oddhead{%
    \vbox{%
      \hbox to \textwidth{\@myheaderfont \rightmark \hfil \@myheaderfont \thepage}%
      \vskip0.4ex
      {\color{subsectioncolor}\hrule height 1pt}%
    }%
  }%
  \def\@evenhead{%
    \vbox{%
      \hbox to \textwidth{\@myheaderfont \thepage \hfil \@myheaderfont \leftmark}%
      \vskip0.4ex
      {\color{subsectioncolor}\hrule height 1pt}%
    }%
  }%
  \def\@oddfoot{}\def\@evenfoot{}%
}
\let\ps@headings\ps@myheadings
\let\ps@IEEEtitlepagestyle\ps@myheadings
\makeatother

\usepackage[hyphens]{url}

\usepackage[
    colorlinks=false,                          
    pdfborderstyle={/S/0/0 4},                 
    linkbordercolor={1 0 0},                   
    citebordercolor={0 1 0},                   
    urlbordercolor={0 0 1},                    
    breaklinks=true                            
]{hyperref}

\begin{document}
\title{Heterogeneous Distributed \\ Zeroth-Order  Nonconvex Optimization \\ with Communication Compression}
\author{Haonan Wang, Xinlei Yi, \IEEEmembership{Member, IEEE}, Yiguang Hong, \IEEEmembership{Fellow, IEEE}, 
and Minghui Liwang, \IEEEmembership{Senior Member, IEEE}
\thanks{This work was supported in part by the National Natural Science Foundation of China under Grant 62503365, Grant 62271424, and Grant 62088101.}
\thanks{H. Wang, X. Yi, Y. Hong, and M. Liwang are with the Department of Control Science and Engineering,
 College of Electronics and Information Engineering, Tongji University,
 Shanghai 201804, China.
 X. Yi, Y. Hong, and M. Liwang are also with the State Key Laboratory of Autonomous Intelligent Unmanned Systems,
  and Frontiers Science Center for Intelligent Autonomous Systems, Ministry of Education, 
  and the Shanghai Institute of Intelligent Science and Technology, Tongji University, Shanghai 200092, China
 (e-mail: hnwang@tongji.edu.cn; xinleiyi@tongji.edu.cn; yghong@iss.ac.cn; minghuiliwang@tongji.edu.cn).}
}

\maketitle

\begin{abstract}
Distributed zeroth-order optimization is increasingly applied in heterogeneous scenarios
 where agents possess distinct data distributions and objectives.
This heterogeneity poses fundamental challenges for convergence analysis,
 as existing convergence analyses rely on relatively strong assumptions to ensure theoretical guarantees.
  Specifically, at least one of the following three assumptions is usually required:
(i) data homogeneity across agents,
(ii) $\mathcal{O}(\mathit{pn})$ function evaluations per iteration with $\mathit{p}$ denoting the dimension and $\mathit{n}$ the number of agents, or
(iii) the Polyak--\L{}ojasiewicz (P--L) or strong convexity condition with a known corresponding constant.
To overcome these limitations, we propose a \underline{He}terogeneous \underline{D}istributed \underline{Z}eroth-\underline{O}rder \underline{C}ompressed (HEDZOC) algorithm, 
which is based on a two-point zeroth-order gradient estimator and a general class of compressors.
 Without assuming data homogeneity, we develop the analysis covering  three settings: general nonconvex functions, functions satisfying the P--L condition without knowing the P--L constant, and those with a known constant.
To the best of our knowledge, the proposed HEDZOC algorithm is the first distributed zeroth-order method that establishes convergence without relying on 
 the above three assumptions.
Moreover, it achieves linear speedup convergence rate, which is comparable to state-of-the-art results attainable under data homogeneity and exact communication assumptions.
 Finally, experiments on heterogeneous adversarial example generation 
 validate the theoretical results.

\end{abstract}

\begin{IEEEkeywords}
Data heterogeneity, distributed zeroth-order optimization, communication compression, linear speedup, nonconvex optimization 
\end{IEEEkeywords}

\section{Introduction} \label{sec:introduction}


Distributed optimization has recently gained considerable attention for its broad applications in areas such as large-scale machine learning and networked systems \cite{Tsianos_Consensus_2012,Nedic_DistributedControl_2018,Yang_Survey_2019}.
Typically, consider $n$ agents, each equipped with a private local cost function 
$f_i:\mathbb{R}^p\to\mathbb{R}$ (not necessarily convex).
 They collaborate to 
 achieve consensus and optimal model parameter by 
 solving the following optimization problem:
\begin{align}\label{zerosg:eqn:xopt}				
 \min_{x\in \mathbb{R}^p} f(x) = \frac{1}{n}\sum_{i=1}^nf_i(x) = \frac{1}{n}\sum_{i=1}^n\mathbb{E}_{\xi_i}[F_i(x,\xi_i)],
\end{align}
where $x\in \mathbb{R}^p$ is the model parameter, 
$\xi_i\in\Xi_i$ is a local data sample drawn from the local distribution $\mathcal{D}_i$, 
and $F_i(x,\xi_i): \mathbb{R}^{p}\times \Xi_i \mapsto \mathbb{R}$ is a stochastic realization of the local cost function $f_i$.
To solve problem \eqref{zerosg:eqn:xopt}, various algorithms have been proposed, 
from convex methods \cite{Nedic_Distributed_2009,Boyd_ADMM_2011,Shi_EXTRA_2015} and later extending to nonconvex formulations \cite{Di_NEXT_2016,Tatarenko_Nonconvex_2017,Yi_Linear_2021}.
More recently, the stochastic nonconvex setting \cite{xin2020variance,Yi_PrimalDual_2022} has been increasingly studied, particularly due to its relevance to neural network training.

However, most existing methods are first-order (gradient-based).
In many practical scenarios one can only query finite number of function valuations  (samplings), such as optimization with 
black-box models \cite{Zhang_Revisiting_2024} and bandit feedback \cite{liu2010distributed}.
These issues naturally call for zeroth-order (gradient-free) optimization methods, typically classified by how their gradient estimators are constructed.
Many existing distributed zeroth-order algorithms construct gradient estimators using two-point differences in both convex \cite{yuan_randomized_2015} and nonconvex settings \cite{Tang_Zeroth_2020,Yi_Zerothorder_2022}. 
Such a  scheme offers a favorable balance between the number of samplings and the convergence rate, and for example linear speedup convergence is achieved in \cite{Yi_Zerothorder_2022}.
In contrast, some distributed zeroth-order methods rely on $2p$-point sampling \cite{wang_distributed_2019,Tang_Zeroth_2020,Yi_Linear_2021}.  
While their convergence rates are comparable to those of first-order methods, they incur a heavy sampling burden when the dimension $p$ and the number of agents $n$ are large.
In the centralized setting, there also exist zeroth-order methods based on one-point sampling \cite{flaxman2004online}, 
as well as zeroth-order Hessian-based algorithms \cite{conn_global_2009}.

Communication is another key aspect of distributed optimization and often becomes a practical bottleneck. 
This has motivated extensive studies on communication-efficient designs, such as compression \cite{Rabbat_Quantized_2005}, event-triggered communication \cite{Lemmon_Event_2010}, and asynchronous schemes \cite{nedic_asynchronous_2010}.
Among these directions, compression is particularly appealing due to its simplicity and ease of implementation for reducing communication overhead.
In distributed convex optimization, a variety of compressors have been applied, including quantizers \cite{yi2014quantized}, 
unbiased compressors \cite{alistarh2017qsgd}, and contractive compressors \cite{koloskova2019decentralized}, 
and these have been further extended to nonconvex settings \cite{basu2020qsparse}. 
Another line of research aims to unify these compressors. 
\cite{liao2022compressed} introduced a general class of compressors with bounded relative compression error that encompasses both unbiased and contractive compressors. 
Then \cite{Yi_CommunicationCompression_2023} further analyzed compressor classes with global and local absolute-error bounds.
Additionally, several studies focus on accelerating convergence rates of compressed algorithms.
Notably, some of compressed methods attain convergence rates comparable to uncompressed first-order \cite{Yi_CommunicationCompression_2023}, stochastic gradient descent (SGD) \cite{basu2020qsparse}, and zeroth-order algorithms \cite{Wang_CZSD_2025}.

In the context of distributed optimization, data heterogeneity can significantly affect algorithmic convergence, arising in areas such as meta-learning \cite{Hospedales_Meta_2021} and large language model adaptation \cite{zhao_survey_2025}. 
This challenge was first recognized in federated learning \cite{li_federated_2020_mag},
where multiple local steps and system heterogeneity such as stragglers can further exacerbate its impact.
To address this issue, many algorithms have been proposed in both federated 
\cite{li_federated_2020,wang_objective_2020} and distributed settings \cite{lian2017can}. 
They typically impose assumptions that limit heterogeneity, which we term homogeneity assumptions. 
Notably, when each agent performs only one local update and all agents participate in every round, 
the algorithm reduces to a standard distributed SGD or first-order method,
for which homogeneity assumptions are unnecessary \cite{Yi_PrimalDual_2022}.
However, distributed zeroth-order methods typically need data homogeneity, as it
 can further amplify zeroth-order gradient estimation error, leading to an inherently non-vanishing estimator variance \cite{Mu_VRZOD_2024}.
Consequently, distributed zeroth-order algorithms based on two-point sampling in both federated~\cite{Ling_Federated_2024}
 and distributed settings~\cite{yuan_randomized_2015,Tang_Zeroth_2020,Yi_Zerothorder_2022}, consistently require data homogeneity to obtain convergence guarantees.
Alternately, methods relying on $2p$-point sampling per iteration~\cite{Yi_Linear_2021,Mu_VRZOD_2024,Wang_Orthogonal_2024}
construct gradient estimators that approximate the true first-order gradient with arbitrarily small error.
This allows data homogeneity to be removed, and the convergence analysis closely follows standard first-order arguments.
However, their $\mathcal{O}(pn)$ sampling burden per iteration grows rapidly with the problem dimension and network size, limiting their scalability in practice. 
To the best of our knowledge,~\cite{Yi_Zerothorder_2022} is the only work that 
provided the
convergence without assuming data homogeneity or $\mathcal{O}(pn)$ function evaluations, but it requires a different condition that the global cost function satisfies the Polyak--\L{}ojasiewicz (P--L) condition with a known corresponding constant.
As a result, convergence guarantees for general convex and nonconvex objectives are still lacking.

As discussed above, existing distributed zeroth-order methods are mostly developed for the homogeneous setting and
establish convergence only under at least one of the following three relatively strong assumptions:  
(i) data homogeneity, which is often unrealistic in practical heterogeneous settings;  
(ii) $\mathcal{O}(pn)$ function evaluations per iteration, which leads to substantial sampling burden; or  
(iii) the P--L condition with a known constant, which applies only to a specific class of nonconvex functions and is also unrealistic since the constant is typically unknown.
Moreover, communication overhead remains a key bottleneck in distributed system.

\subsection{Main Contributions}

This paper aims to relax the three relatively strong assumptions for distributed zeroth-order optimization
and reduce the communication burden through communication compression. 
Specifically, the main contributions of this paper are summarized as follows.

(i)
We study the distributed zeroth-order optimization based on two-point function evaluations in a heterogeneous setting with general nonconvex (and convex) objectives, where agents have distinct data distributions and cost functions.
To further reduce the communication burden via communication compression,
we propose a \underline{He}terogeneous \underline{D}istributed \underline{Z}eroth-\underline{O}rder \underline{C}ompressed (HEDZOC) algorithm.
In contrast to existing distributed zeroth-order methods that rely on at least one of the three relatively strong assumptions, i.e., data homogeneity, $\mathcal{O}(pn)$
 function evaluations per iteration, and the P--L condition, HEDZOC achieves first provable convergence without any of them.



(ii)
 Under heterogeneous data distributions and communication compression,
  we first establish linear speedup convergence rate $\mathcal{O}(\sqrt{p}/(\sqrt{nT}))$ in the general nonconvex setting,     
matching
 the best theoretical guarantees achieved under data homogeneity and exact communication assumptions  \cite{Yi_Zerothorder_2022,Wang_CZSD_2025},
 where $T$ denotes the total number of iterations.
To the best of our knowledge, for general distributed two-point zeroth-order optimization under data heterogeneity,
 this is the first result proving (linear speedup) convergence.
Moreover, under the P--L condition, the proposed algorithm attains a rate of $\mathcal{O}(p/(nT^{\theta}))$ for any $\theta\in(0.5,1)$ without prior knowledge of the P--L constant, 
and further improves to $\mathcal{O}(p/(nT))$ when the P--L constant is known, again exhibiting linear speedup.



(iii)
Existing relatively strong assumptions in the literature are primarily imposed to bound the error of zeroth-order gradient estimators, but they fail in general heterogeneous distributed two-point zeroth-order optimization,
making it crucial to re-establish such bounds.
To this end, we scale the variance of the two-point zeroth-order gradient estimator to the optimality gap and treat it as a perturbation term.
Within a Lyapunov framework, we design a special stepsize to regulate this perturbation and use mathematical induction to keep the optimality gap bounded,  thereby yielding a desired bound of the zeroth-order gradient estimation error.
The above analytical methodology itself represents a substantive innovation.

\subsection{Organization and Notations}
The rest of this paper is organized as follows.
Section~\ref{Section:Existing} provides a detailed analysis of existing relatively strong assumptions and the challenges in removing them.
Then  Section~\ref{Section:Preliminaries} introduces the problem formulation in the heterogeneous setting.
Section~\ref{section:Algorithm} presents the proposed HEDZOC algorithm, Section~\ref{Section:Proof} develops the preliminary convergence analysis, and Section~\ref{section:main results} states the main results.
Section~\ref{Section:Simulations} gives numerical experiments to validate the theoretical findings.
Finally,  Section~\ref{Section:Conclusion} concludes the paper.

 {\bf Notations:} 
Let $[n]=\{1,\dots,n\}$ for any positive integer $n$.
Denote the Euclidean norm by $\|\cdot\|$.
$\mathbb{B}^p$ and $\mathbb{S}^p$ denote the unit ball and unit sphere in $\mathbb{R}^p$, respectively, and $\mathrm{Unif}(\cdot)$ represents the uniform distribution.
For any $x\in\mathbb{R}^p$, $[x]_l$ denotes its $l$-th entry, and $\col(x_1,\ldots,x_k)$ stacks vectors $x_i\in\mathbb{R}^{p_i}$.
For a differentiable function $f$, $\nabla f$ denotes its gradient.
${\bf 1}_n$ and ${\bf I}_n$ stand for the $n$-dimensional all-one vector and the $n\times n$ identity matrix, respectively.
$\rho(\cdot)$ stands for the spectral radius of a matrix, and $\rho_2(\cdot)$ denotes its minimum positive eigenvalue. 
$\diag(t_1,\ldots,t_n)$ denotes a diagonal matrix with diagonal entries $t_i$, and $A\otimes B$ denotes the Kronecker product.
For a positive semidefinite matrix $A$, define $\|x\|_A=\sqrt{\langle x,Ax\rangle}$.
The subscript $i$ indexes the agent and $k$ indexes the iteration. 
Specifically, $x_{i,k}\in\mathbb{R}^p$ denotes agent~$i$'s local estimation of the solution to the global problem \eqref{zerosg:eqn:xopt} at the $k$-th iteration. $v_{i,k}\in\mathbb{R}^p$ is the dual variable.
We also denote  
$\bsx_k=\col(x_{1,k}, \dots,x_{n,k})$,	$\bsv_k=\col(v_{1,k},\dots,v_{n,k})$, 
 $\bar{x}_k=\frac{1}{n}({\bm 1}_n^\top\otimes{\bf I}_p)\bsx_k$, $\bar{\bsx}_k={\bm 1}_n\otimes\bar{x}_k$,   
	$\tilde{f}(\bsx_k)=\sum_{i=1}^{n}f_i(x_{i,k})$, $\bsg_k=\nabla\tilde{f}(\bsx_k)$,  $\bsg^0_k=\nabla\tilde{f}(\bar{\bsx}_k)$, 
	 $\bar{\bsg}_k^0=\bsH\bsg^0_{k}={\bm 1}_n\otimes\nabla f(\bar{x}_k)$, $\bsH=\frac{1}{n}{\bm 1}_n{\bm 1}_n^\top\otimes{\bf I}_p$,
 $\bsE=E\otimes{\bf I}_p$, and $E={\bf I}_n-\tfrac{1}{n}{\bm1}_n{\bm1}_n^{\top}$. 	
For convenience, constants used throughout the paper are provided in Appendix~\ref{appendix:constant:Thm}, 
including \(\kappa\), \(\varepsilon\), \(\tilde{\varepsilon}\), \(a\), \(\tilde a\), \(\tilde a'\), \(b\), \(c\), and \(\tilde d\) families with arbitrary subscripts or arguments.

\section{Existing Assumptions and Challenges} \label{Section:Existing}

In this section, we review the existing assumptions in distributed zeroth-order (ZO) optimization 
and discuss the challenges when trying to relax or remove them.


We first explain why some widely used assumptions on data homogeneity, $\mathcal{O}(pn)$ function evaluations per iteration, and the P--L condition
are adopted,
from the perspective of bounding the estimation error in the two-point zeroth-order gradient estimator. 
\cite{Yi_Zerothorder_2022} proposed the following random gradient estimator, 
which estimates the gradient of the local cost function $f_i(x_i)$ at the $k$-th iteration 
using function evaluations of the stochastic realization $F_i(x_i,\xi_i)$:
\begin{align}
  g^z_{i,k}=\frac{p(F_i(x_{i,k}+ \mu _{i,k} \zeta_{i,k},\xi_{i,k})-F_i(x_{i,k},\xi_{i,k}))}{ \mu _{i,k}}\zeta_{i,k}, \label{dbco:gradient:model2-st}
\end{align}
where 
$\mu_{i,k}>0$ is an exploration parameter and $\zeta_{i,k}\in\mathbb{S}^p$ is a uniformly distributed random vector indicating the distance and direction of the two sampling points, respectively;
$\xi_{i,k}\in\Xi_i$ represents the local data sample drawn from the distribution~$\mathcal{D}_i$.

It has been shown that $g^z_{i,k}$ is an unbiased estimator of 
the gradient of a smoothed version of $f_i$. 
Specifically,
$\mathbb{E}_{\xi_{i,k},\zeta_{i,k}\in\mathbb{S}^p}[g^z_{i,k}]
  = \nabla \hat f_i(x_{i,k},\mu_{i,k}),$
  where 
$\hat f_i(x,\mu) =
\mathbb{E}_{\hat{\zeta}\in\mathbb{B}^p}[f_i(x+\mu\hat{\zeta})]$.
However, there exists a gap between $g^z_{i,k}$ and the true gradient $\nabla f_i(x_{i,k})$.
A more challenging issue lies in the non-vanishing variance of the estimator $g^z_{i,k}$.  
Even when the stochastic function $F_i(\cdot,\xi_{i,k})$ is $\ell$-smooth, 
the variance of $g^z_{i,k}$ remains non-vanishing and is bounded by its mean square
\begin{align}\label{zerosg:lemma:uniformsmoothing-equ6-1}
  \mathbb{E}_{\zeta_{i,k}}[\|g^z_{i,k}\|^2]
  \le 2p\|\nabla_x F_i(x_{i,k},\xi_{i,k})\|^2
  + \tfrac{1}{2}p^2\mu_{i,k}^2\ell^2.
\end{align}
In the distributed optimization setting, 
$\|\nabla_x F_i(x_{i,k},\xi_{i,k})\|^2$ typically does not vanish 
even when the system reaches the global optimum, 
since $\nabla f_i(x) \neq \nabla f(x)$ due to data heterogeneity among agents.
Furthermore, the above variance bound is amplified by the problem dimension~$p$, 
which makes the convergence analysis of distributed zeroth-order algorithms 
particularly challenging.

\subsection{Data homogeneity}

To obtain a tractable bound on the variance of the gradient estimator \eqref{zerosg:lemma:uniformsmoothing-equ6-1}, 
existing works typically assume either the Lipschitz condition on local cost functions
or data homogeneity among agents, 
which constitute the first class of assumptions in the literature.

\begin{assumption}\label{ass:data homo}
The following assumptions are commonly imposed in existing literature, e.g., \cite{yuan_randomized_2015,Yi_Zerothorder_2022,Wang_CZSD_2025,Tang_Zeroth_2020,Ling_Federated_2024},
listed from strong to weak, and are assumed to hold for all \(i \in [n]\), \(x \in \mathbb{R}^p\), and random samples \(\xi\) drawn from the corresponding distribution.
\begin{subequations}\label{eq:gradient-assumptions}
\begin{align}
&(\text{Lipschitz condition}) 
  &&   \hspace{-1.7em}\|\nabla_x F_i(x,\xi)\| \le G, \label{eq:grad-assump-lip}\\
&(\text{Weak Lipschitz condition}) \quad & \nonumber\\
  &&&   \hspace{-8.1em}\|\nabla_x F_i(x,\xi)\| \le \hat{G}(\xi),~\mathbb{E}_{\xi}[\hat{G}(\xi)^2]\le\tilde{G}^2, \label{eq:grad-assump-lip:xi}\\
&(\text{Strong data homogeneity}) && 
 \hspace{-1.7em} \|\nabla f_i(x)-\nabla f(x)\|^2 \le \sigma^2, \label{eq:grad-assump-strong}\\
&(\text{Weak data homogeneity}) \quad & \nonumber\\
  &&& \hspace{-8.1em} \|\nabla f_i(x)-\nabla f(x)\|^2
    \le \eta^2 \|\nabla f(x)\|^2 + \sigma^2,
    \label{eq:grad-assump-weak}
\end{align}
\end{subequations}
where $G,\tilde{G}>0$, \( \hat{G}(\xi) \) is a nonnegative function that depends only on \( \xi \),  and $\sigma, \eta \ge 0$ are constants that characterize 
the degree of data heterogeneity.
\end{assumption}

With the Lipschitz condition, 
the variance of the estimator $g^z_{i,k}$ can be properly bounded, 
which is essential for establishing the convergence of distributed zeroth-order methods. 
Similarly, the data homogeneity assumptions play a comparable role as
they ensure that the variance can be bounded in terms of 
$\|\nabla f(x)\|$. 
As the system approaches a stationary point when $\|\nabla f(x)\|\to0$, 
the variance consequently diminishes, leading to a tractable convergence analysis.

However, the Lipschitz condition does not hold even for simple quadratic functions, and the weak Lipschitz condition does not differ substantially in practice.
Moreover, the data homogeneity assumptions require not only the local data distributions 
to be close to each other, but also the local cost functions 
to share similar functional forms.
When this consistency does not exist, the assumptions are no longer valid 
or hold only with very large constants.

\subsection{$\mathcal{O}(\mathit{pn})$ function evaluations}
When Assumption~\ref{ass:data homo} does not hold, 
a natural alternative is to increase the number of function evaluations per iteration, 
thereby reducing the variance of the zeroth-order gradient estimator.
Accordingly, many methods have been proposed based on $\mathcal{O}(pn)$ function evaluations per iteration:
\begin{align}
&(\text{Coordinate-wise gradient estimator 
	 \cite{wang_distributed_2019,Tang_Zeroth_2020,Yi_Linear_2021}})  \nonumber\\
 &g^p_{i,k}=\sum_{j=1}^{p}\frac{F_i(x_{i,k}+ \mu _{i,k} e_j,\xi_{i,k})-F_i(x_{i,k},\xi_{i,k})}{ \mu _{i,k}}e_j, \label{g:Coordinate-wise}\\
 &\text{where $\{e_1,\dots,e_p\}$ denotes the standard basis vectors in $\mathbb{R}^p$.}  \nonumber\\
&(\text{Variance-reduced gradient estimator \cite{Mu_VRZOD_2024}})  \nonumber\\
  &  g^{vd}_{i,k} =
  \begin{cases}
  &g^{pd}_{i,k}(x_{i,k},\mu_{i,k}), 
   \text{with a small probability},\\
    &g^{1d}_{i,k}(x_{i,k},u_{i,k},j_k) 
  - g^{1d}_{i,k}(\tilde{x}_{i,k},\tilde{u}_{i,k},j_k)\\
  &\qquad \qquad \quad~ + g^{pd}_{i,k}(\tilde{x}_{i,k},\tilde{\mu}_{i,k}), 
   \text{otherwise},
  \end{cases}     \label{g:DVR}
\end{align}
where $g^{pd}_{i,k}(x_{i,k},\mu_{i,k})$ is the deterministic version of $g^p_{i,k}$,
and $g^{1d}_{i,k}(x_{i,k},\mu_{i,k},j_k)
= p\,\frac{f_i(x_{i,k}+\mu_{i,k}e_{j_k}) - f_i(x_{i,k})}{\mu_{i,k}}\,e_{j_k}$ 
with $j_k\in[p]$ being selected uniformly at random as the index of a single coordinate direction.
Moreover, $\tilde{x}_{i,k}$ and $\tilde{\mu}_{i,k}$ are inherited from the last iteration where $g^{pd}_{i,k}(x_{i,k},\mu_{i,k})$ is evaluated, serving as the reference point and smoothing parameter for variance reduction. \vspace{-0.5em}
\begin{align}
  &\hspace{-0.4em}(\text{Gaussian perturbation estimator using $\mathcal{O}(T)$ sampling \cite{Hajinezhad_ZONE_2019}})\nonumber\\
 & g^{\mathcal{N}}_{i,k}=\sum_{j=1}^{m_k}\frac{F_i(x_{i,k}\hspace{-0.15em}+\hspace{-0.15em} \mu _{i,k} \tilde{\zeta}_{i,k,j},\xi_{i,k})\hspace{-0.15em}-\hspace{-0.12em}F_i(x_{i,k},\xi_{i,k})}{ \mu _{i,k}}\tilde{\zeta}_{i,k,j}, \label{g:Gaussian}
\end{align}
where 
$\tilde{\zeta}_{i,k,j}\!\sim\!\mathcal{N}(0,I_p)$ is a standard Gaussian,
and $m_k$ is the number of sampled directions, typically of order $\mathcal{O}(T)$.

Other variants replace the canonical basis $\{e_1,\ldots,e_p\}$ 
with alternative orthogonal directions \cite{Wang_Orthogonal_2024}, 
but the overall sampling complexity remains $\mathcal{O}(pn)$ per iteration. 
There also exist variance-reduction strategies designed for sampled data, 
yet they target stochastic noise rather than zeroth-order gradient variance, 
and are therefore beyond the scope of this paper.

The zeroth-order gradient estimators in \eqref{g:Coordinate-wise}--\eqref{g:Gaussian}
approximate the true first-order gradients of the local cost functions with vanishing variance,
thereby achieving a convergence rate of $\mathcal{O}(1/T)$ comparable to first-order methods.
Consequently, their convergence analyses follow arguments similar to those in first-order methods.
However, these estimators typically require $\mathcal{O}(pn)$ or even more function evaluations per iteration,
which leads to a substantial sampling burden. If in \eqref{g:DVR} the $\mathcal{O}(p)$ sampling estimator $g^{pd}_{i,k}$ is computed with a probability smaller than $1/p$, the sampling burden can be effectively reduced.
 However, the convergence rate deteriorates to $\mathcal{O}(p^{5/2}/T)$, resulting in a considerably large constant factor of $p^{5/2}$. 
Moreover, each agent may still perform $\mathcal{O}(p)$ function evaluations at some iterations.
In practical implementations, this can cause fluctuations in GPU memory usage and lead to inefficient utilization of computational resources, since GPUs generally achieve the best efficiency under full load \cite{jeon2019analysis}.

\subsection{Polyak--\L{}ojasiewicz condition / Strong Convexity}

If Assumption~\ref{ass:data homo} fails
and the zeroth-order gradient is constructed using two-point sampling,
existing convergence analysis is available only when the global cost function satisfies the P--L condition with a known P--L constant~\cite{Yi_Zerothorder_2022}.

\begin{assumption}\label{nonconvex:ass:fil} 
	The global cost function $f(x)$ satisfies the Polyak--\L{}ojasiewicz condition, i.e., there exists a constant $\nu>0$ such that for any $x\in\mathbb{R}^p$,
\begin{align}
 \frac{1}{2}\|\nabla f(x)\|^2\ge \nu( f(x)-f^*), 
 \label{nonconvex:equ:plc}
\end{align}
where $f^* = \inf_{x\in\mathbb{R}^p}f(x)>-\infty$.
\end{assumption}
\begin{remark}
	It should be emphasized that 
	the P--L condition does not imply convexity,
	in contrast to the convexity-based assumptions adopted in
	\cite{Nedic_Distributed_2009,Boyd_ADMM_2011,Shi_EXTRA_2015,yuan_randomized_2015,wang_distributed_2019,flaxman2004online,conn_global_2009,Rabbat_Quantized_2005,Lemmon_Event_2010,nedic_asynchronous_2010,yi2014quantized,alistarh2017qsgd,koloskova2019decentralized}.
	The P--L condition 
	has been shown in various nonconvex problems,
	such as loss functions of wide neural networks~\cite{liu2022loss}.
\end{remark}

Under Assumption~\ref{nonconvex:ass:fil} with a known P--L constant,
the distributed zeroth-order algorithm proposed in~\cite{Yi_Zerothorder_2022}
is capable of handling heterogeneous data across agents. Specifically, 
\begin{subequations}\label{nonconvex:kia-algo-dc-compress}
  \begin{align}
    x_{i,k+1} &= x_{i,k}
    - \alpha_k \Big(
      \beta_k \sum\nolimits_{j=1}^{n} L_{ij} x_{j,k}
      + \gamma_k v_{i,k}
      + g^z_{i,k} \Big), \label{nonconvex:kia-algo-dc-x-compress}\\
    v_{i,k+1} &= v_{i,k}
    + \alpha_k \gamma_k \sum\nolimits_{j=1}^{n} L_{ij} x_{j,k},
    \label{nonconvex:kia-algo-dc-v-compress}
  \end{align}
\end{subequations}
where 
$\alpha_k$, $\beta_k$, and $\gamma_k$ are positive algorithm parameters at iteration~$k$ with $\alpha_k$ being the stepsize,  
$L=[L_{ij}]$ is the weighted graph Laplacian of the network,
and $g^z_{i,k}$ is computed by~\eqref{dbco:gradient:model2-st}.
For convergence, take the Lyapunov function
  $V_k  = V_{c,k}+V_{d,k}+V_{x,k}+V_{o,k}$, where $V_{c,k}$ and $V_{d,k}$ respectively denote the consensus and dual terms,
$V_{x,k}$ is the cross term, and $V_{o,k} = n(f(\bar{x}_k)-f^{*})$ represents the optimality term.

The key idea of the analysis is to leverage the P--L condition to offset the variance of the two-point zeroth-order gradient estimator. 
Specifically, the variance term is first bounded by the optimality term~$V_{o,k}$,
under a mild assumption that each local function is lower bounded, i.e., $f_i^* = \inf_x f_i(x) > -\infty$.
The P--L condition then provides a negative term that counteracts the variance contribution.
Together, these steps yield a recursive Lyapunov relation that ensures convergence.
The main steps of the analysis are summarized as follows.
\begin{align} \label{difficulty:1}
V_{k+1}
&\le (1 - a_{1,k})(V_{c,k} + V_{d,k} + V_{x,k})
+ a_{2,k}V_{o,k} + V_{o,k}  \nonumber\\
&\quad  + a_{3,k} - a_{4,k} n\|\nabla f(\bar{x}_k)\|^2 \nonumber\\
&\le (1 - a_{1,k})(V_{c,k} + V_{d,k} + V_{x,k}) \nonumber\\
&\quad + (1 + a_{2,k} - 2\nu a_{4,k})V_{o,k} + a_{3,k} \nonumber\\
&\le (1 - a_{5,k})V_k + a_{3,k},
\end{align}
where $a_{i,k}$ for $k=1,...,5$ are positive constants determined by the algorithm parameters.
We note that the P--L constant~$\nu$ needs to be known to properly choose $a_{4,k}$ so that $(1+a_{2,k}-2\nu a_{4,k})<1$, ensuring a contractive Lyapunov recursion.

In the absence of the P--L condition, 
 the analytical framework is invalid
 since \eqref{difficulty:1} reduces to a divergent form
\begin{align} \label{difficulty:2}
V_{k+1} \le (1 + a_{2,k})V_k + a_{3,k}.
\end{align}
However, the P--L condition holds only for a specific subset of nonconvex functions, with strong convexity being a sufficient but more restrictive special case.
Moreover, determining the corresponding P--L constant is highly challenging.
To the best of our knowledge, for general nonconvex or convex functions,
as well as for cases where the P--L condition holds but the P--L constant is unknown,
existing works lack theoretical convergence guarantees.
Furthermore, none of existing studies have simultaneously relaxed these assumptions and incorporated communication compression techniques.
These limitations motivate this paper.

\section{Problem Formulation} \label{Section:Preliminaries}




In this section, we present our problem formulation 
with some standard assumptions.

Let us study the problem \eqref{zerosg:eqn:xopt} in a heterogeneous setting, where the local data distributions $\mathcal{D}_i$ may differ significantly across agents and the local cost functions $f_i$ are not required to share the same functional form. 
To solve \eqref{zerosg:eqn:xopt} in a distributed manner, each agent~$i$ maintains a local model parameter $x_i \in \mathbb{R}^{p}$ and estimates a zeroth-order gradient using only two function evaluations of its local stochastic cost function. 
Meanwhile, the agents exchange compressed information over a communication network to cooperate. 
Together, these elements enable minimizing the global cost function while achieving consensus among all local model parameters.

Existing distributed zeroth-order methods \cite{yuan_randomized_2015,Yi_Zerothorder_2022,Wang_CZSD_2025,Tang_Zeroth_2020,Ling_Federated_2024} commonly impose the data homogeneity assumptions in Assumption~\ref{ass:data homo}.
Such assumptions can be restrictive in heterogeneous settings, as they require similarity across agents in both local data distributions $\mathcal{D}_i$ and local cost functions $f_i$.
We emphasize that our analysis does not require data homogeneity. Instead, we adopt the following mild assumption.
\begin{assumption}\label{ass:i:lower-bounded}
Each local cost function $f_i(x)$ has a finite minimum value, i.e.,
$f_i^* = \inf_{x \in \mathbb{R}^p} f_i(x) > -\infty,~\forall i \in [n]$.
\end{assumption}
\begin{remark}
	It should be highlighted that Assumption~\ref{ass:i:lower-bounded} 
	serves as a mild alternative to the data homogeneity assumptions (Assumption~\ref{ass:data homo}).
	This requirement is easily satisfied in practice,
	since most machine learning loss functions (e.g., squared loss, logistic loss, hinge loss) 
	are nonnegative and hence inherently lower bounded.
	In many engineering and control scenarios,
	the cost functions are often quadratic and thus naturally fulfill this property.
	Even if a cost function can take negative values, 
	the lower-boundedness condition remains valid as long as its minimum value is finite.
\end{remark}
Under Assumption~\ref{ass:i:lower-bounded}, the global cost function
$f(x) = \frac{1}{n}\sum_{i=1}^n f_i(x)$ also has a finite minimum value, i.e.,
\begin{align}\label{eq:global_lower_bound}
	f^* = \inf_{x \in \mathbb{R}^p} f(x) \ge \frac{1}{n}\sum\nolimits_{i=1}^n f_i^*> -\infty. 
\end{align}


Then we discuss communication compression for information exchange among agents over a network.

	To achieve consensus and optimize the global cost function, 
	agents communicate over a communication network, 
	which is modeled as a graph 
	\( \mathcal{G} = (\mathcal{V}, \mathcal{E}, A) \).
	Here, \( \mathcal{V} = [n] \) denotes the set of agents, 
	\( \mathcal{E} \subseteq \mathcal{V} \times \mathcal{V} \) is the edge set, 
	and \( A =[A_{ij}]\) represents the weighted adjacency matrix. 
	Specifically, \( A_{ij} > 0 \) if there exists a directed edge 
	from agent \( j \) to agent \( i \) (i.e., \( (j,i) \in \mathcal{E} \)), 
	and \( A_{ij} = 0 \) otherwise.
	The neighbor set of agent \( i \) is defined as 
	\( \mathcal{N}_i = \{ j \in \mathcal{V} : (j,i) \in \mathcal{E} \} \).
	The in-degree matrix is given by 
	\( D = \mathrm{diag}(d_1, d_2, \dots, d_n) \),
	where \( d_i \) denotes the \( i \)-th row sum of \( A \),
	and the Laplacian matrix of the graph is defined as \( L = D - A \).
A graph is said to be undirected if \( (i,j) \in \mathcal{E} \) implies \( (j,i) \in \mathcal{E} \), and in this case \( A_{ij} = A_{ji} \).
It is connected if any two nodes are linked by a path, i.e., a sequence of edges joining them.

	\begin{assumption}\label{ass:graph}
	The communication graph \( \mathcal{G} \) is undirected and connected.
	\end{assumption}



To improve communication efficiency, we consider a general class of compressors 
exhibiting bounded relative compression error \cite{Yi_CommunicationCompression_2023}.

\begin{assumption}\label{nonconvex:ass:compression}
The (possibly stochastic) compressor $\mathcal{C}:\mathbb{R}^p \!\to\! \mathbb{R}^p$ is assumed to satisfy
\begin{align}\label{nonconvex:ass:compression_equ_scaling} 
  \mathbb{E}_{\mathcal{C}}\Big[\Big\|\frac{\mathcal{C}(x)}{r}-x\Big\|^2\Big]\le (1-\delta)\|x\|^2,~\forall x\in\mathbb{R}^p, 
\end{align}
where $\delta\in(0,1]$ and $r>0$ are given constants, and 
$\mathbb{E}_{\mathcal{C}}[\cdot]$ represents the expectation over the randomness inherent in~$\mathcal{C}$.
\end{assumption}

As a direct consequence of the Cauchy--Schwarz inequality, one has
\begin{align}\label{nonconvex:ass:compression_equ}
\mathbb{E}_{\mathcal{C}}\!\left[\|\mathcal{C}(x)-x\|^2\right]
\le \delta_0\|x\|^2,\quad \forall x\in\mathbb{R}^p,
\end{align}
where $\delta_0 = 2r^2(1-\delta) + 2(1-r)^2$.

Assumption~\ref{nonconvex:ass:compression} covers a broad range of compressors, including unbiased, biased contractive, and even certain biased non-contractive ones.
Several representative examples are given as follows.\vspace{-0.5em}
\begin{align*} 
&(\text{Unbiased $k$-bit quantizer:}~r=1+p/4^k,~\delta=1/r)  \\
 &\mathcal{C}_1(x)\hspace{-0.1em}=\hspace{-0.1em}\frac{\|x\|_\infty}{2^{k-1}}\text{sign}(x)\hspace{-0.1em}\circ\hspace{-0.15em}
 \Big\lfloor\frac{2^{k-1}\left|{x}\right|}{\|x\|_\infty}+\varpi\Big\rfloor,b_v=(k+1)p+b_1.\\
 &(\text{Top-$k$ compressor:}~r=1,~\delta=k/p) \\
 &\hspace{6em} \mathcal{C}_2(x)=\sum\nolimits_{j=1}^{k}[x]_{t_j}e_{t_j},~b_v=kb_1. \\
  &(\text{Rand-$k$ compressor:}~r=1,~\delta=k/p) \\
 &\hspace{6em} \mathcal{C}_3(x)=\sum\nolimits_{j=1}^{k}[x]_{r_j}e_{r_j},~b_v=kb_1.\\
 &(\text{Norm-sign compressor:}~r=p/2,~\delta=1/p^2) \\
 &\hspace{6em} \mathcal{C}_4(x)=\frac{\|x\|_\infty}{2}\text{sign}(x),~b_v=p+b_1. 
\end{align*}
Here, $b_v$ represents the number of transmitted bits per vector, and $b_1=64$ corresponds to 64-bit floating-point representation, ensuring exact precision.
 $\varpi \sim \mathrm{Unif}([0,1]^p)$ is a random vector.
 $\text{sign}(\cdot)$, $\circ$, $\lfloor\cdot\rfloor$, and $\left|{\cdot}\right|$ are the element-wise sign, Hadamard product, floor function, and absolute value, respectively.
The index $t_j$ refers to one of the $j$-th largest-magnitude components of $x$, while $r_j$ denotes a randomly selected non-repetitive coordinate index.


Finally, we make the following standard assumptions on problem~\eqref{zerosg:eqn:xopt} throughout the paper.

\begin{assumption}\label{zerosg:ass:zeroth-smooth}
	For almost all $\xi_i$, the local stochastic cost function $F_i(\cdot,\xi_i)$ is $\ell$-smooth, i.e.,  there exists a constant $\ell>0$ such that for any $i\in[n]$ and $x,y\in\mathbb{R}^p$,
	\begin{align}\label{nonconvex:smooth}
		\|\nabla F_i(x,\xi_i)-\nabla F_i(y,\xi_i)\|\le \ell\|x-y\|.
	\end{align}
\end{assumption}
\begin{assumption}\label{zerosg:ass:zeroth-variance}
	The stochastic gradients have bounded state-dependent variance, i.e., there exist constants $\eta_1, \sigma_1 \ge 0$ such that for any $i \in [n]$ and $x \in \mathbb{R}^p$,
	\begin{align}
		&\mathbb{E}_{\xi_i}[\|\nabla_xF_i(x,\xi_i)-\nabla f_i(x)\|^2]\le\eta^2_1\|\nabla f_i(x)\|^2+\sigma^2_1.
	\end{align}
\end{assumption}
Assumptions~\ref{zerosg:ass:zeroth-smooth} and \ref{zerosg:ass:zeroth-variance} are standard in stochastic optimization, e.g., \cite{Yi_Zerothorder_2022,Wang_CZSD_2025,Ling_Federated_2024}. 
Assumption~\ref{zerosg:ass:zeroth-variance} reduces to the bounded variance assumption when $\eta_1=0$ \cite{Yi_PrimalDual_2022,wang_objective_2020,lian2017can}, 
and is more general than the Lipschitz condition \cite{yuan_randomized_2015,koloskova2019decentralized,basu2020qsparse}.



\section{Algorithm Design} \label{section:Algorithm}

In this section, we propose a \underline{He}terogeneous \underline{D}istributed \underline{Z}eroth-\underline{O}rder \underline{C}ompressed (HEDZOC) algorithm. 
HEDZOC integrates the distributed zeroth-order algorithm~\eqref{nonconvex:kia-algo-dc-compress} 
with the compressors in Assumption~\ref{nonconvex:ass:compression}. 
However, directly combining these two components often results in large compression error, 
making it difficult for the algorithm to converge.
To mitigate this error, we employ an indirectly compressed variable~$\hat{x}_{i,k}$ 
to replace~${x}_{i,k}$ in~\eqref{nonconvex:kia-algo-dc-compress}, instead of directly using~$\mathcal{C}(x_{i,k})$.
 Specifically, we construct the compressed variable as
\begin{align}
\hat{x}_{i,k} = y_{i,k} + \mathcal{C}(x_{i,k} - y_{i,k}), \label{nonconvex:kia-algo-dc-compact-xhat}
\end{align}
where the auxiliary variable $y_{i,k}$ serves to alleviate the compression error.
Combining~\eqref{nonconvex:ass:compression_equ} with~\eqref{nonconvex:kia-algo-dc-compact-xhat} yields
\begin{align}
  \mathbb{E}_{\mathcal{C}}[\|x_{i,k} - \hat{x}_{i,k}\|^2]
  &= \mathbb{E}_{\mathcal{C}}[\|x_{i,k} - y_{i,k} - \mathcal{C}(x_{i,k} - y_{i,k})\|^2] \nonumber\\
  &\le \delta_0\mathbb{E}_{\mathcal{C}}[\|x_{i,k} - y_{i,k}\|^2].
\end{align}
Hence, the compression error $\|x_{i,k} - \hat{x}_{i,k}\|^2$ vanishes as $y_{i,k}$ approaches $x_{i,k}$.
To further reduce communication overhead, we introduce another auxiliary variable $z_{i,k}$ to compute $\sum_{j=1}^{n} L_{ij} y_{j,k}$.
This allows each agent to transmit only $\mathcal{C}(x_{i,k} - y_{i,k})$, rather than sending both $y_{i,k}$ and $\mathcal{C}(x_{i,k} - y_{i,k})$ as required for transmitting $\hat{x}_{i,k}$.
	Based on \eqref{nonconvex:kia-algo-dc-compress} and \eqref{nonconvex:kia-algo-dc-compact-xhat}, we propose the HEDZOC algorithm (Algorithm~\ref{nonconvex:algorithm-pdgd}).
The algorithm does not explicitly handle data heterogeneity, yet its analysis is challenging due to the absence of data homogeneity and $\mathcal{O}(pn)$ function evaluations, as well as the coupling between the zeroth-order gradient estimation and communication compression errors.

	\begin{algorithm}[!tb]
		\caption{\underline{He}terogeneous \underline{D}istributed \underline{Z}eroth-\underline{O}rder \underline{C}ompressed (HEDZOC) Algorithm}
		\label{nonconvex:algorithm-pdgd}
		\begin{algorithmic}[1]
			\STATE \textbf{Input}: positive sequences $\{\alpha_k\}$, $\{\beta_k\}$, $\{\gamma_k\}$ and $\{ \mu _{i,k}\}$; positive parameter $\omega$.
			\STATE \textbf{Initialize}: $ x_{i,0}\in\mathbb{R}^p$ and $v_{i,0}=y_{i,0}=z_{i,0}={\bf 0}_p$,
			$\forall i\in[n]$.
			\FOR{$k=0,1,\dots$}
			\FOR{$i=1,\dots,n$ \textbf{in parallel}} 
			\STATE \textbf{Compression}:				
			\vspace{-0.8em} 
			\begin{subequations}\label{zerosg:algorithm-random-pd}
				\begin{align}
					q_{i,k}&=\mathcal{C}(x_{i,k}-y_{i,k}). \label{nonconvex:kia-algo-dc-q}
				\end{align}\vspace{-1.4em} 
			\STATE  \textbf{Communication}: Send $q_{i,k}$ to $\mathcal{N}_i$ and receive $q_{j,k}$ from $j\in\mathcal{N}_i$.
			\STATE \textbf{Stochastic zeroth-order gradient}:\\
			(i) Sample $\xi_{i,k} \sim \mathcal{D}_i$;\\
			(ii) Sample $\zeta_{i,k} \sim \mathrm{Unif}(\mathbb{S}^p)$;\\
			(iii)  Sample $F_i(x_{i,k},\xi_{i,k})$, $F_i(x_{i,k}+\mu _{i,k}\zeta_{i,k},\xi_{i,k})$;\\
			(iv)  Compute $g^z_{i,k}$ using \eqref{dbco:gradient:model2-st}.
			\STATE \textbf{Update auxiliary variables}:
				\vspace{-0.6em} 
				\begin{align}
					y_{i,k+1}&=y_{i,k}+\omega q_{i,k}, \label{nonconvex:kia-algo-dc-a}\\
					z_{i,k+1}&=z_{i,k}+\omega\sum\nolimits_{j=1}^{n}L_{ij}q_{j,k}. \label{nonconvex:kia-algo-dc-b}
				\end{align}\vspace{-1.2em} 
			\STATE \textbf{Update primal and dual variables}:
			\vspace{-0.4em} 				
			\begin{align}
					x_{i,k+1} &= x_{i,k}-\alpha_k\beta_k\Big(z_{i,k}+\sum\nolimits_{j=1}^{n}L_{ij}q_{j,k}\Big)\nonumber\\
					&\qquad-\alpha_k(\gamma_k v_{i,k}+g^z_{i,k} 	), \label{nonconvex:kia-algo-dc-x}\\
					v_{i,k+1} &=v_{i,k}+ \alpha_k\gamma_k\Big(z_{i,k}+\sum\nolimits_{j=1}^{n}L_{ij}q_{j,k}\Big).  \label{nonconvex:kia-algo-dc-v}
				\end{align}\vspace{-1.2em} 
			\end{subequations}
			\ENDFOR
			\ENDFOR
			\STATE  \textbf{Output}: $\{x_{i,k}\}$.
		\end{algorithmic}
	\end{algorithm}

	\section{Preliminary Convergence Analysis} \label{Section:Proof}

\begin{figure*}[ht]
  \centering
  \includegraphics[width=0.88\textwidth]{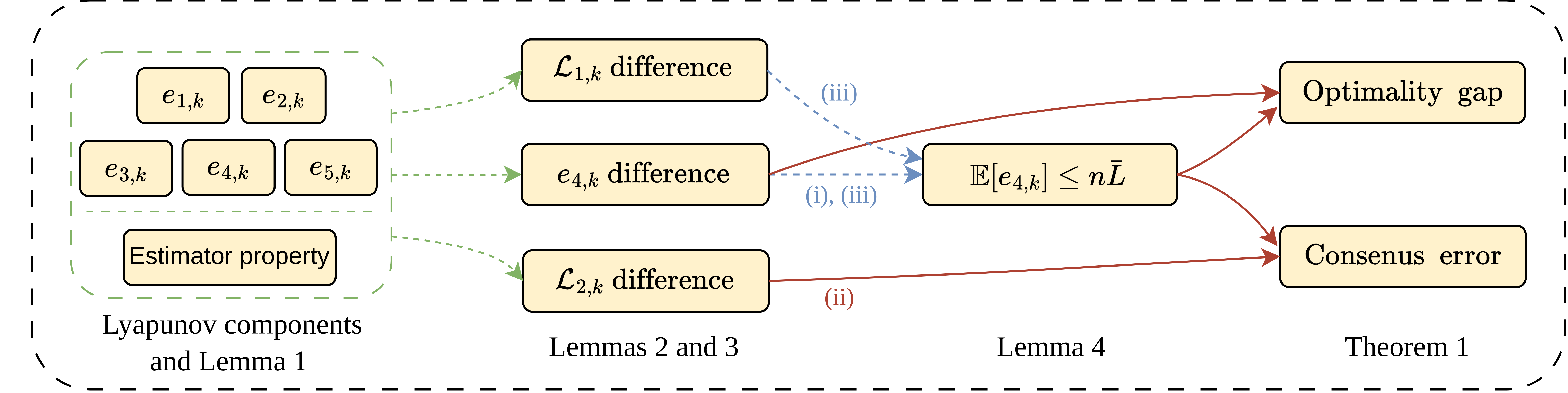}
  {\captionsetup{justification=centering}
	\caption{Logical flow of of the preliminary convergence analysis and the proof of Theorem~\ref{Thm:nonconvex}, where (i)--(iii) correspond to the three key techniques discussed in Section~\ref{Section:Proof}.}
   \label{Fig:proof}}
\end{figure*}

In this section, we provide a preliminary analysis of Algorithm~\ref{nonconvex:algorithm-pdgd} under data heterogeneity in the nonconvex setting.
As discussed in Section~\ref{Section:Existing}, existing relatively strong assumptions in the literature are primarily introduced to bound the zeroth-order gradient estimation error
for convergence analysis.
However, it
is not directly available in our heterogeneous distributed two-point zeroth-order optimization setting.
Accordingly, the most crucial technique lies in proving that the variance of the two-point zeroth-order gradient estimator remains bounded under data heterogeneity, without relying on these assumptions.
To this end, we scale the estimator variance to the optimality gap, view it as a perturbation term, and then design a special stepsize to regulate its growth. Finally, the boundedness is established by mathematical induction.

Inspired by \cite{Yi_Zerothorder_2022,Yi_CommunicationCompression_2023}, 
we construct a Lyapunov function  
\begin{align*}
	\mathcal{L}_{1,k} = \sum\nolimits_{i=1}^{5}e_{i,k}.
\end{align*}
The component terms are defined as follows.
\begin{align*}
	&{\rm (Consensus~error)}  &e_{1,k}&=\tfrac{1}{2}\|\bsx_{k}\|^2_{\bsE},\\
	&{\rm (Dual~term)}        &e_{2,k}&=\tfrac{1}{2}\Big\|\bsv_{k}+\tfrac{1}{\gamma_{k}}\bsg_{k}^0\Big\|^2_{\frac{\beta_k+\gamma_k}{\gamma_k}\bsF},\\
	&{\rm (Cross~term)}       &e_{3,k}&=\bsx_{k}^\top\bsE\bsF\Big(\bsv_{k}+\tfrac{1}{\gamma_{k}}\bsg_{k}^0\Big),\\
	&{\rm (Optimality~gap)}       &e_{4,k}&=\tilde{f}(\bar{\bsx}_{k})-nf^*,\\
	&{\rm (Compression~error)}&e_{5,k}&=\|\bsx_{k}-\bsy_{k}\|^2.
\end{align*}
Here, 
$\bsy_k=\col(y_{1,k},\dots,y_{n,k})$ and
$\bsF=F_M\otimes{\bf I}_p$.
The matrix $F_M$ is given by
\begin{align*}
	F_M=\begin{bmatrix}q & Q\end{bmatrix}
	\begin{bmatrix}
		\lambda_{n+1}^{-1}&0\\
		0&\Lambda_1^{-1}
	\end{bmatrix}
	\begin{bmatrix}
		q^\top\\ Q^\top
	\end{bmatrix},
\end{align*}
where $\Lambda_1=\mathrm{diag}([\lambda_2,\dots,\lambda_n])$ and 
$[q\ Q]\in\mathbb{R}^{n\times n}$ arise from the eigendecomposition of the Laplacian matrix $L$
with $0<\lambda_2\le\dots\le\lambda_n$ being its nonzero eigenvalues, 
$q=\tfrac{1}{\sqrt{n}}\mathbf{1}_n$ corresponds to the zero eigenvalue, 
$Q\in\mathbb{R}^{n\times(n-1)}$ collects the remaining orthonormal eigenvectors, 
and $\lambda_{n+1}$ is an arbitrary constant in $[\lambda_2,\lambda_n]$.

We first rescale the estimator variance in terms of the optimality gap $e_{4,k}$ using Lemma~\ref{Lemma:grad prop},
and derives the iterative difference of $\mathcal{L}_{1,k}$ as a preparatory step in Lemma~\ref{Lemma:Lyap:fix}. The main analysis is then enabled by the following three key techniques.

(i) We  
leverage the \(1/n\) scaling introduced by averaging across \(n\) agents, which effectively reduces the variance in the analysis (Lemma~\ref{zerosg:lemma:sg2-T:23}).

(ii) We consider another Lyapunov function by excluding $e_{4,k}$ while retaining the other components, i.e.,
\begin{align*}
	\mathcal{L}_{2,k} = \sum\nolimits_{i=1}^{3} e_{i,k} + e_{5,k},
\end{align*} 
 thereby mitigating $e_{4,k}$'s influence  
 and facilitating a contractive recursion structure (Lemma~\ref{zerosg:lemma:sg2-T:23}).
This will also be used in the next section to analyze the consensus error $e_{1,k}$ in \eqref{consensus:bound}.

 (iii) More importantly, by designing a special stepsize to regulate the growth of 
$e_{4.k}$, we apply mathematical induction to its recursive relation to rigorously establish its boundedness (Lemma~\ref{L3_bound}).

Finally, the boundedness of $e_{4,k}$ implies bounded variance of the two-point zeroth-order gradient estimator.
This result is essential for establishing convergence of the Lyapunov function, 
and serves as a key technical step toward the convergence guarantee in the general nonconvex setting (Theorem~\ref{Thm:nonconvex}) presented in the next section.
The overall logical flow of the preliminary convergence analysis and the proof of Theorem~\ref{Thm:nonconvex} is illustrated in Fig.~\ref{Fig:proof}.


\subsection{Lyapunov Analysis}


This subsection analyzes the iterative difference of the Lyapunov function $\mathcal{L}_{1,k}$.
The derivations involve additional technical ingredients compared with previous studies in two aspects:
(i) variance rescaling for the two-point zeroth-order gradient estimator, and (ii) decoupling between zeroth-order gradient estimation and communication compression errors.

Our treatment of variance rescaling for the zeroth-order gradient estimator differs from that in prior works.
Specifically, under Assumptions~\ref{ass:i:lower-bounded}, \ref{zerosg:ass:zeroth-smooth} and \ref{zerosg:ass:zeroth-variance},
 we bound the variance of the two-point zeroth-order gradient estimator in terms of the optimality gap $e_{4,k}$ in \eqref{zerosg-a5:rand-grad-esti2}.
The following lemma formalizes this bound and collects several basic properties of the two-point zeroth-order gradient estimator used in the main proofs.

\begin{lemma}\label{zerosg:lemma:grad-st} \label{Lemma:grad prop}
	Under Assumption~\ref{zerosg:ass:zeroth-smooth}, let $\{\bsx_k\}$ be the sequence generated by Algorithm~\ref{nonconvex:algorithm-pdgd}. 
	Then
	\begin{subequations}
		\begin{align}
		\mathbb{E}_{\mathcal{B}_k}[\bsg_k^z] 
		&= \bsg^\mu_k, \label{zerosg:rand-grad-esti1}\\[4pt]
		\|\bsg_k^0-\bsg^\mu_k\|^2 
		&\le 2\ell^2\|\bsx_k\|_{\bsE}^2 + 2n\ell^2\mu_k^2, \label{zerosg:rand-grad-esti8}\\
		\|\bar{\bsg}_k^0-\bar{\bsg}^\mu_k\|^2 
		&\le 2\ell^2\|\bsx_k\|_{\bsE}^2 + 2n\ell^2\mu_k^2, \label{zerosg:rand-grad-esti9}\\
		\mathbb{E}_{\mathcal{B}_k}\big[\|\bar{\bsg}_k^z\|^2\big] 
		&\le \tfrac{1}{n}\,\mathbb{E}_{\mathcal{B}_k}\big[\|\bsg_k^z\|^2\big] + \|\bar{\bsg}^\mu_k\|^2, \label{zerosg:rand-grad-esti5}\\
		\mathbb{E}_{\mathcal{B}_k}\big[\|\bsg_k^0-\bsg_k^z\|^2\big] 
		&\le 4\ell^2\|\bsx_k\|_{\bsE}^2 + 4n\ell^2\mu_k^2 
			+ 2\,\mathbb{E}_{\mathcal{B}_k}\big[\|\bsg_k^z\|^2\big], \label{zerosg:rand-grad-esti6}\\
		\|\bsg^0_{k+1}-\bsg^0_k\|^2 
		&\le \alpha_k^2\ell^2\|\bar{\bsg}_k^z\|^2 
		\le \alpha_k^2\ell^2\|\bsg_k^z\|^2, \label{zerosg:gg-rand-pd}\\
		\|\bar{\bsg}^0_k\|^2 
		&\le 2n\ell\big(f(\bar{x}_k)-f^*\big). \label{zerosg:rand-grad-smooth}\\
		&\hspace{-8.3em}\text{If Assumption~\ref{ass:i:lower-bounded} and \ref{zerosg:ass:zeroth-variance} also hold, then}\nonumber\\
		\mathbb{E}_{\mathcal{B}_k}[\|\bsg_k^z\|^2] 
		&\le 16p(1+\eta_1^2)\ell e_{4,k} + \tfrac{1}{2}np^2\ell ^2\mu_k^2 \nonumber\\
			+ 8np(1+\eta_1^2)\check{\sigma}_2^2 
			 + &4np\sigma_1^2 
			+ 8p(1+\eta_1^2)\ell ^2\|\bsx_k\|_{\bsE}^2,
			\label{zerosg-a5:rand-grad-esti2}\\
		\|\bsg^0_{k+1}\|^2 
			&\le 2\big(\alpha_k^2\ell ^2\|\bsg_k^z\|^2 + 2\ell e_{4,k} + n\check{\sigma}_2^2\big), \label{zerosg-a5:vkLya-2}
		\end{align}
	\end{subequations}
	where $\check{\sigma}^2_2=2\ell f^*-\frac{2\ell }{n}\sum_{i=1}^{n}f_i^*\ge0$, $\bsg_k^z=\col(g^z_{1,k},\dots,g^z_{n,k})$, 	
		 $\bar{\bsg}^z_k=\bsH\bsg^z_k$,
		 $g^\mu_{i,k}=\nabla \hat{f}_{i}(x_{i,k},\mu_{i,k})$, 
$\bsg^\mu_k=\col(g^\mu_{1,k},\dots,g^\mu_{n,k})$, 
$\bar{\bsg}^\mu_k=\bsH\bsg^\mu_k$, 
$\mu_k=\max_{i\in[n]}\{\mu_{i,k}\}$, and  $\mathcal{B}_k$ denotes the $\sigma$-algebra generated by the independent random variables 
		$\xi_{1,k}, \dots, \xi_{n,k}, \zeta_{1,k}, \dots, \zeta_{n,k}$.
	\end{lemma}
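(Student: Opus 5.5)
The plan is to prove the items one at a time, using only standard properties of the uniformly smoothed function $\hat f_i(x,\mu)=\mathbb{E}_{\hat\zeta\in\mathbb{B}^p}[f_i(x+\mu\hat\zeta)]$. Four facts will be taken as known: (a) $\hat f_i$ is $\ell$-smooth and $\mathbb{E}_{\xi,\zeta}[g^z_{i,k}]=\nabla\hat f_i(x_{i,k},\mu_{i,k})$, which gives \eqref{zerosg:rand-grad-esti1} after conditioning on $\mathcal{B}_k$ coordinatewise; (b) $\|\nabla\hat f_i(x,\mu)-\nabla f_i(x)\|\le \ell\mu$; (c) the second-moment bound \eqref{zerosg:lemma:uniformsmoothing-equ6-1}; (d) $\ell$-smoothness of each $f_i$ and of $f$, which is inherited from Assumption~\ref{zerosg:ass:zeroth-smooth} by Jensen.

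For \eqref{zerosg:rand-grad-esti8}, I split $\nabla f_i(\bar x_k)-g^\mu_{i,k}=(\nabla f_i(\bar x_k)-\nabla f_i(x_{i,k}))+(\nabla f_i(x_{i,k})-g^\mu_{i,k})$ and apply $\|a+b\|^2\le2\|a\|^2+2\|b\|^2$. Summing over $i$ and using $\sum_i\|x_{i,k}-\bar x_k\|^2=\|\bsx_k\|^2_{\bsE}$ yields the claim. Then \eqref{zerosg:rand-grad-esti9} follows because $\bsH$ is a projection, so $\|\bsH(\cdot)\|\le\|\cdot\|$. For \eqref{zerosg:rand-grad-esti5}, the terms $g^z_{i,k}$ are conditionally independent across $i$ given the past. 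Hence $\mathbb{E}\|\bar{\bsg}^z_k\|^2=n\|\frac1n\sum_i g^z_{i,k}\|^2$ splits into $\|\bar{\bsg}^\mu_k\|^2$ plus $\frac1n\sum_i\mathbb{E}\|g^z_{i,k}-g^\mu_{i,k}\|^2$. The latter is at most $\frac1n\sum_i\mathbb{E}\|g^z_{i,k}\|^2$, since variance is bounded by the second moment. For \eqref{zerosg:rand-grad-esti6}, I write $\bsg^0_k-\bsg^z_k=(\bsg^0_k-\bsg^\mu_k)+(\bsg^\mu_k-\bsg^z_k)$, apply the factor-2 split, use \eqref{zerosg:rand-grad-esti8}, and bound $\mathbb{E}\|\bsg^\mu_k-\bsg^z_k\|^2\le\mathbb{E}\|\bsg^z_k\|^2$. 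For \eqref{zerosg:gg-rand-pd}, I need the average dynamics. Since $\mathbf 1^\top L=0$ and $v_{i,0}=0$ give $\sum_i v_{i,k}=0$ (and likewise $\sum_i z_{i,k}=0$ since $z_{i,0}=0$), the average obeys $\bar x_{k+1}=\bar x_k-\alpha_k\frac1n\sum_i g^z_{i,k}$. Smoothness then gives $\|\bsg^0_{k+1}-\bsg^0_k\|^2\le n\ell^2\|\bar x_{k+1}-\bar x_k\|^2=\alpha_k^2\ell^2\|\bar{\bsg}^z_k\|^2$, which is in turn at most $\alpha_k^2\ell^2\|\bsg^z_k\|^2$. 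Finally, \eqref{zerosg:rand-grad-smooth} is the standard $\|\nabla f(x)\|^2\le2\ell(f(x)-f^*)$ applied to $f$, multiplied by $n$.

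The main step is \eqref{zerosg-a5:rand-grad-esti2}. Starting from \eqref{zerosg:lemma:uniformsmoothing-equ6-1} and taking $\mathbb{E}_\xi$ with Assumption~\ref{zerosg:ass:zeroth-variance}, I get $\mathbb{E}\|\nabla F_i(x_{i,k},\xi)\|^2\le(1+\eta_1^2)\|\nabla f_i(x_{i,k})\|^2+\sigma_1^2$. Next I split $\|\nabla f_i(x_{i,k})\|^2\le2\ell^2\|x_{i,k}-\bar x_k\|^2+2\|\nabla f_i(\bar x_k)\|^2$. The key heterogeneity-free step is to bound $\|\nabla f_i(\bar x_k)\|^2\le2\ell(f_i(\bar x_k)-f_i^*)$ using Assumption~\ref{ass:i:lower-bounded}, and then sum over $i$:
\[
\sum_i 2\ell(f_i(\bar x_k)-f_i^*)=2\ell e_{4,k}+2\ell nf^*-2\ell\textstyle\sum_i f_i^*=2\ell e_{4,k}+n\check\sigma_2^2 .
\]
The constants then match exactly. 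Multiplying by $2p$ gives $8p(1+\eta_1^2)\ell^2\|\bsx_k\|^2_{\bsE}$, and the gradient-at-average part gives $4p(1+\eta_1^2)\cdot(2\ell e_{4,k}+n\check\sigma^2_2)$. The paper's constants $16p(1+\eta_1^2)\ell e_{4,k}$ and $8np(1+\eta_1^2)\check\sigma_2^2$ are twice what this produces. I would therefore either use a looser split at one step, for instance the factor-2 inequality twice, or simply note that the stated bound is weaker than the derived one and hence also holds. The remaining terms are $2p\sigma_1^2$ per agent, which is within $4np\sigma_1^2$, and $\frac12 np^2\ell^2\mu_k^2$ after using $\mu_{i,k}\le\mu_k$.

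Lastly, \eqref{zerosg-a5:vkLya-2} follows by writing $\bsg^0_{k+1}=(\bsg^0_{k+1}-\bsg^0_k)+\bsg^0_k$, splitting with factor 2, and applying \eqref{zerosg:gg-rand-pd}. The same per-agent bound as above gives $\|\bsg^0_k\|^2=\sum_i\|\nabla f_i(\bar x_k)\|^2\le2\ell e_{4,k}+n\check\sigma_2^2$. Nonnegativity of $\check\sigma_2^2$ is immediate from \eqref{eq:global_lower_bound}.

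I expect the only real care to be needed in \eqref{zerosg:rand-grad-esti5}, where conditional independence across agents must be justified, and in tracking constants in \eqref{zerosg-a5:rand-grad-esti2}.
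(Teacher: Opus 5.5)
Your proposal is correct and follows the paper's argument for the two items the paper actually proves, \eqref{zerosg-a5:rand-grad-esti2} and \eqref{zerosg-a5:vkLya-2}: split $\nabla f_i(x_{i,k})$ around $\bar x_k$, bound $\|\nabla f_i(\bar x_k)\|^2\le 2\ell(f_i(\bar x_k)-f_i^*)$ agent by agent to obtain $2\ell e_{4,k}+n\check\sigma_2^2$, then use a factor-$2$ split for $\bsg^0_{k+1}$. For \eqref{zerosg:rand-grad-esti1}--\eqref{zerosg:rand-grad-smooth} the paper only cites Lemma~6 of \cite{Yi_Zerothorder_2022}, and your direct derivations of those items are the standard ones.

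The factor-$2$ discrepancy you noticed comes from the paper's first step. It uses the cruder split $\|\nabla_x F_i\|^2\le 2\|\nabla_x F_i-\nabla f_i\|^2+2\|\nabla f_i\|^2$ instead of the bias--variance identity. That split needs no unbiasedness of $\nabla_x F_i$ and lands exactly on the stated constants, including $4np\sigma_1^2$. Also, your own chain gives $4p(1+\eta_1^2)\ell^2\|\bsx_k\|^2_{\bsE}$, not $8p$; this is harmless, since the stated bound is weaker.
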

\begin{proof}
See Appendix~\ref{proof:lemma1}.
\end{proof}

Each component term of the Lyapunov function, i.e., $e_{1,k}$--$e_{5,k}$,
couples the zeroth-order gradient estimation error with the communication compression error due to the coexistence of the compressed variable $q_{i,k}$ and  the zeroth-order gradient $g^z_{i,k}$.
To handle this issue, we separate the randomness of compression and gradient estimation at each iteration by conditioning on the $\sigma$-algebras $\mathcal{C}_k$ and $\mathcal{B}_k$, which enables their decoupling in the analysis.
Specifically,  $\mathcal{C}_k$ denotes the $\sigma$-algebra generated by the randomness of the compressor in the $k$-th iteration,
		and we define $\mathcal{A}_k = \mathcal{B}_k \cup \mathcal{C}_k$.


Leveraging Lemma~\ref{Lemma:grad prop}, decoupling the zeroth-order gradient estimation and communication compression errors, 
and choosing suitable algorithm parameters, 
we derive the iterative differences of 
$\mathcal{L}_{1,k}$ in Lemma~\ref{Lemma:Lyap:fix}.

\begin{lemma}\label{Lemma:Lyap:fix}
		Under  Assumptions~\ref{ass:i:lower-bounded}--\ref{zerosg:ass:zeroth-variance}, take $\alpha_k=\alpha=\epsilon_2/\gamma$, $\beta_k=\beta=\epsilon_1\gamma$, $\gamma_k=\gamma$, and $\omega\le 1/r$, where
	$\epsilon_1>\kappa_1$, $\epsilon_2\in(0,\kappa_2(\epsilon_1))$, and $\gamma\ge\tilde{\kappa}_0(\epsilon_1,\epsilon_2)$ for some positive constants
$\kappa_1$, $\kappa_2(\epsilon_1)$, and $\tilde{\kappa}_0(\epsilon_1,\epsilon_2)$. Let $\{\bsx_k\}$ be the sequence generated by Algorithm~\ref{nonconvex:algorithm-pdgd}. 
	Then
	\begin{align}
	&\mathbb{E}_{\mathcal{A}_k}[\mathcal{L}_{1,k+1}]\nonumber\\
	&\le   \mathcal{L}_{1,k}- a_1\|\bsx_k\|^2_{\bsE}
	-2 a_2\Big\|\bsv_k+\frac{1}{\gamma}\bsg_{k}^0\Big\|^2_{\bsF} -2 a_3\|\bsx_{k}-\bsy_{k}\|^2 \nonumber\\
	&\quad -\frac{1}{4}\alpha\|\bar{\bsg}^0_{k}\|^2  + p\tilde{a}_8\alpha^2e_{4,k} + pn\tilde{ a}_4\alpha^2+pn\tilde{ a}_5\alpha\mu_k^2.
	\label{zerosg:sgproof-vkLya2T}
	\end{align}

	\end{lemma}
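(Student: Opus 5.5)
The plan is to write Algorithm~\ref{nonconvex:algorithm-pdgd} in compact form and bound the one-step change of each component $e_{1,k},\dots,e_{5,k}$ separately. Then we add the bounds and absorb the cross terms by the parameter choices $\alpha=\epsilon_2/\gamma$, $\beta=\epsilon_1\gamma$, $\omega\le 1/r$.

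\textbf{Compact form.} Using $\bsz_k=\omega\sum_{t<k}(L\otimes{\bf I}_p)\bsq_t=(L\otimes{\bf I}_p)\bsy_k$ (both start at zero), the updates become
\begin{align*}
\bsx_{k+1}&=\bsx_k-\alpha\beta\bsL\hat{\bsx}_k-\alpha\gamma\bsv_k-\alpha\bsg^z_k,\\
\bsv_{k+1}&=\bsv_k+\alpha\gamma\bsL\hat{\bsx}_k,
\end{align*}
where $\bsL=L\otimes{\bf I}_p$ and $\hat{\bsx}_k=\bsy_k+\bsq_k$. We then write $\bsL\hat{\bsx}_k=\bsL\bsx_k+\bsL(\hat{\bsx}_k-\bsx_k)$. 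Since $\mathbf{1}_n^\top L=0$ and $\bsv_0=0$, we get $\bar{\bsv}_k=0$, hence $\bar x_{k+1}=\bar x_k-\alpha\bar g^z_k$. This is the property that decouples the mean dynamics from compression.

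\textbf{Conditioning.} We take $\mathbb{E}_{\mathcal{A}_k}$ in two layers.
\begin{itemize}
\item[] \emph{Compression layer.} Conditioned on $\mathcal{C}_k$, the compression error satisfies $\mathbb{E}\|\hat{\bsx}_k-\bsx_k\|^2\le\delta_0\|\bsx_k-\bsy_k\|^2$. The terms linear in the compression error are handled as in \cite{Yi_CommunicationCompression_2023}: write $\mathcal{C}(u)=r u+(\mathcal{C}(u)-r u)$ and use \eqref{nonconvex:ass:compression_equ_scaling}.
\item[] \emph{Gradient layer.} Conditioned on $\mathcal{B}_k$, we use unbiasedness \eqref{zerosg:rand-grad-esti1} and the bounds \eqref{zerosg:rand-grad-esti8}--\eqref{zerosg:rand-grad-esti6}.
\end{itemize}
The two noises are independent at iteration $k$, so their cross products vanish in expectation.

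\textbf{Component bounds.}
\begin{itemize}
\item[] \emph{Primal-dual part.} For $e_{1,k}$, $e_{2,k}$, $e_{3,k}$ we follow the primal-dual analysis of \cite{Yi_Zerothorder_2022}. Use $\bsE\bsL=\bsL$, $\bsF\bsL=\bsE$ (by construction of $F_M$), and $\rho_2(L)\|\bsx\|_{\bsE}^2\le\|\bsx\|_{\bsL}^2$. This yields negative terms $-\alpha\beta\|\bsx_k\|^2_{\bsL}$ and $-\alpha\gamma\|\bsv_k+\bsg_k^0/\gamma\|^2_{\bsF}$. The errors are of order $\alpha^2\gamma^2$, $\alpha\|\bsg^0_k-\bsg^\mu_k\|^2$ (bounded by \eqref{zerosg:rand-grad-esti8}), and $\|\bsg^0_{k+1}-\bsg^0_k\|^2/\gamma^2$ (bounded by \eqref{zerosg:gg-rand-pd}), plus compression-error terms of size $\alpha\beta\delta_0\|\bsx_k-\bsy_k\|^2$.
\item[] \emph{Optimality gap $e_{4,k}$.} Apply the $\ell$-smooth descent lemma to $f$ along $\bar x_{k+1}=\bar x_k-\alpha\bar g^z_k$. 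This gives $-\alpha\langle\bar{\bsg}^0_k,\bar{\bsg}^\mu_k\rangle+\frac{\ell\alpha^2}{2}\mathbb{E}\|\bar{\bsg}^z_k\|^2$. Young's inequality with \eqref{zerosg:rand-grad-esti9} produces $-\frac{\alpha}{2}\|\bar{\bsg}^0_k\|^2$ plus consensus and $\mu_k^2$ terms. The variance term is handled by \eqref{zerosg:rand-grad-esti5}: its $\frac1n$ factor turns $\frac1n\mathbb{E}\|\bsg^z_k\|^2$, via \eqref{zerosg-a5:rand-grad-esti2}, into $p\,\tilde a_8\alpha^2 e_{4,k}+pn\tilde a_4\alpha^2$ and a small consensus term.
\item[] \emph{Compression error $e_{5,k}$.} We have $\bsx_{k+1}-\bsy_{k+1}=(\bsx_k-\bsy_k-\omega\bsq_k)+(\bsx_{k+1}-\bsx_k)$. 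With $\omega\le1/r$, \eqref{nonconvex:ass:compression_equ_scaling} gives contraction $\mathbb{E}\|\bsx_k-\bsy_k-\omega\bsq_k\|^2\le(1-\omega r\delta)\|\bsx_k-\bsy_k\|^2$. The increment, bounded using the compact form, contributes $\alpha^2\beta^2$, $\alpha^2\gamma^2$ and $\alpha^2\|\bsg^z_k\|^2$ terms; these are scaled by $1+1/(\omega r\delta)$ after Young.
\end{itemize}

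\textbf{Main obstacle.} The hard step is the final bookkeeping: every positive cross term must be dominated by the negative ones. In particular the $e_{5,k}$ increment brings $\alpha^2\beta^2\|\bsx_k\|^2_{\bsL}=\epsilon_2^2\epsilon_1^2\|\bsx_k\|^2_{\bsL}$, which does not shrink with $\gamma$. We plan to dominate it by $\alpha\beta\|\bsx_k\|^2_{\bsL}=\epsilon_1\epsilon_2\|\bsx_k\|^2_{\bsL}$ by requiring $\epsilon_2<\kappa_2(\epsilon_1)$ small.

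The ordering of choices is:
\begin{itemize}
\item[] $\epsilon_1>\kappa_1$, so the $\beta$-consensus dissipation beats the cross-term contributions;
\item[] $\epsilon_2<\kappa_2(\epsilon_1)$, so the $O(1)$ products are absorbed and the compression coupling $\alpha\beta\delta_0$ is beaten by $\omega r\delta$;
\item[] $\gamma\ge\tilde\kappa_0$, so every term carrying extra $1/\gamma$ factors (from $\alpha$ and the $\bsg^0/\gamma$ shifts) is small. This includes the $\alpha^2\ell^2\cdot p\ell e_{4,k}$ pieces and the $8p(1+\eta_1^2)\ell^2\alpha^2\|\bsx_k\|^2_{\bsE}$ piece.
\end{itemize}
What remains is $-a_1\|\bsx_k\|_{\bsE}^2-2a_2\|\cdot\|_{\bsF}^2-2a_3\|\bsx_k-\bsy_k\|^2-\frac14\alpha\|\bar{\bsg}^0_k\|^2$ plus the stated $e_{4,k}$, $\alpha^2$ and $\alpha\mu_k^2$ terms. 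Here the half of $-\frac{\alpha}{2}\|\bar{\bsg}^0_k\|^2$ that is sacrificed absorbs the $\|\bar{\bsg}^\mu_k\|^2$ part of \eqref{zerosg:rand-grad-esti5} once $\alpha\ell$ is small.
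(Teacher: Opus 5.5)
Your plan follows essentially the same route as the paper. It uses the compact form with $\bar{v}_k=\mathbf{0}$ and separate one-step bounds for $e_{1,k}$--$e_{5,k}$ with the randomness of $\mathcal{B}_k$ and $\mathcal{C}_k$ decoupled. The $\hat{\bsx}_k^\top\bsE(\bsv_k+\frac{1}{\gamma}\bsg_k^0)$ terms cancel exactly among $e_{1,k},e_{2,k},e_{3,k}$, and the leftover linear compression terms are handled by Young's inequality and \eqref{nonconvex:ass:compression_equ} rather than any mean-zero property, since $\mathcal{C}$ may be biased. It then rescales the variance through \eqref{zerosg-a5:rand-grad-esti2} and uses the ordering $\epsilon_1>\kappa_1$, $\epsilon_2<\kappa_2(\epsilon_1)$, $\gamma\ge\tilde{\kappa}_0$.

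One correction to your bookkeeping concerns the source of $p\tilde{a}_8\alpha^2e_{4,k}+pn\tilde{a}_4\alpha^2$. These terms come from the un-averaged $\alpha^2\mathbb{E}_{\mathcal{B}_k}[\|\bsg_k^z\|^2]$ contributions of $e_{1,k},e_{3,k},e_{5,k}$, and of $e_{4,k}$ via $\|\bar{\bsg}_k^z\|^2\le\|\bsg_k^z\|^2$. These pieces carry no extra $1/\gamma$, so they must be retained in that term rather than made small, because nothing negative in $e_{4,k}$ is available to absorb them. By contrast, the $1/n$ factor in \eqref{zerosg:rand-grad-esti5} would only produce $\frac{p}{n}$- and $p$-order terms, and the paper exploits it in Lemma~\ref{zerosg:lemma:sg2-T:23} instead.
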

	\begin{proof}
	See Appendix~\ref{appendix:iterative:Lemma}.
\end{proof}

Inequality~\eqref{zerosg:sgproof-vkLya2T} faces a similar difficulty to \eqref{difficulty:2}.
Without the P--L condition, the term $-\frac{1}{4}\alpha\|\bar{\bsg}^0_{k}\|^2$ may not directly guarantee the convergence of $\mathcal{L}_{1,k}$. 
Therefore, the optimality gap $e_{4,k}$ is treated as a perturbation term, and a proper bound on $p\tilde{a}_8\alpha^2 e_{4,k}$ must be established.

\subsection{Bound on the Optimality Gap} \label{section:bound}

	This subsection establishes an upper bound for the optimality gap $ e_{4,k} $. 
The analysis is nontrivial, as $e_{4,k}$ is inherently non-vanishing in the nonconvex setting by definition.
More critically, $e_{4,k}$ cannot be directly enforced to be non-increasing, reflecting  the same underlying difficulty as in~\eqref{difficulty:2} and \eqref{zerosg:sgproof-vkLya2T}.
To address this issue, we first exploit the $1/n$-scaling property \eqref{zerosg:rand-grad-esti5} introduced by averaging across \(n\) agents to restrain the growth of $e_{4,k}$,
and adopt the Lyapunov function $\mathcal{L}_{2,k} $ that excludes $e_{4,k}$ to suppress its influence.

	\begin{lemma}\label{zerosg:lemma:sg2-T:23}
		Under the same conditions as in Lemma~\ref{Lemma:Lyap:fix}, let $\{\bsx_k\}$ be the sequence generated by Algorithm~\ref{nonconvex:algorithm-pdgd}.
		Then
		\begin{subequations} 
			\begin{align}
				&\mathbb{E}_{\mathcal{A}_k}[e_{4,k+1}]
				\le  e_{4,k} + \frac{p}{n}a_9\alpha^2e_{4,k} + \|\bsx_k\|^2_{2\alpha  \ell ^2\bsE}\nonumber\\
				&\quad  -\frac{1}{4}\alpha\|\bar{\bsg}_{k}^0\|^2 + p a_7\alpha^2
					+(n+p) \ell ^2\alpha\mu^2_k, \label{zerosg:v4kspeed}\\
				&\mathbb{E}_{\mathcal{A}_k}[\mathcal{L}_{2,k+1}]\nonumber\\
				& \le   \mathcal{L}_{2,k}- a_1\|\bsx_k\|^2_{\bsE}-2 a_2\Big\|\bsv_k+\frac{1}{\gamma}\bsg_{k}^0\Big\|^2_{\bsF} - 2a_3\|\bsx_{k}-\bsy_{k}\|^2 \nonumber\\
				&\quad + a_6\alpha^2\|\bar{\bsg}^0_{k}\|^2  + p\tilde{a}_8\alpha^2e_{4,k} + pn\tilde{ a}_4\alpha^2+pn\tilde{ a}_5^{\prime}\alpha\mu_k^2.
				\label{zerosg:sgproof-vkLya2T-bounded}
			\end{align}
	\end{subequations}
	\end{lemma}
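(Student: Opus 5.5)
The plan is to split the argument in two: a descent-lemma computation for \eqref{zerosg:v4kspeed}, and a bookkeeping step that removes $e_{4,k}$ from the Lyapunov bound for \eqref{zerosg:sgproof-vkLya2T-bounded}.

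\textbf{Step 1: the averaged iterate.} Since $\bsF$, $L$ and the $z$-variable updates annihilate the average direction, left-multiplying by $\bsH$ kills the consensus and compression terms. We also need $\bar v_k=0$, which holds because $\bar v_0=0$ and $\bar{\bsv}$ is driven only by Laplacian terms; the same reasoning gives $\sum_i z_{i,k}=0$. Hence
\[
\bar{\bsx}_{k+1}=\bar{\bsx}_k-\alpha\bar{\bsg}^z_k .
\]
In particular, $\bar{\bsx}_{k+1}$ does not depend on the compressor randomness $\mathcal{C}_k$, so the expectation $\mathbb{E}_{\mathcal{A}_k}$ reduces to $\mathbb{E}_{\mathcal{B}_k}$.

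\textbf{Step 2: descent inequality for $e_{4,k}$.} Apply $\ell$-smoothness of $\tilde f$ along the average direction:
\[
e_{4,k+1}\le e_{4,k}-\alpha\langle\bar{\bsg}^0_k,\bar{\bsg}^z_k\rangle+\tfrac{\ell}{2}\alpha^2\|\bar{\bsg}^z_k\|^2 .
\]
Take the conditional expectation and use \eqref{zerosg:rand-grad-esti1} to replace $\bar{\bsg}^z_k$ by $\bar{\bsg}^\mu_k$ in the inner product. Then write
\[
-\langle\bar{\bsg}^0_k,\bar{\bsg}^\mu_k\rangle\le-\tfrac12\|\bar{\bsg}^0_k\|^2+\tfrac12\|\bar{\bsg}^0_k-\bar{\bsg}^\mu_k\|^2,
\]
and bound the difference by \eqref{zerosg:rand-grad-esti9}. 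This produces the terms $\|\bsx_k\|^2_{\alpha\ell^2\bsE}$ and $n\ell^2\alpha\mu_k^2$.

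\textbf{Step 3: the second-order term and the $1/n$ scaling.} For the $\alpha^2$ term use \eqref{zerosg:rand-grad-esti5}. This is the key point: it yields $\tfrac1n\mathbb{E}\|\bsg^z_k\|^2$ rather than $\mathbb{E}\|\bsg^z_k\|^2$. Substituting \eqref{zerosg-a5:rand-grad-esti2} then gives:
\begin{itemize}
\item $\frac{p}{n}a_9\alpha^2e_{4,k}$,
\item a constant $pa_7\alpha^2$ collecting $\check\sigma_2^2$ and $\sigma_1^2$,
\item a $p^2\alpha^2\mu_k^2$ term, which is absorbed into $p\ell^2\alpha\mu_k^2$ because $\alpha p\ell$ is small,
\item an $\alpha^2 p\ell^3\|\bsx_k\|^2_{\bsE}/n$ term, absorbed into $\|\bsx_k\|^2_{\alpha\ell^2\bsE}$ because $\alpha$ is small.
\end{itemize}
The term $\tfrac{\ell}{2}\alpha^2\|\bar{\bsg}^\mu_k\|^2\le\ell\alpha^2(\|\bar{\bsg}^0_k\|^2+\|\bar{\bsg}^0_k-\bar{\bsg}^\mu_k\|^2)$ is handled by $\alpha\ell\le1/4$, which turns $-\tfrac12$ into $-\tfrac14$ and doubles the consensus coefficient to $2\alpha\ell^2$. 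This gives \eqref{zerosg:v4kspeed}.

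\textbf{Step 4: the bound for $\mathcal{L}_{2,k}$.} Since $\mathcal{L}_{2,k}=\mathcal{L}_{1,k}-e_{4,k}$, the cleanest route is to reuse the proof of Lemma~\ref{Lemma:Lyap:fix} but drop the $e_4$ contribution. In that proof the term $-\frac14\alpha\|\bar{\bsg}^0_k\|^2$ came from the $e_{4}$ descent; the remaining components $e_1,e_2,e_3,e_5$ only contribute positive terms of order $\alpha^2\|\bar{\bsg}^0_k\|^2$. These arise from $\|\bsg^0_{k+1}-\bsg^0_k\|^2$ via \eqref{zerosg:gg-rand-pd} and \eqref{zerosg:rand-grad-esti5} inside $e_2$ and $e_3$, and they are collected as $a_6\alpha^2\|\bar{\bsg}^0_k\|^2$. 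The $\mu_k^2$ constant changes, because the $e_4$ part no longer contributes; this accounts for $\tilde a_5'$.

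\textbf{Main obstacle.} I expect the difficulty to lie in Step 4. One must re-audit the appendix computation of Lemma~\ref{Lemma:Lyap:fix} to confirm two things: that the negative terms $-a_1$, $-2a_2$, $-2a_3$ came entirely from $e_{1},e_2,e_3,e_5$ and not from cancellations with the $e_4$ difference, and that every $\|\bar{\bsg}^0_k\|^2$ appearance there carries at least a factor $\alpha^2$.
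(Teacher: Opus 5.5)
Your Steps 1--3 follow the paper's route to \eqref{zerosg:v4kspeed}. You descend along $\bar{\bsx}_{k+1}=\bar{\bsx}_k-\alpha\bar{\bsg}^z_k$, get the $1/n$ factor from \eqref{zerosg:rand-grad-esti5}, and then apply \eqref{zerosg-a5:rand-grad-esti2}. There is one bookkeeping difference. The paper keeps the $-\frac14\alpha\|\bar{\bsg}^\mu_k\|^2$ produced by its splitting (see \eqref{zerosg:v4k}) and cancels $\frac{\ell}{2}\alpha^2\|\bar{\bsg}^\mu_k\|^2$ against it using $\alpha\ell\le\frac12$. You instead discard $-\frac{\alpha}{2}\|\bar{\bsg}^\mu_k\|^2$ and re-expand $\|\bar{\bsg}^\mu_k\|^2$. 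That re-expansion creates an extra $2n\ell^3\alpha^2\mu_k^2$. The step-size conditions give only $\alpha\ell\lesssim 1/p$, with no $n$-dependence, so this term does not fit under $(n+p)\ell^2\alpha\mu_k^2$ when $n\gg p^2$. If you keep the negative term, you recover exactly the stated constants.

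The genuine gap is in Step 4, at the point you flagged. You attribute $a_6\alpha^2\|\bar{\bsg}^0_k\|^2$ to the $\|\bsg^0_{k+1}-\bsg^0_k\|^2$ terms in $e_2,e_3$. In the paper those terms are bounded via \eqref{zerosg:gg-rand-pd}, $\|\bar{\bsg}^z_k\|^2\le\|\bsg^z_k\|^2$ and \eqref{zerosg-a5:rand-grad-esti2}. They therefore feed $p\tilde{a}_8\alpha^2e_{4,k}$ and $pn\tilde{a}_4\alpha^2$, not $\|\bar{\bsg}^0_k\|^2$.

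What you miss is that in the proof of Lemma~\ref{Lemma:Lyap:fix} the $e_4$-descent does double duty. Besides $-\frac14\alpha\|\bar{\bsg}^0_k\|^2$, it supplies $-\frac14\alpha\|\bar{\bsg}^\mu_k\|^2$. That term absorbs the positive $\frac{\alpha}{\gamma}\rho_2^{-1}(L)\|\bar{\bsg}^\mu_k\|^2$ coming from the cross-term bound \eqref{zerosg:v3k}; this is \eqref{zerosg:vkLya-b2}, using $\gamma\ge 8\rho_2^{-1}(L)$. Dropping $e_4$ leaves that term unabsorbed, and it is the actual source of $a_6$:
\begin{itemize}
\item Since $\gamma=\epsilon_2/\alpha$, one has $\frac{\alpha}{\gamma}\rho_2^{-1}(L)=\frac{a_6}{2}\alpha^2$.
\item Splitting $\|\bar{\bsg}^\mu_k\|^2\le 2\|\bar{\bsg}^0_k\|^2+2\|\bar{\bsg}^\mu_k-\bar{\bsg}^0_k\|^2$ and applying \eqref{zerosg:rand-grad-esti9} gives $a_6\alpha^2\|\bar{\bsg}^0_k\|^2$.
\item It also gives $2a_6\ell^2\alpha^2\|\bsx_k\|^2_{\bsE}$, absorbed through $\bsM_{2,k}^{\prime}$; this is why $\varepsilon_0$ contains $2a_6$.
\item Finally it gives $2a_6 n\ell^2\alpha^2\mu_k^2$; this is why $\tilde{a}_5^{\prime}=\tilde{a}_5+2a_6\ell^2\epsilon_2/(\varepsilon_0 p)$.
\end{itemize}
So your explanation of $\tilde{a}_5^{\prime}$ is backwards. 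Removing $e_4$ can only lower the $\mu_k^2$ coefficient, yet $\tilde{a}_5^{\prime}>\tilde{a}_5$ because of this conversion.

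Your other audit concern resolves favorably. The negative terms $-a_1,-2a_2,-2a_3$ come only from $e_1,e_2,e_3,e_5$, as does the cancellation of the $\hat{\bsx}_k^\top\bsE(\bsv_k+\bsg^0_k/\gamma)$ terms. The nonnegative $e_4$ contributions can simply be kept as over-estimates, as the paper does in \eqref{L2:0}.
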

\begin{proof}
	See Appendix~\ref{appendix:proof:lemma3}.
\end{proof}

 The introduction of the factor $1/n$ for the second term on the right-hand side of \eqref{zerosg:v4kspeed} effectively
 slows down the growth of $e_{4,k}$ in the analysis.
Meanwhile,~\eqref{zerosg:sgproof-vkLya2T-bounded} admits a contractive reformulation,
since $\mathcal{L}_{2,k}$ can be upper bounded by
$\|\bsx_k\|_{\bsE}^2+\|\bsv_k+\frac{1}{\gamma}\bsg_k^0\|_{\bsF}^2+\|\bsx_k-\bsy_k\|^2$ by definition.

Building on Lemmas~\ref{Lemma:Lyap:fix} and \ref{zerosg:lemma:sg2-T:23}, we carefully select $\alpha_k$ to regulate the growth of $e_{4,k}$ and establish its boundedness via mathematical induction.


\begin{lemma} \label{L3_bound}
	Under Assumptions~\ref{ass:i:lower-bounded}--\ref{zerosg:ass:zeroth-variance}, consider the sequence $\{\bsx_k\}$ generated by Algorithm~\ref{nonconvex:algorithm-pdgd} 
	with	
	\begin{align}\label{zerosg:step:eta2-sm}
		&~\alpha_k=\epsilon_3\frac{\sqrt{n}}{\sqrt{pT}}, ~\beta _k=\epsilon_1\gamma _k, ~\gamma _k=\frac{\epsilon_2}{\alpha_k}, ~\mu _{i,k}\le\frac{\kappa_{\mu}\sqrt{p\alpha _k}}{\sqrt{n+p}}, \nonumber\\
		&~~~\omega\le\frac{1}{r}, ~T\ge\max\{\frac{n\epsilon_3^2(\tilde{\kappa}_0(\epsilon_1,\epsilon_2))^2}{p\epsilon_2^2},~\frac{n^3\tilde{\kappa}_T(\epsilon_1,\epsilon_2)}{p}\}, \nonumber\\
		&~\quad\epsilon_1>\kappa_1, ~\epsilon_2\in ( 0,\kappa_2( \epsilon_1 ) ),~\epsilon_3\in[\underline{\kappa}_3,\bar{\kappa}_3],~\kappa_{\mu}>0, 
	\end{align}	
	where $\tilde{\kappa}_T(\epsilon_1,\epsilon_2)$, $\underline{\kappa}_3$ and $\bar{\kappa}_3$ are some  positive constants. 
	 Then there exists a constant $\bar{L}>0$ such that
	\begin{align}
		&\qquad \qquad~~~~ \mathbb{E}[e_{4,k}]\le n\bar{L},~\forall k\in[0,T], \label{L3_bound_ineq}\\
		&~\bar{L} \le 4a_{11}\bar{\kappa}_3^2 + 4\bar{\kappa}_3^3 
				+ 4\frac{e_{4,0}}{n} + 8\ell^2\bar{\kappa}_3\frac{{\mathcal{L}}_{1,0}}{n^2} = \mathcal{O}(1). \label{Lbar:bound}
	\end{align}
\end{lemma}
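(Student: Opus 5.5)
The plan is strong induction on $k\in[0,T]$ with the hypothesis $\mathbb{E}[e_{4,j}]\le n\bar L$ for all $j\le k$. The choice $T\ge n\epsilon_3^2\tilde\kappa_0^2/(p\epsilon_2^2)$ gives $\gamma_k=\epsilon_2\sqrt{pT}/(\epsilon_3\sqrt n)\ge\tilde\kappa_0$, so Lemmas~\ref{Lemma:Lyap:fix} and~\ref{zerosg:lemma:sg2-T:23} apply with the constant stepsize $\alpha=\epsilon_3\sqrt n/\sqrt{pT}$. The choice of $\mu_{i,k}$ makes every smoothing term $\mathcal{O}(\alpha^2 p)$ per agent, because $(n+p)\ell^2\alpha\mu_k^2\le \ell^2\kappa_\mu^2p\alpha^2$ and $pn\tilde a_5\alpha\mu_k^2\le n\tilde a_5\kappa_\mu^2 p^2\alpha^2/(n+p)\le n p\,\tilde a_5\kappa_\mu^2\alpha^2$. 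These terms can therefore be absorbed into the constant terms $pa_7\alpha^2$ and $pn\tilde a_4\alpha^2$.

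\textbf{Step 1: bound the consensus-type terms.} Take total expectations in \eqref{zerosg:sgproof-vkLya2T-bounded} and telescope from $0$ to $k-1$. Keep the negative term $-a_1\mathbb{E}\|\bsx_j\|_{\bsE}^2$. For the term $a_6\alpha^2\|\bar{\bsg}_j^0\|^2$, use \eqref{zerosg:rand-grad-smooth}, which gives $\|\bar{\bsg}_j^0\|^2\le 2\ell e_{4,j}$. Then apply the induction hypothesis to every $e_{4,j}$. This yields
\[
\textstyle\sum_{j<k}a_1\mathbb{E}\|\bsx_j\|_{\bsE}^2\le \mathcal{L}_{2,0}+k\alpha^2\big(p\tilde a_8+2\ell a_6\big)n\bar L+k\,pn c\,\alpha^2 .
\]
Since $k\le T$ and $T\alpha^2=\epsilon_3^2 n/p$, the right-hand side is $\mathcal{O}(\mathcal{L}_{1,0}+n^2\bar L\epsilon_3^2(1+1/p)+n^2\epsilon_3^2)$.

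\textbf{Step 2: close the induction.} Take total expectations in \eqref{zerosg:v4kspeed}, drop $-\frac14\alpha\|\bar{\bsg}^0\|^2$, and telescope:
\[
\textstyle\mathbb{E}[e_{4,k}]\le e_{4,0}+\frac{p}{n}a_9\alpha^2\sum_{j<k}\mathbb{E}e_{4,j}+2\alpha\ell^2\sum_{j<k}\mathbb{E}\|\bsx_j\|_{\bsE}^2+Tp a_7'\alpha^2 .
\]
Each term is handled separately:
\begin{itemize}
\item The second term is at most $\frac pn a_9 T\alpha^2 n\bar L=a_9\epsilon_3^2 n\bar L$. This is where the $1/n$ scaling in \eqref{zerosg:v4kspeed} is essential, since it cancels the factor $n/p$ in $T\alpha^2$.
\item The third term uses Step 1 multiplied by $\alpha=\epsilon_3\sqrt n/\sqrt{pT}$. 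It becomes $\mathcal{O}(\bar\kappa_3\mathcal{L}_{1,0})$ plus terms of order $n^2\alpha\epsilon_3^2(\bar L+1)$. The requirement $T\ge n^3\tilde\kappa_T/p$ gives $\alpha\le\epsilon_3/(n\sqrt{\tilde\kappa_T})$, so these become $n\epsilon_3^3(\bar L+1)/\sqrt{\tilde\kappa_T}$. This is the source of the $\bar\kappa_3^3$ and $\mathcal{L}_{1,0}/n^2$ contributions in \eqref{Lbar:bound}.
\item The last term is $a_7'\epsilon_3^2 n$.
\end{itemize}
Collecting everything gives $\mathbb{E}[e_{4,k}]\le e_{4,0}+n\bar L\,\theta+n C$, where $\theta=a_9\epsilon_3^2+\mathcal{O}(\epsilon_3^3)$. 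The constraint $\epsilon_3\le\bar\kappa_3$ (possibly together with $\tilde\kappa_T$ large) is chosen so that $\theta\le 3/4$. Then any $\bar L\ge 4(e_{4,0}/n+C)$ closes the induction, and this choice matches the form \eqref{Lbar:bound}. The base case $k=0$ is trivial because $e_{4,0}\le n\bar L$.

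\textbf{Main obstacle.} The hardest part is the bookkeeping that makes the coefficient $\theta$ of $\bar L$ uniformly smaller than $1$, independent of $n$, $p$ and $T$. The $e_{4}$-feedback enters twice: directly through $a_9$, and indirectly through the consensus error via $\tilde a_8$ and $a_6$ in \eqref{zerosg:sgproof-vkLya2T-bounded}. The indirect path carries an extra factor of $n$ that must be cancelled by the $T\ge n^3/p$ condition. If a direct telescoping of Step 1 loses too much, I would first try exploiting the contractive form of \eqref{zerosg:sgproof-vkLya2T-bounded}, namely $\mathcal{L}_{2,k+1}\le(1-c\alpha)\mathcal{L}_{2,k}+\dots$, to obtain a pointwise bound on $\mathbb{E}\|\bsx_k\|^2_{\bsE}$ before summing.
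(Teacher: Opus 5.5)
Your proposal is correct and follows essentially the paper's argument: induction on $k$, telescoping a Lyapunov recursion to bound $\sum_{\tau<k}\mathbb{E}\|\bsx_\tau\|^2_{\bsE}$, telescoping \eqref{zerosg:v4kspeed}, and then using $T\alpha^2=\epsilon_3^2 n/p$, $T\ge n^3\tilde\kappa_T/p$ and $\epsilon_3\le\bar\kappa_3$ to keep the coefficient of $\bar L$ at most $3/4$. The one deviation is that you telescope the $\mathcal{L}_2$ recursion \eqref{zerosg:sgproof-vkLya2T-bounded} instead of the $\mathcal{L}_1$ recursion \eqref{zerosg:sgproof-vkLya2T}. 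The paper's choice is slightly cleaner, because in \eqref{zerosg:sgproof-vkLya2T} the term $\|\bar{\bsg}^0_k\|^2$ appears with a negative sign and can simply be dropped (using $\mathcal{L}_{1,k}\ge 0$). Your route must instead absorb the extra $a_6$ term (and use $\tilde a_5'$ in place of $\tilde a_5$), so it needs a somewhat larger $\tilde\kappa_T$ than the paper's explicit $\max\{1/a_1^2,\tilde a_{10}^2,\tilde a_{12}^2\}$; this is harmless, since the statement only asks for some positive constant.
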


\begin{proof}

		We prove \eqref{L3_bound_ineq} by  mathematical induction. 

		For $k=0$, the result follows directly from the definition.

		For $k>0$, suppose that the statement holds for $\tau = 0,1,\ldots,k-1$, 
		namely,
		\[
			\mathbb{E}[e_{4,\tau}] \le n\bar{L}, \quad \forall\, \tau=0,1,\ldots,k-1.
		\]

		From $\gamma_k=\gamma=\frac{\epsilon_2\sqrt{pT}}{\epsilon_3\sqrt{n}}$ and $T\ge  \frac{n\epsilon_3^2(\tilde{\kappa}_0(\epsilon_1,\epsilon_2))^2}{p\epsilon_2^2}$, we have $\gamma\ge\tilde{\kappa}_0(\epsilon_1,\epsilon_2)$. 
		Thus, all the conditions in Lemmas~\ref{Lemma:Lyap:fix} and \ref{zerosg:lemma:sg2-T:23} are satisfied. 
		So \eqref{zerosg:sgproof-vkLya2T} and \eqref{zerosg:v4kspeed} hold.

		Denote $\mathcal{F}_k$ = $\bigcup_{t=0}^k \mathcal{A}_t$.
		By taking expectation with respect to $\mathcal{F}_T$ and summing \eqref{zerosg:sgproof-vkLya2T} over $ \tau\in[0,k-1]$, we obtain
		\begin{align}
			&\sum_{\tau=0}^{k-1}\mathbb{E}[\|\bsx_\tau\|^2_{\bsE}]\hspace{-0.1em}
			\le\hspace{-0.1em}\frac{1}{a_1}\hspace{-0.1em}\Big({\mathcal{L}}_{1,0} \hspace{-0.1em}+ \hspace{-0.1em}
				pn(\tilde{a}_8\bar{L}\hspace{-0.1em}+\hspace{-0.1em}\tilde{a}_4\hspace{-0.1em}+\hspace{-0.1em}\tilde{a}_5\kappa_{\mu}^2)\alpha^2k \Big)\hspace{-0.1em}.
			\label{x_bound}
		\end{align}

		Similarly, for any $k\in[0,T]$, combining \eqref{zerosg:v4kspeed} with \eqref{x_bound} gives
		\begin{align}
			&\mathbb{E}[e_{4,k}]
			\le e_{4,0} + \frac{2\ell^2}{a_1}\alpha\Big({\mathcal{L}}_{1,0} + 
				pn(\tilde{a}_8\bar{L}+\tilde{a}_4+\tilde{a}_5\kappa_{\mu}^2)\alpha^2k \Big) \nonumber\\
				&\quad + pa_9\alpha^2\bar{L}k
				+ p(a_7 + \ell^2\kappa_{\mu}^2)\alpha^2k \nonumber\\
			&= e_{4,0} + \frac{2\ell^2{\mathcal{L}}_{1,0}}{a_1}\alpha
				+ (a_9p\alpha^2k + \frac{2\ell^2\tilde{a}_8}{a_1}pn\alpha^3k)\bar{L} \nonumber\\
				&\quad + (a_7+\ell^2\kappa_{\mu}^2)p\alpha^2k 
				+ \frac{2\ell^2(\tilde{a}_4+\tilde{a}_5\kappa_{\mu}^2)}{a_1}pn\alpha^3k \nonumber\\
			&= e_{4,0} + \frac{2\ell^2{\mathcal{L}}_{1,0}}{a_1}\alpha
				+ (a_9p\alpha^2k + \tilde{a}_{10}pn\alpha^3k)\bar{L} \nonumber\\
				&\quad + a_{11}p\alpha^2k + \tilde{a}_{12}pn\alpha^3k \label{nL:iterat}\\
			&\le n(a_9\epsilon_3^2 + \tilde{a}_{10}\epsilon_3^3\frac{n\sqrt{n}}{\sqrt{pT}})\bar{L} 
				+ n(a_{11}\epsilon_3^2 + \tilde{a}_{12}\epsilon_3^3\frac{n\sqrt{n}}{\sqrt{pT}}) \nonumber\\
				&\quad	+ e_{4,0} + \frac{2\ell^2\epsilon_3}{a_1}\frac{{\mathcal{L}}_{1,0}\sqrt{n}}{\sqrt{pT}}, \nonumber\\
			&\le n(a_9\epsilon_3^2 + \epsilon_3^3)\bar{L} 
				+ n(a_{11}\epsilon_3^2 + \epsilon_3^3) \nonumber\\
			&\quad	+ e_{4,0} + 2\ell^2\epsilon_3\frac{{\mathcal{L}}_{1,0}}{n}, \label{T:use}
		\end{align}
		where the second inequality holds due to $k\le T$ and $\alpha=\epsilon_3\frac{\sqrt{n}}{\sqrt{pT}}$;
		and the last inequality holds due to $T\ge\frac{n^3\tilde{\kappa}_T(\epsilon_1,\epsilon_2)}{p}$.

		As long as $a_9\epsilon_3^2 + \epsilon_3^3 <1$ and
		\begin{align}
			\bar{L}\ge \frac{a_{11}\epsilon_3^2 + \epsilon_3^3
				+ \frac{e_{4,0}}{n} + 2\ell^2\epsilon_3\frac{{\mathcal{L}}_{1,0}}{n^2}}{1-a_9\epsilon_3^2-\epsilon_3^3},
		\end{align}
		one has $\mathbb{E}[e_{4,k}]\le n\bar{L}$.
		Thus \eqref{L3_bound_ineq} follows from $\epsilon_3\le\bar{\kappa}_3$ and 
		$a_9\epsilon_3^2 + \epsilon_3^3 \le 3/4 < 1$.
		Moreover, 
		an upper bound of $\bar{L}$ can be derived as
		\begin{align}
			&\bar{L} \le \max\{ \frac{e_{4,0}}{n},~
			\max_{\epsilon_3}\frac{a_{11}\epsilon_3^2 + \epsilon_3^3
				+ \frac{e_{4,0}}{n} + 2\ell^2\epsilon_3\frac{{\mathcal{L}}_{1,0}}{n^2}}{1-a_9\epsilon_3^2-\epsilon_3^3} \} \nonumber\\
			&\le \max\Big\{ \frac{e_{4,0}}{n},\nonumber\\
				&\qquad\qquad~4\max_{\epsilon_3}\Big( a_{11}\epsilon_3^2 + \epsilon_3^3
				+ \frac{e_{4,0}}{n} +  2\ell^2\epsilon_3\frac{{\mathcal{L}}_{1,0}}{n^2} \Big) \Big\}\nonumber\\
			&\le \max\Big\{ \frac{e_{4,0}}{n},~4a_{11}\bar{\kappa}_3^2 + 4\bar{\kappa}_3^3 
				+ 4\frac{e_{4,0}}{n} + 8\ell^2\bar{\kappa}_3\frac{{\mathcal{L}}_{1,0}}{n^2} \Big\} \nonumber\\
			&= 4a_{11}\bar{\kappa}_3^2 + 4\bar{\kappa}_3^3 
				+ 4\frac{e_{4,0}}{n} + 8\ell^2\bar{\kappa}_3\frac{{\mathcal{L}}_{1,0}}{n^2}. \label{nL:00}
			\end{align}
				Since $a_{11}$ and $\bar{\kappa}_3$ do not depend on either the dimension $p$ or the communication network, together with $e_{4,0}=\mathcal{O}(n)$ and $\mathcal{L}_{1,0}=\mathcal{O}(n)$,  \eqref{nL:00} yields \eqref{Lbar:bound}.
\end{proof}  

\begin{remark}
It should be highlighted that we take a special stepsize $\alpha_k$ and require the total number of iterations $T$ to be greater than a certain number in \eqref{zerosg:step:eta2-sm},
 where the constant $\epsilon_3\le \bar{\kappa}_3$ in $\alpha_k $ is additionally introduced compared with~\cite{Yi_Zerothorder_2022,Wang_CZSD_2025}.
These designs are used in \eqref{T:use} and \eqref{nL:00} to regulate the growth of $e_{4,k}$ and ensure that $\bar{L} = \mathcal{O}(1)$, independent of both the dimension $p$ and the communication network.  
Consequently, the magnitude of $\mathbb{E}[e_{4,k}] = \mathcal{O}(n)$ remains at the same order as the initial optimality gap $e_{4,0}$, 
indicating that it is well-regulated  within a favorable bound. 
Then~\eqref{zerosg:sgproof-vkLya2T} yields
$\mathbb{E}_{\mathcal{A}_k}[\mathcal{L}_{1,k+1}]
\le \mathcal{L}_{1,k} 
- \tfrac{1}{4}\alpha\|\bar{\bsg}^0_{k}\|^2  
+ \alpha^2 \mathcal{O}(pn).$
	By summing the above inequality, 
	we obtain the convergence rate of the global cost function gradient $\|\bar{\bsg}^0_{k}\|^2$.

\end{remark}

Finally, the boundedness of $e_{4,k}$ ensures that the variance of the two-point zeroth-order gradient estimator remains bounded via~\eqref{zerosg-a5:rand-grad-esti2},
since the exploration parameter $\mu_k$ can be chosen arbitrarily small, and the consensus error $\|\bsx_k\|_{\bsE}^2$ vanishes as the algorithm approaches a stationary point.
This result is essential for proving convergence of the Lyapunov function, and thus for the convergence guarantees of the algorithm presented in the next section.

  \section{Main Results} \label{section:main results}
Building on Section~\ref{Section:Proof}, we now present the convergence guarantees of Algorithm~\ref{nonconvex:algorithm-pdgd}.
It should be highlighted that these guarantees hold without assuming data homogeneity, using a two-point sampling strategy rather than $\mathcal{O}(pn)$ sampling per iteration, and under a communication compression framework.

\subsection{General Nonconvex Setting}
We begin with the general nonconvex setting.
To the best of our knowledge,
Theorem~\ref{Thm:nonconvex} first addresses the case of general nonconvex functions in heterogeneous settings.

	\begin{theorem}\label{Thm:nonconvex}
	Under the same conditions as in Lemma~\ref{L3_bound},
	 consider  the sequence $\{\bsx_k\}$ generated by Algorithm~\ref{nonconvex:algorithm-pdgd}. Then
	\begin{subequations}
	\begin{align}
	&~~\frac{1}{T}\sum\nolimits_{k=0}^{T-1}\mathbb{E}[\|\nabla f(\bar{x}_k)\|^2]
	=\mathcal{O}\Big(\frac{\sqrt{p}}{\sqrt{nT}}\Big)+\mathcal{O}\Big(\frac{n}{T}\Big),\label{zerosg:coro-sg-sm-equ3}\\
	&~~\mathbb{E}[f(\bar{x}_{T})]-f^*=\mathcal{O}(1),\label{zerosg:coro-sg-sm-equ4}\\
	&~~\mathbb{E}\Big[\frac{1}{n}\sum\nolimits_{i=1}^{n}\|x_{i,T}-\bar{x}_T\|^2\Big]
	=\mathcal{O}\Big(\frac{n}{T}\Big).\label{zerosg:coro-sg-sm-equ3.1}
	\end{align}
	\end{subequations}
	\end{theorem}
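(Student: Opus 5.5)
The plan is to combine Lemma~\ref{Lemma:Lyap:fix} with the uniform bound of Lemma~\ref{L3_bound} and telescope. Under the parameter choice \eqref{zerosg:step:eta2-sm}, $\gamma\ge\tilde\kappa_0(\epsilon_1,\epsilon_2)$, so \eqref{zerosg:sgproof-vkLya2T} holds for every $k\in[0,T-1]$. Taking total expectation and substituting $\mathbb{E}[e_{4,k}]\le n\bar L$ and $\mu_k^2\le \kappa_\mu^2 p\alpha/(n+p)\le \kappa_\mu^2\alpha$ gives
\begin{align*}
\mathbb{E}[\mathcal{L}_{1,k+1}]&\le \mathbb{E}[\mathcal{L}_{1,k}]-a_1\mathbb{E}[\|\bsx_k\|^2_{\bsE}]-\tfrac14\alpha\,\mathbb{E}[\|\bar{\bsg}^0_k\|^2]\\
&\quad+pn(\tilde a_8\bar L+\tilde a_4+\tilde a_5\kappa_\mu^2)\alpha^2 .
\end{align*}
I will also need $\mathcal{L}_{1,k}\ge 0$, or at least bounded below by a constant of order $n$. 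This should follow from $e_{4,k}\ge0$ together with the choice $\epsilon_1>\kappa_1$, which makes $e_1+e_2+e_3+e_5$ nonnegative: the cross term $e_{3,k}$ is dominated by $e_{1,k}$ and $e_{2,k}$ through Young's inequality, as in \cite{Yi_Zerothorder_2022}.

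For \eqref{zerosg:coro-sg-sm-equ3}, sum the displayed inequality over $k=0,\dots,T-1$. Using $\|\bar{\bsg}^0_k\|^2=n\|\nabla f(\bar x_k)\|^2$, this yields
\[
\frac1T\sum_{k=0}^{T-1}\mathbb{E}\|\nabla f(\bar x_k)\|^2\le \frac{4\mathcal{L}_{1,0}}{n\alpha T}+4p(\tilde a_8\bar L+\tilde a_4+\tilde a_5\kappa_\mu^2)\alpha .
\]
Substituting $\alpha=\epsilon_3\sqrt{n}/\sqrt{pT}$ makes the second term $\mathcal{O}(\sqrt{p}/\sqrt{nT})$, since $\bar L=\mathcal{O}(1)$ by \eqref{Lbar:bound}. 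The first term is $\mathcal{O}(\mathcal{L}_{1,0}\sqrt{p}/(n^{3/2}\sqrt{T}))$, which is also $\mathcal{O}(\sqrt p/\sqrt{nT})$ because $\mathcal{L}_{1,0}=\mathcal{O}(n)$. I expect the $\mathcal{O}(n/T)$ term to come from initial-condition pieces that do not scale with $\alpha$. An example is $e_{2,0}$, which involves $\tfrac{1}{\gamma^2}\|\bsg^0_0\|^2$ weighted by $(\beta+\gamma)/\gamma$. A careful split of $\mathcal{L}_{1,0}$ into pieces of size $\mathcal{O}(n)$ and pieces scaling like $\alpha$ should place one part into each of the two rates; I will keep whichever form the constants give.

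For \eqref{zerosg:coro-sg-sm-equ4}, apply Lemma~\ref{L3_bound} at $k=T$: $\mathbb{E}[e_{4,T}]=n(\mathbb{E}f(\bar x_T)-f^*)\le n\bar L$, so $\mathbb{E}[f(\bar x_T)]-f^*\le\bar L=\mathcal{O}(1)$.

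For \eqref{zerosg:coro-sg-sm-equ3.1}, use the contractive Lyapunov function $\mathcal{L}_{2,k}$ from \eqref{zerosg:sgproof-vkLya2T-bounded}. Since $\mathcal{L}_{2,k}\le c(\|\bsx_k\|^2_{\bsE}+\|\bsv_k+\bsg^0_k/\gamma\|^2_{\bsF}+\|\bsx_k-\bsy_k\|^2)$, the negative terms dominate $a\,\mathcal{L}_{2,k}$ for some constant $a\in(0,1)$, giving
\[
\mathbb{E}\mathcal{L}_{2,k+1}\le(1-a)\mathbb{E}\mathcal{L}_{2,k}+a_6\alpha^2\mathbb{E}\|\bar{\bsg}^0_k\|^2+\mathcal{O}(pn\alpha^2).
\]
Bound $\|\bar{\bsg}^0_k\|^2\le 2\ell e_{4,k}$ via \eqref{zerosg:rand-grad-smooth}; its expectation is at most $2\ell n\bar L$. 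The forcing term is then $\mathcal{O}(pn\alpha^2)=\mathcal{O}(n^2/T)$. Unrolling the recursion gives
\[
\mathbb{E}\mathcal{L}_{2,T}\le(1-a)^T\mathcal{L}_{2,0}+\mathcal{O}(n^2/T),
\]
and $(1-a)^T\mathcal{L}_{2,0}=\mathcal{O}(n^2/T)$ as well. Finally, $e_{1,T}\le C\mathcal{L}_{2,T}$ (again using the positivity argument that controls the cross term), and dividing by $n$ yields $\mathcal{O}(n/T)$.

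The main obstacle is the lower-bound and comparison step for the Lyapunov functions. We need $\mathcal{L}_{1,k}$ bounded below and $\mathcal{L}_{2,k}$ equivalent to the sum of its quadratic components despite the indefinite cross term $e_{3,k}$, and we need these constants to be independent of $T$ even though $\gamma$ grows like $\sqrt T$. I would first attempt this with Young's inequality on $e_{3,k}$ and the spectral bounds $\lambda_n^{-1}\le F_M\le\lambda_2^{-1}$ on the image of $E$.
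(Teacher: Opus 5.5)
Your arguments for \eqref{zerosg:coro-sg-sm-equ4} and \eqref{zerosg:coro-sg-sm-equ3.1} are essentially the paper's. The comparison constants you worry about are $\varepsilon_6,\varepsilon_7$ in \eqref{zerosg:vkLya3}--\eqref{zerosg:vkLya3.1}. They depend only on $\epsilon_1$ and $\rho_2(L)$ (with $\epsilon_1\rho_2(L)>1$), not on $\gamma$, because $\beta/\gamma=\epsilon_1$ is fixed. This also gives $\mathcal{L}_{1,k}=\mathcal{L}_{2,k}+e_{4,k}\ge 0$.

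\textbf{Gap in the gradient rate.} The gap is in \eqref{zerosg:coro-sg-sm-equ3}. Your claim that substituting $\alpha=\epsilon_3\sqrt{n}/\sqrt{pT}$ makes $4p(\tilde a_8\bar L+\tilde a_4+\tilde a_5\kappa_\mu^2)\alpha$ of order $\sqrt p/\sqrt{nT}$ is an arithmetic error. In fact $p\alpha=\epsilon_3\sqrt{pn/T}$, so telescoping $\mathcal{L}_{1,k}$ only yields $\mathcal{O}(\sqrt{pn}/\sqrt{T})$. That is a factor $n$ worse, gives no linear speedup, and the bound even grows with $n$.

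This is structural, not a matter of constants. The $\alpha^2$-terms in \eqref{zerosg:sgproof-vkLya2T} come from the full stacked variance $\mathbb{E}_{\mathcal{B}_k}[\|\bsg^z_k\|^2]=\mathcal{O}(pn)$, which enters through $e_{1}$, $e_{3}$ and $e_{5}$ (e.g.\ the term $2\alpha^2\mathbb{E}_{\mathcal{B}_k}[\|\bsg^z_k\|^2]$ in \eqref{nonconvex:v1k}). Each step therefore adds $\mathcal{O}(pn\alpha^2)$. The descent is only $\tfrac14\alpha\|\bar\bsg^0_k\|^2=\tfrac{n\alpha}{4}\|\nabla f(\bar x_k)\|^2$, so the $1/n$ averaging gain is lost.

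\textbf{The missing idea.} Telescope the optimality-gap recursion \eqref{zerosg:v4kspeed} instead of $\mathcal{L}_{1,k}$. There the noise enters only through $\|\bar\bsg^z_k\|^2$, which carries the factor $1/n$ by \eqref{zerosg:rand-grad-esti5}. After Lemma~\ref{L3_bound}, the per-step perturbation satisfies
\[
\tfrac{p}{n}a_9\alpha^2e_{4,k}+pa_7\alpha^2+(n+p)\ell^2\alpha\mu_k^2\le p(a_9\bar L+a_7+\ell^2\kappa_\mu^2)\alpha^2 .
\]
Summing over $k\in[0,T-1]$ and dividing by $n\alpha T/4$ gives
\[
\frac{4e_{4,0}}{n\alpha T}+\frac{4p\alpha}{n}\big(a_9\bar L+a_7+\ell^2\kappa_\mu^2\big)+\frac{8\ell^2}{nT}\sum_{k=0}^{T-1}\mathbb{E}\big[\|\bsx_k\|^2_{\bsE}\big].
\]
The first two terms are $\mathcal{O}(\sqrt p/\sqrt{nT})$. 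Their constants are free of network and compression quantities, unlike $\tilde a_4,\tilde a_8$, which contain $\rho_2^{-1}(L)$ and $c_1^{-1}$.

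The price is the accumulated consensus error, which you must bound as an input. Either your $\mathcal{L}_2$ estimate or summing \eqref{zerosg:sgproof-vkLya2T} as in \eqref{x_bound} gives $\sum_k\mathbb{E}[\|\bsx_k\|^2_{\bsE}]=\mathcal{O}(n^2)$, using $pn\alpha^2T=\epsilon_3^2n^2$. This term is exactly where the $\mathcal{O}(n/T)$ contribution comes from. It does not arise from splitting $\mathcal{L}_{1,0}$ as you speculated: with your route every piece of $\mathcal{L}_{1,0}$ is divided by $\alpha$.
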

	\begin{proof}

	
As discussed in Section~\ref{Section:Proof} and Remark~3, 
the key technical step is to prove that the variance of the two-point zeroth-order gradient estimator remains bounded under data heterogeneity in the general nonconvex setting (Lemma~\ref{L3_bound} and \eqref{zerosg-a5:rand-grad-esti2}). 
With this result in place, we now establish convergence.



	
	Under the setting of Theorem~\ref{Thm:nonconvex}, it is clear that all the conditions in Lemmas~\ref{zerosg:lemma:sg2-T:23} and \ref{L3_bound} are satisfied.
	Noting that $e_{4,T}=n(f(\bar{x}_T)-f^*)$, \eqref{L3_bound_ineq} and \eqref{Lbar:bound} directly yield \eqref{zerosg:coro-sg-sm-equ4}.

	From \eqref{zerosg:rand-grad-smooth} and \eqref{L3_bound_ineq},
	\begin{align}\label{nonconvex:gg3:le}
	\|\bar{\bsg}^0_k\|^2=n\|\nabla f(\bar{x}_k)\|^2\le2\ell e_{4,k}=2n\ell\bar{L}.
	\end{align}

	Denote	$\hat{\mathcal{L}}_{2,k}=\|\bsx_k\|^2_{\bsE}+\Big\|\bsv_k	+\frac{1}{\gamma}\bsg_k^0\Big\|^2_{\bsF}+\|\bsx_{k}-\bsy_{k}\|^2$.
	Then
	\begin{align}
		&\mathcal{L}_{2,k}
		\ge\frac{1}{2}\|\bsx_{k}\|^2_{\bsE}
			+\frac{1}{2}\Big(1+\frac{\beta_k}{\gamma_k}\Big)\Big\|\bsv_k+\frac{1}{\gamma_k}\bsg_k^0\Big\|^2_{\bsF}\nonumber\\
			&\quad-\frac{\gamma_k}{2\beta_k\rho_2(L)}\|\bsx_{k}\|^2_{\bsE}
			-\frac{\beta_k}{2\gamma_k}\Big\|\bsv_k+\frac{1}{\gamma_k}\bsg_k^0\Big\|^2_{\bsF} 
			+\|\bsx_{k}-\bsy_{k}\|^2\nonumber\\
		&\ge\varepsilon_7\Big(\|\bsx_{k}\|^2_{\bsE}
			+\Big\|\bsv_k+\frac{1}{\gamma_k}\bsg_k^0\Big\|^2_{\bsF}\Big)
			+\|\bsx_{k}-\bsy_{k}\|^2 \nonumber\\
		&\ge\varepsilon_7\hat{\mathcal{L}}_{2,k}\ge0,\label{zerosg:vkLya3}
	\end{align}
	where the first inequality holds due to the definition of $\mathcal{L}_{2,k}$ and the Cauchy--Schwarz inequality; and the last inequality holds due to $0<\varepsilon_7<\frac{1}{2}$. Similarly, one has
	\begin{align}\label{zerosg:vkLya3.1}
		\mathcal{L}_{2,k}\le\varepsilon_6\hat{\mathcal{L}}_{2,k}.
	\end{align}

	Taking the expectation of \eqref{zerosg:sgproof-vkLya2T-bounded} with respect to $\mathcal{F}_T$,
	and using \eqref{L3_bound_ineq}, \eqref{nonconvex:gg3:le}--\eqref{zerosg:vkLya3.1} and \eqref{zerosg:step:eta2-sm}, we obtain, 
	\begin{align}\label{zerosg:vkLya4-bound-tilde}
	&\mathbb{E}[\mathcal{L}_{2,k+1}]\le(1-\tilde{d}_1)\mathbb{E}[\mathcal{L}_{2,k}] ~+ \nonumber\\
	&\frac{n^2\bar{\kappa}_3^2(\frac{2a_6\ell\bar{L}}{p} + \tilde{a}_8\bar{L}+\tilde{ a}_4+\tilde{ a}_5^{\prime}\kappa_{\mu}^2)}{T},~\forall k\in[0,T-1].
	\end{align}
	From $\epsilon_1>\kappa_1 \ge\frac{13}{2\rho_2(L)}$, we have $\varepsilon_6>1$. From $\epsilon_2\in(0,\kappa_2(\epsilon_1))$, we have $ a_2 = \frac{1}{16}\epsilon_2-\big(\frac{5}{4}+2(1+c_1^{-1})\big)\rho(L)\epsilon_2^2\le\frac{1}{16}\epsilon_2\le\frac{1}{2}$.
	Thus,
	\begin{align}\label{zerosg:vkLya2-a1-bounded-thm2}
	0<\tilde{d}_1\le\frac{2 a_2}{\varepsilon_6}\le1.
	\end{align}
	Since $\mathcal{L}_{2,0}=\mathcal{O}(n)$, the combination of \eqref{zerosg:vkLya4-bound-tilde}--\eqref{zerosg:vkLya2-a1-bounded-thm2}, \eqref{zerosg:vkLya3},
	and Lemma~5 in \cite{Yi_Zerothorder_2022,Yi_Zerothorder_2021} leads to 
	\begin{align} \label{consensus:bound}
		&\mathbb{E}\Big[\sum\nolimits_{i=1}^{n}\|x_{i,k}-\bar{x}_k\|^2\Big] = \mathbb{E}[\|\bsx_k\|_{\bsE}^2]\le\frac{1}{\varepsilon_7}\mathbb{E}[\mathcal{L}_{2,k}]\nonumber\\
		&=\mathcal{O}(\frac{n^2}{T}) + \mathcal{O}\big(n(1-\tilde{d}_1)^k\big),
	\end{align}
	which further gives \eqref{zerosg:coro-sg-sm-equ3.1}.

	Similarly, the expected sum of \eqref{zerosg:v4kspeed} over $k \in [0, T-1]$ with respect to $\mathcal{F}_T$, together with \eqref{consensus:bound} and \eqref{zerosg:step:eta2-sm}, yields
	\begin{align}
		&\frac{1}{T}\sum\nolimits_{k=0}^{T}\mathbb{E}[\|\nabla f(\bar{x}_k)\|^2]=\frac{1}{nT}\sum\nolimits_{k=0}^{T}\mathbb{E}[\|\bar{\bsg}_{k}^0\|^2]\nonumber\\
		&\le 4\Big(\frac{e_{4,0}}{nT\alpha}
		+\frac{2 \ell ^2}{nT}\mathcal{O}(n^2) + \frac{p\alpha( a_9\bar{L} + a_7 + \ell^2k_\mu^2)}{n}\Big) \nonumber\\
		&\le 4\Big(\frac{f(\bar{x}_0)-f^*}{\underline{\kappa}_3}+ (a_9\bar{L} + a_7+ \ell ^2\kappa_{\mu}^2)\bar{\kappa}_3\Big)\frac{\sqrt{p}}{\sqrt{nT}}
		+\mathcal{O}\Big(\frac{n}{T}\Big). \label{zerosg:thm-sg-sm-equ4p}
	\end{align}
	Since $a_7$, $a_9$, $\underline{\kappa}_3$, and $\bar{\kappa}_3$ are independent of the dimension $p$ and the communication network, \eqref{zerosg:thm-sg-sm-equ4p} implies \eqref{zerosg:coro-sg-sm-equ3}.
\end{proof}

\begin{remark}
		Note that
		 the constants omitted from the leading term on the right-hand side of~\eqref{zerosg:coro-sg-sm-equ3} are independent of any parameters related to the communication network.
		As a result, Algorithm~\ref{nonconvex:algorithm-pdgd} attains linear speedup with a convergence rate of $\mathcal{O}(\sqrt{p}/\sqrt{nT})$, leading to faster convergence as the number of agents increases.
		To the best of our knowledge, this is the first distributed zeroth-order algorithm that establishes convergence for general nonconvex (and convex) functions without assuming data homogeneity or $\mathcal{O}(pn)$ function evaluations per iteration, while matching the fastest proven rate for finding stationary points under data homogeneity and exact communication assumptions.
		Specifically, the HEDZOC algorithm has a faster proven rate than~
		~\cite{Tang_Zeroth_2020,yuan_randomized_2015}
		 and is comparable to~\cite{Yi_Zerothorder_2022,Ling_Federated_2024}.
		However,~\cite{yuan_randomized_2015,Tang_Zeroth_2020} 
		required the Lipschitz condition, 
		and~\cite{Yi_Zerothorder_2022,Ling_Federated_2024} 
		assumed data homogeneity, 
		neither of which is needed in this paper.
		In addition, the proposed algorithm explicitly accounts for communication compression.
		Compared with distributed zeroth-order methods that rely on $\mathcal{O}(p)$ samplings per agent per iteration,
		our algorithm exhibits a slower convergence rate, which is reasonable given its two-point sampling design.
Although~\cite{Ling_Federated_2024} further alleviated the dimension dependence of the convergence rate under an additional assumption of a low-intrinsic-dimensional Hessian, this assumption is beyond the scope of this paper.		
	\end{remark}

	\begin{remark}
		The communication compression slightly slow down convergence, but the effect vanishes as the compression constants $\delta$ and $r$ approach $1$. Simulation results in Section~\ref{Section:Simulations} further confirm that the impact of compression is minor in practice.	
		Compared with existing distributed compressed zeroth-order algorithms, 
		our method achieves a faster proven convergence rate than~\cite{singh2024decentralized} 
		and matches the rate reported in~\cite{Wang_CZSD_2025}. 
		However,~\cite{singh2024decentralized}
		required the Lipschitz condition, and~\cite{Wang_CZSD_2025}
		depended on the weak data homogeneity assumption, whereas our results are established without requiring any such assumptions.
	\end{remark}

\subsection{P--L Setting with Unknown Constant}

Next, we 
consider
 a practical scenario where the global cost function satisfies the P--L condition but the corresponding constant is unknown. As a result, the algorithm can be implemented without verifying the P--L condition or knowing its constant, which is often difficult in practice \cite{Yi_Linear_2021}.

 Following the general nonconvex setting, we first prove boundedness of $e_{4,k}$ in Lemma~\ref{L3_bound:fix}, which implies bounded variance
of the two-point zeroth-order gradient estimator, and then establish convergence in Theorem~\ref{Thm:PL:unknow}.
 \begin{lemma} \label{L3_bound:fix}
		Under Assumptions~\ref{nonconvex:ass:fil}--\ref{zerosg:ass:zeroth-variance}, consider the sequence $\{\bsx_k\}$ generated by Algorithm~\ref{nonconvex:algorithm-pdgd} with	
	\begin{align}\label{zerosg:step:eta1}
		&\alpha_k=\frac{1}{(T+1)^{\theta}},~\beta_k=\epsilon_1\gamma_k,~\gamma_k=\frac{\epsilon_2}{\alpha_k},~\theta\in(0.5,1),
			 \nonumber\\
		&\quad~~\mu _{i,k}\le\frac{\kappa_{\mu}\sqrt{p\alpha _k}}{\sqrt{n+p}},~\omega\le\frac{1}{r},~T\ge\kappa_T(\epsilon_1, \epsilon_2, \theta),
			 \nonumber\\
		&~\quad\quad\quad~\epsilon_1>\kappa_1, ~\epsilon_2\in(0,\kappa_2(\epsilon_1)),~\kappa_{\mu}>0,
	\end{align}
	where $\kappa_T(\epsilon_1, \epsilon_2, \theta)$ is a positive constant.
 Then there exists a constant $\bar{L}>0$ such that
	\begin{align}
		&\qquad \quad~ \mathbb{E}[e_{4,k}]\le n\bar{L},~\forall k\in[0,T], \label{L3_bound_ineq:fix}\\
		&\bar{L} \le \frac{2e_{4,0}}{n} + 4\ell^2\frac{{\mathcal{L}}_{1,0}}{n}
				+ \frac{a_{11}}{2a_9} + 2 = \mathcal{O}(1). \label{Lbar:bound:fix}
	\end{align}
\end{lemma}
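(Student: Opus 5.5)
The plan is to mirror the induction in the proof of Lemma~\ref{L3_bound}, using the new stepsize $\alpha=(T+1)^{-\theta}$ with $\theta\in(0.5,1)$.

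\textbf{Induction setup.} The case $k=0$ is immediate because $\bar L\ge e_{4,0}/n$ is built into \eqref{Lbar:bound:fix}. For $k>0$, assume $\mathbb{E}[e_{4,\tau}]\le n\bar L$ for all $\tau\le k-1$. Since $\gamma=\epsilon_2(T+1)^{\theta}$, choosing $\kappa_T(\epsilon_1,\epsilon_2,\theta)$ large enough gives $\gamma\ge\tilde\kappa_0(\epsilon_1,\epsilon_2)$. Hence Lemmas~\ref{Lemma:Lyap:fix} and \ref{zerosg:lemma:sg2-T:23} apply. The P--L condition is not needed at this stage, since we only need upper bounds.

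\textbf{Chaining the two lemmas.} Take full expectations and sum \eqref{zerosg:sgproof-vkLya2T} over $\tau\in[0,k-1]$, discarding the nonpositive terms. This gives the analogue of \eqref{x_bound}:
\[
\sum_{\tau<k}\mathbb{E}\|\bsx_\tau\|_{\bsE}^2\le\frac{1}{a_1}\Big(\mathcal{L}_{1,0}+pn(\tilde a_8\bar L+\tilde a_4+\tilde a_5\kappa_\mu^2)\alpha^2k\Big).
\]
Next sum \eqref{zerosg:v4kspeed}, using $\mu_k^2(n+p)\le\kappa_\mu^2p\alpha$ and dropping $-\frac14\alpha\|\bar{\bsg}^0_k\|^2$. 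This yields the same expression as \eqref{nL:iterat}:
\[
\mathbb{E}[e_{4,k}]\le e_{4,0}+\tfrac{2\ell^2\mathcal{L}_{1,0}}{a_1}\alpha+\big(a_9p\alpha^2k+\tilde a_{10}pn\alpha^3k\big)\bar L+a_{11}p\alpha^2k+\tilde a_{12}pn\alpha^3k .
\]

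\textbf{The main obstacle.} In Lemma~\ref{L3_bound}, the factor $\alpha^2k\le\epsilon_3^2n/p$ was an $O(n/p)$ quantity made small through $\epsilon_3$. Here $\alpha^2k\le (T+1)^{1-2\theta}$, which is small only because $\theta>1/2$ and $T$ is large. The key step is therefore to choose $\kappa_T$ so that $a_9p(T+1)^{1-2\theta}\le n/2$ and $\tilde a_{10}pn(T+1)^{1-3\theta}\le n/4$ hold simultaneously. The first of these should give
\[
n\Big(\tfrac{a_9p}{n}(T+1)^{1-2\theta}\Big)\bar L\le \tfrac n2\bar L .
\]
This requires $T\gtrsim (2a_9p/n)^{1/(2\theta-1)}$, so the lower bound on $T$ absorbs the dimension dependence. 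I would also enforce $a_{11}p(T+1)^{1-2\theta}\le na_{11}/(2a_9)$, which explains the term $a_{11}/(2a_9)$ in \eqref{Lbar:bound:fix}. The requirement $\tilde a_{12}pn(T+1)^{1-3\theta}\le n$ accounts for the additive constant $2$. Finally, $\tfrac{2\ell^2\alpha}{a_1}\le 2\ell^2$ holds once $\alpha\le a_1$.

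\textbf{Closing the induction.} With these choices, $\mathbb{E}[e_{4,k}]\le \tfrac34 n\bar L+e_{4,0}+2\ell^2\mathcal{L}_{1,0}+n\big(\tfrac{a_{11}}{2a_9}+1\big)$, up to the precise split of fractions (the constants may need retuning so that the coefficient of $\bar L$ is $\tfrac12$). We then solve $\mathbb{E}[e_{4,k}]\le n\bar L$ for $\bar L$. With a coefficient $\tfrac12$, this gives
\[
\bar L=\tfrac{2e_{4,0}}{n}+\tfrac{4\ell^2\mathcal{L}_{1,0}}{n}+\tfrac{a_{11}}{2a_9}+2,
\]
which completes the induction. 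Since $e_{4,0},\mathcal{L}_{1,0}=\mathcal{O}(n)$ and the constants $a_9$, $a_{11}$ do not depend on $p$, we obtain $\bar L=\mathcal{O}(1)$.
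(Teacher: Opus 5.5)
Your proposal follows the paper's proof essentially step for step: the same induction, reuse of \eqref{nL:iterat}, then $k\le T$ giving $\alpha^2k\le (T+1)^{1-2\theta}$ and $\alpha^3k\le (T+1)^{1-3\theta}$, with a $\theta$-dependent lower bound on $T$ driving the coefficient of $\bar L$ down to $\tfrac12$. The only fix is the split of constants, which is a bookkeeping matter rather than a missing idea. The paper enforces $a_9p(T+1)^{1-2\theta}\le n/4$ and $\tilde a_{10}p(T+1)^{1-3\theta}\le 1/4$, i.e.\ $T\ge(4a_9p/n)^{1/(2\theta-1)}$ and $T\ge(4\tilde a_{10}p)^{1/(3\theta-1)}$, together with $T\ge(1/a_1)^{1/\theta}$ and $T\ge(\tilde a_{12}p)^{1/(3\theta-1)}$. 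This gives coefficient $\tfrac14+\tfrac14=\tfrac12$ on $n\bar L$ and also $a_{11}p(T+1)^{1-2\theta}\le a_{11}n/(4a_9)$. Your threshold $a_{11}n/(2a_9)$ would yield $a_{11}/a_9$ rather than $a_{11}/(2a_9)$ after solving for $\bar L$.
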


\begin{proof}
	
	We prove \eqref{L3_bound_ineq:fix} by  mathematical induction. 

		For $k=0$, the result follows directly from the definition.

		For $k>0$, suppose that the statement holds for $\tau = 0,1,\ldots,k-1$, 
		namely,
		\[
			\mathbb{E}[e_{4,\tau}] \le n\bar{L}, \quad \forall\, \tau=0,1,\ldots,k-1.
		\]

    	Given that $\gamma_k = \gamma = \epsilon_2(T+1)^{\theta}$ and $T \ge \kappa_T(\epsilon_1, \epsilon_2, \theta)\ge(\frac{\tilde{\kappa}_0(\epsilon_1, \epsilon_2)}{\epsilon_2})^{\frac{1}{\theta}}$, one has $\gamma \ge \tilde{\kappa}_0(\epsilon_1, \epsilon_2)$. 
		Thus, all the conditions in Lemmas~\ref{Lemma:Lyap:fix} and \ref{zerosg:lemma:sg2-T:23} are satisfied, guaranteeing the validity of \eqref{zerosg:sgproof-vkLya2T} and \eqref{zerosg:v4kspeed}. 		
		Furthermore,  similar to the proof of Lemma~\ref{L3_bound}, we obtain \eqref{nL:iterat}.
	
		Then, from \eqref{nL:iterat}, $\alpha_k=\alpha=\frac{1}{(T+1)^{\theta}}$, and $k\le T$, we have
		\begin{align}
			&\mathbb{E}[e_{4,k}]
				\le e_{4,0} + \frac{2\ell^2{\mathcal{L}}_{1,0}}{a_1(T+1)^{\theta}} 
				+ \Big( \frac{a_9pT}{(T+1)^{2\theta}}+ \frac{ \tilde{a}_{10}pnT}{(T+1)^{3\theta}} \Big)\bar{L} \nonumber\\
				&\quad + \frac{a_{11}pT}{(T+1)^{2\theta}}+ \frac{ \tilde{a}_{12}pnT}{(T+1)^{3\theta}} 
				\nonumber\\
			&\quad \le e_{4,0} + 2\ell^2{\mathcal{L}}_{1,0} + \frac{1}{2}n\bar{L} + \frac{a_{11}n}{4a_9} + n, \label{T:use2}
		\end{align}
		where the last inequality holds due to $T\ge\kappa_T(\epsilon_1, \epsilon_2, \theta)\ge\max\{(\frac{1}{a_1})^{\frac{1}{\theta}},~(\frac{4a_9p}{n})^{\frac{1}{2\theta-1}},~(4\tilde{a}_{10}p)^{\frac{1}{3\theta-1}},~(\tilde{a}_{12}p)^{\frac{1}{3\theta-1}}\}$.

		As long as 
		\begin{align}\label{nL:2}
			\bar{L}\ge \frac{2e_{4,0}}{n} + 4\ell^2\frac{{\mathcal{L}}_{1,0}}{n}
				+ \frac{a_{11}}{2a_9} + 2,
		\end{align}
		one has $\mathbb{E}[e_{4,k}]\le n\bar{L}$.
		Moreover, 
		an upper bound of $\bar{L}$ can be derived as
		\begin{align} \label{nL:fix:0}
			&\bar{L} \le \max\{ \frac{e_{4,0}}{n},~
				\frac{2e_{4,0}}{n} + 4\ell^2\frac{{\mathcal{L}}_{1,0}}{n}
				+ \frac{a_{11}}{2a_9} + 2 \} \nonumber\\
			&=	\frac{2e_{4,0}}{n} + 4\ell^2\frac{{\mathcal{L}}_{1,0}}{n}
				+ \frac{a_{11}}{2a_9} + 2.
			\end{align} 
		Since $a_{9}$ and $a_{11}$ do not depend on either the dimension $p$ or the communication network, together with $e_{4,0}=\mathcal{O}(n)$ and $\mathcal{L}_{1,0}=\mathcal{O}(n)$,  \eqref{nL:fix:0} yields \eqref{Lbar:bound:fix}.
\end{proof}

\begin{remark}
	It should be highlighted that we take a special stepsize $\alpha_k$ and require the total number of iterations $T$ to be greater than a certain number in \eqref{zerosg:step:eta1}.
	These designs are used in \eqref{T:use2} and \eqref{nL:fix:0} to regulate the growth of $e_{4,k}$ and ensure that $\bar{L} = \mathcal{O}(1)$, independent of both the dimension $p$ and the communication network.  
\end{remark}

\begin{theorem}\label{Thm:PL:unknow}
	Under the same conditions as in Lemma~\ref{L3_bound:fix}, consider the sequence $\{\bsx_k\}$ generated by Algorithm~\ref{nonconvex:algorithm-pdgd}. 
	Then
	\begin{subequations}
		\begin{align}
			&~~~\mathbb{E}\Big[\frac{1}{n}\sum_{i=1}^{n}\|x_{i,T}-\bar{x}_T\|^2\Big]
			=\mathcal{O}\Big(\frac{p}{T^{2\theta}}\Big), \label{zerosg:thm-sg-diminishing-equ1.1bounded}\\
			&~~~
			\mathbb{E}[f(\bar{x}_{T})-f^*]
			=\mathcal{O}\Big(\frac{p}{nT^\theta}\Big)+\mathcal{O}\Big(\frac{p}{T^{2\theta}}\Big).
			\label{zerosg:thm-sg-diminishing-equ1bounded}
		\end{align}
	\end{subequations}
\end{theorem}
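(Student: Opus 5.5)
We will prove Theorem~\ref{Thm:PL:unknow} in two stages: the consensus bound first, then the optimality gap. Throughout we use the uniform bound $\mathbb{E}[e_{4,k}]\le n\bar L$ from Lemma~\ref{L3_bound:fix}, with $\bar L=\mathcal{O}(1)$. The novelty relative to \eqref{difficulty:1} is that the P--L constant $\nu$ is never used to tune parameters. It only enters the analysis, through a contraction factor $1-\nu\alpha/2$ that is automatically below one once $T$ is large.

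\emph{Consensus error.} We follow the proof of Theorem~\ref{Thm:nonconvex}, now with $\alpha=(T+1)^{-\theta}$. All conditions of Lemmas~\ref{Lemma:Lyap:fix} and \ref{zerosg:lemma:sg2-T:23} hold because $\gamma\ge\tilde\kappa_0$. We take the expectation of \eqref{zerosg:sgproof-vkLya2T-bounded} and bound $\|\bar{\bsg}^0_k\|^2\le 2\ell e_{4,k}\le 2n\ell\bar L$ via \eqref{zerosg:rand-grad-smooth}. We bound $\mu_k^2\le \kappa_\mu^2 p\alpha/(n+p)$ and use the sandwich \eqref{zerosg:vkLya3}--\eqref{zerosg:vkLya3.1}. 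This gives
\[
\mathbb{E}[\mathcal{L}_{2,k+1}]\le(1-\tilde d_1)\mathbb{E}[\mathcal{L}_{2,k}]+\mathcal{O}(pn\alpha^2),
\]
where $\tilde d_1\in(0,1]$ is a constant independent of $T$. Unrolling this recursion (Lemma~5 of \cite{Yi_Zerothorder_2022}) yields $\mathbb{E}[\mathcal{L}_{2,T}]=\mathcal{O}(pn\alpha^2)+\mathcal{O}(n(1-\tilde d_1)^T)$. The geometric term is $o(T^{-2\theta})$ and is absorbed by enlarging $\kappa_T$. Dividing by $n\varepsilon_7$ then gives \eqref{zerosg:thm-sg-diminishing-equ1.1bounded}. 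We also record the uniform-in-$k$ bound $\mathbb{E}\|\bsx_k\|^2_{\bsE}\le \mathcal{O}(pn\alpha^2)+\mathcal{O}(n(1-\tilde d_1)^k)$ for later use.

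\emph{Optimality gap.} We start from \eqref{zerosg:v4kspeed}. Assumption~\ref{nonconvex:ass:fil} gives
\[
\tfrac14\alpha\|\bar{\bsg}^0_k\|^2=\tfrac{n}{4}\alpha\|\nabla f(\bar x_k)\|^2\ge\tfrac{\nu}{2}\alpha\, e_{4,k}.
\]
The perturbation term is $\frac{p}{n}a_9\alpha^2 e_{4,k}$. Since $\frac{p}{n}a_9\alpha\le \nu/4$ once $T\ge(4pa_9/(n\nu))^{1/\theta}$, we obtain
\[
\mathbb{E}[e_{4,k+1}]\le(1-\tfrac{\nu}{4}\alpha)\mathbb{E}[e_{4,k}]+2\ell^2\alpha\,\mathbb{E}\|\bsx_k\|^2_{\bsE}+pa_7\alpha^2+(n+p)\ell^2\alpha\mu_k^2 .
\]
This lower bound on $T$ involves $\nu$. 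It is only an existence threshold inside the analysis, not a parameter the algorithm must know, so we fold it into $\kappa_T$. Alternatively, if we want $\kappa_T$ to stay exactly as stated, we simply carry $e_{4,k}$ through using Lemma~\ref{L3_bound:fix}.

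\emph{Unrolling and the main obstacle.} The inhomogeneous terms are $pa_7\alpha^2$, $\ell^2\kappa_\mu^2p\alpha^2$, and, from the consensus bound, $2\ell^2\alpha\,[\mathcal{O}(pn\alpha^2)+\mathcal{O}(n(1-\tilde d_1)^k)]$. Unrolling $T$ steps gives $(1-\nu\alpha/4)^T e_{4,0}$ plus a geometric sum $\sum_k(1-\nu\alpha/4)^{T-1-k}(\cdots)$. The stationary-level terms contribute $\mathcal{O}(p\alpha)=\mathcal{O}(p/T^\theta)$ and $\mathcal{O}(pn\alpha^2)$. The transient term contributes $\mathcal{O}(\alpha n)\sum_k(1-\tilde d_1)^k(1-\nu\alpha/4)^{T-1-k}$. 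This sum is dominated by $(1-\nu\alpha/4)^{T}\cdot\mathcal{O}(1)$ when $\tilde d_1>\nu\alpha/4$, which holds for large $T$.

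The main obstacle is the initial-condition term $(1-\nu\alpha/4)^Te_{4,0}\le e_{4,0}\exp(-\tfrac{\nu}{4}T^{1-\theta})$. It is superpolynomially small, so for $T$ beyond a threshold depending on $\theta,\nu,p,n$ it is below $p/T^{2\theta}$. We can therefore hide it in the $\mathcal{O}(\cdot)$ by taking $T$ large; the constant in $\mathcal{O}$ then depends on $\nu$, which is acceptable. Finally we divide by $n$, using $e_{4,T}=n(f(\bar x_T)-f^*)$. This gives $\mathcal{O}(p/(nT^\theta))+\mathcal{O}(p/T^{2\theta})$, which is \eqref{zerosg:thm-sg-diminishing-equ1bounded}.
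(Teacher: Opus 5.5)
Your proposal is correct and follows the paper's proof. The $\mathcal{L}_{2,k}$ recursion, with contraction $1-\tilde d_1$ and $\mathcal{O}(pn\alpha^2)$ forcing, gives the consensus bound, and the P--L-contracted recursion from \eqref{zerosg:v4kspeed}, fed with that bound and unrolled at constant stepsize, gives the optimality gap. The one difference is that you absorb $\frac{p}{n}a_9\alpha^2e_{4,k}$ into the contraction, which adds a harmless $\nu$-dependent threshold on $T$, whereas the paper bounds it by $pa_9\alpha^2\bar L$ via Lemma~\ref{L3_bound:fix}; that is exactly the alternative you mention. One small slip: $\exp\bigl(-\frac{\nu}{4}T/(T+1)^{\theta}\bigr)\le\exp\bigl(-\frac{\nu}{4}T^{1-\theta}\bigr)$ goes the wrong way and needs an extra factor $2^{-\theta}$ in the exponent, which does not affect the conclusion.
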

\begin{proof}

It is clear that all the conditions in Lemmas~\ref{zerosg:lemma:sg2-T:23} and \ref{L3_bound:fix} are satisfied.

From Assumptions~\ref{nonconvex:ass:fil}, 
\begin{align}\label{nonconvex:gg3}
\|\bar{\bsg}^0_k\|^2=n\|\nabla f(\bar{x}_k)\|^2\ge2\nu n(f(\bar{x}_k)-f^*)=2\nu e_{4,k}.
\end{align}

Similarly, combining \eqref{zerosg:rand-grad-smooth} and \eqref{L3_bound_ineq:fix} gives
\begin{align}\label{nonconvex:gg3:le:fix}
\|\bar{\bsg}^0_k\|^2=n\|\nabla f(\bar{x}_k)\|^2\le2\ell e_{4,k}=2n\ell\bar{L}.
\end{align}


 Taking expectaion of \eqref{zerosg:sgproof-vkLya2T-bounded} with respect to $\mathcal{F}_T$, 
 and combining it with \eqref{nonconvex:gg3:le:fix} and \eqref{zerosg:vkLya3}--\eqref{zerosg:vkLya3.1}, we obtain
 \begin{align}\label{zerosg:vkLya4-bound-tilde:fix}
&\mathbb{E}[\mathcal{L}_{2,k+1}]\le(1-\tilde{d}_1)\mathbb{E}[\mathcal{L}_{2,k}]~+ \nonumber\\
&\frac{pn(2a_6\ell\bar{L}/p + \tilde{a}_8\bar{L}+\tilde{ a}_4+\tilde{ a}_5^{\prime}\kappa_{\mu}^2)}{(T+1)^{2\theta}}, ~\forall k\in[0,T-1].
\end{align}
Since $\mathcal{L}_{2,0} = \mathcal{O}(n)$, the combination of \eqref{zerosg:vkLya4-bound-tilde:fix}, \eqref{zerosg:vkLya2-a1-bounded-thm2}, 
and Lemma~5 in \cite{Yi_Zerothorder_2022,Yi_Zerothorder_2021} yields
\begin{align} \label{consensus:bound:fix}
	\mathbb{E}[\|\bsx_k\|_{\bsE}^2]=\mathcal{O}(\frac{pn}{T^{2\theta}}) + + \mathcal{O}\big(n(1-\tilde{d}_1)^k\big),
\end{align}
which yields \eqref{zerosg:thm-sg-diminishing-equ1.1bounded}.

Similarly, the expectation of \eqref{zerosg:v4kspeed} with respect to $\mathcal{F}_T$, 
along with \eqref{nonconvex:gg3}, \eqref{nonconvex:gg3:le:fix} and \eqref{consensus:bound:fix}, leads to
\begin{align}\label{sguT:thm-sg-equ2bounded-proof}
&\mathbb{E}[e_{4,k+1}] \le \Big(1-\frac{\nu\alpha}{2}\Big)\mathbb{E}[e_{4,k}] \nonumber\\
&\quad + 2\alpha \ell^2\|\bsx_k\|^2_{\bsE} 
	+ p\alpha^2(a_9\bar{L} + a_7 + \ell^2k_\mu^2) \nonumber\\
&=\Big(1-\frac{\nu}{2(T+1)^\theta}\Big)e_{4,k} + \mathcal{O}(\frac{pn}{(T+1)^{3\theta}}) \nonumber\\
	&\quad + \mathcal{O}\big( \frac{n(1-\tilde{d}_1)^k}{(T+1)^{\theta}} \big)  + \frac{p(a_9\bar{L} + a_7 + \ell^2k_\mu^2)}{(T+1)^{2\theta}}.
\end{align}
Since $\theta\in(0.5,1)$, \eqref{zerosg:thm-sg-diminishing-equ1bounded} follows from \eqref{sguT:thm-sg-equ2bounded-proof} 
together with Lemma~5 and Lemma~3  in \cite{Yi_Zerothorder_2022,Yi_Zerothorder_2021}.
\end{proof}

\begin{remark}
	
	It should be highlighted 
	 that the P--L constant is not used, 
as identifying this constant is often difficult in real applications \cite{Yi_Linear_2021}. 
Importantly, our algorithm does not require verifying the P--L condition or knowing its constant in order to be implemented. 
Moreover,
	 the constants omitted from the leading term on the right-hand side of~\eqref{zerosg:thm-sg-diminishing-equ1bounded} are independent of any parameters related to the communication network.
	To the best of our knowledge, this is the first distributed zeroth-order algorithm that establishes convergence under the P--L condition without requiring knowledge of the P--L constant, data homogeneity or $\mathcal{O}(pn)$ function evaluations per iteration.
	In contrast to~\cite{Yi_Zerothorder_2022}, which relied on the data homogeneity assumption, our analysis removes this assumption and allows a general class of compression schemes.
\end{remark}

	
\begin{remark}
	When communication is uncompressed, Algorithm~\ref{nonconvex:algorithm-pdgd} reduces to the distributed zeroth-order method in~\cite{Yi_Zerothorder_2022}. 
	Therefore, Theorems~\ref{Thm:nonconvex} and~\ref{Thm:PL:unknow} rigorously extend the 
	results
	of~\cite{Yi_Zerothorder_2022} from homogeneous to heterogeneous data distributions.
\end{remark}

\subsection{P--L Setting with Known Constant}

Finally, Theorem~\ref{Thm:PL:know} considers the case where the global cost function satisfies the P--L condition with a known P--L constant, which enables a faster convergence rate.

\begin{theorem}\label{Thm:PL:know}
Under Assumptions~\ref{nonconvex:ass:fil}--\ref{zerosg:ass:zeroth-variance}, 
consider the sequence $\{\bsx_k\}$ generated by Algorithm~\ref{nonconvex:algorithm-pdgd} 
with
\begin{align}\label{zerosg:step:eta1t1}
	&~\alpha_k=\frac{\epsilon_2}{\epsilon_4(k+m)}, \beta_k=\epsilon_1\gamma_k,
	\gamma_k=\frac{\epsilon_2}{\alpha_k}, \mu_{i,k}\le \frac{\kappa_{\mu}\sqrt{p\alpha_k}}{\sqrt{n+p}},\nonumber\\
	&\hspace{0.2em}~~~\quad \epsilon_1>\kappa_1, ~\epsilon_2\in(0,\kappa_2(\epsilon_1)), ~\epsilon_4\in\big[\frac{\kappa_4\nu\epsilon_2}{4}, \frac{\nu\epsilon_2}{4}\big),\nonumber\\
	&~~m\ge\kappa_m(\epsilon_1,\epsilon_2,\epsilon_4,\nu),~\omega\le\frac{1}{r},~\kappa_4\in(0,1), ~\kappa_{\mu}>0, 
	\end{align}
where 
$\kappa_m(\epsilon_1,\epsilon_2,\epsilon_4,\nu)$ is a positive constant.
Then
	\begin{subequations}
	\begin{align}
	&\mathbb{E}\Big[\frac{1}{n}\sum\nolimits_{i=1}^{n}\|x_{i,T}-\bar{x}_T\|^2\Big]
	=\mathcal{O}\Big(\frac{p}{T^{2}}\Big), \label{zerosg:thm-sg-diminishing-equ2.1bounded}\\
	&\mathbb{E}[f(\bar{x}_{T})-f^*]
	=\mathcal{O}\Big(\frac{p}{nT}\Big)+\mathcal{O}\Big(\frac{p}{T^{2}}\Big) \nonumber\\
		&\qquad + \mathcal{O}\Big(\frac{p^{\vartheta}}{T^{\vartheta}}\Big)+\mathcal{O}\Big(\frac{p}{T^{3}}\Big),~\forall T\in\mathbb{N}_+,
	\label{zerosg:thm-sg-diminishing-equ2bounded}
	\end{align}
	\end{subequations}
	where $\vartheta=\nu\epsilon_2/(2\epsilon_4)\in(2,2/\kappa_4]$.
	\end{theorem}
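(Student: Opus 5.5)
Because the stepsize in \eqref{zerosg:step:eta1t1} is diminishing, $\alpha_k=\epsilon_2/(\epsilon_4(k+m))$, the plan is to redo the preliminary analysis with time-varying parameters and then combine it with the P--L condition. First we verify that $\gamma_k=\epsilon_4(k+m)\ge\gamma_0=\epsilon_4 m\ge\tilde{\kappa}_0(\epsilon_1,\epsilon_2)$ once $m\ge\kappa_m$. Then Lemmas~\ref{Lemma:Lyap:fix} and~\ref{zerosg:lemma:sg2-T:23} hold with $\alpha,\gamma$ replaced by $\alpha_k,\gamma_k$. Since $\beta_k/\gamma_k=\epsilon_1$ is constant, the weight in $e_{2,k}$ is fixed. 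The only new contributions are the differences $\frac{1}{\gamma_{k+1}}-\frac{1}{\gamma_k}=\mathcal{O}(\alpha_k^2)$ inside $e_{2,k}$ and $e_{3,k}$. We bound these by Young's inequality and absorb them into the negative terms $-a_1\|\bsx_k\|_{\bsE}^2-2a_2\|\cdot\|_{\bsF}^2$, together with extra $\mathcal{O}(\alpha_k^2)\|\bsg^0_k\|^2$ terms, which \eqref{zerosg-a5:vkLya-2} controls. This yields time-varying versions of \eqref{zerosg:v4kspeed} and \eqref{zerosg:sgproof-vkLya2T-bounded} with $\alpha$ replaced by $\alpha_k$.

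Next, we establish the analogue of Lemma~\ref{L3_bound} for this case. Here the known $\nu$ makes it easier: we use \eqref{nonconvex:gg3} and $\epsilon_4<\nu\epsilon_2/4$. In \eqref{zerosg:v4kspeed}, the term $-\frac14\alpha_k\|\bar{\bsg}^0_k\|^2$ gives $-\frac{\nu}{2}\alpha_k e_{4,k}$, which dominates $\frac{p}{n}a_9\alpha_k^2e_{4,k}$ once $m$ is large. Rather than relying only on the induction, we would also exploit contraction of a combined potential. Specifically, we would show, for a small weight $c>0$, that $\mathcal{L}_{2,k}+c\,e_{4,k}$ satisfies
$$\mathbb{E}[\mathcal{W}_{k+1}]\le(1-\tfrac{\nu}{4}\alpha_k)\mathbb{E}[\mathcal{W}_k]+\mathcal{O}(pn\alpha_k^2).$$
The key ingredients are that $\mathcal{L}_{2,k}$ contracts at a constant rate $\tilde d_1$ via \eqref{zerosg:vkLya3}--\eqref{zerosg:vkLya3.1}, while its cross-injection $p\tilde a_8\alpha_k^2 e_{4,k}+a_6\alpha_k^2\|\bar{\bsg}^0_k\|^2\le \mathcal{O}(p\alpha_k^2)e_{4,k}$ is small. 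This gives $\mathbb{E}[e_{4,k}]=\mathcal{O}(n)$ uniformly, by induction as in Lemma~\ref{L3_bound:fix}. Consequently the variance \eqref{zerosg-a5:rand-grad-esti2} is $\mathcal{O}(pn)$.

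With the bound on $e_{4,k}$ in hand, \eqref{zerosg:sgproof-vkLya2T-bounded} becomes
$$\mathbb{E}[\mathcal{L}_{2,k+1}]\le(1-\tilde d_1)\mathbb{E}[\mathcal{L}_{2,k}]+\mathcal{O}(pn\alpha_k^2).$$
Since $\alpha_k^2/\alpha_{k+1}^2\to1$, Lemma~5 of \cite{Yi_Zerothorder_2022} gives
$$\mathbb{E}[\|\bsx_k\|_{\bsE}^2]=\mathcal{O}\Big(\frac{pn}{(k+m)^2}\Big)+\mathcal{O}\big(n(1-\tilde d_1)^k\big).$$
The geometric term is dominated by $p/k^2$ (indeed by $p/k^3$), which gives \eqref{zerosg:thm-sg-diminishing-equ2.1bounded}. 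Then we substitute into \eqref{zerosg:v4kspeed} with \eqref{nonconvex:gg3}. Using $\epsilon_4<\nu\epsilon_2/4$ to fix the contraction factor, this gives
$$\mathbb{E}[e_{4,k+1}]\le\Big(1-\frac{\vartheta}{k+m}\Big)\mathbb{E}[e_{4,k}]+\frac{\mathcal{O}(p)}{(k+m)^2}+\frac{\mathcal{O}(pn)}{(k+m)^3}+\mathcal{O}\Big(\frac{n(1-\tilde d_1)^k}{k+m}\Big),$$
with $\vartheta=\nu\epsilon_2/(2\epsilon_4)>2$. The $\mu_k^2$ term is absorbed, since $\alpha_k\mu_k^2(n+p)\le\kappa_\mu^2p\alpha_k^2$.

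The final step is to unroll this recursion with Lemma~3 of \cite{Yi_Zerothorder_2022}. We use $\prod_{t=s}^{k}(1-\vartheta/(t+m))\le((s+m)/(k+m))^{\vartheta}$. The $1/(k+m)^2$ forcing gives $\mathcal{O}(p/T)$ in $e_{4,T}$, that is, $\mathcal{O}(p/(nT))$ after dividing by $n$. The cubic forcing gives $\mathcal{O}(p/T^2)$. The geometric consensus transient contributes $\mathcal{O}(p/T^3)$ or less. The initial condition contributes $e_{4,0}(m/(T+m))^{\vartheta}$. Since $m\ge\kappa_m$ may scale with $p$ (needed for the $\alpha_k^2p$ domination), this last term yields $\mathcal{O}(p^{\vartheta}/T^{\vartheta})$, and together these give \eqref{zerosg:thm-sg-diminishing-equ2bounded}. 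The main obstacle I expect is the uniform boundedness of $e_{4,k}$ with a decaying stepsize while keeping the $1/n$ factor in the leading term. The danger is that the $\mathcal{L}_{2}$ feedback term $\mathcal{O}(p\alpha_k^2)e_{4,k}$ lacks the $1/n$ factor. I would first handle this by choosing $m\gtrsim p$, so that this term is dominated by $\frac{\nu}{4}\alpha_k e_{4,k}$.
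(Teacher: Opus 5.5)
Your proposal is correct. From the point where $\mathbb{E}[e_{4,k}]\le n\bar L$ is available, it follows the paper's proof essentially step by step: contraction of $\mathcal{L}_{2,k}$ with $\mathcal{O}(pn\alpha_k^2)$ forcing, the consensus bound via Lemma~5 of \cite{Yi_Zerothorder_2022}, and then the $e_4$ recursion with factor $1-\vartheta/(k+m)$, unrolled using $\vartheta>2$. The genuine difference is how you obtain that bound.

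In the paper (Lemma~\ref{L3_bound:time-varying1}) the P--L condition is not used at this stage. The $\mathcal{L}_1$ recursion is multiplied by the nonincreasing $\alpha_\tau$ to control $\sum_\tau\alpha_\tau\mathbb{E}\|\mathbf{x}_\tau\|_{\mathbf{E}}^2$. The $e_4$ recursion is then summed after discarding $-\frac14\alpha_k\|\bar{\mathbf g}^0_k\|^2$. Finally, an induction on $\bar L$ closes because $\sum_\tau\alpha_\tau^2\le\epsilon_2^2/(\epsilon_4^2(m-1))$ and $\sum_\tau\alpha_\tau^3$ are made small by the choice of $m$. P--L enters only in the final rate recursion.

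You instead spend P--L already on boundedness. In $\mathcal{W}_k=\mathcal{L}_{2,k}+c\,e_{4,k}$, the term $-\frac{c}{4}\alpha_k\|\bar{\mathbf g}^0_k\|^2\le-\frac{c\nu}{2}\alpha_k e_{4,k}$ dominates all positive injections $(c\frac{p}{n}a_9+pa_8+2a_6\ell)\alpha_k^2e_{4,k}$ once $\alpha_0\lesssim 1/p$. The term $2c\ell^2\alpha_k\|\mathbf x_k\|_{\mathbf E}^2$ is absorbed by $-a_1\|\mathbf x_k\|_{\mathbf E}^2$. Hence $\mathbb{E}[\mathcal W_{k+1}]\le\mathbb{E}[\mathcal W_k]+\mathcal{O}(pn\alpha_k^2)$ with summable forcing, and $\mathcal{L}_{2,k}\ge0$ gives $\mathbb{E}[e_{4,k}]\le\mathbb{E}[\mathcal W_k]/c=\mathcal{O}(n)$ for every $k$. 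This is the classical known-$\nu$ argument recalled in Section~\ref{Section:Existing}.

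What each route buys:
\begin{itemize}
\item Yours needs no induction hypothesis and no self-referential $\bar L$, so your closing ``by induction'' is superfluous, and $c$ need not be small. It is shorter, but it makes boundedness depend on the P--L condition.
\item The paper's shows that summability of the diminishing stepsize alone controls $e_{4,k}$, in the same template as Lemmas~\ref{L3_bound} and~\ref{L3_bound:fix}.
\end{itemize}
Both routes need $m=\mathcal{O}(p)$, which is exactly the source of the $\mathcal{O}(p^\vartheta/T^\vartheta)$ term. Both also keep $\bar L=\mathcal{O}(1)$, which is what preserves the $1/n$ in the leading term.

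Two minor corrections:
\begin{itemize}
\item With time-varying $\gamma_k$ you cannot quote Lemmas~\ref{Lemma:Lyap:fix}--\ref{zerosg:lemma:sg2-T:23} under the threshold $\tilde\kappa_0$. The $\Delta_k$ terms require the larger $\kappa_0$-type threshold used in Lemma~\ref{zerosg:lemma:sg2}.
\item The geometric consensus transient contributes, via Lemma~\ref{series}, terms of order $m^{\vartheta-1}T^{-\vartheta}$, not $\mathcal{O}(p/T^3)$, when $\vartheta<3$. This is harmless, since it is absorbed in $\mathcal{O}(p^\vartheta/T^\vartheta)$.
\end{itemize}
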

\begin{proof}
	The key distinction from the previous proofs is the setting of time-varying algorithm parameters, which makes the analysis more challenging.
	 Nevertheless, with the known P--L constant, this setting allows for a further improvement in the theoretical convergence rate.
	The detailed proof is provided in Appendix~\ref{Proof:thm:know}.
\end{proof}

\begin{remark}
		Note that the constants omitted from the leading term on the right-hand side of~\eqref{zerosg:thm-sg-diminishing-equ2bounded} are independent of any parameters related to the communication network, thus Algorithm~\ref{nonconvex:algorithm-pdgd} achieves linear speedup with a convergence rate of $\mathcal{O}(p/(nT))$.
Compared to~\cite{Yi_Zerothorder_2022}, our method supports a general class of compressors. 
Furthermore, unlike~\cite{Wang_CZSD_2025}, our analysis does not rely on any data homogeneity assumptions and further improves the convergence exponent~$\vartheta$ in $\mathcal{O}\big(p^{\vartheta}/T^{\vartheta}\big)$ by a factor of~$1/3$ under the same algorithm parameter setting.
\end{remark}

\section{Simulations} \label{Section:Simulations}

In this section, we study black-box adversarial example generation against a deep neural network (DNN) classifier trained on the MNIST dataset. 
Unlike the homogeneous benchmark setting in which all agents cooperate to learn a consensus perturbation that attacks the same target digit class~\cite{Yi_Zerothorder_2022}, 
we aim to optimize a perturbation that can simultaneously fool the DNN classifier on multiple digit classes.
To formulate the problem, the adversarial example generation task is expressed as a distributed zeroth-order optimization problem \eqref{zerosg:eqn:xopt}
with the following attack cost functions: 
\begin{align*}
&F_i(x,\xi_i)  =\frac{c}{m_i}\sum_{j=1}^{m_i} \max \Big\{ H_{t_{i,j}} \big(\frac{1}{2}\tanh ( \tanh^{-1} 2 s_{i,j} + x)\big) \\
&\quad - \max_{k \neq t_{i,j}} H_k\big(\frac{1}{2}\tanh ( \tanh^{-1} 2 s_{i,j} + x)\big) ,~0 \Big\}  \\
&\quad + \frac{1}{m_i}\sum_{j=1}^{m_i}\Big\|\frac{1}{2}\tanh ( \tanh^{-1} 2 s_{i,j} + x) - s_{i,j} \Big\|_2^2.
\end{align*}
Here, $x$ is the perturbation.
$H(\cdot)=\col(H_0(\cdot),\dots,H_9(\cdot))$ denotes the well-trained DNN classifier, 
which takes an input image and outputs classification scores for digits $0$--$9$. 
$c=1.5$ balances the trade-off between attack success rate and perturbation magnitude. 
$m_i = 3$ denotes the number of randomly sampled images in each iteration, 
and $\xi_i = (s_{i,j}, t_{i,j})$ represents the stochastic data pair, 
where $s_{i,j}\in\mathbb{R}^p$ is the sampled image and $t_{i,j}\in\{0,1,\ldots,9\}$ is the corresponding label. 
We set $n = 10$ agents, and $p = 28\times 28$ as the image dimension. 
Agent interactions follow a random Erd\H{o}s--R\'enyi graph with connection probability $0.4$.

\begin{table*}[t]
\centering
\small
\caption{Performance under different heterogeneity levels with the 4-bit compressor (25{,}000 iterations).}
\label{Table:HG}
\begin{tabular*}{0.85\textwidth}{@{\extracolsep{\fill}}ccccccccccc@{}}
\toprule
~Heterogeneity level& 1 & 2 & 3 & 4 & 5 & 6 & 7 & 8 & 9 & 10 \\
\midrule
~Attack success rate & 84.0\% & 83.0\% & 83.1\% & 80.2\% & 80.8\% & 80.0\% & 79.1\% & 81.1\% & 79.5\% & 81.4\% \\
~$\ell_2$ distortion & 7.44 & 7.10 & 7.01 & 6.86 & 6.72 & 6.79 & 6.76 & 6.80 & 6.47 & 6.24 \\
\bottomrule
\end{tabular*}
\vspace{0.5em}
\end{table*}

\subsection{Convergence Rate and Communication Efficiency}
We first compare the proposed algorithm with a state-of-the-art method to demonstrate its convergence rate and communication efficiency. 
To the best of our knowledge, among existing distributed zeroth-order methods, only the ZO algorithm in~\cite{Yi_Zerothorder_2022} addresses the heterogeneous data distribution setting, specifically under the special case where the global cost function satisfies the P--L condition. 
Nevertheless, this method can be viewed as an uncompressed special case of our algorithm, and thus is capable of handling heterogeneous data in the general setting. 
Therefore, we compare the proposed algorithm (implemented with four different compressors, as introduced in Section~\ref{Section:Preliminaries}) against the method in~\cite{Yi_Zerothorder_2022}.
We attack five digit classes $\{0,2,4,6,8\}$ by assigning each agent one class and training it on local samples from that class only, thereby inducing data heterogeneity.




Fig.~\ref{Fig:iteration} depicts the evolution of the attack loss with respect to iterations. 
It can be observed that the proposed HEDZOC algorithm adapts well to various compression schemes, achieving a convergence rate comparable to that under exact communication.
Since all methods employ the two-point sampling scheme, the attack loss curves with respect to function evaluations and iterations exhibit similar trends.
Fig.~\ref{Fig:Bits} reveals that compression greatly improves communication efficiency. 
Notably, when the HEDZOC algorithm equipped with the Norm-sign compressor has already converged, the ZO algorithm and those with milder compression levels (e.g., Rand-200 and Rand-400) remain far from convergence.

\begin{figure}[!h]
\centering
  \includegraphics[width=0.485\textwidth]{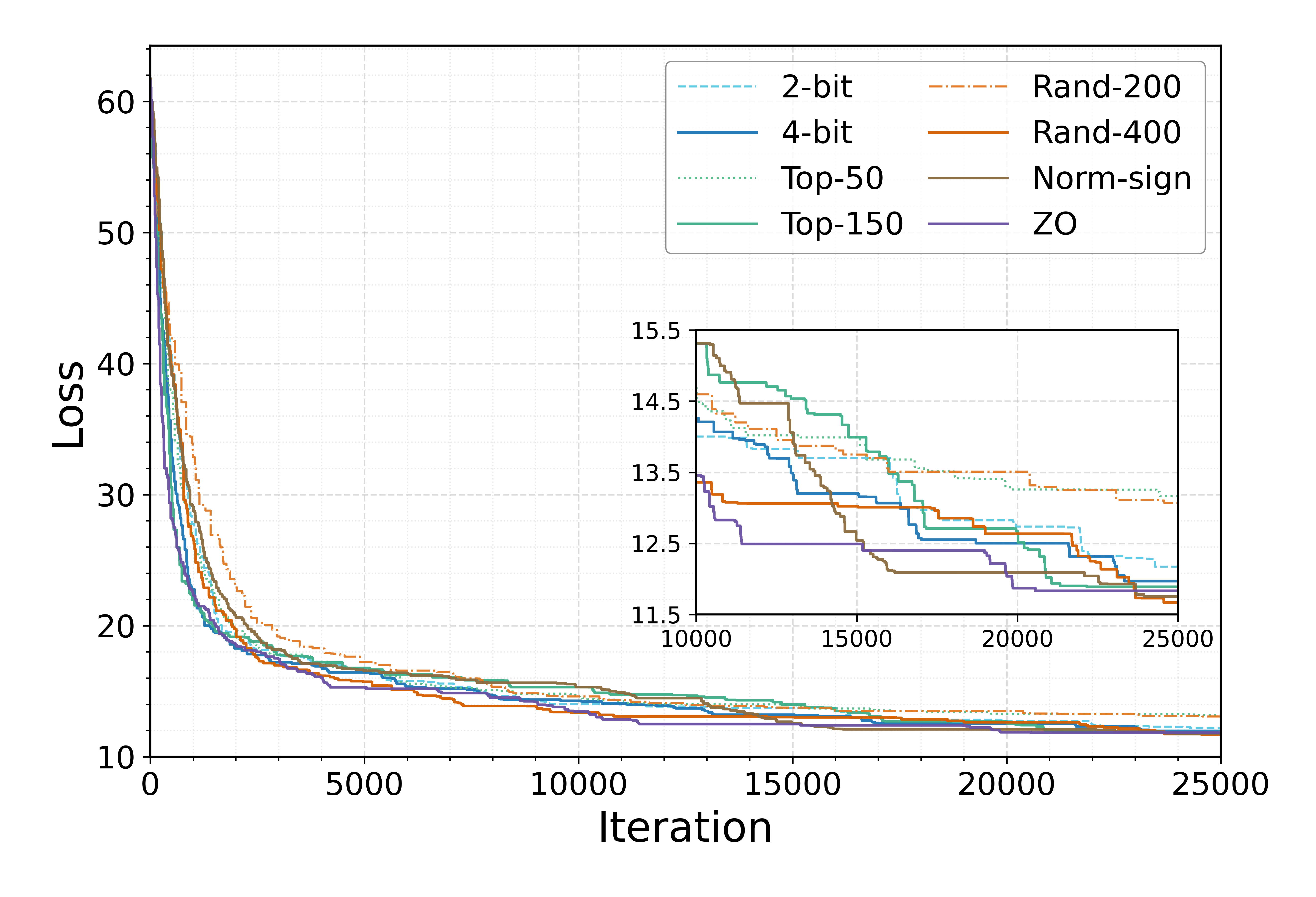}
  \caption{Evolutions of attack loss with respect to the number of iterations.}
  \label{Fig:iteration}
\end{figure}

\begin{figure}[!h]
\centering
  \includegraphics[width=0.465\textwidth]{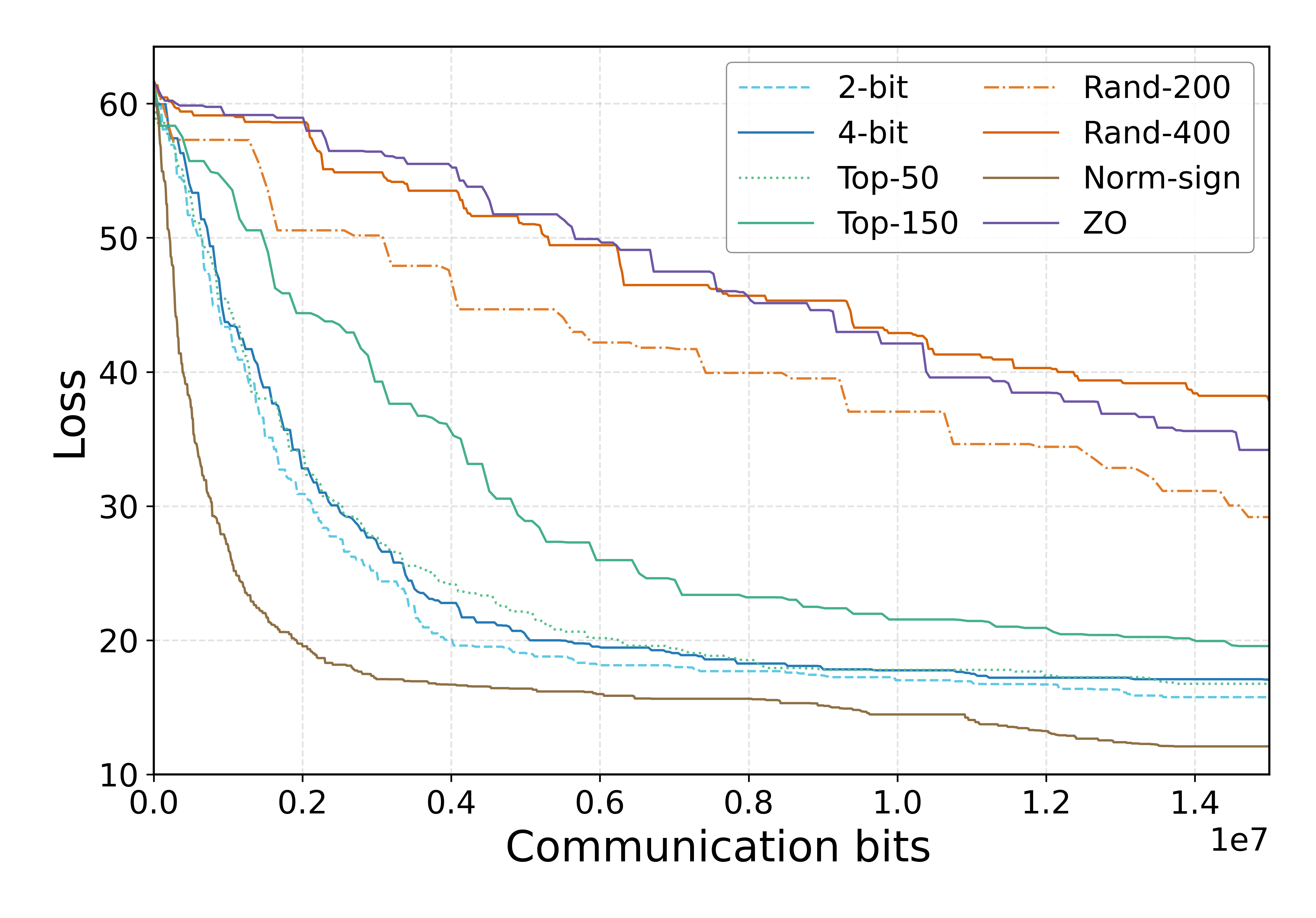}
  \caption{Evolutions of attack loss with respect to the number of inter-agent communication bits.}
  \label{Fig:Bits}
\end{figure}

\subsection{Data Heterogeneity}

To evaluate the performance of the proposed algorithm under different degrees of data heterogeneity, we consider ten heterogeneity levels.
For a fair comparison, we keep the global cost function $f(x)$ fixed by using the same global dataset of 8{,}000  images (800 per digit). 
We allocate the same number of samples to each of the $10$ agents and vary the local class proportions. 
Specifically, we use $\rho_h\in\{0.1,0.2,\ldots,1.0\}$ to control the heterogeneity: agent $i$ contains a $\rho_h$ fraction of digit $(i-1)$ and distributes the remaining $(1-\rho_h)$ fraction uniformly over the other digits.
Thus, $\rho_h=0.1$ yields identical class proportions across agents (least heterogeneous), whereas at $\rho_h=1.0$ they are totally distinct (most heterogeneous).

Table~\ref{Table:HG} summarizes the results under ten levels of data heterogeneity, achieved by the HEDZOC algorithm equipped with the 4-bit compressor after 25{,}000 iterations.
 Overall, the proposed HEDZOC algorithm remains effective across all heterogeneity levels and achieves high attack success rates.
As heterogeneity increases, the attack success rate decreases moderately, consistent with the slight reduction in $\ell_2$ distortion that indicates less aggressive perturbations.

\section{Conclusions} \label{Section:Conclusion}

This paper proposed an algorithm for heterogeneous distributed zeroth-order optimization and provided its convergence analysis, 
 with incorporating communication compression to reduce the communication burden.
Existing convergence analyses for distributed zeroth-order optimization typically depend on relatively strong assumptions such as data homogeneity, $\mathcal{O}(pn)$ function evaluations per iteration, or the P--L condition.
To the best of our knowledge, this is the first work to show that these assumptions can be relaxed or even entirely removed 
while maintaining the fastest theoretical convergence rates achieved by existing methods under data homogeneity and exact communication assumptions.
Future work includes reducing the dependence on data homogeneity and Lipschitz conditions in both federated and centralized learning, accelerating convergence, and exploring general compression techniques.

\appendices

\section*{Appendix}

\setcounter{subsection}{0}
\renewcommand{\thesubsection}{\Alph{subsection}} 
\numberwithin{lemma}{subsection}
\setcounter{lemma}{0}

\subsection{Constants Used Throughout the Paper} \label{appendix:constant:Thm}
	\begin{align*}		
		&\kappa_1=\max\Big\{\frac{13}{2\rho_2(L)},~\rho_2(L)\Big\},\\
		&\kappa_2(\epsilon_1)=\min\Big\{\frac{\varepsilon_3}{\varepsilon_4},~\frac{\rho^{-1}(L)}{20+32(1+c_1^{-1})}, \\
			&\quad ~\frac{\sqrt{\varepsilon_{13}^2+4\varepsilon_{14}c_2}-\varepsilon_{13}}{2\varepsilon_{14}},~8\Big\},\\
		&\tilde{\kappa}_0(\epsilon_1,\epsilon_2)=\max\Big\{\varepsilon_{0},
			~\Big(\frac{p(1+\eta_1^2)\tilde{\varepsilon}_9}{ a_1}\Big)^{\frac{1}{3}},~p\epsilon_2\tilde{\varepsilon}_{12}\Big\},\\
		&\underline{\kappa}_3>0~\text{denotes the root of the equation}~a_9\epsilon^2+ \epsilon^3=\tfrac14,\\ 
		&\bar{\kappa}_3>\underline{\kappa}_3~\text{denotes the root of the equation}~a_9\epsilon^2+ \epsilon^3=\tfrac34, \\
		&\tilde{\kappa}_T(\epsilon_1,\epsilon_2)=\max\{1/a_1^2,~\tilde{a}_{10}^2,~\tilde{a}_{12}^2\},\\
		&\kappa_T(\epsilon_1, \epsilon_2, \theta)=\max\Big\{(\frac{\tilde{\kappa}_0(\epsilon_1,\epsilon_2)}{\epsilon_2})^{\frac{1}{\theta}},~(\frac{1}{a_1})^{\frac{1}{\theta}},~(\frac{4a_9p}{n})^{\frac{1}{2\theta-1}},\\
				&\qquad~(4\tilde{a}_{10}p)^{\frac{1}{3\theta-1}},~(\tilde{a}_{12}p)^{\frac{1}{3\theta-1}}\Big\},\\		
		&\kappa_m(\epsilon_1,\epsilon_2,\epsilon_4,\nu)
			=\max\Big\{\frac{\kappa_0(\epsilon_1,\epsilon_2)}{\epsilon_4},
			~\frac{\epsilon_2}{a_1\epsilon_4},
			~\frac{4a_9\epsilon_2^2}{\epsilon_4^2}\frac{p}{n}+1,\\
			&\quad \sqrt{\frac{2a_{10}\epsilon_2^3p}{\epsilon_4^3}}+1,~\sqrt{\frac{a_{12}\epsilon_2^3p}{2\epsilon_4^3}}+1,~\frac{\epsilon_2}{2\nu\epsilon_4} + \hat{\kappa}_m\Big\},~\hat{\kappa}_m>0,\\
		&\kappa_0(\epsilon_1,\epsilon_2)=\max\Big\{\varepsilon_{0},~\frac{2\varepsilon_5}{ a_1},
			~\frac{\varepsilon_{10}}{2 a_2},
			~\frac{\delta_0b_{6,k}}{ a_3}, ~p\epsilon_2\varepsilon_{12}\Big\},\\
		&\varepsilon_{0}=\max\{ 1+\frac{5}{2} \ell ^2, 
		~\big(8+16(1+c_1^{-1})+4p(1+\eta_1^2)(1+\\
			&\quad \eta_2^2)(6+ 16(1+c_1^{-1})+ \ell) + 2 a_6\big)^\frac{1}{2} \ell ,~8\rho_2^{-1}(L),\\
			&\quad 4p(1+\eta_1^2)\epsilon_2 \ell\}, \\
		&\varepsilon_1=(1+\epsilon_1)\rho_2^{-1}(L),
		 	\\
		&\varepsilon_3=\frac{2\rho_2(L)\epsilon_1-13}{4},\\
		&\varepsilon_4=\big(3+4(1+c_1^{-1})\big)\rho^2(L)\epsilon_1^2-\rho_2(L)\epsilon_1+\rho(L)\\
			&\quad +2\epsilon_1^2+2\epsilon_1+4,\\	
		&\varepsilon_5=3\epsilon_1^2\epsilon_2^2\rho^2(L)+ (\epsilon_1\epsilon_2^2+\epsilon_2^2)\rho(L)-\frac{1}{2}\epsilon_1\epsilon_2\rho_2(L) \\
			&\quad + \epsilon_1^2\epsilon_2^2+2\epsilon_1\epsilon_2^2+\epsilon_2^2+\epsilon_1\epsilon_2+\frac{1}{2},\\	
		&\varepsilon_6=\max\Big\{\frac{1+\epsilon_1\rho_2(L)}{2},
			~\frac{1+\epsilon_1}{2}+\frac{1}{2\epsilon_1\rho_2^2(L)}\Big\},\\
		&\varepsilon_7=\frac{\epsilon_1\rho_2(L)-1}{2\epsilon_1\rho_2(L)},\\
		&\varepsilon_8=(1+\epsilon_1)\rho_2^{-1}(L)+\rho_2^{-2}(L),\\
		&\varepsilon_{10}=\rho(L)\epsilon_1\epsilon_2+3\rho(L)\epsilon_2^2+\epsilon_1+\epsilon_2+1,\\
		&\varepsilon_{11}=\frac{3\epsilon_4}{2\epsilon_2^2}(\varepsilon_8+\varepsilon_1),\\
		&\varepsilon_{12}=10+16(1+c_1^{-1})+ \ell \\
			&\quad +\frac{\big(4(\epsilon_1+1)^2+2\epsilon_1\epsilon_2+2\epsilon_2\big)\rho_2^{-1}(L)+\rho_2^{-2}(L)}{\epsilon_2} \ell ^2\\
			&\quad +(3+2\rho_2^{-2}(L)+2\varepsilon_8+2\varepsilon_1) \ell ^2,\\	
		&\varepsilon_{13}=\frac{1}{2}\rho(L)\delta_0\epsilon_1+2\delta_0+\rho(L)\delta_0,\\
		&\varepsilon_{14}=\big(3\rho^2(L)+4(1+c_1^{-1})\rho^2(L)+2\big)\delta_0\epsilon_1^2\\
			& \quad +\big(2-\rho_2(L)\big)\delta_0\epsilon_1+\big(3+\rho(L) \ell ^2\big)\delta_0,\\			
		&\tilde{\varepsilon}_9
			=4\big(\rho_2^{-2}(L)+ 2(\epsilon_1+1)^2\rho_2^{-1}(L)\big) \ell ^4\epsilon_2 \\
			&\quad +4\big(2\rho_2^{-2}(L)+(\epsilon_1+1)\rho_2^{-1}(L)+2\big) \ell ^4\epsilon_2^2,\\
		&\tilde{\varepsilon}_{12}=6+16(1+c_1^{-1})+ \ell +\frac{\rho_2^{-2}(L)}{\epsilon_2} \ell ^2\\
			&\quad +\big(2\rho_2^{-2}(L)+\frac{2(\epsilon_1+1)^2+2\epsilon_1\epsilon_2+\epsilon_2}{\epsilon_2}\rho_2^{-1}(L)+2\big) \ell ^2,\\
		&b_{6,k}=\epsilon_1^2\epsilon_2^2+2\epsilon_1\epsilon_2^2+\epsilon_2^2+\epsilon_1\epsilon_2\\
			&\quad +(\epsilon_1\epsilon_2^2+\epsilon_2^2+\frac{1}{2}\epsilon_1\epsilon_2+\epsilon_2)\rho(L) + 3\epsilon_1^2\epsilon_2^2\rho^2(L),\\
		& a_1=\frac{1}{2}(\varepsilon_3\epsilon_2-\varepsilon_4\epsilon_2^2),\\
		& a_2=\frac{1}{16}\epsilon_2-\big(\frac{5}{4}+2(1+c_1^{-1})\big)\rho(L)\epsilon_2^2,\\
		& a_3=\frac{1}{2}c_2-\frac{1}{2}\delta_0\varepsilon_2,\\
		& a_4=2\varepsilon_{12}\sigma^2_1+\frac{\varepsilon_{11}\check{\sigma}_2^2}{p},
			+4(1+\eta_1^2)\varepsilon_{12}\check{\sigma}_2^2,\\
		& a_5= \ell ^2\Big(\frac{(29+16c_1^{-1})\epsilon_2+\frac{7}{2}}{p} + \frac{1}{4}\Big),\\
		& a_6=2\frac{\rho_2^{-1}(L)}{\epsilon_2}, \hspace{5.8em}  	
			 a_7=2(\sigma^2_1+2(1+\eta_1^2)\check{\sigma}_2^2) \ell, 
			\\
		& a_8=2\ell\Big(\frac{\varepsilon_{11}}{p} + 4(1+\eta_1^2)\varepsilon_{12}\Big), \\
		& a_9=8(1+\eta_1^2)\ell^2,\hspace{4.75em}
			a_{10}=\frac{2\ell^2a_8}{a_1}, \\
		& a_{11}=a_7+\ell^2\kappa_{\mu}^2,
			\hspace{5.0em}
 			a_{12}=\frac{2\ell^2(a_4+a_5\kappa_{\mu}^2)}{a_1},\\
		&\tilde{ a}_4=2(\sigma^2_1+2(1+\eta_1^2)\check{\sigma}_2^2)\tilde{\varepsilon}_{12},\\
		&\tilde{ a}_5= \ell ^2\Big(\frac{(24+16c_1^{-1})\epsilon_2+\frac{5}{2}}{p} + \frac{1}{4}\Big),\\
			&\tilde{ a}_5^{\prime}= \ell ^2\Big(\frac{(24+16c_1^{-1})\epsilon_2+\frac{5}{2}}{p} + \frac{1}{4}  + 2a_6\frac{\epsilon_2}{\varepsilon_0p}\Big),\\
		& \tilde{ a}_8=8(1+\eta_1^2)\ell\tilde{\varepsilon}_{12},\hspace{3.9em}
		 \tilde{a}_{10}=\frac{2\ell^2\tilde{a}_8}{a_1},\\	
		&	\tilde{a}_{12}=\frac{2\ell^2(\tilde{a}_4+\tilde{a}_5\kappa_{\mu}^2)}{a_1},
			 \hspace{2.5em}	c_1=\frac{\delta\omega r}{2}, \\
		& c_2=c_1+2c_1^2, \hspace{6.18em}
		 \tilde{d}_1=\frac{1}{\varepsilon_6}\min\{a_1, 2a_2, 2a_3\}.
			\end{align*}

\subsection{Technical Preliminaries}\label{zero:app-lemmas}
This part collects several standard inequalities and auxiliary results used throughout the proofs.

\subsubsection{Smoothness}

	\begin{lemma}[Lemma~3.4, \cite{bubeck2015convex}]
		Let $f:\mathbb{R}^p\to\mathbb{R}$ be differentiable and $\ell$-smooth.
		Then, for all $x, y \in \mathbb{R}^p$, the following inequalities hold:
		\begin{subequations}
			\begin{align}
				&|f(y) - f(x) - (y - x)^\top \nabla f(x)| \le \frac{\ell}{2} \|y - x\|^2, \label{nonconvex:lemma:lipschitz} \\
				&\frac{1}{2} \|\nabla f(x)\|^2 \le \ell (f(x) - f^*). \label{rePL}
			\end{align}
		\end{subequations}
	\end{lemma}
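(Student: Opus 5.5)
The plan is to derive both inequalities directly from the integral form of the mean value theorem, using only the $\ell$-Lipschitz continuity of $\nabla f$. Inequality~\eqref{rePL} will then follow by applying the upper half of~\eqref{nonconvex:lemma:lipschitz} at a single well-chosen point.

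\emph{Step 1: inequality~\eqref{nonconvex:lemma:lipschitz}.} Fix $x,y\in\mathbb{R}^p$ and set $\phi(t)=f(x+t(y-x))$ for $t\in[0,1]$. Since $f$ is differentiable with continuous (indeed Lipschitz) gradient, $\phi$ is continuously differentiable with $\phi'(t)=\nabla f(x+t(y-x))^\top(y-x)$. The fundamental theorem of calculus then gives
\begin{align*}
f(y)-f(x)-(y-x)^\top\nabla f(x)=\int_0^1\big(\nabla f(x+t(y-x))-\nabla f(x)\big)^\top(y-x)\,dt .
\end{align*}
I take absolute values and apply Cauchy--Schwarz inside the integral. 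The Lipschitz bound $\|\nabla f(x+t(y-x))-\nabla f(x)\|\le \ell t\|y-x\|$ makes the integrand at most $\ell t\|y-x\|^2$. Integrating $t$ over $[0,1]$ yields the factor $\tfrac{1}{2}$, which is exactly~\eqref{nonconvex:lemma:lipschitz}.

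\emph{Step 2: inequality~\eqref{rePL}.} If $f^*=-\infty$, the right-hand side is $+\infty$ and there is nothing to prove, so assume $f^*>-\infty$. Apply the upper bound from Step~1 with the gradient step $y=x-\tfrac{1}{\ell}\nabla f(x)$:
\begin{align*}
f^*\le f(y)\le f(x)-\tfrac{1}{\ell}\|\nabla f(x)\|^2+\tfrac{\ell}{2}\cdot\tfrac{1}{\ell^2}\|\nabla f(x)\|^2=f(x)-\tfrac{1}{2\ell}\|\nabla f(x)\|^2 .
\end{align*}
Rearranging and multiplying by $\ell$ gives $\tfrac{1}{2}\|\nabla f(x)\|^2\le \ell(f(x)-f^*)$.

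There is no serious obstacle in either step. The only points needing care are justifying differentiability of $\phi$ in Step~1, which is immediate from continuity of $\nabla f$, and handling the degenerate case $f^*=-\infty$ in Step~2. In the paper's applications, $f^*>-\infty$ holds by~\eqref{eq:global_lower_bound}.
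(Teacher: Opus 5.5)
Your proof is correct and is the same as the standard argument in the cited source (Bubeck, Lemma~3.4 via the integral form of the mean value theorem, and Eq.~(3.5) via the gradient step $y = x - \tfrac{1}{\ell}\nabla f(x)$ combined with $f^* \le f(y)$); the paper does not reproduce this proof and simply cites it. Your handling of the degenerate case $f^* = -\infty$ is a harmless extra that the paper does not need, since $f^* > -\infty$ holds by \eqref{eq:global_lower_bound}.
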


\subsubsection{Properties of Graph Matrices}
 \begin{lemma}[Lemma 3,\cite{Yi_CommunicationCompression_2023}]
	The matrices $L, E$ are positive semi-definite and $F_M$ is positive definite.
	 Moreover, they satisfy
	\begin{subequations}		
		\begin{align}
			&EL=LE=L,\label{nonconvex:KL-L-eq}\\ 
			&0\le\rho_2(L)E\le L\le\rho(L)E,\label{nonconvex:KL-L-eq2}\\
			&F_ML=LF_M=E,\label{nonconvex:lemma-eq3}\\
			&\rho^{-1}(L){\bf I}_n\leq F_M \le\rho_2^{-1}(L){\bf I}_n.\label{nonconvex:lemma-eq5}
		\end{align}
	\end{subequations}
 \end{lemma}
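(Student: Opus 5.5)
The plan is to diagonalize everything in the single orthonormal eigenbasis $[q\ Q]$ of $L$ and reduce each claim to a comparison of diagonal entries.

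\textbf{Setup.} Because $\mathcal{G}$ is undirected and connected (Assumption~\ref{ass:graph}), $L$ is real symmetric and $L\mathbf{1}_n=0$. The quadratic form is $x^\top Lx=\tfrac12\sum_{i,j}A_{ij}(x_i-x_j)^2\ge0$, so $L$ is positive semidefinite, and by connectedness its null space is exactly $\mathrm{span}\{\mathbf{1}_n\}$. The spectral theorem then gives
\[
L=[q\ Q]\,\mathrm{diag}(0,\Lambda_1)\,[q\ Q]^\top,
\]
where $[q\ Q]$ is orthogonal, $q=\tfrac{1}{\sqrt n}\mathbf{1}_n$, and $\Lambda_1=\mathrm{diag}(\lambda_2,\dots,\lambda_n)$ with $\lambda_2>0$.

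\textbf{Writing $E$ in the same basis.} Orthogonality gives $qq^\top+QQ^\top={\bf I}_n$, hence
\[
E={\bf I}_n-qq^\top=QQ^\top=[q\ Q]\,\mathrm{diag}(0,{\bf I}_{n-1})\,[q\ Q]^\top .
\]
$F_M$ is by definition $[q\ Q]\,\mathrm{diag}(\lambda_{n+1}^{-1},\Lambda_1^{-1})\,[q\ Q]^\top$. So $L$, $E$ and $F_M$ are simultaneously diagonal.

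\textbf{Reducing each claim to diagonal entries.} Products of these matrices are conjugates of products of diagonal matrices, and matrix inequalities are equivalent to entrywise inequalities between the diagonal factors.
\begin{itemize}
\item $E$ has diagonal factor $\mathrm{diag}(0,{\bf I})$, so it is positive semidefinite.
\item $F_M$ has diagonal entries $\lambda_{n+1}^{-1}>0$ and $\lambda_j^{-1}>0$, so it is positive definite.
\item \eqref{nonconvex:KL-L-eq}: $\mathrm{diag}(0,{\bf I})\,\mathrm{diag}(0,\Lambda_1)=\mathrm{diag}(0,\Lambda_1)$, so $EL=LE=L$.
\item \eqref{nonconvex:lemma-eq3}: $\mathrm{diag}(\lambda_{n+1}^{-1},\Lambda_1^{-1})\,\mathrm{diag}(0,\Lambda_1)=\mathrm{diag}(0,{\bf I})$, so $F_ML=LF_M=E$.
\item \eqref{nonconvex:KL-L-eq2}: this reduces to $0\le\rho_2(L)\cdot 1\le\lambda_j\le\rho(L)\cdot 1$ on the last $n-1$ entries, with $0\le0\le0$ on the first entry. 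It holds because $\rho_2(L)=\lambda_2$ and $\rho(L)=\lambda_n$.
\item \eqref{nonconvex:lemma-eq5}: every diagonal entry of $F_M$ lies in $[\lambda_n^{-1},\lambda_2^{-1}]$. For $\lambda_j^{-1}$ this follows from $\lambda_2\le\lambda_j\le\lambda_n$. For $\lambda_{n+1}^{-1}$ it follows from the choice $\lambda_{n+1}\in[\lambda_2,\lambda_n]$.
\end{itemize}

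\textbf{Main point of care.} The argument is routine. The one step that needs attention is that the same orthogonal matrix diagonalizes $E$, $L$ and $F_M$ simultaneously. This rests on $q$ spanning $\ker L$, which in turn uses connectedness, and on the identity $E=QQ^\top$. The other step to check explicitly is that the arbitrary constant $\lambda_{n+1}$ stays within the spectral bounds in \eqref{nonconvex:lemma-eq5}, which holds precisely because it is chosen in $[\lambda_2,\lambda_n]$.
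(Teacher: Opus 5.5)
Your proof is correct. The paper does not prove this lemma itself; it imports the result from \cite{Yi_CommunicationCompression_2023}. Your simultaneous diagonalization of $L$, $E$ and $F_M$ in the eigenbasis $[q\ Q]$ is the standard argument, and it matches the way $F_M$ is constructed in the paper. Your check that $\lambda_{n+1}\in[\lambda_2,\lambda_n]$ keeps $F_M$ positive definite and within the stated spectral bounds is also correct.
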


	\subsubsection{Properties of Series}

		\begin{lemma}\label{series}
		Let $a_1>0$, $a_2>0$, $d\in(0,1)$ and  $m>a_1$. 
		Suppose a nonnegative sequence $\{\psi_k\}$ satisfies
		\begin{align}\label{eq:rec}
			\psi_{k+1}\ \le\ \Big(1-\frac{a_1}{k+m}\Big)\psi_k\ +\ \frac{a_2}{k+m}\,(1-d)^k,\forall k\in\mathbb{N}_0.
		\end{align}
		Then, for all $k\in\mathbb{N}_+$,
			\begin{align}\label{eq:explicit}
				&\psi_k\ \le\ \Big(\frac{m}{k+m}\Big)^{a_1} \psi_0 + \frac{2^{2a_1-1}(t_0+1)t_0^{a_1-1}a_2}{(k+m)^{a_1}}  \nonumber\\
				&\quad + \frac{2^{2a_1-1}(m+1)^{a_1-1}a_2}{(k+m)^{a_1}d} 
					+ \frac{2^{2a_1-1}\Gamma(a_1)a_2}{d_1^{a_1}(k+m)^{a_1}},
			\end{align}
			where $t_0=\lceil\frac{a_1-1}{d}\rceil$ and $\Gamma(\cdot)$ is the Gamma function.
		\end{lemma}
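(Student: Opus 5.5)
The plan is to unroll the recursion \eqref{eq:rec} into an explicit product–sum, bound the products by a power ratio, and then split the resulting weighted geometric sum into a bounded head and a tail controlled by the Gamma integral. I read the $d_1$ in \eqref{eq:explicit} as a typo for $d$.

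\textbf{Step 1 (unrolling).} Iterating \eqref{eq:rec}, which is legitimate because $m>a_1$ makes every factor $1-\frac{a_1}{s+m}$ lie in $(0,1)$, gives
\begin{align*}
\psi_k\le \prod_{s=0}^{k-1}\Big(1-\frac{a_1}{s+m}\Big)\psi_0+\sum_{t=0}^{k-1}\frac{a_2(1-d)^t}{t+m}\prod_{s=t+1}^{k-1}\Big(1-\frac{a_1}{s+m}\Big).
\end{align*}

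\textbf{Step 2 (bounding the products).} Use $1-x\le e^{-x}$ together with the integral comparison
\begin{align*}
\sum_{s=t+1}^{k-1}\frac{1}{s+m}\ge\int_{t+1}^{k}\frac{dx}{x+m}=\ln\frac{k+m}{t+1+m}.
\end{align*}
This yields
\begin{align*}
\prod_{s=t+1}^{k-1}\Big(1-\frac{a_1}{s+m}\Big)\le\Big(\frac{t+1+m}{k+m}\Big)^{a_1},
\end{align*}
and likewise the first product is at most $(m/(k+m))^{a_1}$. Consequently
\begin{align*}
\psi_k\le\Big(\frac{m}{k+m}\Big)^{a_1}\psi_0+\frac{a_2}{(k+m)^{a_1}}\sum_{t=0}^{k-1}\frac{(t+m+1)^{a_1}}{t+m}(1-d)^t .
\end{align*}

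\textbf{Step 3 (simplifying the summand).} Since $t+m+1\le 2(t+m)$ whenever $t+m\ge 1$, the summand is at most $2^{a_1}(t+m)^{a_1-1}(1-d)^t$. The case $t+m<1$ is handled the same way, using instead $t+m+1\le 2(m+1)$.

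\textbf{Step 4 (splitting the power).} Write $t+m\le 2\max\{t,m+1\}$. This gives
\begin{align*}
(t+m)^{a_1-1}\le 2^{a_1-1}\big(t^{a_1-1}+(m+1)^{a_1-1}\big)
\end{align*}
when $a_1\ge1$. When $a_1<1$, $(t+m)^{a_1-1}\le\min\{t^{a_1-1},m^{a_1-1}\}$ plays the same role. The factors $2^{a_1}\cdot2^{a_1-1}$ combine into the constant $2^{2a_1-1}$ appearing in \eqref{eq:explicit}.

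\textbf{Step 5 (the constant part).} The $(m+1)^{a_1-1}$ piece sums against $(1-d)^t$ to at most $(m+1)^{a_1-1}/d$. This produces the third term of \eqref{eq:explicit}.

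\textbf{Step 6 (the main obstacle).} The remaining sum is $\sum_t t^{a_1-1}(1-d)^t\le\sum_t t^{a_1-1}e^{-dt}$. The function $h(t)=t^{a_1-1}e^{-dt}$ increases up to $t=(a_1-1)/d$ and decreases afterwards, with maximum value at most $t_0^{a_1-1}$ at the peak, where $t_0=\lceil (a_1-1)/d\rceil$.
\begin{itemize}
\item For $t\le t_0$ there are at most $t_0+1$ terms, each at most $t_0^{a_1-1}$. This gives the second term $(t_0+1)t_0^{a_1-1}$.
\item For $t>t_0$ the function is monotone decreasing, so the sum is dominated by $\int_0^\infty x^{a_1-1}e^{-dx}\,dx=\Gamma(a_1)/d^{a_1}$. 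This gives the last term.
\end{itemize}
If $a_1\le1$, the increasing phase is empty, and the whole sum is controlled directly by the integral together with the $t=0$ term.

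The only delicate point is the bookkeeping of the powers of $2$ across these cases, so that they match the stated constant $2^{2a_1-1}$. Combining Steps 2 through 6 then gives \eqref{eq:explicit}.
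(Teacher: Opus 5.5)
Your proposal is correct and follows the paper's proof essentially step by step: unrolling, bounding the products via $1-x\le e^{-x}$ plus an integral comparison, the two doublings that produce $2^{2a_1-1}$, and the split at $t_0$ with the Gamma integral. Your pointwise bound $t+m\le 2\max\{t,m+1\}$ is only a repackaging of the paper's split of the index range at $\lceil m\rceil$ into $S_1$ and $S_2$, and you are right that $d_1$ should read $d$. Like the paper's, your argument genuinely needs $a_1\ge 1$; your side remarks for $a_1<1$ do not reproduce the stated constants, but this is harmless because the right-hand side is not even well defined there ($t_0\le 0$) and the lemma is only applied with $a_1=\nu\epsilon_2/(2\epsilon_4)>2$.
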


		\begin{proof}
		From \eqref{eq:rec}, it follows that
		\begin{align}\label{eq:var}
			&\psi_k
			\le \prod_{t=0}^{k-1}\Big(1-\frac{a_1}{t+m}\Big)\psi_0 \nonumber\\
			&\quad +\sum_{i=0}^{k-1}\frac{a_2}{i+m}\,(1-d)^i\prod_{t=i+1}^{k-1}\Big(1-\frac{a_1}{t+m}\Big).
		\end{align}
		Since $\log(1-x)\le -x$ for $x\in(0,1)$ and $a_1/(t+m)\in(0,1)$ by $m>a_1$, we have
		\begin{align}\label{prod}
			&\prod_{t=i+1}^{k-1}\Big(1-\frac{a_1}{t+m}\Big)
			\le \exp\!\Big(-a_1\sum_{t=i+1}^{k-1}\frac{1}{t+m}\Big) \nonumber\\
			&\le \exp\!\Big(-a_1\int_{i+m+1}^{k+m}\frac{1}{t}\,dt   \Big) \le \Big(\frac{i+1+m}{k+m}\Big)^{a_1}.
		\end{align}
		Substituting \eqref{prod} into \eqref{eq:var} yields
		\begin{align}\label{eq:after-prod}
		&\psi_k\ \le\ \Big(\frac{m}{k+m}\Big)^{a_1} \psi_0 \nonumber\\
		&\quad + \frac{a_2}{(k+m)^{a_1}}\sum_{i=0}^{k-1}(1-d)^i\,\frac{(i+1+m)^{a_1}}{i+m} \nonumber\\
		&\le\ \Big(\frac{m}{k+m}\Big)^{a_1} \psi_0
		+\frac{2^{a_1} a_2}{(k+m)^{a_1}}\sum_{i=0}^{k-1}(1-d)^i (i+m)^{a_1-1}.
		\end{align}

		Denote $m_0=\lceil m\rceil\le m+1$. For the second term in \eqref{eq:after-prod}, we have
		\begin{subequations}
			\begin{align}
				&\sum_{i=0}^{k-1}(1-d)^i (i+m)^{a_1-1}
				= \sum_{i=0}^{m_0}(1-d)^i (i+m)^{a_1-1} \nonumber\\
				&+\sum_{i=m_0+1}^{k-1}(1-d)^i (i+m)^{a_1-1}\triangleq S_1 + S_2, \\
				&S_1 \le (2(m+1))^{a_1-1}\sum_{i=0}^{\infty}(1-d)^i\ \le\ \frac{(2(m+1))^{a_1-1}}{d},\\
				&\frac{S_2}{2^{a_1-1}} \le \sum_{i=1}^{\infty} i^{a_1-1}e^{-di}
				=\sum_{i=0}^{t_0} i^{a_1-1}e^{-di}+\sum_{i=t_0+1}^{\infty} i^{a_1-1}e^{-di} \nonumber\\
				&\le (t_0+1)t_0^{a_1-1} +  \int_{t_0}^{\infty} t^{a_1-1}e^{-dt}\,dt \nonumber\\
				&\le (t_0+1)t_0^{a_1-1} + \frac{\Gamma(a_1)}{d^{a_1}}, \label{ineq:S2}
			\end{align}
		\end{subequations}
		where the first inequality in \eqref{ineq:S2} follows from $(1-d)^i\le e^{-di}$ for $d\in(0,1)$;
		the second inequality in \eqref{ineq:S2} holds since the function $t^{a_1-1}e^{-dt}$ is decreasing for $t\ge t_0$;
		and the last inequality in \eqref{ineq:S2} follows from the property of the Gamma function.

		Finally, the combination of \eqref{eq:after-prod}--\eqref{ineq:S2} leads to \eqref{eq:explicit}.
		\end{proof}

\subsection{Proof of Lemma~\ref{Lemma:grad prop}} \label{proof:lemma1}

		The inequalities \eqref{zerosg:rand-grad-esti1}--\eqref{zerosg:rand-grad-smooth} 
		are obtained by Lemma~6 in \cite{Yi_Zerothorder_2022,Yi_Zerothorder_2021}.
		We then prove \eqref{zerosg-a5:rand-grad-esti2} and \eqref{zerosg-a5:vkLya-2}.

		Under\hspace{-0.05em} Assumption  \ref{zerosg:ass:zeroth-smooth}, \eqref{zerosg:lemma:uniformsmoothing-equ6-1} holds,\hspace{0.3em}and we bound its first term.
		\begin{align}\label{zerosg:rand-grad2-esti1}
			&2p\mathbb{E}_{\xi_{i,k}}[\|\nabla_{x}F_{i}(x_{i,k},\xi_{i,k})\|^2]\nonumber\\
			&=2p\mathbb{E}_{\xi_{i,k}}[\|\nabla_{x}F_{i}(x_{i,k},\xi_{i,k})-\nabla f_i(x_{i,k})+\nabla f_i(x_{i,k})\|^2]\nonumber\\
			&\le 4p\mathbb{E}_{\xi_{i,k}}[\|\nabla_{x}F_{i}(x_{i,k},\xi_{i,k})-\nabla f_i(x_{i,k})\|^2+\|\nabla f_i(x_{i,k})\|^2]\nonumber\\
			&\le 4p(1+\eta_1^2)\|\nabla f_i(x_{i,k})\|^2+4p\sigma^2_1,
		\end{align}
		where 
		the first inequality holds due to the Cauchy--Schwarz inequality;
		and the last inequality holds due to Assumption~\ref{zerosg:ass:zeroth-variance} and the independence of $x_{i,k}$ and $\xi_{i,k}$.

	Combining \eqref{zerosg:lemma:uniformsmoothing-equ6-1} and \eqref{zerosg:rand-grad2-esti1} yields
	\begin{align}
		&\mathbb{E}_{\mathcal{B}_k}[\|\bsg_k^z\|^2]
		\le 4p(1+\eta_1^2)\|\bsg_k\|^2 + 4np\sigma^2_1 + \frac{1}{2}np^2\ell^2\mu_k^2 \nonumber\\
		&= 4p(1+\eta_1^2)\|\bsg_k-\bsg_k^0+\bsg_k^0\|^2  
			+ 4np\sigma^2_1 + \frac{1}{2}np^2\ell^2\mu_k^2 \nonumber\\
		&\le  8p(1+\eta_1^2)\|\bsg_k-\bsg_k^0\|^2 + 8p(1+\eta_1^2)\|\bsg_k^0\|^2  \nonumber\\
			&+ 4np\sigma^2_1 + \frac{1}{2}np^2\ell^2\mu_k^2, \label{g_k^z0}
	\end{align}
	where the first inequality follows from the definitions of $\mathcal{B}_k$ and $\mu_k$, and the last follows from the Cauchy--Schwarz inequality.

	For the first term on the right-hand side of \eqref{g_k^z0}, Assumption~\ref{zerosg:ass:zeroth-smooth} yields
	\begin{align}
		&\|\bsg^0_{k}-\bsg_{k}\|^2
		=\sum\nolimits_{i=1}^n\|\nabla f_i(\bar{x}_k)-\nabla f_i(x_{i,k})\|^2 \nonumber\\
		&\hspace{-0.3em}\le\sum\nolimits_{i=1}^n\ell^2\|\bar{x}_k-x_{i,k}\|^2 
		= \ell^2\|\bar{\bsx}_{k}-\bsx_{k}\|^2
		=\ell^2\|\bsx_{k}\|^2_{\bsE}.\label{zerosg:gg1}
	\end{align}

	For the second term on the right-hand side of \eqref{g_k^z0},
	by Assumption~\ref{zerosg:ass:zeroth-smooth} and the standard smoothness inequality (see, e.g., Eq.~(3.5) on p.~267 of~\cite{bubeck2015convex})
	\begin{align*}
	\frac{1}{2}\|\nabla f_i(x)\|^2 \le \ell\big(f_i(x)-f_i^*\big),~\forall x\in\mathbb{R}^p,
	\end{align*}
	we obtain 
	\begin{align}\label{zerosg-a5:rand-grad2-esti1.2}
		&\|\bsg^0_{k}\|^2 = \sum_{i=1}^{n}\|\nabla f_i(\bar{x}_k) \|^2
		\le \sum_{i=1}^{n} 2\ell(f_i(\bar{x}_k)-f_i^*) \nonumber\\
		&=  2\ell (\sum_{i=1}^{n}f_i(\bar{x}_k)-nf^*) + 2\ell( nf^*-\sum_{i=1}^{n}f_i^*) \nonumber\\
		&= 2\ell e_{4,k}+n\check{\sigma}^2_2,
	\end{align}
	where the last equality follows from the definitions of $e_{4,k}$, $\tilde{f}(\bsx_{k})$, and $\check{\sigma}_2$.
	As a result, \eqref{g_k^z0}--\eqref{zerosg-a5:rand-grad2-esti1.2} gives \eqref{zerosg-a5:rand-grad-esti2}.
	
	From  the Cauchy--Schwarz inequality, \eqref{zerosg:gg-rand-pd}, and \eqref{zerosg-a5:rand-grad2-esti1.2},  
		\begin{align}
		&\|\bsg^0_{k+1}\|^2
			=\|\bsg^0_{k+1}-\bsg^0_{k}+\bsg^0_{k}\|^2 \nonumber\\
		&\le2(\|\bsg^0_{k+1}-\bsg^0_{k}\|^2+\|\bsg^0_{k}\|^2)  \nonumber\\
		&\le 2(\alpha^2_k\ell ^2\|\bsg_k^z\|^2+2\ell e_{4,k}+n\check{\sigma}^2_2),
	\end{align}
	which completes the proof.

\subsection{Proof of Lemma~\ref{Lemma:Lyap:fix}} \label{appendix:iterative:Lemma}
\subsubsection{Iterative Difference of Lyapunov Component Terms} 

To analyze the iterative difference of the Lyapunov function, we separately characterize the iterative differences of its component terms $e_{1,k}$--$e_{5,k}$ in Lemmas~\ref{Lemma Consensus}--\ref{Lemma Compression}.
In addition to the notations defined in Appendix~\ref{appendix:constant:Thm},
we denote $\bsL=L\otimes {\bf I}_p$, $\hat{\bsx}_k=\col(\hat{x}_{1,k}, \dots,\hat{x}_{n,k})$, $\Delta_k=\frac{1}{\gamma_{k}}-\frac{1}{\gamma_{k+1}}$, and

	\begin{align*}
		&\epsilon_5=\frac{(\beta_k+\gamma_k)^2}{\gamma_k^5}\rho_2^{-1}(L), 
			\hspace{2.3em} \epsilon_6=\frac{\beta_k+\gamma_k}{2\gamma_k^3}\rho_2^{-1}(L)
			+ \frac{1}{2\gamma_k^2},\\	
		&\epsilon_7=\frac{1}{2\gamma_k^2}\rho_2^{-2}(L), 
			\hspace{5.3em} \epsilon_8=\frac{1}{2\gamma_k^2}+\frac{1}{\gamma_k^2}\rho_2^{-2}(L),\\
		&\varepsilon_2=\frac{1}{2}\rho(L)\epsilon_1\epsilon_2+(2+\rho(L))\epsilon_2\\
			& \quad + \big(3\rho^2(L)+4(1+c_1^{-1})\rho^2(L)+2\big)\epsilon_1^2\epsilon_2^2\\
			& \quad +\big(2-\rho_2(L)\big)\epsilon_1\epsilon_2^2+\big(3+\rho(L)
			\big)\epsilon_2^2.
	\end{align*}

\begin{lemma}\label{Lemma Consensus}
		Suppose that Assumptions~\ref{ass:graph} and \ref{zerosg:ass:zeroth-smooth} hold. 
		Let $\{\bsx_k\}$ be the sequence generated by Algorithm~\ref{nonconvex:algorithm-pdgd}.
		Then
		\begin{align}
			&\mathbb{E}_{\mathcal{A}_k}[e_{1,k+1}]\le e_{1,k}-\|\bsx_k\|^2_{\frac{\alpha_k\beta_k}{2}\bsL-\frac{\alpha_k}{2}\bsE- \alpha_k(1+5\alpha_k) \ell ^2\bsE}
			\nonumber\\
			&\quad
					+ \mathbb{E}_{\mathcal{C}_k}\big[\|\hat{\bsx}_k\|^2_{\frac{3}{2}\alpha_k^2\beta_k^2\bsL^2}\big]
				+ n \ell ^2\alpha_k(1+5\alpha_k)\mu^2_k \nonumber\\
				&\quad + \frac{\alpha_k}{2}(\beta_k+2\gamma_k)\rho(L)\mathbb{E}_{\mathcal{C}_k}[\|\bsx_k-\hat{\bsx}_k\|^2] \nonumber\\
				&\quad -\alpha_k\gamma_k\mathbb{E}_{\mathcal{C}_k}[\hat{\bsx}^\top_k]\bsE\Big(\bsv_k+\frac{1}{\gamma_k}\bsg_k^0\Big)  + 2\alpha_k^2\mathbb{E}_{\mathcal{B}_k}[\|\bsg_k^z\|^2]\nonumber\\
				&\quad +\Big\|\bsv_k+\frac{1}{\gamma_k}\bsg_k^0\Big\|^2_{\frac{6\alpha_k^2\gamma_k^2\rho(L)+\alpha_k\gamma_k}{4}\bsF}.\label{nonconvex:v1k}
		\end{align}

\end{lemma}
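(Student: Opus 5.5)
The plan is to rewrite the primal update in compact form, expand the squared $\bsE$-norm exactly, and then bound each resulting term by Young's inequality together with the earlier estimates in Lemma~\ref{Lemma:grad prop}.

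\emph{Compact form.} First I will show by induction that $z_{i,k}=\sum_j L_{ij}y_{j,k}$ for all $k$. This holds at $k=0$ since $y_{i,0}=z_{i,0}={\bf 0}$, and it is preserved because \eqref{nonconvex:kia-algo-dc-a}--\eqref{nonconvex:kia-algo-dc-b} apply the same increment $\omega q$ on both sides. Consequently $z_{i,k}+\sum_j L_{ij}q_{j,k}=\sum_jL_{ij}\hat x_{j,k}$ by \eqref{nonconvex:kia-algo-dc-compact-xhat}, and \eqref{nonconvex:kia-algo-dc-x} becomes
\[
\bsx_{k+1}=\bsx_k-\alpha_k\big(\beta_k\bsL\hat{\bsx}_k+\gamma_k\bsv_k+\bsg^z_k\big).
\]
Using $\bsE\bsL=\bsL$ from \eqref{nonconvex:KL-L-eq}, I will expand
\[
\tfrac12\|\bsx_{k+1}\|_{\bsE}^2=e_{1,k}-\alpha_k\bsx_k^\top\bsE\big(\beta_k\bsL\hat{\bsx}_k+\gamma_k\bsv_k+\bsg^z_k\big)+\tfrac{\alpha_k^2}{2}\big\|\beta_k\bsL\hat{\bsx}_k+\bsE(\gamma_k\bsv_k+\bsg^z_k)\big\|^2 .
\]

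\emph{Linear terms.}
\begin{itemize}
\item Laplacian term: write $-\alpha_k\beta_k\bsx_k^\top\bsL\hat{\bsx}_k=-\alpha_k\beta_k\|\bsx_k\|_{\bsL}^2+\alpha_k\beta_k\bsx_k^\top\bsL(\bsx_k-\hat{\bsx}_k)$. Young's inequality bounds this by $-\tfrac{\alpha_k\beta_k}{2}\|\bsx_k\|^2_{\bsL}+\tfrac{\alpha_k\beta_k}{2}\rho(L)\|\bsx_k-\hat{\bsx}_k\|^2$.
\item Dual and gradient terms: regroup as $\gamma_k\bsv_k+\bsg^z_k=\gamma_k(\bsv_k+\tfrac1{\gamma_k}\bsg^0_k)+(\bsg^z_k-\bsg^0_k)$. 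The dual part yields $-\alpha_k\gamma_k\hat{\bsx}_k^\top\bsE(\bsv_k+\tfrac1{\gamma_k}\bsg^0_k)-\alpha_k\gamma_k(\bsx_k-\hat{\bsx}_k)^\top\bsE(\bsv_k+\tfrac1{\gamma_k}\bsg^0_k)$. The first piece appears as is in \eqref{nonconvex:v1k} after taking $\mathbb{E}_{\mathcal{C}_k}$. The second is bounded by $\alpha_k\gamma_k\rho(L)\|\bsx_k-\hat{\bsx}_k\|^2+\tfrac{\alpha_k\gamma_k}{4}\|\bsv_k+\tfrac1{\gamma_k}\bsg^0_k\|^2_{\bsF}$, using $\bsF\ge\rho^{-1}(L){\bf I}$ from \eqref{nonconvex:lemma-eq5}.
\item Gradient error: $\alpha_k\bsx_k^\top\bsE(\bsg^0_k-\bsg^z_k)$ has conditional mean $\alpha_k\bsx_k^\top\bsE(\bsg^0_k-\bsg^\mu_k)$ by \eqref{zerosg:rand-grad-esti1}. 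This is at most $\tfrac{\alpha_k}{2}\|\bsx_k\|_{\bsE}^2+\tfrac{\alpha_k}{2}\|\bsg^0_k-\bsg^\mu_k\|^2$, and \eqref{zerosg:rand-grad-esti8} then gives the $\alpha_k\ell^2\|\bsx_k\|^2_{\bsE}$ and $n\ell^2\alpha_k\mu_k^2$ contributions.
\end{itemize}

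\emph{Quadratic term.} I will split it into three pieces with $\|a+b+c\|^2\le3(\|a\|^2+\|b\|^2+\|c\|^2)$.
\begin{itemize}
\item $\tfrac32\alpha_k^2\beta_k^2\|\hat{\bsx}_k\|^2_{\bsL^2}$, which appears directly in \eqref{nonconvex:v1k}.
\item $\tfrac32\alpha_k^2\gamma_k^2\|\bsE(\bsv_k+\tfrac1{\gamma_k}\bsg^0_k)\|^2\le\tfrac{6}{4}\alpha_k^2\gamma_k^2\rho(L)\|\bsv_k+\tfrac1{\gamma_k}\bsg^0_k\|^2_{\bsF}$, again via \eqref{nonconvex:lemma-eq5}.
\item $\tfrac32\alpha_k^2\|\bsg^z_k-\bsg^0_k\|^2$, bounded through \eqref{zerosg:rand-grad-esti6}. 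This produces the $2\alpha_k^2\mathbb{E}_{\mathcal{B}_k}\|\bsg^z_k\|^2$ term and extra $\alpha_k^2\ell^2$ terms, which combine into the factor $(1+5\alpha_k)$.
\end{itemize}

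\emph{Assembling and taking expectations.} The compression randomness $\mathcal{C}_k$ enters only through $\hat{\bsx}_k$, and the sampling randomness $\mathcal{B}_k$ only through $\bsg^z_k$. Since the two are independent given the past, $\mathbb{E}_{\mathcal{A}_k}$ factors term by term, and the mixed products contain only one random factor each. Collecting the coefficients of $\|\bsx_k-\hat{\bsx}_k\|^2$ gives $\tfrac{\alpha_k}{2}(\beta_k+2\gamma_k)\rho(L)$, matching \eqref{nonconvex:v1k}.

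The main obstacle is this constant bookkeeping, in particular choosing the Young weights so that the coefficients come out exactly as $\tfrac32$, $\tfrac{6\alpha_k^2\gamma_k^2\rho(L)+\alpha_k\gamma_k}{4}$ and $(1+5\alpha_k)\ell^2$. If the exact constants are not recovered, I will accept slightly larger ones, since only their order is used later.
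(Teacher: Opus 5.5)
Your compact form (including the induction $z_{i,k}=\sum_j L_{ij}y_{j,k}$, which the paper uses without comment) and your handling of the three linear terms match the paper. They produce the stated coefficients of $\|\bsx_k\|^2_{\bsL}$, of the order-$\alpha_k$ part of $\|\bsx_k\|^2_{\bsE}$, of $\|\bsx_k-\hat{\bsx}_k\|^2$, and of the $\frac{\alpha_k\gamma_k}{4}$ part of the dual term.

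The gap is in the third bullet of the quadratic term. With the equal-weight split, the gradient-error piece carries $\frac32\alpha_k^2$. Applying \eqref{zerosg:rand-grad-esti6} then gives $3\alpha_k^2\mathbb{E}_{\mathcal{B}_k}[\|\bsg^z_k\|^2]$, $6\alpha_k^2\ell^2\|\bsx_k\|^2_{\bsE}$ and $6n\ell^2\alpha_k^2\mu_k^2$. Together with the linear term you get $\alpha_k(1+6\alpha_k)\ell^2$ and $3\alpha_k^2$, not $(1+5\alpha_k)$ and $2\alpha_k^2$; your claim that these ``combine into'' the stated constants is an arithmetic slip. Retuning the Young weights cannot fix it. Keeping $\frac32\alpha_k^2$ on both the $\hat{\bsx}_k$ and dual pieces forces weight $\frac13$ on each. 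That leaves at least $\frac32\alpha_k^2$ on the gradient-error piece, whereas \eqref{zerosg:rand-grad-esti6} needs $\alpha_k^2$ there to yield $2\alpha_k^2$.

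The missing idea is to use unbiasedness inside the quadratic term as well. The paper expands $\frac{\alpha_k^2}{2}\|\beta_k\bsL\hat{\bsx}_k+\gamma_k\bsv_k+\bsg^z_k\|^2_{\bsE}$ exactly and takes $\mathbb{E}_{\mathcal{B}_k}$ of the cross product $\alpha_k^2\beta_k\hat{\bsx}_k^\top\bsL\bsE(\bsg^z_k-\bsg^0_k)$. Since $\hat{\bsx}_k$ is independent of $\mathcal{B}_k$, \eqref{zerosg:rand-grad-esti1} turns this into $\alpha_k^2\beta_k\hat{\bsx}_k^\top\bsL\bsE(\bsg^\mu_k-\bsg^0_k)\le\frac{\alpha_k^2\beta_k^2}{2}\|\hat{\bsx}_k\|^2_{\bsL^2}+\frac{\alpha_k^2}{2}\|\bsg^\mu_k-\bsg^0_k\|^2$. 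The $\hat{\bsx}_k$--dual cross product is split with weights $\frac12,\frac12$. Only $\frac{\alpha_k^2\gamma_k^2}{2}\|\bsv_k+\frac{1}{\gamma_k}\bsg^z_k\|^2_{\bsE}\le\alpha_k^2\gamma_k^2\|\bsv_k+\frac{1}{\gamma_k}\bsg^0_k\|^2+\alpha_k^2\|\bsg^z_k-\bsg^0_k\|^2$ still involves $\bsg^z_k$. Now apply \eqref{zerosg:rand-grad-esti8} to $\frac{\alpha_k}{2}(1+\alpha_k)\|\bsg^\mu_k-\bsg^0_k\|^2$ and \eqref{zerosg:rand-grad-esti6} to $\alpha_k^2\mathbb{E}_{\mathcal{B}_k}[\|\bsg^z_k-\bsg^0_k\|^2]$. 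This gives exactly $2\alpha_k^2$ and $1+\alpha_k+4\alpha_k=1+5\alpha_k$.

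Alternatively, keep your three-way split but replace \eqref{zerosg:rand-grad-esti6} by the bias--variance identity $\mathbb{E}_{\mathcal{B}_k}[\|\bsg^z_k-\bsg^0_k\|^2]=\mathbb{E}_{\mathcal{B}_k}[\|\bsg^z_k-\bsg^\mu_k\|^2]+\|\bsg^\mu_k-\bsg^0_k\|^2\le\mathbb{E}_{\mathcal{B}_k}[\|\bsg^z_k\|^2]+2\ell^2\|\bsx_k\|^2_{\bsE}+2n\ell^2\mu_k^2$. This yields $\frac32\alpha_k^2$ and $(1+3\alpha_k)$, both dominated by the stated constants.

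Your fallback of accepting larger constants would prove a weaker inequality than the lemma. The explicit downstream constants, such as the $3$ in $b^0_{4,k}$ and the $8\ell^2$ in $\bsM^0_{2,k}$, are built from exactly $2\alpha_k^2$ and $5\alpha_k^2\ell^2$.
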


	\begin{lemma}\label{Lemma Optimality 1}
				Suppose that Assumptions~\ref{ass:graph}  and \ref{zerosg:ass:zeroth-smooth} hold, $\{\gamma_k\}$ is non-decreasing, and $\beta_k/\gamma_k=\epsilon_1$. 
		Let $\{\bsx_k\}$ be the sequence generated by Algorithm~\ref{nonconvex:algorithm-pdgd}.
		Then
		\begin{align}
			&\mathbb{E}_{\mathcal{A}_k}[ e_{2,k+1}]\le e_{2,k}  
					+\frac{1}{2}\varepsilon_1(\Delta_k+\Delta_k^2)\mathbb{E}_{\mathcal{A}_k}[\|\bsg_{k+1}^0\|^2]
			\nonumber\\
			&\quad
			 +(1+\Delta_k)\alpha_k(\beta_k+\gamma_k)\mathbb{E}_{\mathcal{C}_k}[\hat{\bsx}^\top_k]\bsE\Big(\bsv_k+\frac{1}{\gamma_k}\bsg_k^0\Big)\nonumber\\
			&\quad+\mathbb{E}_{\mathcal{C}_k}\big[\|\hat{\bsx}_k\|^2_{(1+\Delta_k)\frac{1}{2}\alpha_k^2(\beta_k\gamma_k+\gamma_k^2)\bsL + (1+\Delta_k)\frac{1}{2}\alpha_k^2(\beta_k+\gamma_k)^2\bsE}\big] \nonumber\\
			&\quad +\Big\|\bsv_k+\frac{1}{\gamma_k}\bsg_{k}^0\Big\|^2_{\big(\alpha_k\frac{\gamma_k}{4}+\Delta_k(\frac{\beta_k+\gamma_k}{2\gamma_k}+\frac{\alpha_k\gamma_k}{4})\big)\bsF}\nonumber\\
			&\quad +(1+\Delta_k)(\epsilon_5\alpha_k+\epsilon_6\alpha_k^2) \ell ^2\mathbb{E}_{\mathcal{B}_k}[\|\bar{\bsg}_k^z\|^2].
			\label{zerosg:v2k}
		\end{align}
	\end{lemma}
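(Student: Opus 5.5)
The plan is to expand the one-step recursion of the dual variable inside the weighted norm that defines $e_{2,k}$, and then bound the extra pieces with Young's inequality. Three identities from Appendix~B are used throughout: $F_ML=LF_M=E$ from \eqref{nonconvex:lemma-eq3}, $EL=L$ from \eqref{nonconvex:KL-L-eq}, and $F_M\le\rho_2^{-1}(L){\bf I}_n$ from \eqref{nonconvex:lemma-eq5}.

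\textbf{Step 1: rewrite the dual update.} By induction from $y_{i,0}=z_{i,0}={\bf 0}_p$ and the updates \eqref{nonconvex:kia-algo-dc-a}--\eqref{nonconvex:kia-algo-dc-b}, one has $\bsz_k=\bsL\bsy_k$. Combined with \eqref{nonconvex:kia-algo-dc-compact-xhat}, this gives $z_{i,k}+\sum_jL_{ij}q_{j,k}=\sum_jL_{ij}\hat x_{j,k}$. Hence $\bsv_{k+1}=\bsv_k+\alpha_k\gamma_k\bsL\hat\bsx_k$. Since ${\bf 1}_n^\top L=0$, the average obeys $\bar x_{k+1}=\bar x_k-\alpha_k\bar g^z_k$. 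So $\bsg^0_{k+1}$ depends only on $\mathcal{B}_k$ and not on the compressor; this is what decouples the two sources of randomness.

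\textbf{Step 2: decompose the shifted dual variable.} Write
\[
\bsv_{k+1}+\tfrac{1}{\gamma_{k+1}}\bsg^0_{k+1}=\underbrace{\Big(\bsv_k+\tfrac{1}{\gamma_k}\bsg^0_k\Big)+\alpha_k\gamma_k\bsL\hat\bsx_k+\tfrac{1}{\gamma_k}(\bsg^0_{k+1}-\bsg^0_k)}_{=:\,\bsu}-\Delta_k\bsg^0_{k+1}.
\]
Because $\beta_k/\gamma_k=\epsilon_1$, the weight $\frac{\beta_k+\gamma_k}{\gamma_k}\bsF=(1+\epsilon_1)\bsF$ does not change with $k$. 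Apply $\|\bsu-\Delta_k\bsb\|^2\le(1+\Delta_k)\|\bsu\|^2+(\Delta_k+\Delta_k^2)\|\bsb\|^2$, which is valid because $\gamma_k$ is non-decreasing and so $\Delta_k\ge0$. Bounding the $\bsb$-part with $\bsF\le\rho_2^{-1}(L){\bf I}$ yields exactly $\frac12\varepsilon_1(\Delta_k+\Delta_k^2)\|\bsg^0_{k+1}\|^2$, where $\varepsilon_1=(1+\epsilon_1)\rho_2^{-1}(L)$. In $(1+\Delta_k)\|\bsu\|^2$, split off $\Delta_k$ times the old term; this produces the $\Delta_k\frac{\beta_k+\gamma_k}{2\gamma_k}\bsF$ weight.

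\textbf{Step 3: expand $\frac{1+\epsilon_1}{2}\|\bsu\|^2_{\bsF}$ and take $\mathbb{E}_{\mathcal{A}_k}$.}
\begin{itemize}
\item The cross term between the old dual term and $\alpha_k\gamma_k\bsL\hat\bsx_k$ equals $\alpha_k(\beta_k+\gamma_k)\hat\bsx_k^\top\bsE(\cdot)$, using $\bsF\bsL=\bsE$. Its $\mathcal{C}_k$-expectation passes inside linearly.
\item The square of the $\bsL\hat\bsx_k$ term gives $\frac12\alpha_k^2(\beta_k\gamma_k+\gamma_k^2)\|\hat\bsx_k\|^2_{\bsL}$, using $\bsL\bsF\bsL=\bsL$.
\item The cross term between the old dual term and the gradient increment is bounded by Young so that it costs $\frac{\alpha_k\gamma_k}{4}\|\cdot\|^2_{\bsF}$ plus a multiple of $\|\bsg^0_{k+1}-\bsg^0_k\|^2$.
\item The cross term between $\bsL\hat\bsx_k$ and the gradient increment is bounded by Young, producing $\frac12\alpha_k^2(\beta_k+\gamma_k)^2\|\hat\bsx_k\|^2_{\bsE}$ plus another increment term.
\item All increment terms are collected and bounded with \eqref{zerosg:gg-rand-pd} by $\alpha_k^2\ell^2\|\bar\bsg^z_k\|^2$. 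This gives the coefficients $\epsilon_5\alpha_k+\epsilon_6\alpha_k^2$.
\end{itemize}
Multiplying by $(1+\Delta_k)$ then reproduces \eqref{zerosg:v2k}.

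The main obstacle is bookkeeping rather than any single inequality. The Young weights must be chosen so that the dual-term coefficient comes out as exactly $\alpha_k\gamma_k/4$, and so that the increment coefficients match $\epsilon_5,\epsilon_6$. These constants have to be tuned together with the $(1+\Delta_k)$ splitting.
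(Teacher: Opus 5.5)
Your proposal is correct and follows the paper's proof essentially step by step. It uses the same split of $\bsv_{k+1}+\frac{1}{\gamma_{k+1}}\bsg^0_{k+1}$ via the $(1+\Delta_k)$/$(\Delta_k+\Delta_k^2)$ inequality, the same expansion through $\bsF\bsL=\bsE$ and $\bsL\bsF\bsL=\bsL$, the same two Young bounds (costing $\frac{\alpha_k\gamma_k}{4}\|\cdot\|^2_{\bsF}$ and $\frac{1}{2\gamma_k^2}\|\bsg^0_{k+1}-\bsg^0_k\|^2$), and the same use of \eqref{zerosg:gg-rand-pd} to produce $\epsilon_5\alpha_k+\epsilon_6\alpha_k^2$; the only addition is your explicit justification of the compact dual update via $\mathbf{z}_k=\bsL\bsy_k$, which the paper uses without comment.
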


	\begin{lemma}\label{Lemma Cross}
				Under Assumptions~\ref{ass:graph}  and \ref{zerosg:ass:zeroth-smooth}, if $\{\gamma_k\}$ is non-decreasing 
		with the sequence $\{\bsx_k\}$ generated by Algorithm~\ref{nonconvex:algorithm-pdgd}, then
		\begin{align}
			&\mathbb{E}_{\mathcal{A}_k}[ e_{3,k+1}]
				\le e_{3,k} +\|\bsx_k\|^2_{\alpha_k(\frac{\gamma_k+2}{4}+\frac{1}{2} \ell ^2)\bsE+3\alpha_k^2 \ell ^2\bsE}
			\nonumber\\
			&\quad
			-(1+\Delta_k)\alpha_k\beta_k\mathbb{E}_{\mathcal{C}_k}[\hat{\bsx}_k^\top]\bsE\Big(\bsv_k+\frac{1}{\gamma_k}\bsg_{k}^0\Big)\nonumber\\
			&\quad +\mathbb{E}_{\mathcal{C}_k}\Big[\|\hat{\bsx}_k\|^2_{\alpha_k\gamma_k\bsE
			 +\alpha_k^2\big((\frac{1}{2}\beta_k^2+\gamma_k^2)\bsE-\beta_k\gamma_k\bsL\big)+\frac{1}{2}\Delta_k\alpha_k\beta_k\bsE}\Big] \nonumber\\
			&\quad -\Big\|\bsv_k+\frac{1}{\gamma_k}\bsg_{k}^0\Big\|^2_{\big(\alpha_k(\frac{3}{4}\gamma_k-\rho_2^{-1}(L))-\alpha_k^2\gamma_k^2\rho(L)-\frac{1}{2}\rho(L)\Delta_k\alpha_k\beta_k\big)\bsF}\nonumber\\
			&\quad +(\alpha_k\epsilon_7+\alpha_k^2\epsilon_8) \ell ^2\mathbb{E}_{\mathcal{B}_k}\big[\|\bar{\bsg}_k^z\|^2\big]  +n \ell ^2\alpha_k(\frac{1}{2}+3\alpha_k)\mu_k^2 \nonumber\\
			&\quad + \alpha^2_k\mathbb{E}_{\mathcal{B}_k}[\|\bsg_k^z\|^2] +\frac{\alpha_k}{\gamma_k}\rho_2^{-1}(L)\|\bar{\bsg}_k^\mu\|^2 \nonumber\\
			&\quad+\frac{1}{2}\Delta_k\mathbb{E}_{\mathcal{A}_k}[2e_{1,k+1}+\rho_2^{-2}(L)\|\bsg_{k+1}^0\|^2].
		\label{zerosg:v3k}
			\end{align}

	\end{lemma}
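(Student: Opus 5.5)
We need to prove Lemma (Cross term), the bound on $\mathbb{E}_{\mathcal{A}_k}[e_{3,k+1}]$.

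We start from the compact form of Algorithm~\ref{nonconvex:algorithm-pdgd}. Using $\bsz_k=\bsL\bsy_k$ (by induction from $z_{i,0}=y_{i,0}=0$), we have $\bsz_k+\bsL\bsq_k=\bsL\hat{\bsx}_k$. Hence
\begin{align*}
\bsx_{k+1}&=\bsx_k-\alpha_k\big(\beta_k\bsL\hat{\bsx}_k+\gamma_k\bsv_k+\bsg_k^z\big),\\
\bsv_{k+1}&=\bsv_k+\alpha_k\gamma_k\bsL\hat{\bsx}_k.
\end{align*}
Write $\bsw_k=\bsv_k+\frac{1}{\gamma_k}\bsg_k^0$. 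Then
\[
\bsw_{k+1}=\bsw_k+\alpha_k\gamma_k\bsL\hat{\bsx}_k+\tfrac{1}{\gamma_{k}}(\bsg^0_{k+1}-\bsg^0_k)-\Delta_k\bsg^0_{k+1}.
\]
Similarly,
\[
\bsE\bsx_{k+1}=\bsE\bsx_k-\alpha_k\big(\beta_k\bsL\hat{\bsx}_k+\gamma_k\bsE\bsw_k+\bsE(\bsg_k^z-\bsg_k^0)\big),
\]
using $\bsE\bsL=\bsL$. Note that $\bsE\bsv_k=\bsv_k$, since ${\bf 1}^\top v_{\cdot,k}=0$ is preserved by the dual update.

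We then expand
\[
e_{3,k+1}=\bsx_{k+1}^\top\bsE\bsF\bsw_{k+1}
\]
as the product of the two increments. This gives $e_{3,k}$ plus first-order cross terms plus second-order products, and we treat each group with the identities $\bsF\bsL=\bsE$ and $\rho^{-1}(L)\le F_M\le\rho_2^{-1}(L)$.

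\textbf{Key first-order terms.}
\begin{itemize}
\item The term $\alpha_k\gamma_k\bsx_k^\top\bsE\bsF\bsL\hat{\bsx}_k=\alpha_k\gamma_k\bsx_k^\top\bsE\hat{\bsx}_k$. Writing $\bsx_k=\hat{\bsx}_k+(\bsx_k-\hat{\bsx}_k)$ and applying Young's inequality produces the $\alpha_k\gamma_k\|\hat{\bsx}_k\|^2_{\bsE}$ and $\frac{\gamma_k}{4}\alpha_k\|\bsx_k\|^2_{\bsE}$ contributions.
\item The term $-\alpha_k\beta_k\hat{\bsx}_k^\top\bsL\bsF\bsw_k=-\alpha_k\beta_k\hat{\bsx}_k^\top\bsE\bsw_k$. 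This gives the cross term with coefficient $-\alpha_k\beta_k$. It will later cancel against the matching terms in the $e_{1,k}$ and $e_{2,k}$ bounds, which is why it is kept exactly.
\item The term $-\alpha_k\gamma_k\|\bsw_k\|^2_{\bsE\bsF}$. This yields the negative $\frac34\alpha_k\gamma_k\|\bsw_k\|^2_{\bsF}$ piece. Here we use that $\bsw_k$'s mean component contributes only through $\bar{\bsg}_k^0$, which gives the term $\frac{\alpha_k}{\gamma_k}\rho_2^{-1}(L)\|\bar{\bsg}^\mu_k\|^2$ after replacing $\bsg^0$ by $\bsg^\mu$ via~\eqref{zerosg:rand-grad-esti9}.
\item The gradient-error term $-\alpha_k\bsx_k^\top\bsE\bsF\bsE(\bsg^z_k-\bsg^0_k)$. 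Take the conditional expectation $\mathbb{E}_{\mathcal{B}_k}$, which replaces $\bsg^z_k$ by $\bsg^\mu_k$ via~\eqref{zerosg:rand-grad-esti1}. Then Young's inequality together with~\eqref{zerosg:rand-grad-esti8} yields the $\frac12\ell^2$, $\rho_2^{-1}(L)\|\bsw_k\|^2_{\bsF}$ and $n\ell^2\mu_k^2$ terms.
\end{itemize}

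\textbf{Terms involving $\bsg^0_{k+1}-\bsg^0_k$.} These are bounded by~\eqref{zerosg:gg-rand-pd} in terms of $\alpha_k^2\ell^2\|\bar{\bsg}^z_k\|^2$, producing the $(\alpha_k\epsilon_7+\alpha_k^2\epsilon_8)\ell^2$ coefficients.

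\textbf{Terms carrying $\Delta_k$.} Handle these by writing
\[
-\Delta_k\bsx_{k+1}^\top\bsE\bsF\bsg^0_{k+1}\le\tfrac12\Delta_k\big(2e_{1,k+1}+\rho_2^{-2}(L)\|\bsg^0_{k+1}\|^2\big).
\]
Using $\gamma_k$ non-decreasing (so $\Delta_k\ge0$) and $\beta_k=\epsilon_1\gamma_k$ produces the $(1+\Delta_k)$ factors and the $\frac12\Delta_k\alpha_k\beta_k$ terms.

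\textbf{Second-order products.} Products of increments, $\alpha_k^2(\cdot)$, are bounded by Cauchy--Schwarz:
\begin{itemize}
\item $\alpha_k^2\beta_k\gamma_k\hat{\bsx}_k^\top\bsE\bsF\bsL\bsL\hat{\bsx}_k=\alpha_k^2\beta_k\gamma_k\|\hat{\bsx}_k\|^2_{\bsL}$, entering with a minus sign.
\item $\alpha_k^2\gamma_k^2\|\bsw_k\|^2_{\bsF}$ terms, weighted by $\rho(L)$.
\item $\alpha_k^2\|\bsg^z_k\|^2$ terms, retained under $\mathbb{E}_{\mathcal{B}_k}$.
\end{itemize}
Finally, take $\mathbb{E}_{\mathcal{C}_k}$ over the compressor and $\mathbb{E}_{\mathcal{B}_k}$ over sampling separately, which is possible since they are independent given $\mathcal{F}_{k-1}$, to arrive at~\eqref{zerosg:v3k}.

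\textbf{Main obstacle.} The main difficulty is bookkeeping rather than any single idea. The constants must match~\eqref{zerosg:v3k} exactly, in particular the $\frac34\gamma_k-\rho_2^{-1}(L)$ coefficient and the exact $-(1+\Delta_k)\alpha_k\beta_k$ cross term. I would first carry out the computation with $\Delta_k=0$, then add the $\Delta_k$ corrections by splitting $\frac{1}{\gamma_{k+1}}=\frac{1}{\gamma_k}-\Delta_k$.
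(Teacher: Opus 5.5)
Your plan follows the paper's own route. You peel off $-\Delta_k\bsx_{k+1}^\top\bsE\bsF\bsg^0_{k+1}\le\frac12\Delta_k(2e_{1,k+1}+\rho_2^{-2}(L)\|\bsg^0_{k+1}\|^2)$, expand the rest using $\bsF\bsL=\bsE$ and $\bsE\bsv_k=\bsv_k$, apply Young term by term, and add and subtract $\Delta_k\alpha_k\beta_k\hat{\bsx}_k^\top\bsE\mathbf{w}_k$, with $\mathbf{w}_k:=\bsv_k+\frac{1}{\gamma_k}\bsg^0_k$, to obtain the $-(1+\Delta_k)\alpha_k\beta_k$ cross term. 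The lemma is nothing but exact constants, though, and two of your term assignments would not deliver \eqref{zerosg:v3k}.

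\textbf{The gradient-error term is misplaced.} The error sits in $\bsx_{k+1}$, so it pairs with $\bsE\bsF(\mathbf{w}_k+\alpha_k\gamma_k\bsL\hat{\bsx}_k+\frac{1}{\gamma_k}(\bsg^0_{k+1}-\bsg^0_k))$, not with $\bsx_k$. The actual terms are $-\alpha_k(\bsg^z_k-\bsg^0_k)^\top\bsE\bsF\mathbf{w}_k$, $-\alpha_k^2\gamma_k(\bsg^z_k-\bsg^0_k)^\top\bsE\hat{\bsx}_k$ and $-\frac{\alpha_k}{\gamma_k}(\bsg^z_k-\bsg^0_k)^\top\bsE\bsF(\bsg^0_{k+1}-\bsg^0_k)$. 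In the last one you may not replace $\bsg^z_k$ by $\bsg^\mu_k$, because $\bsg^0_{k+1}$ depends on $\mathcal{B}_k$. There you need Young plus \eqref{zerosg:rand-grad-esti6}. That step is the source of $\alpha_k^2\mathbb{E}_{\mathcal{B}_k}[\|\bsg^z_k\|^2]$, and of $2\alpha_k^2\ell^2$ (resp.\ $2n\alpha_k^2\ell^2$) out of the stated $3\alpha_k^2\ell^2$ on $\|\bsx_k\|^2_{\bsE}$ (resp.\ $3n\alpha_k^2\ell^2$ on $\mu_k^2$). Also, for $\alpha_k\gamma_k\bsx_k^\top\bsE\hat{\bsx}_k$ apply Young directly. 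Splitting $\bsx_k=\hat{\bsx}_k+(\bsx_k-\hat{\bsx}_k)$ would create $\|\bsx_k-\hat{\bsx}_k\|^2$ terms that are absent from the statement.

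\textbf{The mean component.} Evaluated exactly, $-\alpha_k\gamma_k\|\mathbf{w}_k\|^2_{\bsE\bsF}=-\alpha_k\gamma_k\|\mathbf{w}_k\|^2_{\bsF}+\frac{\alpha_k}{\lambda_{n+1}\gamma_k}\|\bar{\bsg}^0_k\|^2$. Converting $\bar{\bsg}^0_k$ to $\bar{\bsg}^\mu_k$ through \eqref{zerosg:rand-grad-esti9} turns the coefficient into $\frac{2\alpha_k}{\lambda_{n+1}\gamma_k}$ instead of the stated $\frac{\alpha_k}{\gamma_k}\rho_2^{-1}(L)$. It also adds $\frac{4\alpha_k\ell^2}{\lambda_{n+1}\gamma_k}(\|\bsx_k\|^2_{\bsE}+n\mu_k^2)$, for which \eqref{zerosg:v3k} has no room.

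The fix, which is what the paper does, is to merge this piece with the gradient-error term before applying Young. After $\mathbb{E}_{\mathcal{B}_k}$, $\bsE(\gamma_k\bsv_k+\bsg^\mu_k)=\gamma_k\bsv_k+\bsg^\mu_k-\bar{\bsg}^\mu_k$. Hence $-\alpha_k\gamma_k\|\mathbf{w}_k\|^2_{\bsE\bsF}-\alpha_k(\bsg^\mu_k-\bsg^0_k)^\top\bsE\bsF\mathbf{w}_k$ equals exactly $-\alpha_k(\gamma_k\mathbf{w}_k+\bsg^\mu_k-\bsg^0_k-\bar{\bsg}^\mu_k)^\top\bsF\mathbf{w}_k$. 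Then two Young steps, using $\bsF^2\le\rho_2^{-1}(L)\bsF$, give:
\begin{itemize}
\item $-\alpha_k(\bsg^\mu_k-\bsg^0_k)^\top\bsF\mathbf{w}_k\le\frac{\alpha_k}{4}\|\bsg^\mu_k-\bsg^0_k\|^2+\alpha_k\rho_2^{-1}(L)\|\mathbf{w}_k\|^2_{\bsF}$;
\item $\alpha_k(\bar{\bsg}^\mu_k)^\top\bsF\mathbf{w}_k\le\frac{\alpha_k}{\gamma_k}\rho_2^{-1}(L)\|\bar{\bsg}^\mu_k\|^2+\frac14\alpha_k\gamma_k\|\mathbf{w}_k\|^2_{\bsF}$.
\end{itemize}
The $\frac14\alpha_k\gamma_k$ given up in the second bound is exactly why the stated coefficient is $\frac34\gamma_k-\rho_2^{-1}(L)$. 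With these two corrections, the rest of your bookkeeping closes as in the paper: the $(\alpha_k\epsilon_7+\alpha_k^2\epsilon_8)\ell^2$ terms via \eqref{zerosg:gg-rand-pd}, the $\rho(L)$-weighted $\alpha_k^2\gamma_k^2$ terms, and the $\frac12\Delta_k\alpha_k\beta_k$ split.
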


	\begin{lemma}\label{Lemma Optimality 2}
				Suppose that Assumptions~\ref{ass:graph} and \ref{zerosg:ass:zeroth-smooth} hold, \eqref{eq:global_lower_bound} holds, and $\{\gamma_k\}$ is non-decreasing. 
		Let $\{\bsx_k\}$ be the sequence generated by Algorithm~\ref{nonconvex:algorithm-pdgd}.
		Then
		\begin{align}
			&\mathbb{E}_{\mathcal{A}_k}[e_{4,k+1}]
			\le e_{4,k} - \frac{1}{4}\alpha_k\|\bar{\bsg}^\mu_{k}\|^2 + \|\bsx_k\|^2_{\alpha_k \ell ^2\bsE}\nonumber\\
			&\quad + n \ell ^2\alpha_k\mu^2_k-\frac{1}{4}\alpha_k\|\bar{\bsg}_{k}^0\|^2
			+\frac{1}{2}\alpha^2_k \ell \mathbb{E}_{\mathcal{B}_k}[\|\bar{\bsg}^z_{k}\|^2].\label{zerosg:v4k}
		\end{align}
	\end{lemma}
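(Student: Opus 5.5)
The plan is to show that, after averaging over agents, all network-dependent terms in the primal update vanish. Then $\bar x_k$ follows an inexact centralized gradient step driven only by the averaged zeroth-order estimator, and the bound \eqref{zerosg:v4k} follows from the descent lemma for $f$ together with Lemma~\ref{Lemma:grad prop}.

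\textbf{Step 1 (averaged dynamics).} Multiply \eqref{nonconvex:kia-algo-dc-x} by $\frac{1}{n}({\bf 1}_n^\top\otimes{\bf I}_p)$. Since $L$ is symmetric (Assumption~\ref{ass:graph}), ${\bf 1}_n^\top L=0$, so the term $\sum_j L_{ij}q_{j,k}$ averages to zero. Next I would prove by induction, using $z_{i,0}={\bf 0}_p$ and \eqref{nonconvex:kia-algo-dc-b}, that $\bsz_k$ lies in the range of $\bsL$; hence $({\bf 1}_n^\top\otimes{\bf I}_p)\bsz_k=0$. Likewise, from $v_{i,0}={\bf 0}_p$ and \eqref{nonconvex:kia-algo-dc-v}, every increment of $\bsv_k$ is $\alpha_k\gamma_k(\bsz_k+\bsL\bsq_k)$, which lies in the range of $\bsL$. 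Thus $\bar v_k=0$ for all $k$. It follows that
\[
\bar x_{k+1}=\bar x_k-\alpha_k\tfrac{1}{n}\textstyle\sum_{i}g^z_{i,k},\qquad\text{i.e.}\qquad \bar{\bsx}_{k+1}=\bar{\bsx}_k-\alpha_k\bar{\bsg}^z_k .
\]
In particular, $\bar{\bsx}_{k+1}$ does not depend on the compressor randomness $\mathcal{C}_k$. Therefore $\mathbb{E}_{\mathcal{A}_k}[e_{4,k+1}]=\mathbb{E}_{\mathcal{B}_k}[e_{4,k+1}]$. This is the step I regard as the main (if mild) obstacle, since it is where the compression and the gradient noise decouple.

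\textbf{Step 2 (descent lemma).} By Assumption~\ref{zerosg:ass:zeroth-smooth}, each $f_i=\mathbb{E}_{\xi_i}[F_i(\cdot,\xi_i)]$ is $\ell$-smooth, and hence so is $\tilde f$ viewed on the consensus subspace. Apply \eqref{nonconvex:lipschitz} to $\tilde f(\bar{\bsx}_{k+1})=\sum_i f_i(\bar x_{k+1})$ and use $(\bsg^0_k)^\top\bar{\bsg}^z_k=(\bar{\bsg}^0_k)^\top\bar{\bsg}^z_k$. This gives
\[
\tilde f(\bar{\bsx}_{k+1})\le \tilde f(\bar{\bsx}_k)-\alpha_k(\bar{\bsg}^0_k)^\top\bar{\bsg}^z_k+\tfrac{\ell}{2}\alpha_k^2\|\bar{\bsg}^z_k\|^2 .
\]
Subtracting $nf^*$ (finite by \eqref{eq:global_lower_bound}) turns this into a recursion for $e_{4,k}$.

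\textbf{Step 3 (expectation and the cross term).} Take $\mathbb{E}_{\mathcal{B}_k}$. By \eqref{zerosg:rand-grad-esti1}, $\mathbb{E}_{\mathcal{B}_k}[\bar{\bsg}^z_k]=\bar{\bsg}^\mu_k$. Then use the polarization identity
\[
-\alpha_k(\bar{\bsg}^0_k)^\top\bar{\bsg}^\mu_k=-\tfrac{\alpha_k}{2}\|\bar{\bsg}^0_k\|^2-\tfrac{\alpha_k}{2}\|\bar{\bsg}^\mu_k\|^2+\tfrac{\alpha_k}{2}\|\bar{\bsg}^0_k-\bar{\bsg}^\mu_k\|^2 .
\]
Bound the last term by \eqref{zerosg:rand-grad-esti9}, which gives $\alpha_k\ell^2\|\bsx_k\|_{\bsE}^2+n\ell^2\alpha_k\mu_k^2$. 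Relax each coefficient $-\tfrac{\alpha_k}{2}$ to $-\tfrac{\alpha_k}{4}$. The quadratic term is kept as $\tfrac12\alpha_k^2\ell\,\mathbb{E}_{\mathcal{B}_k}[\|\bar{\bsg}^z_k\|^2]$. Collecting these pieces yields exactly \eqref{zerosg:v4k}.

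The monotonicity of $\gamma_k$ plays no role here, since $e_{4,k}$ contains no $\gamma_k$. Only Steps 1--3 are needed.
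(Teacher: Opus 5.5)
Your proof is correct and follows the paper's argument. The steps match: you derive the averaged dynamics $\bar{\bsx}_{k+1}=\bar{\bsx}_k-\alpha_k\bar{\bsg}^z_k$ from $\bar v_k=0$ and the zero column sums of $L$, apply the descent lemma to $\tilde f$, use unbiasedness \eqref{zerosg:rand-grad-esti1}, and bound the gap by \eqref{zerosg:rand-grad-esti9}. The only cosmetic difference is that you use the exact polarization identity and then relax $-\tfrac{\alpha_k}{2}$ to $-\tfrac{\alpha_k}{4}$, whereas the paper splits the inner product symmetrically and applies Young's inequality; both reach the same bound.
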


	\begin{lemma} \label{Lemma Compression}
				Suppose that Assumptions~\ref{ass:graph}  and \ref{zerosg:ass:zeroth-smooth} hold, $\{\gamma_k\}$ is non-decreasing, and $\omega\le1/r$.
		Let $\{\bsx_k\}$ be the sequence generated by
		Algorithm~\ref{nonconvex:algorithm-pdgd}.
		Then
		\begin{align}
			&\mathbb{E}_{\mathcal{A}_k}[e_{5,k+1}]
				\le e_{5,k}-c_2\|\bsx_{k}-\bsy_{k}\|^2
			\nonumber\\
			&\quad +4(1+c_1^{-1})\rho^2(L)\alpha_k^2\beta_k^2 \mathbb{E}_{\mathcal{C}_k}[\|\bsx_k-\hat{\bsx}_k\|_\bsE^2] \nonumber\\
			&\quad + \|\bsx_k\|^2_{4(1+c_1^{-1})\alpha^2_k(\beta_k^2\rho^2(L)+4 \ell ^2)\bsE}\nonumber\\
			&\quad + 16(1+c_1^{-1})n \ell ^2\alpha_k^2\mu_k^2 + 8(1+c_1^{-1})\alpha_k^2\mathbb{E}_{\mathcal{B}_k}[\|\bsg_k^z\|^2]\nonumber\\
			&\quad + \Big\|\bsv_k+\frac{1}{\gamma_k}\bsg_k^0\Big\|^2_{4(1+c_1^{-1})\rho(L)\alpha_k^2\gamma_k^2\bsF}.
			\label{nonconvex:xminush}
		\end{align}
	\end{lemma}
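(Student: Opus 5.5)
We prove Lemma~\ref{Lemma Compression} by writing the one-step recursion of $\bsx_k-\bsy_k$ explicitly and splitting it into a contraction induced by the compressor and a perturbation induced by the primal update.

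\textbf{Step 1: the recursion.} From \eqref{nonconvex:kia-algo-dc-a}, $\bsy_{k+1}=\bsy_k+\omega\mathcal{C}(\bsx_k-\bsy_k)$, where $\mathcal{C}$ acts blockwise. By induction, $z_{i,k}=\sum_j L_{ij}y_{j,k}$, so $z_{i,k}+\sum_j L_{ij}q_{j,k}=\sum_jL_{ij}\hat x_{j,k}$. Hence
\[
\bsx_{k+1}=\bsx_k-\alpha_k\beta_k\bsL\hat{\bsx}_k-\alpha_k\gamma_k\bsv_k-\alpha_k\bsg^z_k .
\]
Subtracting the two updates gives
\[
\bsx_{k+1}-\bsy_{k+1}=\big[(\bsx_k-\bsy_k)-\omega\mathcal{C}(\bsx_k-\bsy_k)\big]-\alpha_k\big(\beta_k\bsL\hat{\bsx}_k+\gamma_k\bsv_k+\bsg^z_k\big).
\]
Apply $\|a+b\|^2\le(1+c_1)\|a\|^2+(1+c_1^{-1})\|b\|^2$ with $c_1=\delta\omega r/2$.

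\textbf{Step 2: the contraction term.} Write $u=\bsx_k-\bsy_k$ and $u-\omega\mathcal{C}(u)=(1-\omega r)u+\omega r\,(u-\mathcal{C}(u)/r)$. Since $\omega r\le1$, convexity of $\|\cdot\|^2$ and \eqref{nonconvex:ass:compression_equ_scaling} give
\[
\mathbb{E}_{\mathcal{C}_k}\|u-\omega\mathcal{C}(u)\|^2\le\big(1-\omega r+\omega r(1-\delta)\big)\|u\|^2=(1-2c_1)\|u\|^2 .
\]
Multiplying by $(1+c_1)$ yields the factor $1-c_1-2c_1^2=1-c_2$, which produces the term $-c_2\|\bsx_k-\bsy_k\|^2$.

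\textbf{Step 3: the perturbation term.} Since the compressor randomness and $\bsg^z_k$ enter only through $\hat{\bsx}_k$ and $\bsg^z_k$, we rewrite the perturbation in terms of quantities appearing in the other lemmas.
\begin{itemize}
\item Use $\bsL\hat{\bsx}_k=\bsL\bsx_k-\bsL(\bsx_k-\hat{\bsx}_k)$ together with $\bsL=\bsL\bsE$, so that $\|\bsL\cdot\|^2\le\rho^2(L)\|\cdot\|_\bsE^2$.
\item Write $\gamma_k\bsv_k+\bsg^z_k=\gamma_k(\bsv_k+\tfrac1{\gamma_k}\bsg^0_k)+(\bsg^z_k-\bsg^0_k)$.
\item Split the resulting four vectors with $\|\sum_{j=1}^4 b_j\|^2\le4\sum_j\|b_j\|^2$.
\end{itemize}
The dual piece is bounded by $\|\cdot\|^2\le\rho(L)\|\cdot\|^2_{\bsF}$ via \eqref{nonconvex:lemma-eq5}. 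The gradient-error piece is bounded by \eqref{zerosg:rand-grad-esti6}, which gives $4\ell^2\|\bsx_k\|_\bsE^2+4n\ell^2\mu_k^2+2\mathbb{E}\|\bsg^z_k\|^2$. Multiplying by $4(1+c_1^{-1})\alpha_k^2$ reproduces the coefficients $4\rho^2\beta^2$, $16\ell^2$, $16n\ell^2\mu^2$ and $8$ in \eqref{nonconvex:xminush}.

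\textbf{Main obstacle: decoupling the expectations.} Conditional on $\mathcal{F}_{k-1}$, the vectors $\hat{\bsx}_k$ (which depends on $\mathcal{C}_k$) and $\bsg^z_k$ (which depends on $\mathcal{B}_k$) are independent. Because Step 3 uses only squared-norm splits and no cross terms survive, each term can be averaged separately over $\mathcal{C}_k$ or $\mathcal{B}_k$. The compression term in Step 2 depends only on $\mathcal{C}_k$.

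Two points need checking. First, the cross term between the Step~2 and Step~3 pieces is absorbed by the Young's inequality in Step~1 before any expectation is taken, so no bias of $\mathcal{C}$ matters. Second, the fact that $\{\gamma_k\}$ is non-decreasing is not needed for this particular lemma. Once these are verified, collecting the terms gives \eqref{nonconvex:xminush}.
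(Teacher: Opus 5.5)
Your proposal is correct and follows essentially the same route as the paper's proof. Like the paper, you split $\bsx_{k+1}-\bsy_{k+1}$ into the increment $\bsx_{k+1}-\bsx_k$ plus the convex combination $(1-\omega r)(\bsx_k-\bsy_k)+\omega r\big(\bsx_k-\bsy_k-\mathcal{C}(\bsx_k-\bsy_k)/r\big)$, apply Young's inequality with $c_1$, and bound the increment by the same four-term split using \eqref{nonconvex:KL-L-eq2}, \eqref{nonconvex:lemma-eq5} and \eqref{zerosg:rand-grad-esti6}. Your choice $c_1=\delta\omega r/2$ matches the constants appendix and is the one that actually gives $(1+c_1)(1-\delta\omega r)=1-c_2$; the proof text's ``$c_1=\omega r/2$'' is a typo. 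You are also right that the monotonicity of $\{\gamma_k\}$ is not used in this lemma.
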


\begin{proof}
	\hspace{0.15em}Without \hspace{0.15em}ambiguity, \hspace{0.15em}denote \hspace{0.2em}$\mathcal{C}(\bsx)$\hspace{0.35em}$=$\hspace{0.35em}$\col(\mathcal{C}(x_1),\dots,\mathcal{C}$\\$(x_n))$ and $\bsq_k=\mathcal{C}(\bsx_k-\bsy_k)$.
	To simplify the analysis, we present the compact form of \eqref{nonconvex:kia-algo-dc-a}, \eqref{nonconvex:kia-algo-dc-x} and \eqref{nonconvex:kia-algo-dc-v}.
	\begin{subequations}\label{nonconvex:alg-compress-compact}
			\begin{align}
				\bsy_{k+1}&=\bsy_{k}+\omega\bsq_k,\label{nonconvex:kia-algo-dc-compact-a}\\
				\bsx_{k+1}&=\bsx_k-\alpha_k(\beta_k\bsL\hat{\bsx}_k+\gamma_k\bsv_k+\bsg_k^z),\label{nonconvex:kia-algo-dc-compact-x}\\
				\bsv_{k+1}&=\bsv_k+\alpha_k\gamma_k\bsL\hat{\bsx}_k.\label{nonconvex:kia-algo-dc-compact-v}
			\end{align}	
		\end{subequations}
According to \eqref{nonconvex:kia-algo-dc-compact-v}, and noting that $\sum_{i=1}^{n}L_{ij}=0$ and  $\sum_{i=1}^{n}v_{i,0}={\bf0}_d$, it follows that
	\begin{align}
		\bar{v}_k={\bm 0}_d.\label{nonconvex:vkn}
	\end{align}
Then, combining \eqref{nonconvex:vkn} with \eqref{nonconvex:kia-algo-dc-compact-x}, we further get
	\begin{align}
		&\bar{\bsx}_{k+1}=\bar{\bsx}_{k}-\alpha_k\bar{\bsg}_k.\label{nonconvex:xbardynamic}
	\end{align}
		Now it is ready to prove Lemmas~\ref{Lemma Consensus}--\ref{Lemma Compression}.


\noindent {\bf (i)}
This step is to show the relation between $e_{1,k+1}$ and $e_{1,k}$.
		\begin{align}
			&\mathbb{E}_{\mathcal{B}_k}[e_{1,k+1}]=\mathbb{E}_{\mathcal{B}_k}\Big[\frac{1}{2}\|\bsx_{k+1} \|^2_{\bsE}\Big]\nonumber\\
			&=\mathbb{E}_{\mathcal{B}_k}\Big[\frac{1}{2}\|\bsx_k-\alpha_k(\beta_k\bsL\hat{\bsx}_k+\gamma_k\bsv_k+\bsg_k^{z}) \|^2_{\bsE}\Big]\nonumber\\
			&=\mathbb{E}_{\mathcal{B}_k}\Big[\frac{1}{2}\|\bsx_k\|^2_{\bsE}-\alpha_k\beta_k\bsx^\top_k\bsL\hat{\bsx}_k
			+\|\hat{\bsx}_k\|^2_{\frac{\alpha_k^2\beta_k^2}{2}\bsL^2}\nonumber\\
			&\quad-\alpha_k\gamma_k(\bsx^\top_k-\alpha_k\beta_k\hat{\bsx}_k^\top\bsL)\bsE
			\Big(\bsv_k+\frac{1}{\gamma_k}\bsg_k^{z}\Big)\nonumber\\
			&\quad+\Big\|\bsv_k+\frac{1}{\gamma_k}\bsg_k^{z}\Big\|^2_{\frac{\alpha_k^2\gamma_k^2}{2}\bsE}\Big]\nonumber\\
			&=\frac{1}{2}\|\bsx_k\|^2_{\bsE}-\alpha_k\beta_k\bsx^\top_k\bsL(\bsx_k+\hat{\bsx}_k-\bsx_k)
			+\|\hat{\bsx}_k\|^2_{\frac{\alpha_k^2\beta_k^2}{2}\bsL^2}\nonumber\\
			&\quad-\alpha_k\gamma_k(\bsx^\top_k-\alpha_k\beta_k\hat{\bsx}_k^\top\bsL)\bsE\Big(\bsv_k
			+\frac{1}{\gamma_k}\bsg_k^0\nonumber\\
			&\quad +\frac{1}{\gamma_k}\bsg_k^\mu-\frac{1}{\gamma_k}\bsg_k^0\Big)\nonumber\\
			&\quad+\mathbb{E}_{\mathcal{B}_k}\Big[\Big\|\bsv_k+\frac{1}{\gamma_k}\bsg_k^0
			+\frac{1}{\gamma_k}\bsg_k^{z}-\frac{1}{\gamma_k}\bsg_k^0\Big\|^2_{\frac{\alpha_k^2\gamma_k^2}{2}\bsE}\Big]\nonumber\\
			&\le\frac{1}{2}\|\bsx_k\|^2_{\bsE}-\|\bsx_k\|^2_{\alpha_k\beta_k\bsL}
			+\|\bsx_k\|^2_{\frac{\alpha_k\beta_k}{2}\bsL} \nonumber\\
			&\quad +\|\hat{\bsx}_k-\bsx_k\|^2_{\frac{\alpha_k\beta_k}{2}\bsL} 
			+\|\hat{\bsx}_k\|^2_{\frac{\alpha_k^2\beta_k^2}{2}\bsL^2} \nonumber\\
			&\quad-\alpha_k\gamma_k\bsx^\top_k\bsE\Big(\bsv_k+\frac{1}{\gamma_k}\bsg_k^0\Big) \nonumber\\
			&\quad +\frac{\alpha_k}{2}\|\bsx_k\|^2_{\bsE}
			+\frac{\alpha_k}{2}\|\bsg_k^\mu-\bsg_k^0\|^2\nonumber\\
			&\quad +\|\hat{\bsx}_k\|^2_{\frac{\alpha_k^2\beta_k^2}{2}\bsL^2} 
			 +\frac{\alpha_k^2\gamma_k^2}{2}\Big\|\bsv_k+\frac{1}{\gamma_k}\bsg_k^0\Big\|^2\nonumber\\
			&\quad+\|\hat{\bsx}_k\|^2_{\frac{\alpha_k^2\beta_k^2}{2}\bsL^2}
			+\frac{\alpha_k^2}{2}\|\bsg_k^\mu-\bsg_k^0\|^2\nonumber\\
			&\quad+\alpha_k^2\gamma_k^2\Big\|\bsv_k+\frac{1}{\gamma_k}\bsg_k^0\Big\|^2
			+\alpha_k^2\mathbb{E}_{\mathcal{B}_k}\Big[\|\bsg_k^z-\bsg_k^0\|^2\Big]\nonumber\\
			&=\frac{1}{2}\|\bsx_k\|^2_{\bsE}-\|\bsx_k\|^2_{\frac{\alpha_k\beta_k}{2}\bsL-\frac{\alpha_k}{2}\bsE}
			+\|\hat{\bsx}_k\|^2_{\frac{3\alpha_k^2\beta_k^2}{2}\bsL^2}\nonumber\\
			&\quad+\frac{\alpha_k}{2}(1+\alpha_k)\|\bsg_k^\mu-\bsg_k^0\|^2
			+\|\hat{\bsx}_k-\bsx_k\|^2_{\frac{\alpha_k\beta_k}{2}\bsL}\nonumber\\
			&\quad-\alpha_k\gamma_k(\hat{\bsx}_k+\bsx_k-\hat{\bsx}_k)^\top\bsE
			\Big(\bsv_k+\frac{1}{\gamma_k}\bsg_k^0\Big)\nonumber\\
			&\quad+\frac{3\alpha_k^2\gamma_k^2}{2}\Big\|\bsv_k+\frac{1}{\gamma_k}\bsg_k^0\Big\|^2+\alpha_k^2\mathbb{E}_{\mathcal{B}_k}\Big[\|\bsg_k^z-\bsg_k^0\|^2\Big] \nonumber\\
			&\le\frac{1}{2}\|\bsx_k\|^2_{\bsE}-\|\bsx_k\|^2_{\frac{\alpha_k\beta_k}{2}\bsL-\frac{\alpha_k}{2}\bsE}
			+\|\hat{\bsx}_k\|^2_{\frac{3\alpha_k^2\beta_k^2}{2}\bsL^2}\nonumber\\
			&\quad+\frac{\alpha_k}{2}(1+\alpha_k)\|\bsg_k^\mu-\bsg_k^0\|^2
			+\|\hat{\bsx}_k-\bsx_k\|^2_{\frac{\alpha_k}{2}(\beta_k\bsL+2\rho(L)\gamma_k\bsE)}\nonumber\\
			&\quad-\alpha_k\gamma_k\hat{\bsx}^\top_k\bsE\Big(\bsv_k+\frac{1}{\gamma_k}\bsg_k^0\Big)+\alpha_k^2\mathbb{E}_{\mathcal{B}_k}\Big[\|\bsg_k^z-\bsg_k^0\|^2\Big]\nonumber\\	
			&\quad+\frac{6\alpha_k^2\gamma_k^2+\alpha_k\gamma_k\rho^{-1}(L)}{4}\Big\|\bsv_k+\frac{1}{\gamma_k}\bsg_k^0\Big\|^2
			\nonumber\\
			&\le e_{1,k}-\|\bsx_k\|^2_{\frac{\alpha_k\beta_k}{2}\bsL-\frac{\alpha_k}{2}\bsE
				- \alpha_k(1+5\alpha_k) \ell ^2\bsE} + \|\hat{\bsx}_k\|^2_{\frac{3\alpha_k^2\beta_k^2}{2}\bsL^2}\nonumber\\
			&\quad + n \ell ^2\alpha_k(1+5\alpha_k)\mu^2_k + \frac{\alpha_k}{2}(\beta_k+2\gamma_k)\rho(L)\|\bsx_k-\hat{\bsx}_k\|^2 \nonumber\\
			&\quad -\alpha_k\gamma_k\hat{\bsx}^\top_k\bsE\Big(\bsv_k+\frac{1}{\gamma_k}\bsg_k^0\Big)  + 2\alpha_k^2\mathbb{E}_{\mathcal{B}_k}[\|\bsg_k^z\|^2]\nonumber\\
			&\quad +\Big\|\bsv_k+\frac{1}{\gamma_k}\bsg_k^0\Big\|^2_{\frac{6\alpha_k^2\gamma_k^2\rho(L)+\alpha_k\gamma_k}{4}\bsF},\label{nonconvex:v1k_B_k}
		\end{align}
		where the second, third and fourth equalities hold due to \eqref{nonconvex:kia-algo-dc-compact-x}, \eqref{nonconvex:KL-L-eq} and \eqref{zerosg:rand-grad-esti1}, respectively; the first and second inequalities hold due to the Cauchy--Schwarz inequality and $\rho(\bsE)=1$; and the last  inequality holds due to \eqref{zerosg:rand-grad-esti8}, \eqref{zerosg:rand-grad-esti6} and \eqref{nonconvex:lemma-eq5}.
		Then, since  $\mathcal{B}_k$ and $\mathcal{C}_k$ are independent and $\mathcal{A}_k = \mathcal{B}_k \cup \mathcal{C}_k$, taking the expectation with respect to $\mathcal{A}_k$ on both sides of \eqref{nonconvex:v1k_B_k} yields \eqref{nonconvex:v1k}.

		\noindent {\bf (ii)}
			This step is to show the relation between $e_{2,k+1}$ and $e_{2,k}$.

		\begin{align}
			&e_{2,k+1}=\frac{1}{2}\Big\|\bsv_{k+1}+\frac{1}{\gamma_{k+1}}\bsg_{k+1}^0\Big\|^2_{\frac{\beta_k+\gamma_k}{\gamma_k}\bsF}\nonumber\\
			&\quad=\frac{1}{2}\Big\|\bsv_{k+1}+\frac{1}{\gamma_{k}}\bsg_{k+1}^0
			+\Big(\frac{1}{\gamma_{k+1}}-\frac{1}{\gamma_{k}}\Big)\bsg_{k+1}^0\Big\|^2_{\frac{\beta_k+\gamma_k}{\gamma_k}\bsF}\nonumber\\
			&\quad\le\frac{1}{2}(1+\Delta_k)\Big\|\bsv_{k+1}+\frac{1}{\gamma_{k}}\bsg_{k+1}^0\Big\|^2_{\frac{\beta_k+\gamma_k}{\gamma_k}\bsF}\nonumber\\
			&\qquad+\frac{1}{2}(\Delta_k+\Delta_k^2)\|\bsg_{k+1}^0\|^2_{\frac{\beta_k+\gamma_k}{\gamma_k}\bsF},\label{zerosg:v2k-1}
		\end{align}
		where the inequality holds due to the Cauchy--Schwarz inequality.
		
		For the first term on the right-hand side of \eqref{zerosg:v2k-1}, we have
		\begin{align}
			&\frac{1}{2}\Big\|\bsv_{k+1}+\frac{1}{\gamma_k}\bsg_{k+1}^0\Big\|^2_{\frac{\beta_k+\gamma_k}{\gamma_k}\bsF}\nonumber\\
			&=\frac{1}{2}\Big\|\bsv_k+\frac{1}{\gamma_k}\bsg_{k}^0+\alpha_k\gamma_k\bsL\hat{\bsx}_k
			+\frac{1}{\gamma_k}(\bsg_{k+1}^0-\bsg_{k}^0) \Big\|^2_{\frac{\beta_k+\gamma_k}{\gamma_k}\bsF}\nonumber\\
			&=\frac{1}{2}\Big\|\bsv_k+\frac{1}{\gamma_k}\bsg_{k}^0\Big\|^2_{\frac{\beta_k+\gamma_k}{\gamma_k}\bsF}\nonumber\\ 
			&\quad +\alpha_k(\beta_k+\gamma_k)\hat{\bsx}^\top_k\bsE\Big(\bsv_k+\frac{1}{\gamma_k}\bsg_k^0\Big)\nonumber\\
			&\quad+\|\hat{\bsx}_k\|^2_{\frac{\alpha_k^2\gamma_k}{2}(\beta_k+\gamma_k)\bsL}
			+\frac{1}{2\gamma_k^2}\|\bsg_{k+1}^0-\bsg_{k}^0\|^2_{\frac{\beta_k+\gamma_k}{\gamma_k}\bsF}\nonumber\\
			&\quad +\frac{\beta_k+\gamma_k}{\gamma_k^2}\Big(\bsv_k+\frac{1}{\gamma_k}\bsg_{k}^0
			\Big)^\top\bsF(\bsg_{k+1}^0-\bsg_{k}^0)\nonumber\\
			&\quad +\alpha_k\frac{\beta_k+\gamma_k}{\gamma_k}\hat{\bsx}_k^\top\bsE(\bsg_{k+1}^0-\bsg_{k}^0)\nonumber\\
			&\le\frac{1}{2}\Big\|\bsv_k+\frac{1}{\gamma_k}\bsg_{k}^0\Big\|^2_{\frac{\beta_k+\gamma_k}{\gamma_k}\bsF}\nonumber\\
			 &\quad +\alpha_k(\beta_k+\gamma_k)\hat{\bsx}^\top_k\bsE\Big(\bsv_k+\frac{1}{\gamma_k}\bsg_k^0\Big)\nonumber\\
			&\quad+\|\hat{\bsx}_k\|^2_{\frac{\alpha_k^2\gamma_k}{2}(\beta_k+\gamma_k)\bsL}
			+\|\bsg_{k+1}^0-\bsg_{k}^0\|^2_{\frac{\beta_k+\gamma_k}{2\gamma_k^3}\bsF}\nonumber\\
			&\quad+\Big\|\bsv_k+\frac{1}{\gamma_k}\bsg_{k}^0\Big\|^2_{\frac{\alpha_k\gamma_k}{4}\bsF}
			+\|\bsg_{k+1}^0-\bsg_{k}^0\|^2_{\frac{(\beta_k+\gamma_k)^2}{\alpha_k\gamma_k^5}\bsF}\nonumber\\
			&\quad+\|\hat{\bsx}_k\|^2_{\frac{\alpha_k^2(\beta_k+\gamma_k)^2}{2}\bsE}
			+\frac{1}{2\gamma_k^2}\|\bsg_{k+1}^0-\bsg_{k}^0\|^2\nonumber\\
			&\le \frac{1}{2}\Big\|\bsv_k+\frac{1}{\gamma_k}\bsg_{k}^0\Big\|^2_{\frac{\beta_k+\gamma_k}{\gamma_k}\bsF} 
			 +\Big\|\bsv_k+\frac{1}{\gamma_k}\bsg_{k}^0\Big\|^2_{\frac{\alpha_k\gamma_k}{4}\bsF} \nonumber\\ 
			&\quad +\alpha_k(\beta_k+\gamma_k)\hat{\bsx}^\top_k\bsE\Big(\bsv_k+\frac{1}{\gamma_k}\bsg_k^0\Big)\nonumber\\
			&\quad+\|\hat{\bsx}_k\|^2_{\frac{\alpha_k^2}{2}(\beta_k\gamma_k+\gamma_k^2)\bsL+\frac{\alpha_k^2}{2}(\beta_k+\gamma_k)^2\bsE} \nonumber\\
			&\quad +\big((\frac{\beta_k+\gamma_k}{2\gamma_k^3}+\frac{(\beta_k+\gamma_k)^2}{\alpha_k\gamma_k^5})\rho_2^{-1}(L)
			  + \frac{1}{2\gamma_k^2}\big)\nonumber\\
			&\quad \times\|\bsg_{k+1}^0-\bsg_{k}^0\|^2 \nonumber\\
			&\le\frac{1}{2}\Big\|\bsv_k+\frac{1}{\gamma_k}\bsg_{k}^0\Big\|^2_{\frac{\beta_k+\gamma_k}{\gamma_k}\bsF}
			+\Big\|\bsv_k+\frac{1}{\gamma_k}\bsg_{k}^0\Big\|^2_{\frac{\alpha_k\gamma_k}{4}\bsF}\nonumber\\ 
			&\quad +\alpha_k(\beta_k+\gamma_k)\hat{\bsx}^\top_k\bsE\Big(\bsv_k+\frac{1}{\gamma_k}\bsg_k^0\Big)\nonumber\\
			&\quad+\|\hat{\bsx}_k\|^2_{\frac{\alpha_k^2}{2}(\beta_k\gamma_k+\gamma_k^2)\bsL+\frac{\alpha_k^2}{2}(\beta_k+\gamma_k)^2\bsE} \nonumber\\
			&\quad +(\epsilon_5\alpha_k+\epsilon_6\alpha_k^2) \ell ^2\|\bar{\bsg}_k^z\|^2
			,\label{nonconvex:v2k}
		\end{align}
		where the first equality holds due to \eqref{nonconvex:kia-algo-dc-compact-v}; the second equality holds due to \eqref{nonconvex:KL-L-eq} and \eqref{nonconvex:lemma-eq3}; the first inequality holds due to the Cauchy--Schwarz inequality and $\rho(\bsE)=1$; the second inequality holds due to \eqref{nonconvex:lemma-eq5}; and the last inequality holds due to \eqref{zerosg:gg-rand-pd}.

		For the second term on the right-hand side of \eqref{zerosg:v2k-1}, from \eqref{nonconvex:lemma-eq5}, we have
		\begin{align}\label{zerosg:v2k-4}
			\|\bsg_{k+1}^0\|^2_{\frac{\beta_k+\gamma_k}{\gamma_k}\bsF}
			\le\varepsilon_1\|\bsg_{k+1}^0\|^2.
		\end{align}


		Then, from \eqref{zerosg:v2k-1}--\eqref{zerosg:v2k-4}, we have
		\begin{align}
			&e_{2,k+1}\le e_{2,k}  +(1+\Delta_k)\alpha_k(\beta_k+\gamma_k)\hat{\bsx}^\top_k\bsE\Big(\bsv_k+\frac{1}{\gamma_k}\bsg_k^0\Big)\nonumber\\
			&\quad+\|\hat{\bsx}_k\|^2_{(1+\Delta_k)\frac{1}{2}\alpha_k^2(\beta_k\gamma_k+\gamma_k^2)\bsL + (1+\Delta_k)\frac{1}{2}\alpha_k^2(\beta_k+\gamma_k)^2\bsE} \nonumber\\
			&\quad +\Big\|\bsv_k+\frac{1}{\gamma_k}\bsg_{k}^0\Big\|^2_{\big(\alpha_k\frac{\gamma_k}{4}+\Delta_k(\frac{\beta_k+\gamma_k}{2\gamma_k}+\frac{\alpha_k\gamma_k}{4})\big)\bsF}\nonumber\\
			&\quad +(1+\Delta_k)(\epsilon_5\alpha_k+\epsilon_6\alpha_k^2) \ell ^2\|\bar{\bsg}_k^z\|^2 \nonumber\\
			&\quad +\frac{1}{2}\varepsilon_1(\Delta_k+\Delta_k^2)\|\bsg_{k+1}^0\|^2.
			\label{zerosg:v2k_B_k}
		\end{align}
		Since $\mathcal{B}_k$ and $\mathcal{C}_k$ are independent and $\mathcal{A}_k = \mathcal{B}_k \cup \mathcal{C}_k$, taking the expectation with respect to $\mathcal{A}_k$ on both sides of \eqref{zerosg:v2k_B_k} yields \eqref{zerosg:v2k}.

		\noindent {\bf (iii)}
			This step is to show the relation between $e_{3,k+1}$ and $e_{3,k}$.
		\begin{align}
		&e_{3,k+1} =\bsx_{k+1}^\top\bsE\bsF\Big(\bsv_{k+1}+\frac{1}{\gamma_{k+1}}\bsg_{k+1}^0\Big)\nonumber\\
		&=\bsx_{k+1}^\top\bsE\bsF\Big(\bsv_{k+1}+\frac{1}{\gamma_{k}}\bsg_{k+1}^0
		+\Big(\frac{1}{\gamma_{k+1}}-\frac{1}{\gamma_{k}}\Big)\bsg_{k+1}^0\Big)\nonumber\\
		&=\bsx_{k+1}^\top\bsE\bsF\Big(\bsv_{k+1}+\frac{1}{\gamma_{k}}\bsg_{k+1}^0\Big)
		-\Delta_k\bsx_{k+1}^\top\bsE\bsF\bsg_{k+1}^0\nonumber\\
		&\le\bsx_{k+1}^\top\bsE\bsF\Big(\bsv_{k+1}+\frac{1}{\gamma_{k}}\bsg_{k+1}^0\Big) \nonumber\\
		 &\quad +\frac{1}{2}\Delta_k(\|\bsx_{k+1}\|^2_{\bsE}+\|\bsg_{k+1}^0\|_{\bsF^2}^2).\label{zerosg:v3k-1}
		\end{align}

	For the first term on the right-hand side of \eqref{zerosg:v3k-1}, we have
	\begin{align}
		&\mathbb{E}_{\mathcal{B}_k}\Big[\bsx_{k+1}^\top\bsE\bsF\Big(\bsv_{k+1}+\frac{1}{\gamma_k}\bsg_{k+1}^0\Big)\Big]\nonumber\\
		&=\mathbb{E}_{\mathcal{B}_k}\Big[(\bsx_k-\alpha_k(\beta_k\bsL\hat{\bsx}_k+\gamma_k\bsv_k+\bsg_k^0+\bsg_k^z-\bsg_k^0))^\top
		\bsE\bsF\nonumber\\
		&\quad \times\Big(\bsv_k
		+\frac{1}{\gamma_k}\bsg_{k}^0+\alpha_k\gamma_k\bsL\hat{\bsx}_k+\frac{1}{\gamma_k}(\bsg_{k+1}^0-\bsg_{k}^0)\Big)\Big]\nonumber\\
		&=(\bsx_k^\top\bsE\bsF-\alpha_k(\beta_k+\alpha_k\gamma_k^2)\hat{\bsx}_k^\top\bsE)
		\Big(\bsv_k+\frac{1}{\gamma_k}\bsg_{k}^0\Big)\nonumber\\
		&\quad+\alpha_k\gamma_k\bsx_k^\top\bsE\hat{\bsx}_k-\|\hat{\bsx}_k\|^2_{\alpha_k^2\beta_k\gamma_k\bsL}\nonumber\\
		&\quad
		+\frac{1}{\gamma_k}(\bsx_k^\top\bsE\bsF-\alpha_k\beta_k\hat{\bsx}_k^\top\bsE)\mathbb{E}_{\mathcal{B}_k}[\bsg_{k+1}^0-\bsg_{k}^0]\nonumber\\
		&\quad-\alpha_k(\gamma_k\bsv_k+\bsg_{k}^0+\bsg_k^\mu-\bsg_k^0-\bar{\bsg}_k^\mu)^\top\bsF
		\Big(\bsv_k+\frac{1}{\gamma_k}\bsg_{k}^0\Big)\nonumber\\
		&\quad 
		-\alpha_k\Big(\bsv_k+\frac{1}{\gamma_k}\bsg_{k}^0\Big)^\top\bsE\bsF\mathbb{E}_{\mathcal{B}_k}[\bsg_{k+1}^0-\bsg_{k}^0]\nonumber\\
		&\quad-\alpha_k^2\gamma_k(\bsg_k^\mu-\bsg_k^0)^\top\bsE\hat{\bsx}_k \nonumber\\
		&\quad-\mathbb{E}_{\mathcal{B}_k}\Big[\frac{\alpha_k}{\gamma_k}(\bsg_k^z-\bsg_k^0)^\top\bsE\bsF(\bsg_{k+1}^0-\bsg_{k}^0)\Big] \nonumber\\
		&\le(\bsx_k^\top\bsE\bsF-\alpha_k\beta_k\hat{\bsx}_k^\top\bsE)
		\Big(\bsv_k+\frac{1}{\gamma_k}\bsg_{k}^0\Big) \nonumber\\
	    &\quad +\|\hat{\bsx}_k\|^2_{\frac{\alpha_k^2\gamma_k^2}{2}\bsE}
		+\frac{\alpha_k^2\gamma_k^2}{2}\Big\|\bsv_k+\frac{1}{\gamma_k}\bsg_{k}^0\Big\|^2 \nonumber\\
		&\quad +\|\bsx_k\|^2_{\frac{\alpha_k\gamma_k}{4}\bsE} + \|\hat{\bsx}_k\|^2_{\alpha_k\gamma_k(\bsE-\alpha_k\beta_k\bsL)}\nonumber\\
		&\quad+\|\bsx_k\|^2_{\frac{\alpha_k}{2}\bsE} + \mathbb{E}_{\mathcal{B}_k}\big[\|\bsg_{k+1}^0-\bsg_{k}^0\|^2_{\frac{1}{2\alpha_k\gamma_k^2}\bsF^2}\big]
		\nonumber\\
		&\quad +\|\hat{\bsx}_k\|^2_{\frac{\alpha^2_k\beta^2_k}{2}\bsE}
		+\frac{1}{2\gamma^2_k}\mathbb{E}_{\mathcal{B}_k}[\|\bsg_{k+1}^0-\bsg_{k}^0\|^2]\nonumber\\
		&\quad -\Big\|\bsv_k+\frac{1}{\gamma_k}\bsg_{k}^0\Big\|^2_{\alpha_k\gamma_k\bsF}\nonumber\\
		&\quad+\frac{\alpha_k}{4}\|\bsg_k^\mu-\bsg_k^0\|^2
			+\Big\|\bsv_k+\frac{1}{\gamma_k}\bsg_{k}^0\Big\|^2_{\alpha_k\bsF^2}\nonumber\\
				&\quad	+ \frac{\alpha_k}{\gamma_k}\rho_2^{-1}(L)\|\bar{\bsg}_k^\mu\|^2 
				+  \Big\|\bsv_k+\frac{1}{\gamma_k}\bsg_{k}^0\Big\|^2_{\frac{1}{4}\rho_2(L)\alpha_k\gamma_k \bsF^2} \nonumber\\
		&\quad+ \frac{\alpha_k^2\gamma_k^2}{2}\Big\|\bsv_k+\frac{1}{\gamma_k}\bsg_{k}^0\Big\|^2
		+\mathbb{E}_{\mathcal{B}_k}\big[\|\bsg_{k+1}^0-\bsg_{k}^0\|^2_{\frac{1}{2\gamma_k^2}\bsF^2}\big] \nonumber\\
		&\quad +\frac{\alpha_k^2}{2}\|\bsg_k^\mu-\bsg_k^0\|^2
		 +\|\hat{\bsx}_k\|^2_{\frac{\alpha_k^2\gamma_k^2}{2}\bsE} \nonumber\\
		&\quad +\frac{\alpha_k^2}{2}\mathbb{E}_{\mathcal{B}_k}\big[\|\bsg_k^z-\bsg_k^0\|^2\big]
		+\mathbb{E}_{\mathcal{B}_k}\big[\|\bsg_{k+1}^0-\bsg_{k}^0\|^2_{\frac{1}{2\gamma_k^2}\bsF^2}\big] \nonumber\\
		&=(\bsx_k^\top\bsE\bsF-\alpha_k\beta_k\hat{\bsx}_k^\top\bsE)\Big(\bsv_k+\frac{1}{\gamma_k}\bsg_{k}^0\Big) \nonumber\\
		&\quad +\|\bsx_k\|^2_{\frac{\alpha_k(\gamma_k+2)}{4}\bsE}
		 +\|\hat{\bsx}_k\|^2_{\alpha_k\gamma_k\bsE
			+\alpha_k^2\big((\frac{1}{2}\beta_k^2+\gamma_k^2)\bsE-\beta_k\gamma_k\bsL\big)} \nonumber\\
		&\quad -\Big\|\bsv_k+\frac{1}{\gamma_k}\bsg_{k}^0\Big\|^2_{\alpha_k(\gamma_k\bsF-\frac{1}{4}\rho_2(L)\gamma_k\bsF^2-\bsF^2)-\alpha_k^2\gamma_k^2{\bf I}_{np}}\nonumber\\
		&\quad +\mathbb{E}_{\mathcal{B}_k}\big[\|\bsg_{k+1}^0-\bsg_{k}^0\|^2_{\frac{1}{2\gamma_k^2}{\bf I}_{np}+\frac{1}{2\gamma_k^2}(2+\frac{1}{\alpha_k})\bsF^2}\big] \nonumber\\
		&\quad +\frac{\alpha_k}{4}(1+2\alpha_k)\|\bsg_k^\mu-\bsg_k^0\|^2
		 +\frac{\alpha_k^2}{2}\mathbb{E}_{\mathcal{B}_k}\big[\|\bsg_k^z-\bsg_k^0\|^2\big] \nonumber\\
		&\quad+\frac{\alpha_k}{\gamma_k}\rho_2^{-1}(L)\|\bar{\bsg}_k^\mu\|^2 \nonumber\\
		&\le(\bsx_k^\top\bsE\bsF-\alpha_k\beta_k\hat{\bsx}_k^\top\bsE)\Big(\bsv_k+\frac{1}{\gamma_k}\bsg_{k}^0\Big) \nonumber\\
		&\quad +\|\bsx_k\|^2_{\frac{\alpha_k(\gamma_k+2)}{4}\bsE}
		 +\|\hat{\bsx}_k\|^2_{\alpha_k\gamma_k\bsE
			+\alpha_k^2\big((\frac{1}{2}\beta_k^2+\gamma_k^2)\bsE-\beta_k\gamma_k\bsL\big)} \nonumber\\
		&\quad -\Big\|\bsv_k+\frac{1}{\gamma_k}\bsg_{k}^0\Big\|^2_{\alpha_k(\frac{3}{4}\gamma_k-\rho_2^{-1}(L))\bsF-\alpha_k^2\gamma_k^2\rho(L)\bsF}\nonumber\\
		&\quad + \big(\frac{1}{2\gamma_k^2}+\frac{1}{2\gamma_k^2}(2+\frac{1}{\alpha_k})\rho_2^{-2}(L)\big)\mathbb{E}_{\mathcal{B}_k}\big[\|\bsg_{k+1}^0-\bsg_{k}^0\|^2\big]\nonumber\\
		&\quad +\frac{\alpha_k}{4}(1+2\alpha_k)\|\bsg_k^\mu-\bsg_k^0\|^2
		 +\frac{\alpha_k^2}{2}\mathbb{E}_{\mathcal{B}_k}\big[\|\bsg_k^z-\bsg_k^0\|^2\big] \nonumber\\
		&\quad+\frac{\alpha_k}{\gamma_k}\rho_2^{-1}(L)\|\bar{\bsg}_k^\mu\|^2 
		\nonumber\\
		&\le\bsx_k^\top\bsE\bsF\Big(\bsv_k+\frac{1}{\gamma_k}\bsg_{k}^0\Big)
		-(1+\Delta_k)\alpha_k\beta_k\hat{\bsx}_k^\top\bsE\big(\bsv_k+\frac{1}{\gamma_k}\bsg_{k}^0\big)\nonumber\\
		&\quad +\Delta_k\alpha_k\beta_k\hat{\bsx}_k^\top\bsE\Big(\bsv_k+\frac{1}{\gamma_k}\bsg_{k}^0\Big)\nonumber\\
		&\quad+\|\bsx_k\|^2_{\alpha_k(\frac{\gamma_k+2}{4}+\frac{1}{2} \ell ^2)\bsE+3\alpha_k^2 \ell ^2\bsE}\nonumber\\
		&\quad +\|\hat{\bsx}_k\|^2_{\alpha_k\gamma_k\bsE
		 +\alpha_k^2\big((\frac{1}{2}\beta_k^2+\gamma_k^2)\bsE-\beta_k\gamma_k\bsL\big)} \nonumber\\
		&\quad -\Big\|\bsv_k+\frac{1}{\gamma_k}\bsg_{k}^0\Big\|^2_{\alpha_k(\frac{3}{4}\gamma_k-\rho_2^{-1}(L))\bsF-\alpha_k^2\gamma_k^2\rho(L)\bsF}\nonumber\\
		&\quad +(\alpha_k\epsilon_7+\alpha_k^2\epsilon_8) \ell ^2\mathbb{E}_{\mathcal{B}_k}\big[\|\bar{\bsg}_k^z\|^2\big]  +n \ell ^2\alpha_k(\frac{1}{2}+3\alpha_k)\mu_k^2 \nonumber\\
		&\quad + \alpha^2_k\mathbb{E}_{\mathcal{B}_k}[\|\bsg_k^z\|^2] +\frac{\alpha_k}{\gamma_k}\rho_2^{-1}(L)\|\bar{\bsg}_k^\mu\|^2 
		,\label{nonconvex:v3k}
	\end{align}
	where the first equality holds due to \eqref{nonconvex:kia-algo-dc-compact-x} and \eqref{nonconvex:kia-algo-dc-compact-v}; 
	the second equality holds since \eqref{nonconvex:KL-L-eq}, \eqref{nonconvex:lemma-eq3}, $\bsE=({\bf I}_{n}-\frac{1}{n}{\bf 1}_n{\bf 1}_n)\otimes{\bf I}_p$, \eqref{zerosg:rand-grad-esti1}, and that $x_{i,k}$ and $v_{i,k}$ are independent of $\mathcal{B}_k$; the first inequality holds due to the Cauchy--Schwarz inequality and $\rho(\bsE)=1$; the second inequality holds due to \eqref{nonconvex:lemma-eq5}; and the last  inequality holds due to \eqref{zerosg:rand-grad-esti8}, \eqref{zerosg:rand-grad-esti6}, and \eqref{zerosg:gg-rand-pd}.

	For the third term on the right-hand side of \eqref{nonconvex:v3k}, we have
		\begin{align}\label{zerosg:v3k-3}
			&\Delta_k\alpha_k\beta_k\hat{\bsx}_k^\top\bsE\Big(\bsv_k+\frac{1}{\gamma_k}\bsg_{k}^0\Big) \nonumber\\
			&\le\|\hat{\bsx}_k\|^2_{\frac{1}{2}\Delta_k\alpha_k\beta_k\bsE}
			+\Big\|\bsv_k+\frac{1}{\gamma_k}\bsg_k^0\Big\|^2_{\frac{1}{2}\rho(L)\Delta_k\alpha_k\beta_k\bsF}.
		\end{align}
		
		Then, from \eqref{zerosg:v3k-1}--\eqref{zerosg:v3k-3} and \eqref{nonconvex:lemma-eq5}, we have
		\begin{align}
			&\mathbb{E}_{\mathcal{B}_k}[e_{3,k+1}]\nonumber\\
			&\quad\le e_{3,k}
			-(1+\Delta_k)\alpha_k\beta_k\hat{\bsx}_k^\top\bsE\Big(\bsv_k+\frac{1}{\gamma_k}\bsg_{k}^0\Big)\nonumber\\
			&\quad+\|\bsx_k\|^2_{\alpha_k(\frac{\gamma_k+2}{4}+\frac{1}{2} \ell ^2)\bsE+3\alpha_k^2 \ell ^2\bsE}\nonumber\\
			&\quad +\|\hat{\bsx}_k\|^2_{\alpha_k\gamma_k\bsE
			 +\alpha_k^2\big((\frac{1}{2}\beta_k^2+\gamma_k^2)\bsE-\beta_k\gamma_k\bsL\big)+\frac{1}{2}\Delta_k\alpha_k\beta_k\bsE} \nonumber\\
			&\quad -\Big\|\bsv_k+\frac{1}{\gamma_k}\bsg_{k}^0\Big\|^2_{\big(\alpha_k(\frac{3}{4}\gamma_k-\rho_2^{-1}(L))-\alpha_k^2\gamma_k^2\rho(L) -\frac{1}{2}\rho(L)\Delta_k\alpha_k\beta_k\big)\bsF}\nonumber\\
			&\quad +(\alpha_k\epsilon_7+\alpha_k^2\epsilon_8) \ell ^2\mathbb{E}_{\mathcal{B}_k}\big[\|\bar{\bsg}_k^z\|^2\big]  +n \ell ^2\alpha_k(\frac{1}{2}+3\alpha_k)\mu_k^2 \nonumber\\
			&\quad + \alpha^2_k\mathbb{E}_{\mathcal{B}_k}[\|\bsg_k^z\|^2] +\frac{\alpha_k}{\gamma_k}\rho_2^{-1}(L)\|\bar{\bsg}_k^\mu\|^2 \nonumber\\
			&\quad+\frac{1}{2}\Delta_k\mathbb{E}_{\mathcal{B}_k}[2e_{1,k+1}+\rho_2^{-2}(L)\|\bsg_{k+1}^0\|^2].
		\label{zerosg:v3k_B_k}
		\end{align}
		Since $\mathcal{B}_k$ and $\mathcal{C}_k$ are independent and $\mathcal{A}_k = \mathcal{B}_k \cup \mathcal{C}_k$, taking the expectation with respect to $\mathcal{A}_k$ on both sides of \eqref{zerosg:v3k_B_k} yields \eqref{zerosg:v3k}.

		\noindent {\bf (iv)}
			This step is to show the relation between $e_{4,k+1}$ and $e_{4,k}$.	
		\begin{align}
			&\mathbb{E}_{\mathcal{B}_k}[e_{4,k+1}]
			=\mathbb{E}_{\mathcal{B}_k}[\tilde{f}(\bar{\bsx}_{k+1})-nf^*]\nonumber\\
			&=\mathbb{E}_{\mathcal{B}_k}[\tilde{f}(\bar{\bsx}_k)-nf^*+\tilde{f}(\bar{\bsx}_{k+1})
			-\tilde{f}(\bar{\bsx}_k)]\nonumber\\
			&\le\mathbb{E}_{\mathcal{B}_k}\Big[\tilde{f}(\bar{\bsx}_k)-nf^*
			-\alpha_k(\bar{\bsg}_{k}^z)^\top\bsg^0_k
			+\frac{1}{2}\alpha^2_k \ell \|\bar{\bsg}_{k}^z\|^2\Big]\nonumber\\
			&=e_{4,k}
			-\alpha_k(\bar{\bsg}_{k}^\mu)^\top\bsg^0_k
			+\frac{1}{2}\alpha^2_k \ell \mathbb{E}_{\mathcal{B}_k}[\|\bar{\bsg}_{k}^z\|^2]\nonumber\\
			&=e_{4,k}
			-\alpha_k(\bar{\bsg}_{k}^\mu)^\top\bar{\bsg}^0_k
			+\frac{1}{2}\alpha^2_k \ell \mathbb{E}_{\mathcal{B}_k}[\|\bar{\bsg}_{k}^z\|^2]\nonumber\\
			&=e_{4,k}
			-\frac{1}{2}\alpha_k(\bar{\bsg}_{k}^\mu)^\top(\bar{\bsg}^\mu_k+\bar{\bsg}^0_k-\bar{\bsg}^\mu_k)\nonumber\\
			&\quad-\frac{1}{2}\alpha_k(\bar{\bsg}^\mu_{k}-\bar{\bsg}^0_k+\bar{\bsg}^0_k)^\top\bar{\bsg}^0_k
			+\frac{1}{2}\alpha^2_k \ell \mathbb{E}_{\mathcal{B}_k}[\|\bar{\bsg}_{k}^z\|^2]\nonumber\\
			&\le e_{4,k}-\frac{1}{4}\alpha_k(\|\bar{\bsg}^\mu_{k}\|^2
			-\|\bar{\bsg}^0_k-\bar{\bsg}^\mu_k\|^2+\|\bar{\bsg}_{k}^0\|^2\nonumber\\
			&\quad-\|\bar{\bsg}^0_k-\bar{\bsg}^\mu_k\|^2)
			+\frac{1}{2}\alpha^2_k \ell \mathbb{E}_{\mathcal{B}_k}[\|\bar{\bsg}_{k}^z\|^2]\nonumber\\
			&= e_{4,k}-\frac{1}{4}\alpha_k\|\bar{\bsg}^\mu_{k}\|^2
			+\frac{1}{2}\alpha_k\|\bar{\bsg}^0_k-\bar{\bsg}^\mu_k\|^2\nonumber\\
			&\quad-\frac{1}{4}\alpha_k\|\bar{\bsg}_{k}^0\|^2
			+\frac{1}{2}\alpha^2_k \ell \mathbb{E}_{\mathcal{B}_k}[\|\bar{\bsg}_{k}^z\|^2]\nonumber\\
			&\le e_{4,k} - \frac{1}{4}\alpha_k\|\bar{\bsg}^\mu_{k}\|^2 + \|\bsx_k\|^2_{\alpha_k \ell ^2\bsE}\nonumber\\
			&\quad + n \ell ^2\alpha_k\mu^2_k-\frac{1}{4}\alpha_k\|\bar{\bsg}_{k}^0\|^2
			+\frac{1}{2}\alpha^2_k \ell \mathbb{E}_{\mathcal{B}_k}[\|\bar{\bsg}^z_{k}\|^2],\label{zerosg:v4k_B_k}
			\end{align}
		where the first inequality follows from the smoothness of $\tilde{f}$, together with \eqref{nonconvex:lemma:lipschitz} and \eqref{nonconvex:xbardynamic}; the third equality holds since \eqref{zerosg:rand-grad-esti1} and that $x_{i,k}$ and $v_{i,k}$ are independent of $\mathcal{B}_k$; the fourth equality holds due to $(\bar{\bsg}_{k}^\mu)^\top\bsg^0_k=(\bsg_{k}^\mu)^\top\bsH\bsg^0_k=(\bsg_{k}^\mu)^\top\bsH\bsH\bsg^0_k
		=(\bar{\bsg}_{k}^\mu)^\top\bar{\bsg}^0_k$; the second inequality holds due to the Cauchy--Schwarz inequality; and the last inequality holds due to \eqref{zerosg:rand-grad-esti9}.
		Then, taking the expectation with respect to $\mathcal{A}_k$ on both sides of \eqref{zerosg:v4k_B_k} yields \eqref{zerosg:v4k}.

		\noindent {\bf (v)}
		 	 This step is to show the relation between $e_{5,k+1}$ and $e_{5,k}$.	
		
		Denote $\mathcal{C}_r(\bsx)=\mathcal{C}(\bsx)/r$, then we have
		\begin{align}
			&\mathbb{E}_{\mathcal{C}_k}[\|\bsx_{k+1}-\bsy_{k+1}\|^2]\nonumber\\
			&=\mathbb{E}_{\mathcal{C}_k}[\|\bsx_{k+1}-\bsx_{k}+\bsx_{k}-\bsy_{k}-\omega\bsq_k\|^2]\nonumber\\
			&=\mathbb{E}_{\mathcal{C}_k}[\|\bsx_{k+1}-\bsx_{k}+(1-\omega r)(\bsx_{k}-\bsy_{k})\nonumber\\
			&\quad+\omega r(\bsx_{k}-\bsy_{k}-\mathcal{C}_r(\bsx_{k}-\bsy_{k}))\|^2]\nonumber\\
			&\le(1+c_1^{-1})\mathbb{E}_{\mathcal{C}_k}[\|\bsx_{k+1}-\bsx_{k}\|^2]\nonumber\\
			&\quad+(1+c_1)\mathbb{E}_{\mathcal{C}_k}[\|(1-\omega r)(\bsx_{k}-\bsy_{k})\nonumber\\
			&\quad+\omega r(\bsx_{k}-\bsy_{k}-\mathcal{C}_r(\bsx_{k}-\bsy_{k}))\|^2]\nonumber\\
			&\le(1+c_1^{-1})\mathbb{E}_{\mathcal{C}_k}[\|\bsx_{k+1}-\bsx_{k}\|^2]\nonumber\\
			&\quad+(1+c_1)(1-\omega r)\mathbb{E}_{\mathcal{C}_k}[\|\bsx_{k}-\bsy_{k}\|^2]\nonumber\\
			&\quad+(1+c_1)\omega r\mathbb{E}_{\mathcal{C}_k}[\|\bsx_{k}-\bsy_{k}-\mathcal{C}_r(\bsx_{k}-\bsy_{k})\|^2]\nonumber\\
			&\le(1+c_1^{-1})\mathbb{E}_{\mathcal{C}_k}[\|\bsx_{k+1}-\bsx_{k}\|^2]\nonumber\\
			&\quad+(1+c_1)(1-\omega r)\|\bsx_{k}-\bsy_{k}\|^2\nonumber\\
			&\quad+(1+c_1)\omega r(1-\delta)\|\bsx_{k}-\bsy_{k}\|^2\nonumber\\
			&=(1+c_1^{-1})\mathbb{E}_{\mathcal{C}_k}[\|\bsx_{k+1}-\bsx_{k}\|^2]\nonumber\\
			&\quad+(1-c_1-2c_1^2)\|\bsx_{k}-\bsy_{k}\|^2,
			\label{nonconvex:xminush_compress_B_k}
		\end{align}
		where the first and second equalities hold due to \eqref{nonconvex:kia-algo-dc-compact-a} and $\bsq_k=\mathcal{C}(\bsx_k-\bsy_k)$, respectively; the first inequality holds due to the Cauchy--Schwarz inequality and $c_1>0$; the second inequality holds due to the Cauchy--Schwarz inequality and $\omega r\in(0,1]$; 
		the last inequality holds due to  $\bsx_k$ and $\bsy_k$ being independent of $\mathcal{C}_k$, along with \eqref{nonconvex:ass:compression_equ_scaling};
		and the last equality holds due to $c_1=\omega r/2$.
		Then, taking the expectation with respect to $\mathcal{A}_k$ on both sides of \eqref{nonconvex:xminush_compress_B_k}, we have
		\begin{align}
			&\mathbb{E}_{\mathcal{A}_k}[\|\bsx_{k+1}-\bsy_{k+1}\|^2]\nonumber\\
			&\le(1+c_1^{-1})\mathbb{E}_{\mathcal{A}_k}[\|\bsx_{k+1}-\bsx_{k}\|^2]\nonumber\\
			&\quad+(1-c_1-2c_1^2)\|\bsx_{k}-\bsy_{k}\|^2.
			\label{nonconvex:xminush_compress}
		\end{align}

		For the first term on the right-hand side of \eqref{nonconvex:xminush_compress}, we have
		\begin{align}
			&\|\bsx_{k+1}-\bsx_k\|^2=\alpha_k^2\|\beta_k\bsL\hat{\bsx}_k+\gamma_k\bsv_k+\bsg_k^z\|^2\nonumber\\
			&=\alpha_k^2\|\beta_k\bsL(\hat{\bsx}_k-\bsx_k)+\beta_k\bsL\bsx_k+\gamma_k\bsv_k
			+\bsg_k^0+\bsg_k^z-\bsg_k^0\|^2\nonumber\\
			&\le4\alpha_k^2\big(\beta_k^2\|\hat{\bsx}_k-\bsx_k\|^2_{\bsL^2}+\beta_k^2\|\bsx_k\|^2_{\bsL^2}+\|\gamma_k\bsv_k+\bsg_k^0\|^2 \nonumber\\
			&\qquad  +\|\bsg_k^z-\bsg_k^0\|^2\big),
		\end{align}
		where the first equality holds due to \eqref{nonconvex:kia-algo-dc-compact-x}; and the first inequality holds due to the Cauchy--Schwarz inequality.
		Then, taking expectation with respect to $\mathcal{A}_k$, from the independence of $\mathcal{B}_k$ and $\mathcal{C}_k$, $\mathcal{A}_k = \mathcal{B}_k \cup \mathcal{C}_k$,  \eqref{nonconvex:KL-L-eq2}, \eqref{nonconvex:lemma-eq5} and \eqref{zerosg:rand-grad-esti6}, we have
		\begin{align}\label{nonconvex:xkoneminusx}
			&\mathbb{E}_{\mathcal{A}_k}[\|\bsx_{k+1}-\bsx_{k}\|^2]
			\le 4\rho^2(L)\alpha_k^2\beta_k^2 \mathbb{E}_{\mathcal{C}_k}[\|\hat{\bsx}_k-\bsx_k\|_\bsE^2] 
			\nonumber\\
            &\quad
			+\|\bsx_k\|^2_{4\alpha_k^2(\beta_k^2\rho^2(L)+4 \ell ^2)\bsE}+ \Big\|\bsv_k+\frac{1}{\gamma_k}\bsg_k^0\Big\|^2_{4\rho(L)\alpha_k^2\gamma_k^2\bsF} 
			\nonumber\\
			&\quad+ 16n \ell ^2\alpha_k^2\mu_k^2 
			+ 8\alpha_k^2\mathbb{E}_{\mathcal{B}_k}[\|\bsg_k^z\|^2]. 	
		\end{align}	
	
		Then, from \eqref{nonconvex:xminush_compress} and \eqref{nonconvex:xkoneminusx}, we have \eqref{nonconvex:xminush}.
	\end{proof}

	\subsubsection{Main proof} 
			To prove Lemma~\ref{Lemma:Lyap:fix}, it suffices to establish a more general version of Lemma~\ref{zerosg:lemma:sg:L1}.
			Before proceeding, 
			 we introduce the following additional notations for the subsequent analysis:		
			\begin{align*}
				&b_{1,k}=4p(1+\eta_1^2)\rho_2^{-2}(L) \ell ^4\frac{\alpha_k}{\gamma_k^2}\\
				&\quad +4p(1+\eta_1^2)\Big(2\rho_2^{-2}(L)+ (\epsilon_1+1)\rho_2^{-1}(L)+2\Big) \ell ^4\frac{\alpha_k^2}{\gamma_k^2}\\
				&\quad +8p(1+\eta_1^2)(\epsilon_1+1)^2\rho_2^{-1}(L) \ell ^4\frac{\alpha_k}{\gamma_k^3}\\
				&\quad+\Big(\frac{1}{2}+ \ell ^2\Big)\alpha_k\Delta_k + \Big(5+16p(1+\eta_1^2)\\
				&\qquad +8p(1+\eta_1^2)\varepsilon_8 \ell ^2\Big) \ell ^2\alpha_k^2\Delta_k\\
				&\quad+ 8p(1+\eta_1^2)\varepsilon_1 \ell ^4\alpha_k^2\Delta_k^2\\
				&\quad+ 4p(1+\eta_1^2)\Big((\epsilon_1+1)\rho_2^{-1}(L)+1\Big) \ell ^4\frac{\alpha_k^2\Delta_k}{\gamma_k^2}\\
				&\quad+ 8p(1+\eta_1^2)(\epsilon_1+1)^2\rho_2^{-1}(L) \ell ^4\frac{\alpha_k\Delta_k}{\gamma_k^3}
				,\\	
				&b_{2,k}=2 a_2 - \frac{1}{2}\Delta_k(\rho(L)\epsilon_1\epsilon_2+3\rho(L)\epsilon_2^2+\epsilon_1+\epsilon_2+1),\\
					&b^0_{2,k}=\alpha_k(\frac{1}{4}\gamma_k-\rho_2^{-1}(L))-\big(\frac{5}{2}+4(1+c_1^{-1})\big)\rho(L)\epsilon_2^2\\
					&\quad -\frac{1}{2}\Delta_k(\rho(L)\epsilon_1\epsilon_2+3\rho(L)\epsilon_2^2+\epsilon_1+\epsilon_2+1),\\
				&b_{3,k}=\varepsilon_8\frac{\Delta_k}{\alpha_k^2}
					+\varepsilon_1\frac{\Delta_k^2}{\alpha_k^2},\\
				&b_{4,k} = 2b^0_{4,k}+2\ell^2(\varepsilon_8\Delta_k+\varepsilon_1\Delta_k^2), \\
				&b^0_{4,k}=3 + 8(1+c_1^{-1}) +\frac{1}{2} \ell +\frac{\rho_2^{-2}(L) \ell ^2}{2\epsilon_2}\frac{1}{\gamma_k}\\
					&\quad +(\rho_2^{-2}(L)+\frac{2(\epsilon_1+1)^2+\epsilon_1\epsilon_2+\epsilon_2}{2\epsilon_2}\rho_2^{-1}(L)+1) \ell ^2\frac{1}{\gamma_k^2}\\
					&\quad +2\Delta_k + (\frac{2(\epsilon_1+1)^2+\epsilon_1\epsilon_2+\epsilon_2}{2\epsilon_2}\rho_2^{-1}(L)+\frac{1}{2}) \ell ^2\frac{\Delta_k}{\gamma_k^2}
					,\\
				&b_{5,k}= n \ell ^2\Big(\frac{5}{2}+\big(8+16(1+c_1^{-1})+\frac{1}{4}p^2b_{4,k}\big)\alpha_k\\
					&\quad +\Delta_k+5\alpha_k\Delta_k\Big),\\
				&\bsM_{1,k}=\frac{\beta_k}{2}\bsL-\frac{9\gamma_k}{4}\bsE-(1+\frac{5}{2} \ell ^2)\bsE,\\	
				&\bsM^0_{2,k}= 3\beta_k^2\bsL^2 - (\beta_k\gamma_k-\gamma_k^2)\bsL + \Big(4(1+c_1^{-1})\rho^2(L)\beta_k^2\\
					&\quad + 2\beta_k^2 +2\beta_k\gamma_k +3\gamma_k^2 + 8 \ell ^2+16(1+c_1^{-1}) \ell ^2\Big)\bsE,\\
				&\bsM_{2,k}=\bsM^0_{2,k}
					\\
					&\quad +4p(1+\eta_1^2)\big(6+ 16(1+c_1^{-1})+ \ell \big) \ell ^2\bsE,\\
				&\bsM_{3}=3\epsilon_1^2\epsilon_2^2\bsL^2 + (\epsilon_1\epsilon_2^2+\epsilon_2^2-\frac{1}{2}\epsilon_1\epsilon_2)\bsL \\
					&\quad + (\epsilon_1^2\epsilon_2^2+2\epsilon_1\epsilon_2^2+\epsilon_2^2+\epsilon_1\epsilon_2+\frac{1}{2})\bsE,\\	
				&\bsM_{4,k}=\frac{\rho(L)}{2}(\beta_k+2\gamma_k){\bf I}_{np} + 2\gamma_k\bsE,\\
				&\bsM_{5,k}=3\beta_k^2\bsL^2-(\beta_k\gamma_k-\gamma_k^2)\bsL + \Big(4(1+c_1^{-1})\rho^2(L)\beta_k^2\\
					&\quad + 2\beta_k^2 +2\beta_k\gamma_k +3\gamma_k^2 
					\Big)\bsE,\\
				&\bsM_{6}=\frac{\rho(L)}{2}(\epsilon_1\epsilon_2+2\epsilon_2){\bf I}_{np} + 3\epsilon_1^2\epsilon_2^2\bsL^2 +(\epsilon_1\epsilon_2^2+\epsilon_2^2)\bsL \\
					&\quad + (\epsilon_1^2\epsilon_2^2+2\epsilon_1\epsilon_2^2+\epsilon_2^2+\epsilon_1\epsilon_2)\bsE.	
			\end{align*}

		\begin{lemma} \label{zerosg:lemma:sg:L1}
			Suppose that Assumptions~\ref{ass:i:lower-bounded}--\ref{zerosg:ass:zeroth-variance} hold, $\{\gamma_k\}$ is non-decreasing, $\beta_k/\gamma_k=\epsilon_1$, $\alpha_k\gamma_k=\epsilon_2$, $\epsilon_1>\kappa_1$, $\epsilon_2>0$, and $\gamma_k\ge\varepsilon_{0}$. 
			Then  
				\begin{align}
					&\mathbb{E}_{\mathcal{A}_k}[\mathcal{L}_{1,k+1}]  \nonumber\\
					&\quad \le \mathcal{L}_{1,k} -\|\bsx_k\|^2_{(2 a_1-\varepsilon_5\Delta_k-b_{1,k})\bsE}
					-\Big\|\bsv_k+\frac{1}{\gamma_k}\bsg_{k}^0\Big\|^2_{b_{2,k}\bsF}\nonumber\\
					&\quad -\frac{1}{4}\alpha_k\|\bar{\bsg}^0_{k}\|^2 + 2\ell(b_{3,k}+4p(1+\eta_1^2)b_{4,k})\alpha_k^2 e_{4,k}\nonumber\\
					&\quad +2pn\sigma^2_1b_{4,k}\alpha_k^2 + n\check{\sigma}_2^2(b_{3,k}+4p(1+\eta_1^2)b_{4,k})\alpha_k^2 \nonumber\\
					&\quad +b_{5,k}\alpha_k\mu_k^2	- (c_2-\delta_0\varepsilon_2-\delta_0b_{6,k}\Delta_k)\|\bsx_{k}-\bsy_{k}\|^2.
										\label{zerosg:sgproof-vkLya:L1}
				\end{align}
		\end{lemma}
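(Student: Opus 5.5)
The plan is to sum the five component inequalities of Lemmas~\ref{Lemma Consensus}--\ref{Lemma Compression}, so that $\mathbb{E}_{\mathcal{A}_k}[\mathcal{L}_{1,k+1}]-\mathcal{L}_{1,k}$ is bounded by one long expression, and then collect terms by type: $\|\bsx_k\|_{\bsE}^2$, $\|\bsv_k+\frac{1}{\gamma_k}\bsg_k^0\|_{\bsF}^2$, $\|\bsx_k-\bsy_k\|^2$, the $\hat{\bsx}_k$-quadratic and cross terms, and the gradient-estimator terms $\mathbb{E}_{\mathcal{B}_k}[\|\bsg_k^z\|^2]$, $\mathbb{E}_{\mathcal{B}_k}[\|\bar{\bsg}_k^z\|^2]$, $\|\bsg^0_{k+1}\|^2$, $\|\bar{\bsg}_k^\mu\|^2$.

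\textbf{Step 1 (cross terms).} The cross terms in $\mathbb{E}_{\mathcal{C}_k}[\hat{\bsx}_k^\top]\bsE(\bsv_k+\frac{1}{\gamma_k}\bsg_k^0)$ carry coefficients $-\alpha_k\gamma_k$ from \eqref{nonconvex:v1k}, $+(1+\Delta_k)\alpha_k(\beta_k+\gamma_k)$ from \eqref{zerosg:v2k}, and $-(1+\Delta_k)\alpha_k\beta_k$ from \eqref{zerosg:v3k}. They therefore leave only $\Delta_k\alpha_k\gamma_k$, which is absorbed by Cauchy--Schwarz into $\frac12\Delta_k\alpha_k\gamma_k(\|\hat{\bsx}_k\|_{\bsE}^2+\rho(L)\|\cdot\|_{\bsF}^2)$.

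\textbf{Step 2 (removing $\hat{\bsx}_k$).} Write $\hat{\bsx}_k=\bsx_k+(\hat{\bsx}_k-\bsx_k)$ and apply $\|a+b\|_M^2\le 2\|a\|_M^2+2\|b\|_M^2$. Then use $\mathbb{E}_{\mathcal{C}_k}\|\hat{\bsx}_k-\bsx_k\|^2\le\delta_0\|\bsx_k-\bsy_k\|^2$. The $\hat{\bsx}_k$ weight matrices, collected with $\alpha_k\gamma_k=\epsilon_2$ and $\beta_k=\epsilon_1\gamma_k$, are exactly those encoded by $\bsM_3$, $\bsM_{4,k}$, $\bsM_6$, with $\varepsilon_2$ bounding their spectral radius. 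This produces the $\delta_0\varepsilon_2+\delta_0 b_{6,k}\Delta_k$ penalty against $c_2\|\bsx_k-\bsy_k\|^2$.

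\textbf{Step 3 (gradient terms).} Bound $\mathbb{E}_{\mathcal{B}_k}\|\bar{\bsg}_k^z\|^2$ via \eqref{zerosg:rand-grad-esti5}. Absorb $\frac{\alpha_k}{\gamma_k}\rho_2^{-1}(L)\|\bar{\bsg}^\mu_k\|^2$ into the $-\frac14\alpha_k\|\bar{\bsg}^\mu_k\|^2$ of \eqref{zerosg:v4k}; this is possible because $\gamma_k\ge\varepsilon_0\ge 8\rho_2^{-1}(L)$. Bound $\|\bsg^0_{k+1}\|^2$ by \eqref{zerosg-a5:vkLya-2}, and bound every $\mathbb{E}_{\mathcal{B}_k}\|\bsg_k^z\|^2$ by \eqref{zerosg-a5:rand-grad-esti2}. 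This turns them into $e_{4,k}$, $\|\bsx_k\|_{\bsE}^2$, $\mu_k^2$, $\sigma_1^2$ and $\check\sigma_2^2$ contributions. Collecting the coefficients of $\alpha_k^2$ gives $b_{4,k}$, the $\Delta_k$ contributions from $e_2$ and $e_3$ give $b_{3,k}$ (using $\Delta_k/\alpha_k^2$ scaling), and the $\|\bsx_k\|_{\bsE}^2$ pieces produced by \eqref{zerosg-a5:rand-grad-esti2} give $b_{1,k}$. The $\frac12\Delta_k\cdot 2e_{1,k+1}$ term of \eqref{zerosg:v3k} is handled by re-inserting \eqref{nonconvex:v1k} crudely (giving the $\varepsilon_5\Delta_k$ and $\Delta_k$-terms in $b_{1,k}$, $b_{5,k}$). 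The $\mu_k^2$ terms sum to $b_{5,k}\alpha_k\mu_k^2$.

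\textbf{Step 4 (consensus and dual coefficients).} The $\|\bsx_k\|^2$ coefficient is $-\alpha_k\bsM_{1,k}+\alpha_k^2\bsM_{2,k}$ in Laplacian form. Use $\rho_2(L)\bsE\le\bsL\le\rho(L)\bsE$ and $\epsilon_1>\kappa_1\ge 13/(2\rho_2(L))$ to show $\alpha_k\bsM_{1,k}-\alpha_k^2\bsM_{2,k}\ge 2a_1\bsE$ after substituting $\alpha_k\gamma_k=\epsilon_2$; this yields $a_1=\frac12(\varepsilon_3\epsilon_2-\varepsilon_4\epsilon_2^2)$, with $\gamma_k\ge\varepsilon_0$ absorbing lower-order $\ell^2$ and $p$ terms. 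Similarly, the $\bsF$-coefficients give $-\alpha_k(\frac34\gamma_k-\rho_2^{-1})+\frac14\alpha_k\gamma_k+O(\epsilon_2^2)+\Delta_k(\cdot)$, i.e.\ $-b_{2,k}$ with $2a_2$.

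The main obstacle I expect is this bookkeeping in Step 4, particularly verifying positivity of the combined $\|\bsx_k\|_{\bsE}^2$ coefficient given that the $p$-dependent variance terms feed back into it. I would first try handling them by requiring $\gamma_k$ large (the $4p(1+\eta_1^2)\epsilon_2\ell$ entry in $\varepsilon_0$) rather than shrinking $\epsilon_2$.
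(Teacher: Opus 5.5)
Your outline follows the paper's route: sum \eqref{nonconvex:v1k}--\eqref{nonconvex:xminush}, split $\hat\bsx_k=\bsx_k+(\hat\bsx_k-\bsx_k)$, apply the compression bound, then use \eqref{zerosg-a5:rand-grad-esti2}, \eqref{zerosg-a5:vkLya-2} and the matrix bounds. This lemma, however, is nothing but bookkeeping with explicit constants, and as written two of your steps do not yield the stated inequality.

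The first problem is in Step~1. The term $\frac12\Delta_k\mathbb{E}_{\mathcal{A}_k}[2e_{1,k+1}]$ in \eqref{zerosg:v3k} must be bounded by exactly $\Delta_k$ times \eqref{nonconvex:v1k}, so that \eqref{nonconvex:v1k} enters with weight $(1+\Delta_k)$. With that weight the cross-term coefficients are $-(1+\Delta_k)\alpha_k\gamma_k+(1+\Delta_k)\alpha_k(\beta_k+\gamma_k)-(1+\Delta_k)\alpha_k\beta_k=0$. They cancel exactly, and there is no $\Delta_k\alpha_k\gamma_k$ remainder. Done exactly, your own Step~3 reinsertion supplies the missing $-\Delta_k\alpha_k\gamma_k$ cross term, so the Cauchy--Schwarz step in Step~1 double-counts. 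That step adds $\frac12\Delta_k\epsilon_2\|\hat\bsx_k\|_{\bsE}^2$ and $\frac12\rho(L)\Delta_k\epsilon_2\|\bsv_k+\frac{1}{\gamma_k}\bsg_k^0\|_{\bsF}^2$. Neither term appears in $\bsM_3$, $\bsM_6$ or $b_{2,k}$, and in general there is no room for them; for instance $\rho(\bsM_6)=b_{6,k}$ exactly. Conversely, a ``crude'' reinsertion loses precisely the entries coming from the $\Delta_k$-copy of \eqref{nonconvex:v1k}:
(i) the $\frac12\bsE$ and $-\frac12\epsilon_1\epsilon_2\bsL$ in $\bsM_3$ (hence the $-\frac12\epsilon_1\epsilon_2\rho_2(L)$ in $\varepsilon_5$);
(ii) the $\frac{\rho(L)}{2}(\epsilon_1\epsilon_2+2\epsilon_2)$ part of $\bsM_6$;
(iii) the $\frac14\Delta_k(6\rho(L)\epsilon_2^2+\epsilon_2)$ part of $b_{2,k}$;
(iv) the $2\Delta_k$ in $b^0_{4,k}$;
(v) the $\Delta_k+5\alpha_k\Delta_k$ in $b_{5,k}$.

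The second problem is in Step~3: do not use \eqref{zerosg:rand-grad-esti5} here. It generates extra $\|\bar\bsg^\mu_k\|^2$ terms with coefficient $(1+\Delta_k)(\epsilon_5\alpha_k+\epsilon_6\alpha_k^2)\ell^2+(\epsilon_7\alpha_k+\epsilon_8\alpha_k^2)\ell^2+\frac12\alpha_k^2\ell$. After absorbing $\frac{\alpha_k}{\gamma_k}\rho_2^{-1}(L)\|\bar\bsg^\mu_k\|^2$, the remaining reserve $\alpha_k(\frac14-\rho_2^{-1}(L)/\gamma_k)$ is only guaranteed to be at least $\frac18\alpha_k$. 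Meanwhile a term such as $\epsilon_7\ell^2=\rho_2^{-2}(L)\ell^2/(2\gamma_k^2)$ is not controlled by $\gamma_k\ge\varepsilon_0$, so the absorption can fail. The paper instead uses the projection bound $\|\bar\bsg^z_k\|^2=\|\bsH\bsg^z_k\|^2\le\|\bsg^z_k\|^2$. This is exactly what produces $b^0_{4,k}$ (its $\frac12\ell$, $\frac{\rho_2^{-2}(L)\ell^2}{2\epsilon_2\gamma_k}$, \dots\ entries, with no $1/n$); the $1/n$ gain is exploited only later, in \eqref{zerosg:v4kspeed}.

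Three smaller corrections:
(i) Both \eqref{nonconvex:v1k} and \eqref{zerosg:v2k} contribute $\frac14\alpha_k\gamma_k$ to the $\bsF$-weight. The leading term is therefore $-\alpha_k(\frac14\gamma_k-\rho_2^{-1}(L))$, which $\gamma_k\ge8\rho_2^{-1}(L)$ turns into the $\frac18\epsilon_2$ of $2a_2$.
(ii) The weight on $\|\hat\bsx_k-\bsx_k\|^2$ is $\alpha_k\bsM_{4,k}+\alpha_k^2\bsM_{5,k}+\Delta_k\bsM_6$, with spectral bounds $\varepsilon_2$ and $b_{6,k}$. $\bsM_3$ belongs to the $\|\bsx_k\|^2$ weight, not to this one.
(iii) The obstacle you anticipate does not arise, because the lemma asserts no positivity (only $\epsilon_2>0$ is assumed). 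The conditions $\epsilon_1>\kappa_1$ and $\gamma_k\ge\varepsilon_0$ serve only to give $\alpha_k\bsM_{1,k}\ge\varepsilon_3\epsilon_2\bsE$, $\alpha_k^2\bsM_{2,k}\le\varepsilon_4\epsilon_2^2\bsE$, $b^0_{2,k}\ge b_{2,k}$ and $\frac14-\rho_2^{-1}(L)/\gamma_k\ge0$. The $p$-dependent part of $\bsM_{2,k}$ is absorbed through the square-root entry of $\varepsilon_0$, not the $4p(1+\eta_1^2)\epsilon_2\ell$ entry.
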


		\begin{proof}

		We start to show the relation between $\mathcal{L}_{1,k+1}$ and $\mathcal{L}_{1,k}$.
		We have
		\begin{align}
			&\mathbb{E}_{\mathcal{A}}[\mathcal{L}_{1,k+1}]\nonumber\\
			&\le  \mathcal{L}_{1,k}+\frac{1}{2}\Delta_k\|\bsx_k\|^2_{\bsE} \nonumber\\
			&\quad - (1+\Delta_k)\|\bsx_k\|^2_{\frac{\alpha_k\beta_k}{2}\bsL-\frac{\alpha_k}{2}\bsE-\alpha_k(1+5\alpha_k) \ell ^2\bsE}\nonumber\\
			&\quad+ (1+\Delta_k)\mathbb{E}_{\mathcal{C}_k}\Big[\|\hat{\bsx}_k-\bsx_k+\bsx_k\|^2_{\frac{3}{2}\alpha_k^2\beta_k^2\bsL^2}\Big] \nonumber\\
			&\quad + (1+\Delta_k)n \ell ^2\alpha_k(1+5\alpha_k)\mu^2_k\nonumber\\
			&\quad + (1+\Delta_k)\frac{\alpha_k}{2}(\beta_k+2\gamma_k)\rho(L)\mathbb{E}_{\mathcal{C}_k}[\|\bsx_k-\hat{\bsx}_k\|^2] \nonumber\\
			&\quad + 2(1+\Delta_k)\alpha^2_k\mathbb{E}_{\mathcal{B}_k}[\|\bsg_k^z\|^2]\nonumber\\
			&\quad + (1+\Delta_k)\Big\|\bsv_k+\frac{1}{\gamma_k}\bsg_k^0\Big\|^2_{\frac{6\alpha_k^2\gamma_k^2\rho(L)+\alpha_k\gamma_k}{4}\bsF} 
			\nonumber\\ 
			&\quad +\frac{1}{2}\varepsilon_1(\Delta_k+\Delta_k^2)\mathbb{E}_{\mathcal{A}_k}[\|\bsg_{k+1}^0\|^2] \nonumber\\
			&\quad+\mathbb{E}_{\mathcal{C}_k}\Big[\|\hat{\bsx}_k-\bsx_k+\bsx_k\|^2_{(1+\Delta_k)\frac{1}{2}\alpha_k^2(\beta_k\gamma_k+\gamma_k^2)\bsL }\Big] \nonumber\\
			&\quad+\mathbb{E}_{\mathcal{C}_k}\Big[\|\hat{\bsx}_k-\bsx_k+\bsx_k\|^2_{(1+\Delta_k)\frac{1}{2}\alpha_k^2(\beta_k+\gamma_k)^2\bsE}\Big] \nonumber\\
			&\quad +\Big\|\bsv_k+\frac{1}{\gamma_k}\bsg_{k}^0\Big\|^2_{\big(\alpha_k\frac{\gamma_k}{4}+\Delta_k(\frac{\beta_k+\gamma_k}{2\gamma_k}+\frac{\alpha_k\gamma_k}{4})\big)\bsF}\nonumber\\
			&\quad +(1+\Delta_k)(\epsilon_5\alpha_k+\epsilon_6\alpha_k^2) \ell ^2\mathbb{E}_{\mathcal{B}_k}\big[\|\bar{\bsg}_k^z\|^2\big] 
			\nonumber\\ 
			&\quad+\|\bsx_k\|^2_{\alpha_k(\frac{\gamma_k+2}{4}+\frac{1}{2} \ell ^2)\bsE+3\alpha_k^2 \ell ^2\bsE}\nonumber\\
			&\quad +\mathbb{E}_{\mathcal{C}_k}\Big[\|\hat{\bsx}_k-\bsx_k+\bsx_k\|^2_{\alpha_k\gamma_k\bsE
				+\alpha_k^2\big((\frac{1}{2}\beta_k^2+\gamma_k^2)\bsE-\beta_k\gamma_k\bsL\big)}\Big] \nonumber\\
			&\quad +\mathbb{E}_{\mathcal{C}_k}\Big[\|\hat{\bsx}_k-\bsx_k+\bsx_k\|^2_{\frac{1}{2}\Delta_k\alpha_k\beta_k\bsE} \Big]\nonumber\\
			&\quad -\Big\|\bsv_k+\frac{1}{\gamma_k}\bsg_{k}^0\Big\|^2_{\big(\alpha_k(\frac{3}{4}\gamma_k-\rho_2^{-1}(L))-\alpha_k^2\gamma_k^2\rho(L) -\frac{1}{2}\rho(L)\Delta_k\alpha_k\beta_k\big)\bsF}\nonumber\\
			&\quad +(\alpha_k\epsilon_7+\alpha_k^2\epsilon_8) \ell ^2\mathbb{E}_{\mathcal{B}_k}\big[\|\bar{\bsg}_k^z\|^2\big]  +n \ell ^2\alpha_k(\frac{1}{2}+3\alpha_k)\mu_k^2 \nonumber\\
			&\quad + \alpha^2_k\mathbb{E}_{\mathcal{B}_k}[\|\bsg_k^z\|^2] +\frac{\alpha_k}{\gamma_k}\rho_2^{-1}(L)\|\bar{\bsg}_k^\mu\|^2 \nonumber\\
			&\quad+\frac{1}{2}\rho_2^{-2}(L)\Delta_k\mathbb{E}_{\mathcal{A}_k}[\|\bsg_{k+1}^0\|^2]
			\nonumber\\ 
			&\quad- \frac{1}{4}\alpha_k\|\bar{\bsg}^\mu_{k}\|^2 + \|\bsx_k\|^2_{\alpha_k \ell ^2\bsE}\nonumber\\
			&\quad + n \ell ^2\alpha_k\mu^2_k-\frac{1}{4}\alpha_k\|\bar{\bsg}_{k}^0\|^2
			+\frac{1}{2}\alpha^2_k \ell \mathbb{E}_{\mathcal{B}_k}[\|\bar{\bsg}^z_{k}\|^2]
			\nonumber\\ 
			&\quad - c_2\|\bsx_{k}-\bsy_{k}\|^2 \nonumber\\
			&\quad +4(1+c_1^{-1})\rho^2(L)\alpha_k^2\beta_k^2 \mathbb{E}_{\mathcal{C}_k}[\|\bsx_k-\hat{\bsx}_k\|_\bsE^2] \nonumber\\
			&\quad + \|\bsx_k\|^2_{4(1+c_1^{-1})\alpha^2_k(\beta_k^2\rho^2(L)+4 \ell ^2)\bsE}\nonumber\\
			&\quad + 16(1+c_1^{-1})n \ell ^2\alpha_k^2\mu_k^2 + 8(1+c_1^{-1})\alpha_k^2\mathbb{E}_{\mathcal{B}_k}[\|\bsg_k^z\|^2]\nonumber\\
			&\quad + \Big\|\bsv_k+\frac{1}{\gamma_k}\bsg_k^0\Big\|^2_{4(1+c_1^{-1})\rho(L)\alpha_k^2\gamma_k^2\bsF}
			\nonumber\\ 
			&\le \mathcal{L}_{1,k} - \|\bsx_k\|^2_{\alpha_k\bsM_{1,k}-\alpha_k^2\bsM^0_{2,k}-\Delta_k\bsM_{3}}\nonumber\\
			&\quad + \|\bsx_k\|^2_{((\frac{1}{2}+ \ell ^2)\alpha_k\Delta_k+5 \ell ^2\alpha_k^2\Delta_k)\bsE} \nonumber\\
			&\quad + \mathbb{E}_{\mathcal{C}_k}\big[\|\bsx_k-\hat{\bsx}_k\|^2_{\alpha_k\bsM_{4,k}+\alpha_k^2\bsM_{5,k}
			 + \Delta_k\bsM_6}\big] \nonumber\\
			&\quad - \Big\|\bsv_k+\frac{1}{\gamma_k}\bsg_{k}^0\Big\|^2_{b^0_{2,k}\bsF}
			  -\frac{1}{4}\alpha_k\|\bar{\bsg}_{k}^0\|^2+b^0_{4,k}\alpha^2_k 
			  \mathbb{E}_{\mathcal{B}_k}[\|\bsg_k^z\|^2] \nonumber\\
			&\quad + \frac{1}{2}(\varepsilon_8\Delta_k+\varepsilon_1\Delta_k^2)
			\mathbb{E}_{\mathcal{A}_k}[\|\bsg_{k+1}^0\|^2] - c_2\|\bsx_{k}-\bsy_{k}\|^2 \nonumber\\
			&\quad + n \ell ^2\Big(\frac{5}{2}+\big(8+16(1+c_1^{-1})\big)\alpha_k+\Delta_k+5\alpha_k\Delta_k\Big)\alpha_k\mu_k^2 \nonumber\\
				&\quad -\alpha_k\Big(\frac{1}{4}-\frac{\rho_2^{-1}(L)}{\gamma_k}\Big) \|\bar{\bsg}_{k}^{\mu}\|^2 \nonumber\\			
			&\le  \mathcal{L}_{1,k} - \|\bsx_k\|^2_{\alpha_k\bsM_{1,k}-\alpha_k^2\bsM_{2,k}-\Delta_k\bsM_{3}-b_{1,k}\bsE} \nonumber\\
			&\quad - \Big\|\bsv_k+\frac{1}{\gamma_k}\bsg_{k}^0\Big\|^2_{b^0_{2,k}\bsF} -\frac{1}{4}\alpha_k\|\bar{\bsg}^0_{k}\|^2 \nonumber\\
			&\quad  + 2\ell(b_{3,k}+4p(1+\eta_1^2)b_{4,k})\alpha_k^2 e_{4,k}\nonumber\\
			&\quad + 2pn\sigma^2_1b_{4,k}\alpha_k^2+n\check{\sigma}_2^2(b_{3,k}+4p(1+\eta_1^2)b_{4,k})\alpha_k^2 \nonumber\\
			&\quad +b_{5,k}\alpha_k\mu_k^2 - (c_2-\delta_0\varepsilon_2-\delta_0b_{6,k}\Delta_k)\|\bsx_{k}-\bsy_{k}\|^2 \nonumber\\		
			&\quad -\alpha_k\Big(\frac{1}{4}-\frac{\rho_2^{-1}(L)}{\gamma_k}\Big) \|\bar{\bsg}_{k}^{\mu}\|^2   \label{nonconvex:vkLya_xhat}
		\end{align}
		where the first inequality holds due to \eqref{nonconvex:v1k}--\eqref{nonconvex:xminush};
		  the second inequality holds due to $\|\hat{\bsx}_k\|^2=\|\hat{\bsx}_k-\bsx_k+\bsx_k\|^2\le2\|\hat{\bsx}_k-\bsx_k\|^2+2\|\bsx_k\|^2$, the independence between $\bsx_k$ and $\mathcal{C}_k$, $\|\bar{\bsg}^z_{k}\|^2\le\|\bsg^z_{k}\|^2$, $\beta_k=\epsilon_1\gamma_k$ and  $\alpha_k=\epsilon_2/\gamma_k$; 
		   and the last inequality hods due to \eqref{zerosg-a5:rand-grad-esti2}, \eqref{zerosg-a5:vkLya-2},
			 \eqref{nonconvex:ass:compression_equ_scaling} , the independence of $\bsx_k$ and $\bsy_k$ from $\mathcal{C}_k$, \eqref{nonconvex:KL-L-eq2}, $\rho(\bsE)=1$, $\gamma_k\ge\varepsilon_{0}\ge1$ and $\alpha_k\le\epsilon_2$.

		Next, we scale and bound some coefficients to prove the Lemma~\ref{zerosg:lemma:sg:L1}.

		From \eqref{nonconvex:KL-L-eq2}, $\beta_k=\epsilon_1\gamma_k$, $\epsilon_1>\kappa_1\ge13/(2\rho_2(L))$, $\gamma_k\ge\varepsilon_{0}\ge1+ 5\ell ^2/2$, and $\alpha_k=\epsilon_2/\gamma_k$, we have
		\begin{align}\label{zerosg:m1-rand-pd}
		\alpha_k\bsM_{1,k}\ge\varepsilon_3\epsilon_2\bsE.
		\end{align}
		From \eqref{nonconvex:KL-L-eq2}, $\beta_k=\epsilon_1\gamma_k$, $\gamma_k\ge\varepsilon_{0}\ge\big(8+16(1+c_1^{-1})+4p(1+\eta_1^2)(6+ 16(1+c_1^{-1})+ \ell )\big)^\frac{1}{2} \ell $, and $\alpha_k=\epsilon_2/\gamma_k$, we have
		\begin{align}\label{zerosg:m2-rand-pd}
		\alpha_k^2\bsM_{2,k}\le\varepsilon_4\epsilon_2^2\bsE.
		\end{align}
		From \eqref{nonconvex:KL-L-eq2}, $\beta_k=\epsilon_1\gamma_k$,  and $\alpha_k=\epsilon_2/\gamma_k$, we have
		\begin{align}\label{zerosg:m3-rand-pd}
		\bsM_{3}\le\varepsilon_5\bsE.
		\end{align}
		From $\gamma_k\ge\varepsilon_{0}\ge 8\rho_2^{-1}(L)$ and $\alpha_k=\epsilon_2/\gamma_k$, we have
		\begin{align}\label{zerosg:vkLya-b1}
		b^0_{2,k}\ge&b_{2,k}.
		\end{align}
		From $\gamma_k\ge\varepsilon_{0}\ge 8\rho_2^{-1}(L)$, we have
		\begin{align}\label{zerosg:vkLya-b2}
			\frac{1}{4}-\frac{\rho_2^{-1}(L)}{\gamma_k}  \ge 0
		\end{align}

		Finally, from \eqref{nonconvex:vkLya_xhat}--\eqref{zerosg:vkLya-b2}, we know that \eqref{zerosg:sgproof-vkLya:L1} holds.
	\end{proof}

We are now ready to prove Lemma~\ref{Lemma:Lyap:fix}.
Noting $\gamma_k = \gamma \ge \tilde{\kappa}_0(\epsilon_1,\epsilon_2) \ge \varepsilon_0$, all the conditions in Lemma~\ref{zerosg:lemma:sg:L1} are satisfied.
Substituting $\beta_k=\beta=\epsilon_1\gamma$, $\gamma_k=\gamma$, $\alpha_k=\alpha=\epsilon_2/\gamma$, and $\Delta_k=0$ into 
	\eqref{zerosg:sgproof-vkLya:L1}, one has
	\begin{align}
		&\mathbb{E}_{\mathcal{A}_k}[\mathcal{L}_{1,k+1}]\
			\le  \mathcal{L}_{1,k}-\|\bsx_k\|^2_{(2 a_1-\tilde{b}_{1})\bsE}
		-\|\bsv_k+\frac{1}{\gamma_k}\bsg_{k}^0\|^2_{2a_2\bsF}\nonumber\\
		&\quad-\frac{1}{4}\alpha\|\bar{\bsg}^0_{k}\|^2  + 8p(1+\eta_1^2)\ell \tilde{b}_{4}\alpha^2e_{4,k}\nonumber\\
		&\quad+2pn(\sigma^2_1+2(1+\eta_1^2)\check{\sigma}_2^2)\tilde{b}_{4}\alpha^2+\tilde{b}_{5}\alpha\mu_k^2,\nonumber\\
		&\quad -2 a_3\|\bsx_{k}-\bsy_{k}\|^2, \label{zerosg:sgproof-vkLyaT}
	\end{align}
	where
	\begin{align*}
		&\tilde{b}_{1}=4p(1+\eta_1^2)\rho_2^{-2}(L) \ell ^4\frac{\alpha}{\gamma^2}\\
			&\quad +4p(1+\eta_1^2)\Big(2\rho_2^{-2}(L) + (\epsilon_1+1)\rho_2^{-1}(L)+2\Big) \ell ^4\frac{\alpha^2}{\gamma^2}\\
			&\quad +8p(1+\eta_1^2)(\epsilon_1+1)^2\rho_2^{-1}(L) \ell ^4\frac{\alpha}{\gamma^3}
			,\\	
		&\tilde{b}_{4}=6 + 16(1+c_1^{-1})+  \ell  + \frac{\rho_2^{-2}(L) \ell ^2}{\epsilon_2}\frac{1}{\gamma}\\
			&\quad +(2\rho_2^{-2}(L)+\frac{2(\epsilon_1+1)^2+\epsilon_1\epsilon_2+\epsilon_2}{\epsilon_2}\rho_2^{-1}(L)+2) \ell ^2\frac{1}{\gamma^2},\\
		&\tilde{b}_{5}= n \ell ^2\Big(\frac{5}{2}+\big(8+16(1+c_1^{-1})+\frac{1}{4}p^2\tilde{b}_{4}\big)\alpha\Big).\\
	\end{align*}


	From $\gamma\ge\tilde{\kappa}_0(\epsilon_1,\epsilon_2)\ge\max\{\varepsilon_0, 1, \Big(\frac{p(1+\eta_1^2)\tilde{\varepsilon}_9}{ a_1}\Big)^{\frac{1}{3}}\}$, we have
	\begin{align}
		2 a_1-\tilde{b}_{1}\ge2 a_1-\frac{p(1+\eta_1^2)\tilde{\varepsilon}_9}{\gamma^3}\ge a_1.
	\end{align}


	From $\gamma\ge1$, we have
	\begin{align} \label{tilde_b4}
	\tilde{b}_4\le\tilde{\varepsilon}_{12}.
	\end{align}

	From $\gamma\ge\tilde{\kappa}_0(\epsilon_1,\epsilon_2)\ge~p\epsilon_2\tilde{\varepsilon}_{12}$, we have
		\begin{align}\label{zerosg:b3kb4keta:1}
		\tilde{b}_4\alpha\le \frac{\epsilon_2\tilde{\varepsilon}_{12}}{\gamma}
		\le\frac{1}{p}.
		\end{align}

		From \eqref{zerosg:b3kb4keta:1} and $\gamma\ge1$, we have 
	\begin{align}
		\tilde{b}_5\le pn\tilde{a}_5.
	\end{align}

	From $\epsilon_1>\kappa_1\ge13/(2\rho_2(L))$, we have
	$\varepsilon_3>0$.
	From $\rho(L)\epsilon_1\ge\rho_2(L)\epsilon_1\ge13/2$, we have $\varepsilon_4\ge7\rho^2(L)\epsilon_1^2-\rho(L)\epsilon_1>0$.
	From $\varepsilon_3>0$, $\varepsilon_4>0$, and $\epsilon_2 < \kappa_2(\epsilon_1) \le\min\{\frac{\varepsilon_3}{\varepsilon_4},~\frac{\rho^{-1}(L)}{20+32(1+c_1^{-1})}\}$, we have
	\begin{align}
	a_1>0~\text{and}~
	a_2>0.\label{zerosg:kappa4-6}
	\end{align}

	From $\epsilon_2 < \kappa_2(\epsilon_1) \le\frac{\sqrt{\varepsilon_{13}^2+4\varepsilon_{14}c_2}-\varepsilon_{13}}{2\varepsilon_{14}}$, we have
	\begin{align}\label{zerosg:c1}
		a_3=\frac{1}{2}(c_2-\delta_0\varepsilon_2)=\frac{1}{2}(c_2-\varepsilon_{13}\epsilon_2-\varepsilon_{14}\epsilon_2^2)>0.
	\end{align}

	From \eqref{tilde_b4}, we have
	\begin{align} \label{tilde_a8}
		8p(1+\eta_1^2)\ell \tilde{b}_{4}\alpha^2e_{4,k} 
		\le p \tilde{a}_8 \alpha^2 e_{4,k}.
	\end{align}

	Finally, from \eqref{zerosg:sgproof-vkLyaT}--\eqref{tilde_a8}, we have \eqref{zerosg:sgproof-vkLya2T}.


\subsection{Proof of Lemma \ref{zerosg:lemma:sg2-T:23}} \label{appendix:proof:lemma3}


\noindent {\bf (i)}
	Under the setting of Lemma~\ref{zerosg:lemma:sg2-T:23}, inequalities \eqref{nonconvex:v1k}--\eqref{nonconvex:xminush} hold.


			
		From \eqref{zerosg:v4k}, \eqref{zerosg:rand-grad-esti5} and \eqref{zerosg-a5:rand-grad-esti2}, 
		\begin{align}
		&\mathbb{E}_{\mathcal{A}_k}[e_{4,k+1}] \le e_{4,k}-\frac{1}{4}\alpha_k\|\bar{\bsg}^\mu_{k}\|^2
		+\|\bsx_k\|^2_{\alpha_k \ell ^2\bsE}+n \ell ^2\alpha_k\mu^2_k\nonumber\\
		&\quad-\frac{1}{4}\alpha_k\|\bar{\bsg}_{k}^0\|^2
		+\frac{1}{2}\alpha_k^2 \ell \Big(
			 \frac{16p(1+\eta_1^2)\ell }{n}e_{4,k} +\|\bar{\bsg}^\mu_{k}\|^2\nonumber\\
			&\quad 	+\frac{8p(1+\eta_1^2)\ell ^2}{n} \|\bsx_{k}\|^2_{\bsE}  +4p\sigma^2_1+8p(1+\eta_1^2)\check{\sigma}_2^2 \nonumber\\
			&\quad + \frac{1}{2}p^2 \ell ^2\mu_k^2
			\Big).\label{zerosg:v4kspeed-diminishing}
		\end{align}
		Rescaling $\alpha_k=\epsilon_2/\gamma_k$, and $\gamma_k\ge \varepsilon_0\ge4p(1+\eta_1^2)\epsilon_2 \ell $, we obtain
		\begin{subequations}
			\begin{align}
				&\frac{4p(1+\eta_1^2)}{n}\alpha_k^2 \ell ^3
					\le\alpha_k \ell ^2,\label{zerosg:v4kspeed-diminishing-1.2}\\
				&\frac{1}{2}\alpha_k^2 \ell \le\frac{1}{4}\alpha_k,\label{zerosg:v4kspeed-diminishing-1.3}\\
				&\frac{1}{4}p^2\alpha_k^2 \ell ^3\le p \ell ^2\alpha_k.\label{zerosg:v4kspeed-diminishing-1.4}
			\end{align}
		\end{subequations}
		Then \eqref{zerosg:v4kspeed} follows from \eqref{zerosg:v4kspeed-diminishing}--\eqref{zerosg:v4kspeed-diminishing-1.4}.

\noindent {\bf (ii)}
		 We establish the relation between $\mathcal{L}_{2,k+1}$ and $\mathcal{L}_{2,k}$ by following a procedure similar to the proof of Lemma~\ref{Lemma:Lyap:fix}.
		Denote $b_{5,k}^{\prime} = b_{5,k} + 2n \ell^2 a_6 \epsilon_2/\varepsilon_0$ and $\bsM_{2,k}^{\prime} = \bsM_{2,k} + 2 a_6 \ell^2 \bsE$.
		Similar to the derivation of~\eqref{zerosg:sgproof-vkLya:L1}, 
				we obtain
		\begin{align}
			&\mathbb{E}_{\mathcal{A}_k}[\mathcal{L}_{2,k+1}]   
			  \le  \mathcal{L}_{2,k} - \|\bsx_k\|^2_{\alpha_k\bsM_{1,k}-\alpha_k^2\bsM_{2,k}-\Delta_k\bsM_{3}-b_{1,k}\bsE} \nonumber\\
			&\quad - \Big\|\bsv_k+\frac{1}{\gamma_k}\bsg_{k}^0\Big\|^2_{b^0_{2,k}\bsF} 
				+ \frac{\alpha_k}{\gamma_k}\rho_2^{-1}(L)\|\bar{\bsg}_k^\mu - \bar{\bsg}_k^0 + \bar{\bsg}_k^0\|^2 \nonumber\\
			&\quad  + 2\ell(b_{3,k}+4p(1+\eta_1^2)b_{4,k})\alpha_k^2 e_{4,k}\nonumber\\
			&\quad + 2pn\sigma^2_1b_{4,k}\alpha_k^2+n\check{\sigma}_2^2(b_{3,k}+4p(1+\eta_1^2)b_{4,k})\alpha_k^2 \nonumber\\
			&\quad +b_{5,k}\alpha_k\mu_k^2 - (c_2-\delta_0\varepsilon_2-\delta_0b_{6,k}\Delta_k)\|\bsx_{k}-\bsy_{k}\|^2 \nonumber\\
			&\le  \mathcal{L}_{2,k} - \|\bsx_k\|^2_{\alpha_k\bsM_{1,k}-\alpha_k^2\bsM_{2,k}^{\prime}-\Delta_k\bsM_{3}-b_{1,k}\bsE} \nonumber\\
			&\quad - \Big\|\bsv_k+\frac{1}{\gamma_k}\bsg_{k}^0\Big\|^2_{b^0_{2,k}\bsF} 
				+ b_{5,k}\alpha_k\mu_k^2 + 2n \ell^2 a_6 \alpha_k^2 \mu_k^2 \nonumber\\
			&\quad + a_6 \alpha_k^2 \|\bar{\bsg}^0_{k}\|^2  + 2\ell(b_{3,k}+4p(1+\eta_1^2)b_{4,k})\alpha_k^2 e_{4,k} \nonumber\\
			&\quad + 2pn\sigma^2_1b_{4,k}\alpha_k^2 + n\check{\sigma}_2^2\big(b_{3,k}+4p(1+\eta_1^2)b_{4,k}\big)\alpha_k^2 \nonumber\\
			&\quad - (c_2-\delta_0\varepsilon_2-\delta_0b_{6,k}\Delta_k)\|\bsx_{k}-\bsy_{k}\|^2 \nonumber\\
			&\le  \mathcal{L}_{2,k} -\|\bsx_k\|^2_{(2 a_1-\varepsilon_5\Delta_k-b_{1,k})\bsE}
				-\Big\|\bsv_k+\frac{1}{\gamma_k}\bsg_{k}^0\Big\|^2_{b_{2,k}\bsF}\nonumber\\
			&\quad + a_6 \alpha_k^2 \|\bar{\bsg}^0_{k}\|^2  + 2\ell(b_{3,k}+4p(1+\eta_1^2)b_{4,k})\alpha_k^2 e_{4,k} \nonumber\\
			&\quad + 2pn\sigma^2_1b_{4,k}\alpha_k^2 + n\check{\sigma}_2^2\big(b_{3,k}+4p(1+\eta_1^2)b_{4,k}\big)\alpha_k^2 \nonumber\\
			&\quad 	+ b_{5,k}^{\prime}\alpha_k\mu_k^2 
			 - (c_2-\delta_0\varepsilon_2-\delta_0b_{6,k}\Delta_k)\|\bsx_{k}-\bsy_{k}\|^2, \label{L2:0}
		\end{align}
		where the first inequality holds in the same manner as~\eqref{nonconvex:vkLya_xhat};
		the second inequality hold due to $\|\bar{\bsg}_k^\mu - \bar{\bsg}_k^0 + \bar{\bsg}_k^0\|^2 \le 2\|  \bar{\bsg}_k^0\|^2 +2 \|\bar{\bsg}_k^\mu - \bar{\bsg}_k^0\| $, \eqref{zerosg:rand-grad-esti9}, and $\gamma_k=\epsilon_2/\alpha_k$;
		the last inequality holds in the similar manner as~\eqref{zerosg:m1-rand-pd}--\eqref{zerosg:vkLya-b2}, with \(\alpha_k = \epsilon_2/\gamma_k \le \epsilon_2/\varepsilon_0\).
	Then, following similar steps to those in~\eqref{zerosg:sgproof-vkLyaT}--\eqref{tilde_a8}, we obtain~\eqref{zerosg:sgproof-vkLya2T-bounded} from~\eqref{L2:0}.

\subsection{Proof of Theorem~\ref{Thm:PL:know}} \label{Proof:thm:know}
The key distinction from the previous proofs is the setting of time-varying algorithm parameters, which makes the analysis more challenging.
We therefore provide time-varying counterparts of the lemmas used for Theorems~\ref{Thm:nonconvex} and~\ref{Thm:PL:unknow}, namely Lemmas~\ref{zerosg:lemma:sg2}--\ref{L3_bound:time-varying1}.
Before proceeding, in addition to the notations defined in Appendices~\ref{appendix:constant:Thm} and \ref{appendix:iterative:Lemma}, 
 we introduce the following constants for the subsequent analysis:
\begin{align*}
&\varepsilon_9
=4\big(\rho_2^{-2}(L)+4(\epsilon_1+1)^2\rho_2^{-1}(L)\big) \ell ^4\epsilon_2 \\
&\quad+\frac{(1+2 \ell ^2)\epsilon_2}{2p(1+\eta_1^2)}
+\big(\frac{5}{p(1+\eta_1^2)}+16\big) \ell ^2\epsilon_2^2\\
&\quad + 4\big(2\rho_2^{-2}(L)+2(\epsilon_1+1)\rho_2^{-1}(L)+2\varepsilon_8+2\varepsilon_1+3\big) \ell ^4\epsilon_2^2,\\
&\varepsilon_{15}=
\frac{4\epsilon_2^2(a_9\bar{L} + a_7+ \ell ^2\kappa_{\mu}^2)}{\epsilon_4^2(\frac{\nu\epsilon_2}{2\epsilon_4}-1)},\\
& a_5^{\prime}= \ell ^2\Big(\frac{(29+16c_1^{-1})\epsilon_2+\frac{7}{2}}{p} + \frac{1}{4}  +2a_6\frac{\epsilon_2}{\varepsilon_0p} \Big),\\
&d_1=\frac{1}{\varepsilon_6}\min\{a_1, a_2, a_3\}.
\end{align*}




\begin{lemma}\label{zerosg:lemma:sg2}
	Suppose  Assumptions~\ref{ass:i:lower-bounded}--\ref{zerosg:ass:zeroth-variance} hold,  $\alpha_k=\epsilon_2/\gamma_k$,  $\beta_k=\epsilon_1\gamma_k$, and $\gamma_k=\epsilon_4(k+m)$, 
	where $\epsilon_1>\kappa_1$, $\epsilon_2\in(0,\kappa_2(\epsilon_1))$, $\epsilon_4\ge \kappa_0(\epsilon_1,\epsilon_2)/m$, and $m\ge 1$. Let $\{\bsx_k\}$ be the sequence generated by Algorithm~\ref{nonconvex:algorithm-pdgd}. 
	Then
	\begin{align}
	&\mathbb{E}_{\mathcal{A}_k}[\mathcal{L}_{1,k+1}] \nonumber\\
	&\quad \le  \mathcal{L}_{1,k}- a_1\|\bsx_k\|^2_{\bsE}
	- a_2\Big\|\bsv_k+\frac{1}{\gamma}\bsg_{k}^0\Big\|^2_{\bsF} - a_3\|\bsx_{k}-\bsy_{k}\|^2 \nonumber\\
	&\quad -\frac{1}{4}\alpha_k\|\bar{\bsg}^0_{k}\|^2  + pa_8\alpha_k^2e_{4,k} + pna_4\alpha_k^2+pna_5\alpha_k\mu_k^2.
		\label{zerosg:sgproof-vkLya2}
	\end{align}
\end{lemma}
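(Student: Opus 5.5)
The plan is to derive this time-varying lemma directly from the general one-step bound of Lemma~\ref{zerosg:lemma:sg:L1}. The first task is to confirm that its hypotheses hold. With $\gamma_k=\epsilon_4(k+m)$, the sequence $\{\gamma_k\}$ is non-decreasing. We have $\beta_k/\gamma_k=\epsilon_1$ and $\alpha_k\gamma_k=\epsilon_2$ by construction. Moreover, $\gamma_k\ge\epsilon_4 m\ge\kappa_0(\epsilon_1,\epsilon_2)\ge\varepsilon_0$. So \eqref{zerosg:sgproof-vkLya:L1} holds at every $k$, and what remains is to bound its coefficients uniformly in $k$. The key new quantity is $\Delta_k=\frac{1}{\gamma_k}-\frac{1}{\gamma_{k+1}}=\frac{1}{\epsilon_4(k+m)(k+m+1)}$. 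This satisfies $\Delta_k\le 1/\gamma_k$ and $\Delta_k\le \alpha_k/(\epsilon_2\gamma_k)$; in particular $\Delta_k/\alpha_k^2\le 1/\epsilon_2^2$ up to a constant depending on $\epsilon_4$. All $\Delta_k$-terms will be compared against $\alpha_k$ or $\alpha_k^2$ using these relations.

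Next come the coefficient estimates, treated term by term as in the constant-step proof.

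\emph{Consensus coefficient.} The coefficient of $\|\bsx_k\|_{\bsE}^2$ is $2a_1-\varepsilon_5\Delta_k-b_{1,k}$. Using $\gamma_k\ge 2\varepsilon_5/a_1$ gives $\varepsilon_5\Delta_k\le \varepsilon_5/\gamma_k\le a_1/2$. For $b_{1,k}$, every term carries a factor $p(1+\eta_1^2)\ell^4$ times $\alpha_k/\gamma_k^2$, $\alpha_k^2\Delta_k$, or similar. With $\alpha_k=\epsilon_2/\gamma_k$, this yields $b_{1,k}\le p(1+\eta_1^2)\varepsilon_9/\gamma_k^{3}$ or $\varepsilon_9$-type$/\gamma_k$. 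This is exactly what $\varepsilon_9$ collects, and $\gamma_k$ being large (through $\varepsilon_0\ge 4p(1+\eta_1^2)\epsilon_2\ell$ and the $\kappa_0$ entries) gives $b_{1,k}\le a_1/2$. Hence the coefficient is at least $a_1$.

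\emph{Dual coefficient.} For $b_{2,k}=2a_2-\tfrac12\Delta_k\varepsilon_{10}$, the condition $\gamma_k\ge\varepsilon_{10}/(2a_2)$ gives $b_{2,k}\ge a_2$.

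\emph{Compression coefficient.} The coefficient $c_2-\delta_0\varepsilon_2-\delta_0b_{6,k}\Delta_k$ equals $2a_3-\delta_0 b_{6,k}\Delta_k$. Since $\gamma_k\ge \delta_0 b_{6,k}/a_3$, this is at least $a_3$.

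\emph{Perturbation terms.} It remains to handle $b_{3,k}$, $b_{4,k}$ and $b_{5,k}$. From the bound on $\Delta_k/\alpha_k^2$ and $\Delta_k\le1/\gamma_k\le1$, $b_{3,k}=\varepsilon_8\Delta_k/\alpha_k^2+\varepsilon_1\Delta_k^2/\alpha_k^2$ is bounded by $\varepsilon_{11}$ (times $1/\epsilon_2^2$ factors already absorbed, since $\varepsilon_{11}$ contains $\epsilon_4/\epsilon_2^2$). Similarly, $b_{4,k}\le 2\varepsilon_{12}$ once $\gamma_k\ge1$. These give $2\ell(b_{3,k}+4p(1+\eta_1^2)b_{4,k})\le p a_8$. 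Writing the constant terms as $2pn\sigma_1^2b_{4,k}+n\check\sigma_2^2(\cdots)\le pn a_4$ matches the definition of $a_4$ (with its $\varepsilon_{11}\check\sigma_2^2/p$ piece). For $b_{5,k}$, the condition $\gamma_k\ge p\epsilon_2\varepsilon_{12}$ gives $p^2b_{4,k}\alpha_k\le 2p$. Together with $\Delta_k\le1$ and $\alpha_k\le\epsilon_2$, this yields $b_{5,k}\le pn a_5$, and the $7/2$ and $29+16c_1^{-1}$ constants in $a_5$ account for the extra $\Delta_k$ and $5\alpha_k\Delta_k$ pieces.

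Assembling these bounds gives \eqref{zerosg:sgproof-vkLya2}. One small point: the dual term there is written with $1/\gamma$, which I read as $1/\gamma_k$.

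The main obstacle is the bookkeeping for $b_{1,k}$ and $b_{3,k}$. Their $\Delta_k$-terms must be matched to the right powers of $\gamma_k$, and the constants $\varepsilon_9$ and $\varepsilon_{11}$ must dominate them uniformly in $k$. I would first write $\Delta_k\le\gamma_k^{-2}/\epsilon_4\cdot\epsilon_4^2=\epsilon_4/\gamma_k^2$, using $(k+m)(k+m+1)\ge(k+m)^2$. This turns every term into a monomial in $1/\gamma_k$, after which each can be checked against the corresponding entry of $\kappa_0$.
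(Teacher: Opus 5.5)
Your route is the paper's. You check the hypotheses of Lemma~\ref{zerosg:lemma:sg:L1} via $\gamma_k\ge\epsilon_4 m\ge\kappa_0(\epsilon_1,\epsilon_2)\ge\varepsilon_0$, use $\Delta_k\le\epsilon_4/\gamma_k^2$ and $\Delta_k\le 1/\gamma_k\le 1$, and bound the coefficients in \eqref{zerosg:sgproof-vkLya:L1} one by one. Your handling of $\varepsilon_5\Delta_k$, $b_{2,k}$, $b_{3,k}$ and the compression coefficient is fine; bounding $\Delta_k\le 1/\gamma_k\le 1/(\epsilon_4 m)$ is the paper's step.

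Two estimates, however, do not close with the stated constants as you wrote them. The first is $b_{4,k}$. With your bound $b_{4,k}\le 2\varepsilon_{12}$ you only get $2\ell(b_{3,k}+4p(1+\eta_1^2)b_{4,k})\le 2\ell(\varepsilon_{11}+8p(1+\eta_1^2)\varepsilon_{12})$. This exceeds $pa_8=2\ell(\varepsilon_{11}+4p(1+\eta_1^2)\varepsilon_{12})$. The same doubling breaks the match with $a_4$. Likewise, $p^2 b_{4,k}\alpha_k\le 2p$ puts $\tfrac{p}{2}n\ell^2$ into $b_{5,k}$, whereas $pna_5$ contains only $\tfrac{p}{4}n\ell^2$.

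The factor $2$ in $b_{4,k}=2b^0_{4,k}+2\ell^2(\varepsilon_8\Delta_k+\varepsilon_1\Delta_k^2)$ is already built into $\varepsilon_{12}$. Substituting $1/\gamma_k\le 1$ and $\Delta_k\le 1$ gives exactly $b_{4,k}\le\varepsilon_{12}$. Then $\gamma_k\ge p\epsilon_2\varepsilon_{12}$ gives $b_{4,k}\alpha_k\le 1/p$, and the constants match exactly:
$b_{5,k}\le n\ell^2\big(\tfrac72+(29+16c_1^{-1})\epsilon_2+\tfrac p4\big)=pna_5$, the $e_{4,k}$-coefficient is at most $pa_8\alpha_k^2$, and the constant terms are at most $pna_4\alpha_k^2$.

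The second is $b_{1,k}$. The bound $b_{1,k}\le p(1+\eta_1^2)\varepsilon_9/\gamma_k^3$ cannot hold uniformly in $\epsilon_4$. For instance, $(\tfrac12+\ell^2)\alpha_k\Delta_k=(\tfrac12+\ell^2)\epsilon_2/(\gamma_k^2(k+m+1))$ is of order $\gamma_k^{-2}$, not $\gamma_k^{-3}$, when $\epsilon_4$ is large. For the same reason, your early claim $\Delta_k\le\alpha_k/(\epsilon_2\gamma_k)=\gamma_k^{-2}$ fails whenever $\epsilon_4>1+1/(k+m)$. Only the version $\Delta_k\le\epsilon_4/\gamma_k^2$ that you state at the end is valid.

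What actually holds, using $\gamma_k\ge 1$ and $\Delta_k\le 1/\gamma_k$ term by term, is
$b_{1,k}\le p(1+\eta_1^2)\varepsilon_9/\gamma_k^2\le p(1+\eta_1^2)\varepsilon_9/(\epsilon_4 m)^2$.
To conclude $b_{1,k}\le a_1/2$ you need the specific threshold $\epsilon_4 m\ge\big(2p(1+\eta_1^2)\varepsilon_9/a_1\big)^{1/2}$. Appealing to ``$\gamma_k$ large'' through $\varepsilon_0\ge 4p(1+\eta_1^2)\epsilon_2\ell$ does not deliver this, since $\varepsilon_9/a_1$ involves $\ell^4\rho_2^{-2}(L)$, $\varepsilon_3$ and $\varepsilon_4$.

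The paper's proof invokes exactly this inequality and asserts it is part of $\kappa_0$. It is not among the entries of $\kappa_0$ listed in Appendix~\ref{appendix:constant:Thm}, so it has to be imposed explicitly. With these two repairs, your argument coincides with the paper's.
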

\begin{proof}

Noting that $\epsilon_1>\kappa_1$ and $\gamma_k=\epsilon_4(k+m)\ge\epsilon_4 m\ge \kappa_0(\epsilon_1,\epsilon_2)\ge\varepsilon_{0}$, we know that all the conditions in Lemma~\ref{zerosg:lemma:sg:L1} are satisfied, so \eqref{zerosg:sgproof-vkLya:L1} hold.

Next, we prove \eqref{zerosg:sgproof-vkLya2} by scaling and bounding some coefficients.

Recalling $\gamma_k=\epsilon_4(k+m)$,
\begin{align}\label{zerosg:omegak}
\Delta_k&=\frac{1}{\gamma_{k}}-\frac{1}{\gamma_{k+1}}
=\frac{1}{\epsilon_4}(\frac{1}{(k+m)}-\frac{1}{(k+m+1)})
\nonumber\\
&\le\frac{1}{\epsilon_4(k+m)(k+m+1)}
\le\frac{\epsilon_4}{\gamma_k^2}.
\end{align}
From $\gamma_k=\epsilon_4(k+m)\ge\epsilon_4 m\ge \kappa_0(\epsilon_1,\epsilon_2)\ge\varepsilon_{0}\ge1$, we have $\Delta_k\le1$.	


From \eqref{zerosg:omegak}, $\alpha_k=\epsilon_2/\gamma_k$, $\gamma_k\ge1$, $\Delta_k\le1$, $\epsilon_4\ge \kappa_0(\epsilon_1,\epsilon_2)/m
\ge(\frac{2p(1+\eta_1^2)\varepsilon_9}{ a_1})^{\frac{1}{2}}\times\frac{1}{m}$, and $m\ge1$, we have
\begin{align}
b_{1,k}\le\frac{p(1+\eta_1^2)\varepsilon_9}{\epsilon_4^2m^2}\le\frac{ a_1}{2} .\label{zerosg:b1k}
\end{align}

From \eqref{zerosg:omegak}--\eqref{zerosg:b1k}, $\epsilon_4\ge  \kappa_0(\epsilon_1,\epsilon_2)/m\ge 2\varepsilon_5/(a_1m)$, $m\ge1$, and \eqref{zerosg:kappa4-6}, whether $\varepsilon_5>0$ or not, we have
\begin{align}
2 a_1-\varepsilon_5\Delta_k-b_{1,k}
\ge2 a_1-\frac{\varepsilon_5}{\epsilon_4 m}-\frac{ a_1}{2}
\ge a_1>0.\label{zerosg:varepsilon4}
\end{align}

From \eqref{zerosg:omegak}, $\alpha_k=\epsilon_2/\gamma_k$, $\epsilon_4\ge  \kappa_0(\epsilon_1,\epsilon_2)/m\ge \varepsilon_{10}/(2 a_2m) $, and \eqref{zerosg:kappa4-6}, we have
\begin{align}
b_{2,k}\ge2 a_2-\frac{\varepsilon_{10}}{2\epsilon_4 m}
\ge a_2>0.\label{zerosg:b2k}
\end{align}


From $\eqref{zerosg:omegak}, \gamma_k\ge1$ and $\Delta_k\le1$, we have
\begin{subequations}
\begin{align}
b_{3,k}&\le \varepsilon_{11},\label{zerosg:b3k}\\
b_{4,k}&\le \varepsilon_{12}.\label{zerosg:b4k}
\end{align}
\end{subequations}

From $\epsilon_4\ge \kappa_0(\epsilon_1,\epsilon_2)/m\ge p\epsilon_2\varepsilon_{12}/m$, $\gamma_k\ge1$, $\Delta_k\le1$ and \eqref{zerosg:b4k}, we have
	\begin{align}\label{zerosg:b3kb4keta}
	&b_{4,k}\alpha_k\le \frac{\epsilon_2\varepsilon_{12}}{\epsilon_4 m}
	\le\frac{1}{p}.
	\end{align}

From \eqref{zerosg:b3kb4keta}, $\alpha_k=\epsilon_2/\gamma_k$, $\gamma_k\ge1$, and $\Delta_k\le1$, we have
\begin{align}
b_{5,k}\le pn a_5.\label{zerosg:b5k}
\end{align}


From \eqref{zerosg:omegak} and $\epsilon_4\ge \kappa_0(\epsilon_1,\epsilon_2)/m \ge \delta_0b_{6,k}/(a_3m)$, we have
\begin{align}\label{zerosg:c2}
	\delta_0b_{6,k}\Delta_k\le \frac{\delta_0b_{6,k}}{\epsilon_4 m}\le  a_3.
\end{align}

From \eqref{zerosg:c1} and \eqref{zerosg:c2}, we have
\begin{align}\label{zerosg:c}
	c_2-\delta_0\varepsilon_2-\delta_0b_{6,k}\Delta_k\ge  a_3>0.
\end{align}

Then, from \eqref{zerosg:sgproof-vkLya:L1}, \eqref{zerosg:varepsilon4}--\eqref{zerosg:b5k} and \eqref{zerosg:c}, we know that \eqref{zerosg:sgproof-vkLya2} holds.
\end{proof}

\begin{lemma}\label{zerosg:lemma:sg2:23}
	%
	Under the same conditions as in Lemma~\ref{zerosg:lemma:sg2}, let ${\mathbf{x}_k}$ be the sequence generated by Algorithm~\ref{nonconvex:algorithm-pdgd}. Then
	\begin{subequations}
	\begin{align}
	&\mathbb{E}_{\mathcal{A}_k}[e_{4,k+1}]	
		\le  e_{4,k} + \frac{p}{n}a_9\alpha_k^2e_{4,k} +\|\bsx_k\|^2_{2\alpha_k  \ell ^2\bsE}\nonumber\\ 
		&\quad -\frac{1}{4}\alpha_k\|\bar{\bsg}_{k}^0\|^2 + p a_7\alpha_k^2
			+(n+p) \ell ^2\alpha_k\mu^2_k,
		\label{zerosg:v4kspeed-diminishing-2}\\
		&\mathbb{E}_{\mathcal{A}_k}[\mathcal{L}_{2,k+1}] \nonumber\\
		& \le   \mathcal{L}_{2,k}- a_1\|\bsx_k\|^2_{\bsE} - a_2\Big\|\bsv_k+\frac{1}{\gamma}\bsg_{k}^0\Big\|^2_{\bsF} - a_3\|\bsx_{k}-\bsy_{k}\|^2 \nonumber\\
		&\quad + a_6\alpha_k^2\|\bar{\bsg}^0_{k}\|^2  + pa_8\alpha_k^2e_{4,k} + pna_4\alpha_k^2+pna_5^{\prime}\alpha_k\mu_k^2.
		\label{zerosg:sgproof-vkLya2-bounded}
	\end{align}
	\end{subequations}
\end{lemma}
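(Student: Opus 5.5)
The plan mirrors the proof of Lemma~\ref{zerosg:lemma:sg2-T:23}, now with time-varying parameters, and reuses the coefficient bounds from the proof of Lemma~\ref{zerosg:lemma:sg2}. The conditions of Lemma~\ref{zerosg:lemma:sg2} give $\gamma_k=\epsilon_4(k+m)\ge\kappa_0(\epsilon_1,\epsilon_2)\ge\varepsilon_0$. So the per-component inequalities \eqref{nonconvex:v1k}--\eqref{nonconvex:xminush} hold at every $k$, and so do the coefficient estimates \eqref{zerosg:omegak}--\eqref{zerosg:c}: namely $\Delta_k\le\epsilon_4/\gamma_k^2\le1$, $b_{1,k}\le a_1/2$, $b_{2,k}\ge a_2$, $b_{3,k}\le\varepsilon_{11}$, $b_{4,k}\le\varepsilon_{12}$, $b_{4,k}\alpha_k\le 1/p$, $b_{5,k}\le pna_5$, and $c_2-\delta_0\varepsilon_2-\delta_0b_{6,k}\Delta_k\ge a_3$.

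For \eqref{zerosg:v4kspeed-diminishing-2}, I start from \eqref{zerosg:v4k}, which holds for general $\alpha_k$. I then insert the $1/n$-scaled bound \eqref{zerosg:rand-grad-esti5} together with \eqref{zerosg-a5:rand-grad-esti2} for $\mathbb{E}_{\mathcal{B}_k}[\|\bsg_k^z\|^2]$. This is exactly \eqref{zerosg:v4kspeed-diminishing}, but with $\alpha_k$ in place of $\alpha$. The term $\tfrac12\alpha_k^2\ell\|\bar{\bsg}_k^\mu\|^2$ is absorbed by $-\tfrac14\alpha_k\|\bar{\bsg}_k^\mu\|^2$ using \eqref{zerosg:v4kspeed-diminishing-1.3}. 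The consensus contribution is bounded by $\alpha_k\ell^2\|\bsx_k\|_{\bsE}^2$ via \eqref{zerosg:v4kspeed-diminishing-1.2}, so the total consensus coefficient becomes $2\alpha_k\ell^2$. The $\mu_k^2$ term is controlled via \eqref{zerosg:v4kspeed-diminishing-1.4}, giving $(n+p)\ell^2\alpha_k\mu_k^2$. The constant terms collect into $pa_7\alpha_k^2$, and the $e_{4,k}$ term equals $\tfrac{p}{n}a_9\alpha_k^2e_{4,k}$ with $a_9=8(1+\eta_1^2)\ell^2$. All three scalings need only $\alpha_k=\epsilon_2/\gamma_k$ and $\gamma_k\ge\varepsilon_0\ge4p(1+\eta_1^2)\epsilon_2\ell$, and these hold for every $k$.

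For \eqref{zerosg:sgproof-vkLya2-bounded}, I sum \eqref{nonconvex:v1k}, \eqref{zerosg:v2k}, \eqref{zerosg:v3k} and \eqref{nonconvex:xminush}, omitting \eqref{zerosg:v4k}. As in \eqref{nonconvex:vkLya_xhat}, the $\hat{\bsx}_k$ terms split into $\bsx_k$ and $\bsx_k-\hat{\bsx}_k$ parts, and the latter are bounded by $\delta_0\|\bsx_k-\bsy_k\|^2$. The one difference from \eqref{nonconvex:vkLya_xhat} is that the $-\tfrac14\alpha_k\|\bar{\bsg}_k^\mu\|^2$ from $e_{4}$ is no longer available to cancel $\frac{\alpha_k}{\gamma_k}\rho_2^{-1}(L)\|\bar{\bsg}_k^\mu\|^2$. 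I handle this term as in \eqref{L2:0}: by \eqref{zerosg:rand-grad-esti9}, $\|\bar{\bsg}_k^\mu\|^2\le2\|\bar{\bsg}_k^0\|^2+4\ell^2\|\bsx_k\|_{\bsE}^2+4n\ell^2\mu_k^2$. With $\alpha_k/\gamma_k=\alpha_k^2/\epsilon_2$, this yields $a_6\alpha_k^2\|\bar{\bsg}_k^0\|^2$. It also produces an extra $2a_6\ell^2\alpha_k^2\bsE$, which enlarges $\bsM_{2,k}$ to $\bsM'_{2,k}$; this is covered by $\varepsilon_0$ including $2a_6$ under the square root, so \eqref{zerosg:m2-rand-pd} still holds. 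The remaining extra piece is $2na_6\ell^2\alpha_k^2\mu_k^2\le 2na_6\ell^2(\epsilon_2/\varepsilon_0)\alpha_k\mu_k^2$, which turns $a_5$ into $a_5'$. Finally I apply \eqref{zerosg:m1-rand-pd}--\eqref{zerosg:m3-rand-pd} and the time-varying bounds \eqref{zerosg:varepsilon4}--\eqref{zerosg:c}. The $e_{4,k}$ and constant terms become $pa_8\alpha_k^2e_{4,k}+pna_4\alpha_k^2$ through $b_{3,k}\le\varepsilon_{11}$ and $b_{4,k}\le\varepsilon_{12}$.

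The main obstacle is the bookkeeping in this last step, specifically the $\Delta_k$-dependent terms. These include $\tfrac12\Delta_k\mathbb{E}[\|\bsg^0_{k+1}\|^2]$, which must be expanded through \eqref{zerosg-a5:vkLya-2} into $e_{4,k}$, $\|\bsg_k^z\|^2$ and $\check\sigma_2^2$ parts. I have to check that every such term is absorbed into $b_{3,k}$, $b_{4,k}$, $b_{1,k}$ and $b_{6,k}$ without the cancellation that $e_{4}$ previously supplied. Since that cancellation involved only $\bar{\bsg}^\mu_k$, I expect the constants $a_4$, $a_8$ and $a_5'$ to suffice.
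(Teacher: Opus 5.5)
Your proposal is correct and follows the paper's route. The first bound is obtained exactly as \eqref{zerosg:v4kspeed}, from \eqref{zerosg:v4k}, \eqref{zerosg:rand-grad-esti5}, \eqref{zerosg-a5:rand-grad-esti2} and the rescalings \eqref{zerosg:v4kspeed-diminishing-1.2}--\eqref{zerosg:v4kspeed-diminishing-1.4}, which need only $\gamma_k\ge\varepsilon_0$. The second bound is obtained by combining \eqref{L2:0} with the time-varying coefficient bounds \eqref{zerosg:varepsilon4}--\eqref{zerosg:b4k}, \eqref{zerosg:b5k} and \eqref{zerosg:c}.

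Your remaining worry about the $\Delta_k$ terms resolves as you expect. In \eqref{nonconvex:vkLya_xhat} those terms arise only from $e_{2,k}$ and $e_{3,k}$, so their bookkeeping carries over to $\mathcal{L}_{2,k}$ unchanged. The only cancellation that $e_{4,k}$ supplied was for the $\|\bar{\bsg}^\mu_k\|^2$ term, which you already handle.
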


\begin{proof}

	The proof of \eqref{zerosg:v4kspeed-diminishing-2} follows the same argument as in \eqref{zerosg:v4kspeed}.
	In a similar way to the proof of \eqref{zerosg:sgproof-vkLya2}, the combination of inequalities \eqref{L2:0}, \eqref{zerosg:varepsilon4}--\eqref{zerosg:b4k}, \eqref{zerosg:b5k}, and \eqref{zerosg:c} leads to \eqref{zerosg:sgproof-vkLya2-bounded}.
\end{proof}

\begin{lemma} \label{L3_bound:time-varying1}
	Under the same setting as in Theorem~\ref{Thm:PL:know},  
		 consider  the sequence $\{\bsx_k\}$ generated by Algorithm~\ref{nonconvex:algorithm-pdgd}. 
		 Then
there exists a constant $\bar{L}>0$ such that
	\begin{align}
		&~~~~\qquad \mathbb{E}[e_{4,k}]\le n\bar{L},~\forall k\in[0,T], \label{L3_bound_ineq:TV}\\
		&\bar{L} \le \frac{2e_{4,0}}{n} + 4\ell^2\frac{{\mathcal{L}}_{1,0}}{n} 
			+ \frac{a_{11}}{2a_9}
					+ 2 = \mathcal{O}(1). \label{Lbar:bound:TV}
	\end{align}
\end{lemma}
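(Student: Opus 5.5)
I would follow the same induction as in the proofs of Lemmas~\ref{L3_bound} and~\ref{L3_bound:fix}. The changes are that the parameters are now time-varying, namely $\alpha_k=\epsilon_2/(\epsilon_4(k+m))$ and $\gamma_k=\epsilon_4(k+m)$, and that the horizon is arbitrary, so the bound has to hold for every $k$ rather than only for $k\le T$.

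\emph{Base case and hypothesis.} The case $k=0$ holds by definition because $\bar L\ge e_{4,0}/n$. For the inductive step I assume $\mathbb{E}[e_{4,\tau}]\le n\bar L$ for all $\tau\le k-1$.

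\emph{Checking the hypotheses.} Since $m\ge\kappa_m\ge\kappa_0(\epsilon_1,\epsilon_2)/\epsilon_4$, we get $\epsilon_4\ge\kappa_0/m$. Hence Lemmas~\ref{zerosg:lemma:sg2} and~\ref{zerosg:lemma:sg2:23} apply, and \eqref{zerosg:sgproof-vkLya2} and \eqref{zerosg:v4kspeed-diminishing-2} hold.

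\emph{Summing the Lyapunov inequality.} I take full expectations in \eqref{zerosg:sgproof-vkLya2}, drop the nonpositive terms, and sum over $\tau\in[0,k-1]$. Using the hypothesis together with $\mu_{i,\tau}^2\le\kappa_\mu^2 p\alpha_\tau/(n+p)$, this gives
\[
\sum_{\tau=0}^{k-1}\mathbb{E}\|\bsx_\tau\|_{\bsE}^2\le \frac{1}{a_1}\Big(\mathcal{L}_{1,0}+pn(a_8\bar L+a_4+a_5\kappa_\mu^2)\sum_{\tau=0}^{k-1}\alpha_\tau^2\Big).
\]
One point needs care here: $\alpha_\tau$ is no longer constant, so the consensus term in \eqref{zerosg:v4kspeed-diminishing-2} carries the weight $2\alpha_\tau\ell^2$. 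Because $\alpha_\tau$ is nonincreasing, $\sum_\tau\alpha_\tau\|\bsx_\tau\|_{\bsE}^2\le\alpha_0\sum_\tau\|\bsx_\tau\|_{\bsE}^2$, and $\alpha_0=\epsilon_2/(\epsilon_4m)\le 1/a_1$ by the condition $m\ge\epsilon_2/(a_1\epsilon_4)$ in $\kappa_m$.

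\emph{Summing the optimality-gap inequality.} Next I sum \eqref{zerosg:v4kspeed-diminishing-2}, drop $-\tfrac14\alpha_\tau\|\bar{\bsg}^0_\tau\|^2$, and insert the bound above. This yields an analogue of \eqref{nL:iterat} with $\alpha^2k$ replaced by $S_2=\sum_{\tau\ge0}\alpha_\tau^2$ and $\alpha^3k$ replaced by $\alpha_0S_2$, using $(n+p)\mu_\tau^2\le\kappa_\mu^2p\alpha_\tau$. The key observation is that, unlike in the constant-stepsize case, the series converges uniformly in $k$:
\[
S_2\le\frac{\epsilon_2^2}{\epsilon_4^2}\sum_{j\ge m}\frac{1}{j^2}\le\frac{\epsilon_2^2}{\epsilon_4^2(m-1)}.
\]

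\emph{Choosing $m$ to make the coefficients small.} Write $\tilde L_{1}=2\ell^2\mathcal{L}_{1,0}\alpha_0/a_1$. The resulting bound is
\[
\mathbb{E}[e_{4,k}]\le e_{4,0}+\tilde L_{1}+\big(a_9pS_2+a_{10}pn\alpha_0S_2\big)\bar L+a_{11}pS_2+a_{12}pn\alpha_0S_2 .
\]
The lower bounds in $\kappa_m$ control each piece:
\begin{itemize}
\item $m-1\ge 4a_9\epsilon_2^2p/(\epsilon_4^2n)$ gives $a_9pS_2\le n/4$.
\item $(m-1)^2\ge 2a_{10}\epsilon_2^3p/\epsilon_4^3$ gives $a_{10}pn\alpha_0S_2\le n/4$.
\item $(m-1)^2\ge a_{12}\epsilon_2^3p/(2\epsilon_4^3)$ gives $a_{12}pn\alpha_0S_2\le n$ (roughly).
\item $a_{11}pS_2\le a_{11}n/(4a_9)$ also follows from the first condition.
\end{itemize}
Together these give
\[
\mathbb{E}[e_{4,k}]\le e_{4,0}+2\ell^2\mathcal{L}_{1,0}+\tfrac12 n\bar L+\tfrac{a_{11}}{4a_9}n+n .
\]
Therefore any $\bar L\ge \frac{2e_{4,0}}{n}+4\ell^2\frac{\mathcal{L}_{1,0}}{n}+\frac{a_{11}}{2a_9}+2$ closes the induction, and taking equality gives \eqref{Lbar:bound:TV}, with $\mathcal{O}(1)$ following from $e_{4,0},\mathcal{L}_{1,0}=\mathcal{O}(n)$.

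\emph{Main obstacle.} The hardest part is the constant bookkeeping: making each $m$-condition in $\kappa_m$ yield exactly the fractions $\tfrac12$, $\tfrac14$ and $1$ above. A further subtlety is that \eqref{zerosg:sgproof-vkLya2} has $a_8$ rather than $\tilde a_8$, so the $\bar L$-coefficient involves $a_{10}$ as defined in Appendix~\ref{appendix:constant:Thm}, and this must be matched consistently.
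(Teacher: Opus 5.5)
Your skeleton matches the paper's: induction on $k$, telescoping \eqref{zerosg:sgproof-vkLya2} to control the consensus term, then telescoping \eqref{zerosg:v4kspeed-diminishing-2} using $\sum_\tau(\tau+m)^{-s}\le 1/((s-1)(m-1)^{s-1})$ and the conditions in $\kappa_m$. The one step where you deviate does not give the fractions you claim, so the explicit bound \eqref{Lbar:bound:TV} is not established.

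You telescope \eqref{zerosg:sgproof-vkLya2} unweighted and then pull out $\alpha_0$. Your cubic terms are therefore controlled by $\alpha_0S_2\le\epsilon_2^3/(\epsilon_4^3m(m-1))$. The conditions on $m$ in $\kappa_m$ are instead calibrated to $\sum_\tau\alpha_\tau^3\le\epsilon_2^3/(2\epsilon_4^3(m-1)^2)$, which is smaller by the factor $2(m-1)/m\to2$. This loss comes from the ordering, not from your estimate of $S_2$: for large $k$ and $m$, $\alpha_0S_2\approx\epsilon_2^3/(\epsilon_4^3m^2)$, whereas $\sum_\tau\alpha_\tau^3\approx\epsilon_2^3/(2\epsilon_4^3m^2)$.

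Concretely:
\begin{itemize}
\item $(m-1)^2\ge2a_{10}\epsilon_2^3p/\epsilon_4^3$ only yields $a_{10}pn\alpha_0S_2\le n(m-1)/(2m)<n/2$, not $n/4$.
\item $(m-1)^2\ge a_{12}\epsilon_2^3p/(2\epsilon_4^3)$ only yields $a_{12}pn\alpha_0S_2<2n$, not $n$.
\end{itemize}
Your recursion then becomes $\mathbb{E}[e_{4,k}]\le e_{4,0}+2\ell^2\mathcal{L}_{1,0}+\tfrac34n\bar L+\tfrac{a_{11}}{4a_9}n+2n$. This still closes, since $\tfrac34<1$, so boundedness with $\bar L=\mathcal{O}(1)$ survives. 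However, your bookkeeping certifies the induction only for $\bar L\ge\frac{4e_{4,0}}{n}+\frac{8\ell^2\mathcal{L}_{1,0}}{n}+\frac{a_{11}}{a_9}+8$, which is roughly twice the constant in the statement.

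The missing device is the paper's weighting. Multiply \eqref{zerosg:sgproof-vkLya2} at step $\tau$ by $\alpha_\tau$ before summing, and use $\alpha_{\tau+1}\mathbb{E}[\mathcal{L}_{1,\tau+1}]\le\alpha_\tau\mathbb{E}[\mathcal{L}_{1,\tau+1}]$, which holds because $\alpha_\tau$ is nonincreasing and $\mathcal{L}_{1,\tau+1}\ge0$. This telescopes directly to $\sum_{\tau}\alpha_\tau\mathbb{E}\|\bsx_\tau\|_{\bsE}^2\le\frac{1}{a_1}\big(\alpha_0\mathcal{L}_{1,0}+pn(a_8\bar L+a_4+a_5\kappa_\mu^2)\sum_\tau\alpha_\tau^3\big)$. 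That is exactly the weighted quantity appearing in \eqref{zerosg:v4kspeed-diminishing-2}. With $\alpha_0S_2$ replaced by $\sum_\tau\alpha_\tau^3$, your second and third bullets hold as stated, and the bound closes with $\tfrac12n\bar L$, $\tfrac{a_{11}}{4a_9}n$ and $n$.

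One further slip: $m\ge\epsilon_2/(a_1\epsilon_4)$ gives $\alpha_0\le a_1$, not $\alpha_0\le1/a_1$. The bound $\alpha_0\le a_1$ is what you actually need for $2\ell^2\alpha_0\mathcal{L}_{1,0}/a_1\le2\ell^2\mathcal{L}_{1,0}$.
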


\begin{proof}
			We prove by \eqref{L3_bound_ineq:TV}  by mathematical induction. 

		For $k=0$, the result follows directly from the definition.

		For $k>0$, suppose that the statement holds for $\tau = 0,1,\ldots,k-1$, 
		namely,
		\[
			\mathbb{E}[e_{4,\tau}] \le n\bar{L}, \quad \forall\, \tau=0,1,\ldots,k-1.
		\]

		It is clear that all the conditions in Lemmas~\ref{zerosg:lemma:sg2} and \ref{zerosg:lemma:sg2:23} are satisfied.
		Then, from \eqref{zerosg:sgproof-vkLya2} and the monotonicity of $\alpha_k$, we get
		\begin{align}
			&\alpha_{\tau+1}\mathbb{E}_{\mathcal{A}_k}[\mathcal{L}_{1,\tau+1}] \le \alpha_{\tau}\mathbb{E}_{\mathcal{A}_k}[\mathcal{L}_{1,\tau+1}]
			 \le \alpha_{\tau}\mathcal{L}_{1,\tau} \nonumber\\
			&
				- a_1\alpha_{\tau}\|\bsx_\tau\|^2_{\bsE}  
				+ pa_8\alpha_k^3n\bar{L} 
				 + pna_4\alpha_k^3+pna_5\kappa_{\mu}^2\alpha_k^3, \nonumber\\
			&\qquad\qquad \qquad \qquad \qquad \quad  \forall \tau = 0,1,\dots,k-1.
			\label{zerosg:sgproof-vkLya2-induction}
		\end{align}

		Taking expectation with respect to $\mathcal{F}_T$, summing \eqref{zerosg:sgproof-vkLya2-induction} over $ \tau\in[0,k-1]$ yields
		\begin{align}
			&\sum_{\tau=0}^{k-1}\mathbb{E}[\alpha_{\tau}\|\bsx_\tau\|^2_{\bsE}]
			\le\frac{1}{a_1}\Big(\alpha_0{\mathcal{L}}_{1,0} + 
				pna_8\bar{L}\sum_{\tau=0}^{k-1}\alpha_{\tau}^3 \nonumber\\
			& +pn(a_4+a_5\kappa_{\mu}^2)\sum_{\tau=0}^{k-1}\alpha_{\tau}^3 \Big).
			\label{x_bound:TV}
		\end{align}

    Similarly, the expected sum of \eqref{zerosg:v4kspeed-diminishing-2} over $\tau \in [0,k-1]$ with respect to $\mathcal{F}_T$, combined with \eqref{x_bound:TV}, yields		
	\begin{align}
			&\mathbb{E}[e_{4,k}]
			\le e_{4,0} + \frac{2\ell^2}{a_1}\alpha_0{\mathcal{L}}_{1,0} 
				+ \frac{2\ell^2a_8}{a_1}pn\bar{L}\sum_{\tau=0}^{k-1}\alpha_{\tau}^3 \nonumber\\
				&\quad +\frac{2\ell^2(a_4+a_5\kappa_{\mu}^2)}{a_1}pn\sum_{\tau=0}^{k-1}\alpha_{\tau}^3  \nonumber\\
				&\quad + pa_9\bar{L}\sum_{\tau=0}^{k-1}\alpha_{\tau}^2
				+ p(a_7 + \ell^2\kappa_{\mu}^2)\sum_{\tau=0}^{k-1}\alpha_{\tau}^2 \nonumber\\
			&= e_{4,0} + \frac{2\ell^2}{a_1}\alpha_0{\mathcal{L}}_{1,0} 
				 +\Big(\frac{p}{n}a_9\sum_{\tau=0}^{k-1}\alpha_{\tau}^2 + 
				 \frac{2\ell^2a_8}{a_1}p\sum_{\tau=0}^{k-1}\alpha_{\tau}^3   \Big)n\bar{L}
				\nonumber\\
				&\quad + p(a_7 + \ell^2\kappa_{\mu}^2)\sum_{\tau=0}^{k-1}\alpha_{\tau}^2
					 +\frac{2\ell^2(a_4+a_5\kappa_{\mu}^2)}{a_1}pn\sum_{\tau=0}^{k-1}\alpha_{\tau}^3 \nonumber\\
			&\le e_{4,0} + \frac{2\ell^2}{a_1}\alpha_0{\mathcal{L}}_{1,0} 
					+\Big(\frac{a_9\epsilon_2^2p}{\epsilon_4^2(m-1)n} 
					+  \frac{a_{10}\epsilon_2^3p}{2\epsilon_4^3(m-1)^2}   \Big)n\bar{L}
					\nonumber\\
				&\quad + \frac{a_{11}\epsilon_2^2p}{\epsilon_4^2(m-1)} 
					+ \frac{a_{12}\epsilon_2^3pn}{2\epsilon_4^3(m-1)^2} \nonumber\\
			&\le e_{4,0} + 2\ell^2{\mathcal{L}}_{1,0} 
					+ \frac{1}{2}n\bar{L} + \frac{a_{11}n}{4a_9} + n, \label{T:use3}
		\end{align}
		where the second inequality follows from the bound $\sum_{\tau=0}^{k-1}\frac{1}{(\tau+m)^s} \le \frac{1}{(s-1)(m-1)^{s-1}}$ for $k \ge 1$, $m > 1$, and $s = 2, 3$;
		and the last inequality holds due to $\alpha_0=\frac{\epsilon_2}{\epsilon_4m}$ and $m\ge\kappa_m\ge\max\big\{\frac{\epsilon_2}{a_1\epsilon_4},
	~\frac{4a_9\epsilon_2^2}{\epsilon_4^2}\frac{p}{n}+1,~\sqrt{\frac{2a_{10}\epsilon_2^3p}{\epsilon_4^3}}+1,~\sqrt{\frac{a_{12}\epsilon_2^3p}{2\epsilon_4^3}}+1\big\}$.
		
		As long as 
		\begin{align}
			\bar{L}\ge \frac{2e_{4,0}}{n} + 4\ell^2\frac{{\mathcal{L}}_{1,0}}{n} 
			+ \frac{a_{11}}{2a_9}
					+ 2,
		\end{align}
		one has $\mathbb{E}[e_{4,k}]\le\bar{L}$.
		Thus \eqref{L3_bound_ineq:TV} follows from $m\ge\kappa_m(\epsilon_1, \epsilon_2, \epsilon_4)=\max\{\frac{4a_9\epsilon_2^2}{\epsilon_4^2}\frac{p}{n}+1,
			~\sqrt{\frac{2a_{10}\epsilon_2^3p}{\epsilon_4^3}}+1\}$.
		Moreover, 
		an upper bound of $\bar{L}$ can be derived as
		\begin{align}
			&\bar{L} \le \max\Big\{ \frac{e_{4,0}}{n},~\frac{2e_{4,0}}{n} + 4\ell^2\frac{{\mathcal{L}}_{1,0}}{n} 
			+ \frac{a_{11}}{2a_9} + 2 \Big\}\nonumber\\
			&=\frac{2e_{4,0}}{n} + 4\ell^2\frac{{\mathcal{L}}_{1,0}}{n} 
			+ \frac{a_{11}}{2a_9}
					+ 2. \label{nL:TV:0}
		\end{align} 
		Since $a_{9},a_{11}$ do not depend on either the dimension $p$ or the communication network, together with $e_{4,0}=\mathcal{O}(n)$ and $\mathcal{L}_{1,0}=\mathcal{O}(n)$,  \eqref{nL:TV:0} further yields \eqref{Lbar:bound:TV}.
\end{proof}


Building upon Lemmas~\ref{zerosg:lemma:sg2}--\ref{L3_bound:time-varying1}, we proceed to prove Theorem~\ref{Thm:PL:know}.
From $m \ge \kappa_m(\epsilon_1,\epsilon_2,\epsilon_4) \ge \kappa_0(\epsilon_1,\epsilon_2)/\epsilon_4$, it follows that $\epsilon_4 \ge \kappa_0(\epsilon_1,\epsilon_2)/m$, and hence all the conditions in these lemmas are satisfied.

From \eqref{zerosg:rand-grad-smooth} and \eqref{L3_bound_ineq:TV}, one has
\begin{align}\label{nonconvex:gg3:le:TV}
\|\bar{\bsg}^0_k\|^2=n\|\nabla f(\bar{x}_k)\|^2\le2\ell e_{4,k}=2n\ell\bar{L}.
\end{align}
Taking the expectation of \eqref{zerosg:sgproof-vkLya2-bounded} with respect to $\mathcal{F}_T$ and combining it with \eqref{nonconvex:gg3:le:TV}, \eqref{L3_bound_ineq:TV}, \eqref{zerosg:vkLya3.1}, and \eqref{zerosg:step:eta1t1}, we obtain
\begin{align}\label{zerosg:vkLya4-bound-pl-1}
	&\mathbb{E}[\mathcal{L}_{2,k+1}]\le(1-d_1)\mathbb{E}[\mathcal{L}_{2,k}] \nonumber\\
	&\quad + \frac{pn(\frac{2a_6\ell\bar{L}}{p} + a_8\bar{L} + a_4+ a_5^{\prime}\kappa_{\mu}^2)\epsilon_2^2}{\epsilon_4^2(k+m)^{2}}.
\end{align}
Noting from \eqref{zerosg:vkLya2-a1-bounded-thm2} that $0 \le d_1 \le 1/2$, 
combining \eqref{zerosg:vkLya4-bound-pl-1}, \eqref{zerosg:vkLya3}, and Lemma~5 in \cite{Yi_Zerothorder_2022,Yi_Zerothorder_2021} yields
\begin{align}\label{zerosg:lemma:sequence-equ6-bounded-pl-speed}
\mathbb{E}[\|\bsx_k\|_{\bsE}^2]\le\frac{1}{\varepsilon_7}\mathbb{E}[\mathcal{L}_{2,k}]
	=\mathcal{O}(\frac{pn}{(k+m)^2}) + \mathcal{O}\big(n(1-d_1)^k\big),
\end{align}
which gives \eqref{zerosg:thm-sg-diminishing-equ2.1bounded}.

Similarly, the expectation of \eqref{zerosg:v4kspeed-diminishing-2} with respect to $\mathcal{F}_T$, 
in combination with \eqref{nonconvex:gg3}, \eqref{zerosg:lemma:sequence-equ6-bounded-pl-speed}, \eqref{nonconvex:gg3:le:TV}, and \eqref{zerosg:step:eta1t1}, yields
\begin{align}
	&\mathbb{E}[e_{4,k+1}] \nonumber\\
	&\le \Big(1-\frac{\nu\epsilon_2}{2\epsilon_4(k+m)}\Big)\mathbb{E}[e_{4,k}] + \frac{p(a_9\bar{L} + a_7 + \ell^2k_\mu^2)\epsilon_2^2}{\epsilon_4^2(k+m)^2} \nonumber\\
		&\quad + \frac{2\ell^2\epsilon_2}{\epsilon_4(k+m)}\mathcal{O}(\frac{pn}{(k+m)^2})
		 + \frac{2\ell^2\epsilon_2}{\epsilon_4(k+m)}\mathcal{O}\big(n(1-d_1)^k\big). \nonumber\\
\end{align}
Noting that $\nu\epsilon_2/(2\epsilon_4) > 2$ as $\epsilon_4<\nu\epsilon_2/4$, following the similar proof of Lemma~5 in \cite{Yi_Zerothorder_2022,Yi_Zerothorder_2021}, and using Lemma~\ref{series}, we obtain
\begin{subequations}\label{zerosg:v4kspeed-diminishing-6}
\begin{align}
&\mathbb{E}[f(\bar{x}_{T})-f^*]\nonumber\\
&\le\mathcal{O}\Big(\frac{m^{\frac{\nu\epsilon_2}{2\epsilon_4}}}{(T+m)^{\frac{\nu\epsilon_2}{2\epsilon_4}}}\Big) +\mathcal{O}\Big(\frac{p}{n(T+m)^{2}}\Big)+\frac{\varepsilon_{15}p}{n(T+m)}\nonumber\\
&+\mathcal{O}\Big(\frac{m^{\frac{\nu\epsilon_2}{2\epsilon_4}}}{(T+m)^{\frac{\nu\epsilon_2}{2\epsilon_4}}}\Big) +\mathcal{O}\Big(\frac{p}{(T+m)^{3}}\Big) +\mathcal{O}\Big(\frac{p}{(T+m)^{2}}\Big) \nonumber\\
&+\mathcal{O}\Big(\frac{m^{\frac{\nu\epsilon_2}{2\epsilon_4}}}{(T+m)^{\frac{\nu\epsilon_2}{2\epsilon_4}}}\Big) + \mathcal{O}\Big(\frac{1}{(T+m)^{\frac{\nu\epsilon_2}{2\epsilon_4}}}\Big) \nonumber\\
& +\mathcal{O}\Big(\frac{m^{\frac{\nu\epsilon_2}{2\epsilon_4}-1}}{(T+m)^{\frac{\nu\epsilon_2}{2\epsilon_4}}}\Big) +  \mathcal{O}\Big(\frac{1}{(T+m)^{\frac{\nu\epsilon_2}{2\epsilon_4}}}\Big), \\
&\varepsilon_{15} = \frac{4\epsilon_2^2(a_9\bar{L} + a_7+ \ell ^2\kappa_{\mu}^2)}{\epsilon_4^2(\frac{\nu\epsilon_2}{2\epsilon_4}-1)}
\le\frac{64(a_9\bar{L} + a_7+ \ell ^2\kappa_{\mu}^2)}{\kappa_4(2-\kappa_4)\nu^2}.
\end{align}
\end{subequations}
Since $m=\mathcal{O}(p)$, we get \eqref{zerosg:thm-sg-diminishing-equ2bounded}.


\section*{Acknowledgment}
The authors thank Dr.~Shengjun Zhang for sharing codes.

\clearpage

\end{document}

%% file: styledsymbols.tex
\def\bsg{{\mathbf{g}}}

\def\bsq{{\mathbf{q}}}

\def\bsv{{\mathbf{v}}}

\def\bsx{{\mathbf{x}}}
\def\bsy{{\mathbf{y}}}

\def\bsE{{\mathbf{E}}}
\def\bsF{{\mathbf{F}}}

\def\bsH{{\mathbf{H}}}

\def\bsL{{\mathbf{L}}}

\def\bsM{{\mathbf{M}}}